\documentclass[leqno,11pt,a4paper]{amsart}

\usepackage{jc}

\begin{document}

\author{Julian Chaidez}
\address{Department of Mathematics\\University of Southern California\\Los Angeles, CA\\90007\\USA}
\email{julian.chaidez@usc.edu}

\title[Robustly Non-Convex Hypersurfaces In Contact Manifolds]{Robustly Non-Convex Hypersurfaces In Contact Manifolds}

\begin{abstract} We construct the first examples of hypersurfaces in any contact manifold of dimension 5 and larger that cannot be $C^2$-approximated by convex hypersurfaces, contrasting sharply with the foundational results of Giroux in dimension $3$ and Honda-Huang in the $C^0$ case. The main technical step is the first construction of a dynamical blender in the contact setting. 
\end{abstract}

\vspace*{-20pt}

\maketitle

\vspace*{-20pt}

\section{Introduction} \label{sec:introduction}

A hypersurface $\Sigma$ in a contact manifold $(Y,\xi)$ is \emph{convex} if there is a contact vector-field $V$ that is transverse to $\Sigma$. Convex surface theory was first introduced by Giroux \cite{g1991}, and has since proven to be a deep and powerful tool for the study of contact $3$-manifolds. Applications include classifications of contact structures \cite{y1997,h1999,h2000,hkm2003,gls2006,cm2020,t2000,mn2023} and Legendrians \cite{fm2023,hkm2002,emm2022,cemm2023}; the construction of $3$-manifolds with no tight contact structures \cite{e2001} and tight, non-fillable contact structures \cite{eh2002}; and finiteness results for tight contact structures \cite{cgh2003,cgh2009}. There have also been many fruitful interactions with Floer homology \cite{cghh2011,a2023,az2024,fh2021,bs2016,hkm2008}

\vspace{3pt}

Recently, the study of higher dimensional convex surface theory was initiated by Honda-Huang \cite{hh2018,hh2019} and Breen-Honda-Huang \cite{bhh2023}, who have systematically generalized many of the foundational results of Giroux to dimensions larger than three. One such result is the following.

\begin{thm*}[Giroux] \label{thm:giroux_theorem} Any closed surface $\Sigma$ in a contact manifold $(Y,\xi)$ in dimension $3$ can be $C^\infty$-approximated by a convex surface $\Sigma'$.
\end{thm*}

\noindent In \cite{hh2019} (and in Eliashberg-Pancholi \cite{ep2022}) the following partial generalization was proven.

\begin{thm*}[Honda-Huang] \label{thm:honda_huang} Any closed hypersurface $\Sigma$ in a contact manifold $(Y,\xi)$ can be $C^0$-approximated by a Weinstein convex hypersurface.
\end{thm*} 

\noindent Any convex hypersurface $\Sigma$ naturally divides into two ideal Liouville manifolds meeting along their boundary, and $\Sigma$ is called \emph{Weinstein} if these Liouville manifolds are Weinstein. 

\vspace{3pt}

Theorem \ref{thm:honda_huang} is both stronger than Theorem \ref{thm:giroux_theorem} due to the Weinstein condition, but also weaker since it only provides $C^0$-approximations. Indeed, Honda-Huang noted in \cite{hh2019} that the precise generalization of Giroux's theorem was left unresolved by their work.

\begin{question*} \cite[Rmk 1.2.4]{hh2019} or \cite[Problem 2.1]{AIMproblems}. \label{qu:smooth_approximation} Can any closed hypersurface $\Sigma$ in a contact manifold $(Y,\xi)$ be $C^\infty$-approximated by a convex hypersurface?
\end{question*}

\begin{remark*} A counter-example to Question \ref{qu:smooth_approximation} was previously proposed by Mori \cite{mori2011,mori2009reeb} but was later proven to be $C^\infty$-approximable by convex hypersurfaces by Breen \cite[Cor 1.8]{b2021}. A different candidate counter-example (with boundary) remains unverified \cite[Rmk 1.9]{b2021}. \end{remark*}

\vspace{3pt}

\noindent The goal of this paper is to resolve this longstanding question by proving the following theorem.

\begin{thm*}[Main Theorem] \label{thm:main} For any $n \ge 2$, there is a closed hypersurface $\Sigma$ in standard contact $\R^{2n+1}$ (and thus in any contact $(2n+1)$-manifold) that cannot be $C^2$-approximated by convex hypersurfaces. 
\end{thm*}

Our proof is a novel combination of disparate methods in contact topology and partially hyperbolic dynamics. Specifically, we construct $C^1$-robustly topologically mixing contactomorphisms as perturbations of time one maps of contact Anosov flows. In the process, we give the first construction of a blender \cite{bonattidiaz1995,bonatti2004dynamics} in contact dynamics. We then use suspension and embedding constructions from contact topology to produce hypersurfaces in standard contact $\R^{2n+1}$ with $C^2$-robustly transitive characteristic foliations. The characteristic foliation of a convex surface cannot be topologically transitive, so this yields Theorem \ref{thm:main}. Based on the approach of this paper, we propose several new conjectures in convex hypersurface theory (see Section \ref{subsec:questions}).

\newpage

\subsection{Characteristic Foliations} Let us briefly recall some dynamical aspects of the structure of hypersurfaces in contact manifolds, before discussing the key results in our proof.

\vspace{3pt}

Let $\Sigma$ be any hypersurface in a contact manifold $(Y,\xi)$. Recall that the \emph{characteristic foliation} $\Sigma_\xi$ of $\Sigma$ is the (generally singular) oriented, 1-dimensional foliation given by
\[\Sigma_\xi = (T\Sigma \cap \xi)^\omega \subset T\Sigma\]
Here $V^\omega$ denotes the symplectic perpendicular of a subspace $V \subset \xi$ with respect to the conformal symplectic structure on $\xi$. Relatedly, a \emph{characteristic vector-field} $Z$ of $\Sigma$ is a vector-field that spans $\Sigma_\xi$ everywhere and that generates the given orientation. 

\vspace{3pt}

Any convex surface can be divided into two regions that act as a source and a sink for the flow of any characteristic vector-field for the characteristic foliation.

\begin{definition*}[Dividing Set] Let $\Sigma$ be a convex surface in a contact manifold $(Y,\xi)$ with transverse contact vector-field $V$. The \emph{dividing set} $\Gamma$ with respect to $V$ is given by
\[\Gamma = H^{-1}(0) \cap \Sigma \quad \text{where }H = \alpha(V) \text{ for any contact form $\alpha$ for $\xi$}\]
The dividing set $\Gamma$ is the intersection of the \emph{negative region} $\Sigma_-$ and \emph{positive region} $\Sigma_+$ given by the inverse images
$H^{-1}(-\infty,0] \cap \Sigma$ and $H^{-1}[0,\infty) \cap \Sigma$ respectively. \end{definition*}

The dividing set $\Gamma$ is always a transversely cutout hypersurface in $\Sigma$ with a natural contact structure $T\Gamma \cap \xi$ \cite{hh2019}. The two regions $\Sigma_+$ and $\Sigma_-$ are ideal Liouville domains with Liouville forms given by the restriction of $\alpha$, and the characteristic foliation is given by the span of the corresponding Liouville vector-field. Thus $\Sigma$ is equipped with a folded symplectic structure \cite{gsw2000,b2024}. Moreover, this structure is independent of the choices of $V$ and $\alpha$ up to isotopy, and is thus canonical up to deformation.

\begin{figure}[h!]
    \centering
    \includegraphics[width=.7\textwidth]{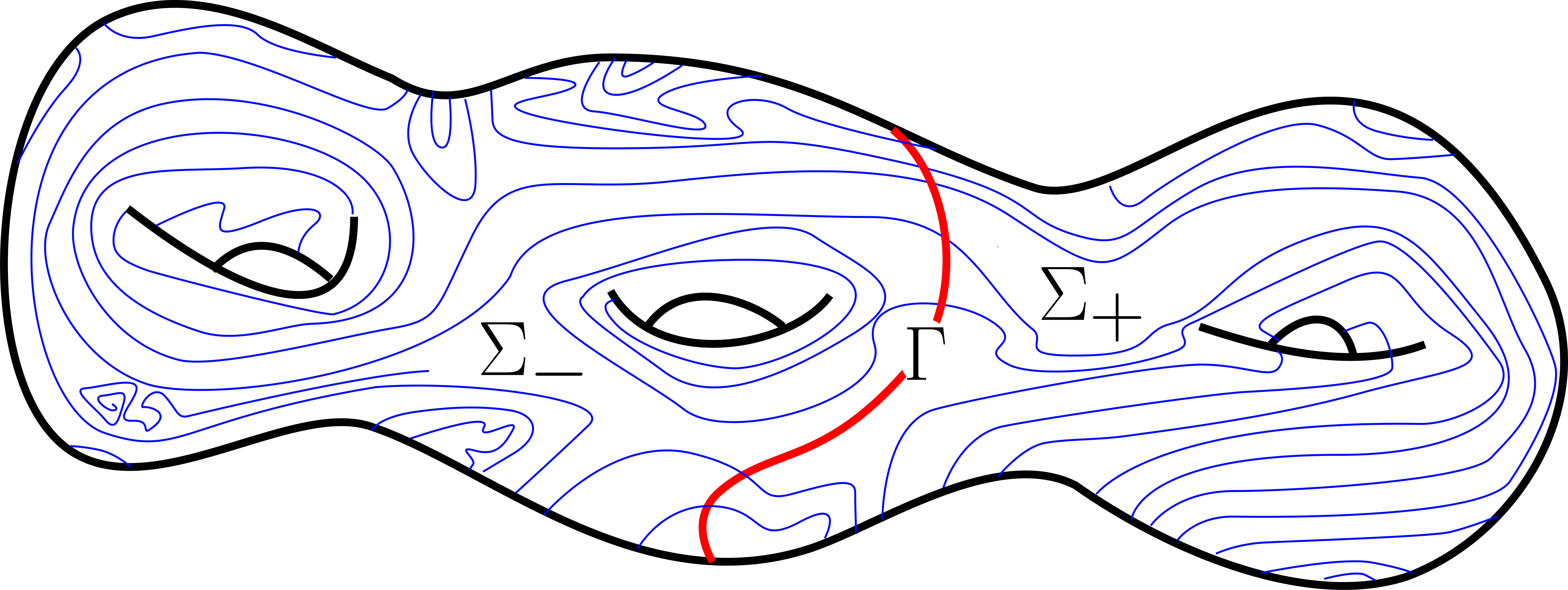}
    \caption{A convex surface with dividing set and characteristic foliation.}
    \label{fig:chacteristic_foliation}
\end{figure}

The characteristic foliation $\Sigma_\xi$ of a convex surface $\Sigma$ is always transverse to the dividing set $\Gamma$, pointing out of $\Sigma_+$ and into $\Sigma_-$, and this has some significant implications for the dynamics. For example, we recall the following definition.

\begin{definition*}[Transitive/Mixing] A smooth flow $\Phi:\R \times \Sigma \to \Sigma$ on a closed manifold $\Sigma$ is \emph{topologically transitive} if, for any non-empty open subsets $U,V \subset \Sigma$, there is a time $T$ such that
\begin{equation} \label{eq:transitivity}
\Phi_T(U) \cap V \neq \emptyset
\end{equation}
Additionally, $\Phi$ is \emph{topologically mixing} if there is a $S$ depending on $U$ and $V$ so that (\ref{eq:transitivity}) holds for all $T > S$. We adopt the analogous definitions for a diffeomorphism $\Phi:Y \to Y$ of a manifold $Y$.\end{definition*}

\begin{lemma*} \label{lem:convex_non_transitive} Let $\Sigma$ be a closed convex hypersurface in a contact manifold $(Y,\xi)$ and let $Z$ be a characteristic vector field for $\Sigma$. Then $Z$ is not topologically transitive (and in particular, not topologically mixing).
\end{lemma*}

\begin{proof} Choose a dividing set $\Gamma$ on $\Sigma$ and a pair of disjoint open subsets $A$ and $B$ of $\Gamma$. Let $\Phi$ be the flow of a characteristic vector-field $Z$ and consider the  subsets
\[
U = \Phi(\R \times A) \quad\text{and}\quad V = \Phi(\R \times B) 
\]
Any flowline of the characteristic foliation $\Sigma_\xi$ intersects $\Gamma$ at most once, so $U$ and $V$ are open and disjoint. Moreover, $\Phi_t(U) \cap V = U \cap V = \emptyset$ for all $t$. Thus $\Phi$ is not topologically transitive. \end{proof}

Our goal (in view of Lemma \ref{lem:convex_non_transitive}) is now to construct examples of hypersurfaces that are robustly topologically transitive in the following sense.

\begin{definition*} \label{def:robustly_transitive} A smooth flow $\Phi:\R \times \Sigma \to \Sigma$ is \emph{robustly transitive} if there is a $C^1$-open set
\[\mathcal{U} \subset \on{Flow}(\Sigma) \quad\text{with}\quad \Phi \in \mathcal{U}\]
such that any $\Psi \in \mathcal{U}$ is topologically transitive. Note that we adopt the analogous definition of a \emph{robustly  mixing} flow $\Phi$, as well for a \emph{robustly transitive (or mixing)} diffeomorphism $\Phi:Y \to Y$.
\end{definition*}

\subsection{Suspension} \label{subsec:intro_suspension} The first step towards this goal is to reduce the problem to a question about contactomorphisms. Fix a contactomorphism $\Phi$ of contact manifold $(Y,\xi)$ to itself. We may take the suspension (or mapping torus) to get an even dimensional space
\[\Sigma(\Phi) = [0,1]_s \times Y/\sim \qquad\text{with a hyperplane field }\eta = \on{span}(\partial_s) \oplus \xi\]
The hyperplane field $\eta$ is an example of an even contact structure, and so $\Sigma(\Phi)$ has the structure of a contact Hamiltonian manifold (also referred to as an even contact manifold \cite{bm2021}). Any such space has a natural characteristic foliation, which in this case is given by
\[
\Sigma(\Phi)_\xi = \on{span}(\partial_s)
\]
In particular, the flow of the characteristic foliation is simply the suspension flow of the contactomorphism $\Phi$. One may also take the contactization
\[
\R_s \times \Sigma(\Phi)
\]
such that the characteristic foliation on $0 \times \Sigma(\Phi)$ (as a hypersurface within the contactization) agrees with the intrinsic characteristic foliation $\Sigma(\Phi)_\xi$. 

\begin{figure}[h!]
    \centering
    \includegraphics[width=.4\textwidth]{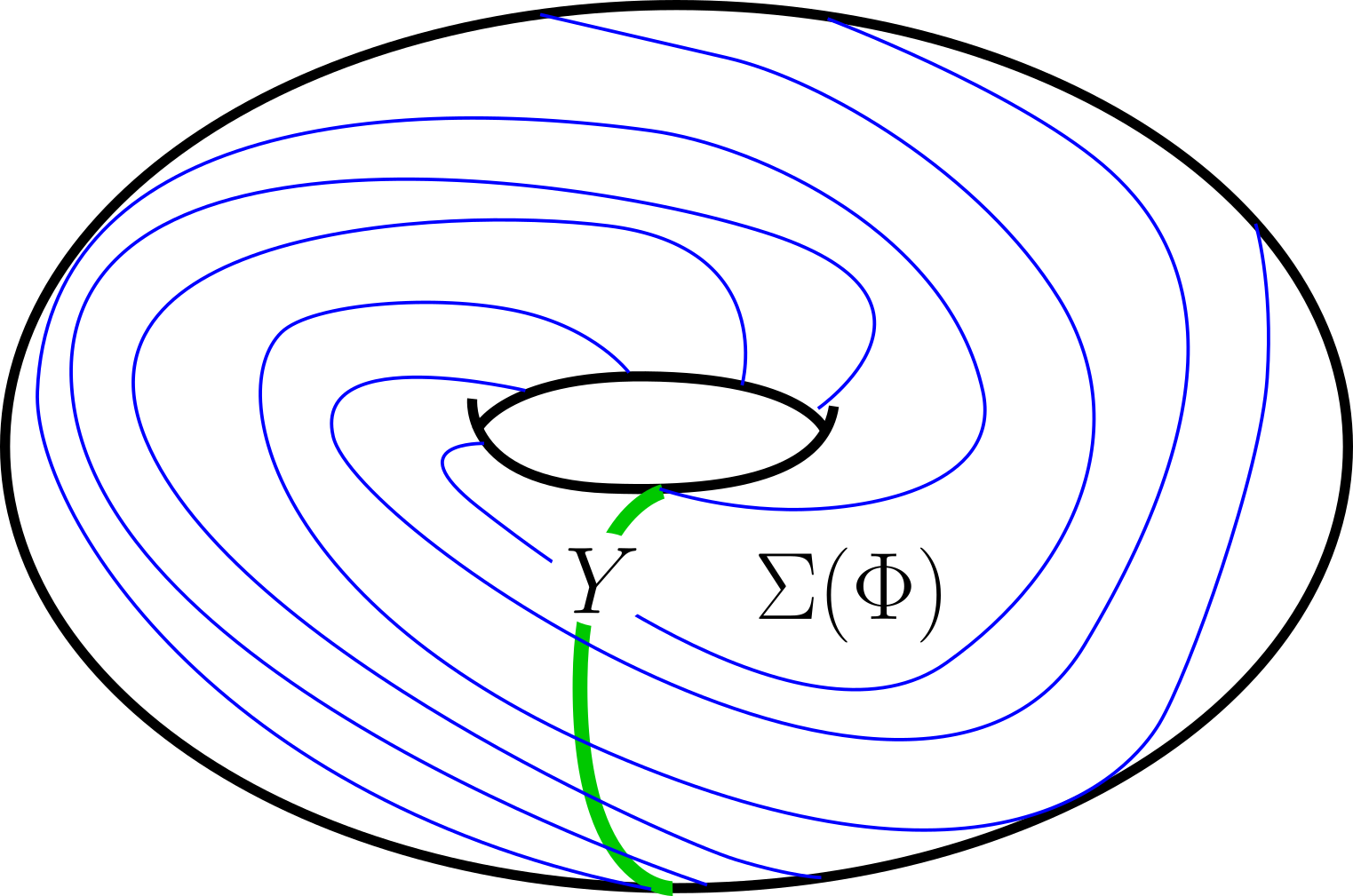}
    \caption{The suspension of a half rotation of the circle $Y = S^1$.}
    \label{fig:suspension}
\end{figure}

\vspace{-5pt}

As an example, we have depicted the suspension $\Sigma(\Phi)$ of a half rotation $\Phi:S^1 \to S^1$ of the circle in Figure \ref{fig:suspension}. This is an example of a contactomorphism that gives rise to a non-convex surface in the contactization of its suspension, since it has no dividing curves. It is a nice exercise for the reader to find a perturbation of this foliation that satisfies Giroux's convexity criterion \cite{g1991,b2021}.

\vspace{3pt}

In Section \ref{sec:contact_hamiltonian_manifolds}, we will discuss contact Hamiltonian manifolds and the suspension construction in detail. As an application of this construction, we will prove the following result.

\begin{thm*}[Proposition \ref{prop:main_suspension}] \label{thm:main_suspension} Let $\Phi:Y \to Y$ be a robustly transitive contactomorphism. Then $\Sigma(\Phi)$ has a $C^2$-open neighborhood $\mathcal{U}$ such that every $\Sigma' \in \mathcal{U}$ has topologically transitive characteristic foliation. \end{thm*}

\subsection{Robustly Mixing Contactomorphisms And Blenders} The next step is the construction of robustly mixing contactomorphisms. This is the subject of our most difficult theorem.

\begin{thm*}[Theorem \ref{thm:body_anosov}] \label{thm:main_anosov} Let $(Y,\xi)$ be a closed contact manifold admitting an Anosov Reeb flow $\Phi$ with $C^\infty$ stable and unstable foliations.  Then the $C^1$-open set of robustly mixing contactomorphisms
\[
\on{Cont}_{\on{RM}}(Y,\xi) \subset \on{Cont}(Y,\xi)
\]
is non-empty. More precisely, if $T$ is a (non-zero) multiple of the period of a closed Reeb orbit of $\Phi$, then $\Phi_T$ is in the $C^\infty$-closure of the set of robustly mixing contactomorphisms.
\end{thm*}

\noindent Note that the contactomorphisms constructed in Theorem \ref{thm:main_anosov} are also topologically transitive, since topological mixing is a stronger property than topological transitivity. Before discussing the proof, we briefly note that there are examples where Theorem \ref{thm:main_anosov} applies.

\begin{example*} \label{ex:hyperbolic_manifolds} Let $X$ be a closed $n$-manifold with a hyperbolic metric $g$. Then the geodesic flow
\[\Phi:\R \times SX \to SX\]
on the unit cosphere bundle $SX$ is Anosov with smooth stable and unstable foliations. These foliations are precisely the quotients of the foliations by the positive and negative unit conormal bundles of the horospheres in $\mathbb{H}^n$, respectively. Specific constructions of closed hyperbolic manifolds in any dimension $2$ or greater can be found in \cite{b1963,m1976,g1987}.  \end{example*}

Theorem \ref{thm:main_anosov} is version of a seminal theorem of Bonatti-Diaz \cite[Thm A]{bonattidiaz1995} in the contact setting, and our main task is to construct a certain dynamical structure introduced in \cite{bonattidiaz1995} called a blender. Roughly, a blender is a robust, horseshoe-type structure that forces certain invariant manifolds to be larger than expected. Since their introduction in \cite{bonattidiaz1995}, blenders have become an essential tool in the study of robust properties of smooth dynamical systems. We refer the reader to \cite{whatisblender,bonatti2004dynamics} for a survey on this topic and \cite{bdp2003,bd2008,l2024,np2012} for just a few of their applications. 

\vspace{3pt}

Our construction is the first instance of a blender in contact dynamics, and constitutes the first application of this fundamental tool to a longstanding problem in either symplectic or contact topology. The construction departs in several places from \cite{bonattidiaz1995} and is unrelated to the symplectic blender construction in the (conservative) Hamiltonian setting due to Nassiri-Pujals \cite{np2012}. Indeed, the work \cite{np2012} has been applied solely to the study problems from dynamics (e.g. related to Arnold diffusion) and produces robust transitivity in certain proper hyperbolic invariant sets fibering over closed symplectic manifolds, which is not useful for our purposes.

\vspace{3pt}

The contact blender construction proceeds by perturbing the time one map of an Anosov Reeb flow $\Phi$ along a period one closed orbit, to produce a new map $\Psi$ possessing a pair of hyperbolic fixed points $P$ and $Q$ of coindex one, with a heteroclinic cycle connecting them (Section \ref{sec:blender_construction}). We then carefully verify our variant of the blender axioms (Definition \ref{def:blender}) in Section \ref{sec:proof_of_blender_axioms}. 

\vspace{3pt}

Constructions in contact dynamics tend to encounter difficulties that are not present in the smooth setting. For example, it is not possible to cut off a contact vector field without introducing additional dynamics near the boundary of the cutoff, and this makes constructing dynamical plugs difficult. This issue is manifest in Honda-Huang \cite{hh2019} and Eliashberg-Pancholi \cite{ep2022}. Our construction of the contact blender is also subject to such difficulties not present in \cite{bonattidiaz1995}. Of particular note, there are additional error terms appearing in the perturbed map $\Psi$ that could ruin the blender properties in general. A key insight is that these error terms can be effectively controlled by choosing the contact Hamiltonians in our perturbation carefully with respect to a well-behaved Weinstein chart, so that the errors are tangent to a certain smoothly integrable Legendrian foliation that is uniformly contracted by $\Psi$. The existence of this foliation is guaranteed in the setting of Theorem \ref{thm:main_anosov} by the hypothesis that the Anosov Reeb flow $\Phi$ has $C^\infty$ stable and unstable foliations. This is discussed further in Remark \ref{rmk:error_terms}.


\begin{figure}[h!]
    \centering
    \includegraphics[width=.8\textwidth]{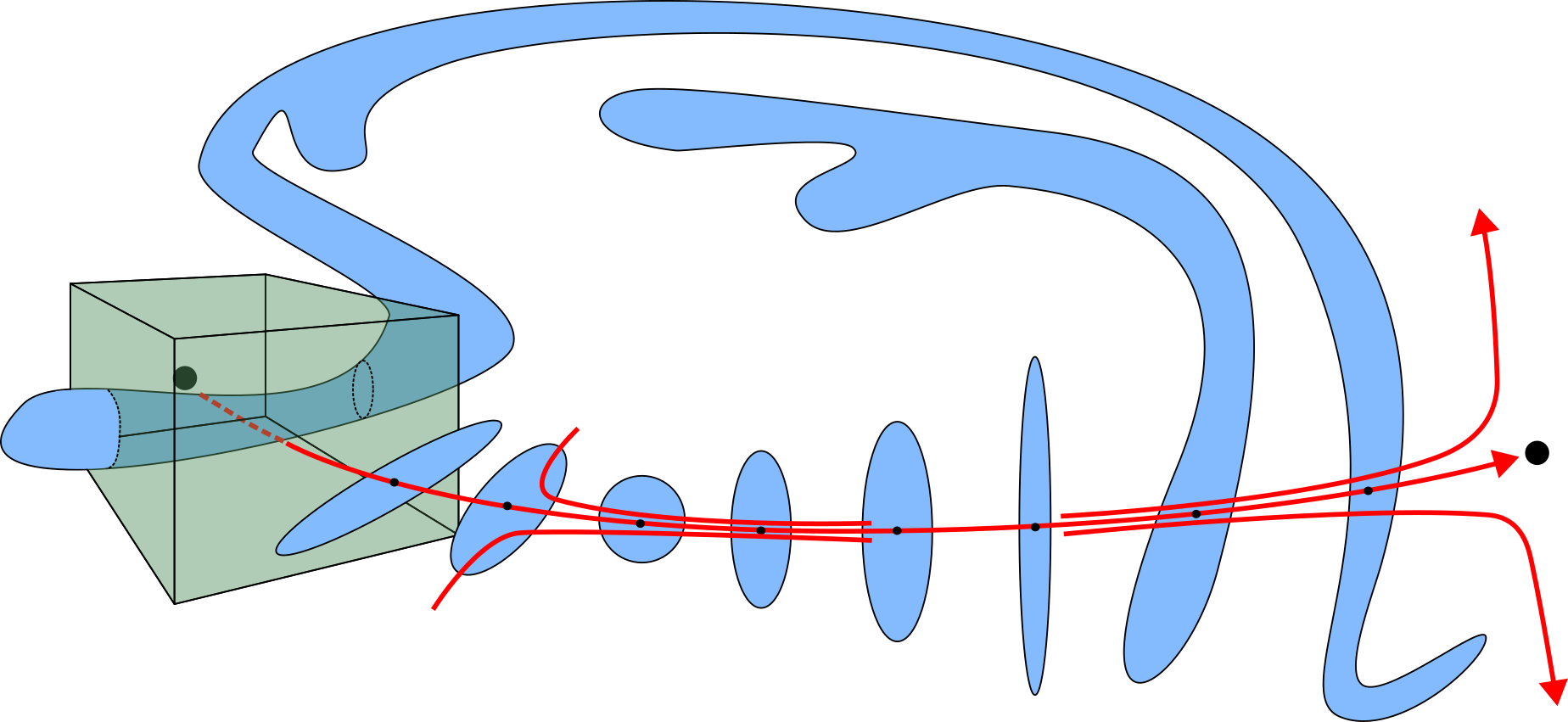}
    \caption{A cartoon of a blender, created by the interaction of stable and unstable manifolds of two fixed points of coindex one. }
    \label{fig:blender}
\end{figure}

\vspace{-5pt}

\subsection{Non-Convex Hypersurfaces} We are now ready to combine the results discussed thus far to prove the main theorem. Already, we can use Lemma \ref{lem:convex_non_transitive}, Theorems \ref{thm:main_suspension}-\ref{thm:main_anosov} and Example \ref{ex:hyperbolic_manifolds} to immediately acquire a specific case of our main theorem.

\begin{thm*} The cosphere bundle $SX$ of a closed hyperbolic manifold $X$ has a contactomorphism $\Phi:X \to X$ such that the suspension
\[\Sigma(\Phi) \subset \R \times \Sigma(\Phi)\]
cannot be $C^2$-approximated by a convex hypersurface. \end{thm*}

In order to enhance this result to acquire the more general Theorem \ref{thm:main}, we apply two difficult theorems. First, we have the following theorem of Sullivan.

\begin{thm*} \cite{s1979} \label{thm:intro_parallelizable} Every closed hyperbolic manifold $W$ has a finite cover $X$ that is stably parallelizable.
\end{thm*}

\noindent We also have the following existence theorem for Legendrian embeddings. This follows from the h-principle of Murphy \cite{m2012}, although this  case follows from the earlier h-principle of Gromov.

\begin{thm*} \cite{m2012} \label{thm:intro_legendrian} Any closed, stably parallelizable manifold $X$ has a Legendrian embedding $X \to \R^{2n+1}$.
\end{thm*}

\noindent Finally, we need the following lemma that will be proven in Section \ref{sec:contact_hamiltonian_manifolds}. Recall that a contactomorphism is \emph{positive} if it is the time $1$ map of a (possibly time-dependent) contact Hamiltonian that is positive as a smooth function.

\begin{lemma*}[Lemma \ref{lem:legendrian_embeddings}] \label{lem:intro_embedding}  Let $\Lambda \subset (Y,\xi)$ be a closed Legendrian and let $\Phi:S\Lambda \to S\Lambda$ be a positive contactomorphism of the cosphere bundle $S\Lambda$. Then there exists a contact embedding
\[(-\epsilon,\epsilon) \times \Sigma(\Phi) \to (Y,\xi) \qquad\text{for small $\epsilon$}\]
\end{lemma*}

\noindent With these preliminary results in hand, we can now proceed with the proof of the main result.

\begin{thm*}[Theorem \ref{thm:main}] There is a closed, embedded hypersurface $\Sigma \subset \R^{2n+1}$ for any $n \ge 2$ that cannot be $C^2$-approximated by convex hypersurfaces.
\end{thm*}

\begin{proof} Take a stably parallelizable closed hyperbolic manifold $X$ (via Theorem \ref{thm:intro_parallelizable} and Example \ref{ex:hyperbolic_manifolds}) with a Legendrian embedding $X \to \R^{2n+1}$ (via Theorem \ref{thm:intro_legendrian}). By Theorem \ref{thm:main_anosov}, there is a robustly mixing contactomorphism
\[\Phi:SX \to SX\]
that is $C^\infty$-close to the time $T$ Reeb flow, where $T$ is the period of a closed Reeb orbit. Since the time $T$ Reeb flow is a positive contactomorphism, $\Phi$ is also positive.
By Lemma \ref{lem:intro_embedding}, there is a contact embedding $U \to \R^{2n+1}$ of a neighborhood $U \subset \R \times \Sigma(\Phi)$ of the suspension $\Sigma(\Phi)$ in its contactization. By Theorem \ref{thm:main_suspension}, $\Sigma(\Phi)$ cannot be $C^2$-approximated by a convex surface. 
\end{proof}

\subsection{Conjectures} \label{subsec:questions} This paper establishes a new connection between contact topology and smooth (hyperbolic) dynamics. We conclude by this introduction proposing several new conjectures in convex hypersurface theory that build on this connection. We require the following definition.

\begin{definition*}[Robust Non-Convexity] A hypersurface $\Sigma$ in a contact manifold is \emph{robustly non-convex} if there is a $C^2$-neighborhood $\mathcal{U}$ in the space of embedded hypersurfaces such that
\[\Sigma' \text{ is not convex for any }\Sigma' \in \mathcal{U}\]
Similarly, a contactomorphism $\Phi:Y \to Y$ is called \emph{robustly non-convex} if the suspension $\Sigma(\Phi)$ is. \end{definition*} 

\begin{conjecture*}[Cycles] \label{conj:robust_hetero} A closed hypersurface $\Sigma \subset (Y,\xi)$ is robustly non-convex if and only if $\Sigma_\xi$ has a robust heterodimensional cycle between a positive hyperbolic orbit and a negative hyperbolic orbit.
\end{conjecture*}

\noindent Recall that a \emph{robust heterodimensional cycle} of a vector field $V$ is a cycle of heteroclinics that is robust under $C^1$-perturbations. Also recall that any periodic orbit of the characteristic foliation of a hypersurface can be classified as \emph{positive} or \emph{negative} via a divergence criterion (cf. Breen \cite{b2021}). 

\vspace{3pt}

To motivate this, recall that the existence of robust heterodimensional cycles is closely related to the theory of blenders \cite{bonatti2008robust,li2024persistence}. One may check that the contact blender constructed in this paper contains a robust cycle as in Conjecture \ref{conj:robust_hetero}. Moreover, the existence of such a cycle in the characteristic foliation implies the existence of a retrograde heteroclinic from a negative to a positive orbit, which is an obstruction to convexity (cf. Honda-Huang \cite{hh2019} and Breen \cite{b2021}).

\vspace{3pt}

Next, we propose the following improvement of Theorem \ref{thm:main}. This is a strong counterpart to the approximation theorem of Honda-Huang (Theorem \ref{thm:honda_huang}). 

\begin{conjecture*}[Approximation] \label{conj:robust_density} Every closed oriented hypersurface $\Sigma$ in a contact manifold $(Y,\xi)$ of dimensions $5$ or higher is $C^\infty$-isotopic by a $C^0$-small isotopy to a robustly non-convex hypersurface.
\end{conjecture*}

\noindent Conjecture \ref{conj:robust_density} would follow from the backwards direction of Conjecture \ref{conj:robust_hetero} along with a local construction of robust heterodimensional cycles by $C^0$-small perturbations (cf. \cite{li2024persistence} for a smooth analogue). This approach is the subject of ongoing joint investigation with M. Huang \cite{ch2025}. 

\vspace{3pt}

Finally, it is natural to ask if Theorem \ref{thm:main} is sharp. We conjecture that this is indeed the case.

\begin{conjecture*}[$C^1$-Genericity] \label{conj:C1_density} Convex hypersurfaces are $C^1$-generic in any contact manifold.
\end{conjecture*}

\noindent Resolving Conjecture \ref{conj:C1_density} will require entirely new methods from \cite{hh2019,ep2022} since hypersurfaces with gradient-like characteristic foliation are not $C^1$-dense. To further motivate Conjecture \ref{conj:C1_density}, recall that a flow is \emph{Smale} if it satisfies Axiom A and the strong transversality property. Such flows are structurally stable and cannot have heteroclinic cycles. Smale diffeomorphisms are $C^0$-dense by Sullivan-Shub \cite{{shubhomology}} and the same is believed for flows \cite{zeeman1972c0}. A more precise version of Conjecture \ref{conj:C1_density} is that hypersurfaces with Smale characteristic foliation are convex and $C^1$-dense. This conjecture is the subject of ongoing joint investigation with Eliashberg-Pancholi.

\subsection*{Outline} In Section \ref{sec:contact_hamiltonian_manifolds}, we discuss contact Hamiltonian manifolds and the suspension construction. In Section \ref{sec:partial_hyperbolicity}, we review the necessary background from the theory of partially hyperbolic diffeomorphisms and blenders. In Section 
\ref{sec:robustly_mixing_contactomorphisms}, we prove Theorem \ref{thm:main_anosov} assuming the existence of the contact blender. In Sections \ref{sec:blender_construction} and \ref{sec:proof_of_blender_axioms} we conclude with the construction of the contact blender.

\subsection*{Acknowledgements} Question \ref{qu:smooth_approximation} was discussed at the workshop Higher-Dimensional Contact Topology at the American Institute of Mathematics in April 2024. We thank the organizers Roger Casals, Yakov Eliashberg, Ko Honda, and Gordana Mati\'{c} and the AIM staff for a fruitul week. We also thank the members of the conformal symplectic structures room (M\'{e}lanie Bertelson,  Fabio Gironella, Pac\^{o}me Van Overschelde, Kevin Sackel and Lisa Traynor) for our discussion of this problem. Finally, we thank Joseph Breen and Austin Christian for input on earlier drafts, and Kai Cieliebak for pointing out an innacurate claim about the suspension of mixing diffeomorphisms being a mixing, leading to several unjustified claims in an earlier version.

\section{Contact Hamiltonian Manifolds} \label{sec:contact_hamiltonian_manifolds} 

In this brief section, we discuss the theory of contact Hamiltonian manifolds, which are also called even contact manifolds in the terminology of Bertelson-Meigniez \cite{bm2021}. 

\vspace{3pt}

This theory has satisfying parallels with the theory of (stable) Hamiltonian manifolds \cite{ce2015,w2016,c2023}, which motivates our preferred nomenclature. We freely use these two terms as synonyms. 

\subsection{Fundamentals} We start with the basic facts, which mirror the stable Hamiltonian case.

\begin{definition} \label{def:contact_hamiltonian_manifold} A \emph{contact Hamiltonian manifold} $(\Sigma,\eta)$ is a $2n$-manifold $\Sigma$ equipped with a coorientable, maximally non-integrable, plane distribution of codimension one
\[\eta \subset T\Sigma\]
Equivalently, $\eta$ is the kernel of a \emph{contact Hamiltonian form} $\nu$ with $\nu \wedge d\nu^{n-1}$ is nowhere vanishing. 
\end{definition}

\begin{example}[Product] \label{ex:product} Let $(Y,\xi)$ be a contact manifold. Then the manifolds
\[\R \times Y \qquad\text{and}\qquad \R/\Z \times Y \]
are contact Hamiltonian manifolds with distribution $\eta = \partial_t \oplus \xi$, where $t$ is the $\R$-coordinate. \end{example}

\noindent Every contact Hamiltonian manifold has a natural line distribution (or equivalently, foliation).

\begin{definition} \label{def:characteristic_foliation} The \emph{characteristic foliation} $\Sigma_\eta$ of a contact Hamiltonian manifold $(\Sigma,\eta)$ is given by
\[
\Sigma_\eta = \on{ker}(d\nu|_\eta) \subset T\Sigma \qquad\text{for any contact Hamiltonian form $\nu$ for $\eta$}
\]
A \emph{characteristic vector-field} $Z$ is a nowhere vanishing section of $\Sigma_\eta$ and a \emph{framing} form $\theta$ is a $1$-form whose restriction to $\Sigma_\eta$ is nowhere vanishing. A framing form determines a characteristic vector-field by the following equations.
\begin{equation} \label{eq:characteristic_vector-field} \nu(Z) = 0 \qquad (\iota_Zd\nu)|_\eta = 0 \quad\text{and}\quad \theta(Z) = 1\end{equation}
\end{definition}

\begin{lemma} \label{lem:characteristic_flow} Any characteristic vector-field $Z$ on a contact Hamiltonian manifold $(\Sigma,\eta)$ preserves $\eta$.
\end{lemma}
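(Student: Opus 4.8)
The plan is to prove the infinitesimal statement $\mathcal{L}_Z\nu = f\nu$ for some smooth function $f$, where $\nu$ is any contact Hamiltonian form with $\eta = \ker\nu$; this is exactly the condition guaranteeing that the (local) flow of $Z$ preserves the hyperplane field $\eta$.

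First I would record the two properties of a characteristic vector-field that follow from Definition \ref{def:characteristic_foliation}: since $Z$ is a section of $\Sigma_\eta = \ker(d\nu|_\eta) \subset \eta$, we have $\nu(Z) = 0$ (because $Z \in \eta$) and $(\iota_Zd\nu)|_\eta = 0$ (because $Z \in \ker(d\nu|_\eta)$); both are also built into \eqref{eq:characteristic_vector-field}. Then Cartan's formula gives $\mathcal{L}_Z\nu = \iota_Zd\nu + d(\iota_Z\nu) = \iota_Zd\nu$, using $\iota_Z\nu = \nu(Z) \equiv 0$. So it remains to see that the $1$-form $\iota_Zd\nu$ is a pointwise multiple of $\nu$, and this is pure linear algebra: at each point $p$ the covector $(\iota_Zd\nu)_p$ annihilates the hyperplane $\eta_p = \ker\nu_p$, the annihilator of a hyperplane in $T_p^*\Sigma$ is the line $\R\nu_p$, and hence $\iota_Zd\nu = f\nu$ for a function $f$ that is smooth because $\nu$ is nowhere vanishing.

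Finally I would translate $\mathcal{L}_Z\nu = f\nu$ back into the geometric conclusion: the local flow $\phi_t$ of $Z$ satisfies $\frac{d}{dt}\phi_t^*\nu = \phi_t^*(\mathcal{L}_Z\nu) = (f\circ\phi_t)\,\phi_t^*\nu$, so $\phi_t^*\nu = g_t\,\nu$ with $g_t = \exp\!\big(\int_0^t f\circ\phi_s\,ds\big) > 0$; hence $\phi_t$ preserves $\ker\nu = \eta$. I do not expect a genuine obstacle here: the only points requiring care are lining up Definition \ref{def:characteristic_foliation} so that both $\nu(Z)=0$ and $(\iota_Zd\nu)|_\eta = 0$ are available, and the elementary observation that a $1$-form annihilating a hyperplane is automatically proportional to any defining form of that hyperplane. (One could also phrase the argument intrinsically in terms of $\mathcal{L}_Z$ acting on the distribution $\eta$ without ever choosing $\nu$, but introducing a contact Hamiltonian form is the cleanest route.)
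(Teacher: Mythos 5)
Your proposal is correct and follows essentially the same route as the paper: both use Cartan's formula together with $\nu(Z)=0$ and $(\iota_Z d\nu)|_\eta = 0$ to deduce $\mathcal{L}_Z\nu = \iota_Z d\nu = f\nu$, and then integrate to get $\Phi_t^*\nu = g_t\nu$. The paper's proof is more terse, but the underlying argument is identical.
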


\begin{proof} Let $\Phi$ be the flow of $Z$. Note that $\iota_Z d\nu = f\nu$ for some smooth $f$ since $(\iota_Zd\nu)|_\eta = 0$. Thus
\[
\mathcal{L}_Z\nu = d(\iota_Z\nu) + \iota_Zd\nu = f \nu \qquad\text{and therefore}\qquad \Phi_t^*\nu = g_t \cdot \nu\qedhere
\]\end{proof}

\begin{definition}[Hamiltonian Vector-fields] \label{def:hamiltonian_vector-fields} The \emph{Reeb vector-field} $R$ of a contact Hamiltonian manifold $(\Sigma,\eta)$ with contact Hamiltonian form $\nu$ and framing $\theta$ is the unique vector-field satisfying
\[
\theta(R) = 0 \qquad \nu(R) = 1 \quad\text{and}\quad (\iota_R d\nu)|_\xi = 0 \quad \text{where}\quad \xi = \on{ker}(\theta)
\]
The \emph{Hamiltonian vector-field} $V_H$ of a function $H:\Sigma \to \R$ is the unique vector-field satisfying
\[
\theta(V_H) = 0 \qquad \nu(V_H) = H  \quad\text{and}\quad (\iota_{V_H}d\nu - dH(R) \cdot \nu + dH)|_\xi = 0
\]
Note that the last condition is equivalent to $\mathcal{L}_{V_H}\nu = dH(R) \cdot \nu + g \cdot \theta$ given that $H = \nu(V_H)$. \end{definition}

\subsection{Contact Hamiltonian Hypersurfaces} A natural source of contact Hamiltonian hypersurfaces are (special) hypersurfaces in contact manifolds.

\begin{definition} \label{ex:hypersurfaces} A hypersurface $\Sigma$ in a contact manifold $(Y,\xi)$ is called \emph{contact Hamiltonian} if
\[\xi \text{ is transverse to }T\Sigma\]
A \emph{framing vector-field} $U$ is a vector-field in a neighborhood of $\Sigma$ such that
\[\text{$U$ is transverse to }\Sigma \qquad\text{and}\qquad \text{$U$ is tangent to $\xi$} \]\end{definition}

\begin{lemma} Let $\Sigma$ be a  contact Hamiltonian hypersurface in $(Y,\xi)$ with framing vector-field $U$. Then
\[(\Sigma,\xi \cap T\Sigma) \quad \text{is contact Hamiltonian with framing form}\quad  \theta = \iota_Ud\alpha\]
\end{lemma}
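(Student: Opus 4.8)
The plan is to verify that $\eta := \xi \cap T\Sigma$ is maximally non-integrable and that $\theta := \iota_U d\alpha$ restricts to a nowhere-vanishing form on the resulting characteristic foliation, directly from the definitions in Section~\ref{sec:contact_hamiltonian_manifolds}. First I would choose a contact form $\alpha$ for $\xi$ near $\Sigma$ and set $\nu := \iota^*\alpha$, where $\iota:\Sigma \hookrightarrow Y$ is the inclusion. Since $\Sigma$ is a contact Hamiltonian hypersurface, $\xi$ is transverse to $T\Sigma$, so the Reeb vector-field $R_\alpha$ of $\alpha$ has a nonzero component normal to $\Sigma$; I would use this together with the nondegeneracy of $d\alpha|_\xi$ to show $\nu \wedge (d\nu)^{n-1}$ is nowhere vanishing on $\Sigma$, i.e. $\eta = \ker \nu$ is a contact Hamiltonian structure in the sense of Definition~\ref{def:contact_hamiltonian_manifold}. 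The cleanest way to see this: at a point $p \in \Sigma$, the line $T\Sigma \cap \ker\alpha^\perp$-type considerations let us split $T_pY = \R U \oplus T_p\Sigma$ and $T_p\Sigma = \R w \oplus \eta_p$ with $w$ transverse to $\eta_p$ inside $T_p\Sigma$; restricting the volume form $\alpha \wedge (d\alpha)^n$ to $U$ and computing gives a nonzero multiple of $\nu \wedge (d\nu)^{n-1}$ evaluated suitably, using that $U$ is tangent to $\xi$ and $d\alpha|_\xi$ is symplectic.

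Next I would identify the characteristic foliation $\Sigma_\eta = \ker(d\nu|_\eta)$. On the contact manifold side, $\Sigma_\xi = (T\Sigma \cap \xi)^\omega \subset T\Sigma$ where $\omega = d\alpha|_\xi$; since $\eta = T\Sigma \cap \xi$ has corank one inside $\xi$ (as $\Sigma$ has codimension one and $\xi$ is transverse to $T\Sigma$), $\eta^\omega$ is a line, and it lies in $T\Sigma$ precisely because $\eta^\omega \perp_\omega \eta$ and... more carefully, I would check $\Sigma_\eta = \ker(d\nu|_\eta)$ agrees with the line $\eta^\omega$: a vector $Z \in \eta$ satisfies $d\nu(Z,\cdot)|_\eta = d\alpha(Z,\cdot)|_\eta = 0$ iff $Z \in \eta^{\omega}$, and since $\omega$ is symplectic on $\xi$ and $\eta$ has even codimension... actually $\eta$ has codimension $2$ in $\xi$ (dimension $2n-2$ vs $2n$), which is coisotropic-or-symplectic depending, so I would instead argue $\ker(d\nu|_\eta)$ is the radical of the restriction of $\omega$ to $\eta$, which is one-dimensional by the count $\nu \wedge (d\nu)^{n-1} \neq 0 \Leftrightarrow d\nu|_\eta$ has rank $2n-2$.

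With $\Sigma_\eta$ in hand, the final and decisive step is to show $\theta = \iota_U d\alpha$ restricts to a nowhere-vanishing $1$-form on $\Sigma_\eta$. Let $Z$ be a nonzero characteristic vector at $p$, so $Z \in \eta_p = \xi_p \cap T_p\Sigma$ and $d\alpha(Z, v) = 0$ for all $v \in \eta_p$. I would argue that $\theta(Z) = d\alpha(U, Z) \neq 0$ by contradiction: if $d\alpha(U,Z) = 0$, then combined with $d\alpha(Z,\cdot)|_{\eta} = 0$ and the fact that $T_p\Sigma = \R w \oplus \eta_p$ while $\xi_p = \R U' \oplus \eta_p$ for some $U' \in \xi_p$ transverse to $T_p\Sigma$ (using transversality) — hmm, the relation between $U$ and $U'$: $U$ is in $\xi$ and transverse to $\Sigma$, so $U$ itself plays the role of $U'$ up to $\eta_p$. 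Then $Z$ would be $d\alpha$-orthogonal to $\R U \oplus \eta_p = \xi_p$, forcing $Z$ in the radical of $d\alpha|_{\xi_p}$, which is zero since $\omega$ is symplectic — contradicting $Z \neq 0$. Hence $\theta(Z) \neq 0$, so $\theta$ is a framing form in the sense of Definition~\ref{def:characteristic_foliation}. I expect the main obstacle to be bookkeeping the linear algebra cleanly — keeping straight the chain of subspaces $\eta_p \subset \xi_p$, $\eta_p \subset T_p\Sigma$, and the symplectic orthogonal complements — and being careful that all the genericity/nondegeneracy facts (rank of $d\nu|_\eta$, nonvanishing of $\nu \wedge (d\nu)^{n-1}$) are deduced from, rather than assumed alongside, the transversality hypothesis; once the pointwise picture is set up, every claim is a one-line consequence of $d\alpha|_\xi$ being symplectic.
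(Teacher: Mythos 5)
Your proposal follows essentially the same route as the paper: set $\nu = \alpha|_{T\Sigma}$, use symplectic linear algebra on $\xi$ together with transversality to see that $d\nu|_\eta$ has a one-dimensional radical (so $\nu$ is a contact Hamiltonian form), and then argue that $d\alpha(U,Z) \neq 0$ for a characteristic $Z$ because otherwise $Z$ would lie in the radical of $d\alpha|_\xi = \omega$, contradicting nondegeneracy. One small slip: you write that $\eta = \xi \cap T\Sigma$ has codimension two in $\xi$ (dimension $2n-2$); in fact $\dim T_pY = 2n+1$, $\dim\xi = \dim T\Sigma = 2n$, and transversality gives $\dim\eta = 2n-1$, so $\eta$ is corank one in $\xi$ (as you initially said) — your corrected radical count $\nu \wedge (d\nu)^{n-1} \neq 0 \Leftrightarrow \operatorname{rank}(d\nu|_\eta) = 2n-2$ then yields a one-dimensional kernel as intended, so the error doesn't propagate.
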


\begin{proof} Fix a contact form $\alpha$ on $Y$. Then $d\alpha$ has a $1$-dimensional kernel on $\on{ker}(\alpha) \cap T\Sigma$ by standard symplectic linear algebra. It follows that $\nu = \alpha|_{T\Sigma}$ is a contact Hamiltonian form. Similarly, since $d\alpha$ is non-degenerate on $\xi$, we must have
\[d\alpha(U,Z) \neq 0 \text{ for any non-vanishing }Z \in \Sigma_\eta \qedhere\]
\end{proof}

\noindent Every contact Hamiltonian manifold arises as a hypersurface in its own contactization.

\begin{definition} The \emph{contactization} $(C\Sigma,\alpha)$ of a closed contact Hamiltonian manifold $(\Sigma,\eta)$ with contact Hamiltonian form $\nu$ and framing form $\theta$ is given by
\[
C\Sigma = (-\epsilon,\epsilon)_s \times \Sigma \qquad\text{with contact form }\alpha = s\theta + \nu
\]
\end{definition}

\begin{lemma}The contactization $(C\Sigma,\alpha)$ is a contact manifold for $\epsilon$ small, and $\Sigma$ naturally embeds as a contact Hamiltonian hypersurface
\[\Sigma = 0 \times \Sigma \subset C\Sigma \qquad\text{with}\qquad \nu = \alpha|_\Sigma\]\end{lemma}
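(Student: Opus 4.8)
The plan is to verify the two claims directly: that $\alpha = s\theta + \nu$ is contact on a thin enough slab, and that the slice $\{0\}\times\Sigma$ is a contact Hamiltonian hypersurface. Throughout, $\theta$ and $\nu$ (and $d\theta$, $d\nu$) on $C\Sigma$ are understood as pullbacks along the projection $\pi\colon C\Sigma\to\Sigma$. Since $\Sigma$ is closed, it is enough to check that $\alpha\wedge(d\alpha)^n$ is nowhere zero along $\{0\}\times\Sigma$ and then shrink $\epsilon$; the identity $\nu = \alpha|_\Sigma$ is immediate, since $\pi\circ\iota = \on{id}$ for the inclusion $\iota\colon\{0\}\times\Sigma\hookrightarrow C\Sigma$ and $\iota^*s = 0$.

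First I would compute $d\alpha = ds\wedge\theta + s\,d\theta + d\nu$, so that at $s=0$ one has $d\alpha = ds\wedge\theta + d\nu$. Because $(ds\wedge\theta)^2 = 0$, expanding the $n$-th power leaves only two terms,
\[
(d\alpha)^n\big|_{s=0} = (d\nu)^n + n\,(ds\wedge\theta)\wedge(d\nu)^{n-1},
\]
and wedging with $\alpha|_{s=0} = \nu$ kills the first summand because $\nu\wedge(d\nu)^n = \pi^*\!\big(\nu\wedge(d\nu)^n\big)$ is the pullback of a $(2n+1)$-form on the $2n$-manifold $\Sigma$. Hence, up to a nonzero scalar,
\[
\alpha\wedge(d\alpha)^n\big|_{s=0} = ds\wedge\theta\wedge\nu\wedge(d\nu)^{n-1},
\]
with the last three factors pulled back from $\Sigma$. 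The crux is then to see that $\theta\wedge\nu\wedge(d\nu)^{n-1}$ is a volume form on $\Sigma$: the $(2n-1)$-form $\nu\wedge(d\nu)^{n-1}$ is nowhere zero (it defines the contact Hamiltonian structure), so at each point its kernel is exactly one-dimensional, and a short contraction computation using $\nu(Z) = 0$ together with $\iota_Z d\nu = f\nu$ — which holds since $(\iota_Z d\nu)|_\eta = 0$ — shows that this kernel is spanned by the characteristic vector field $Z$ (Definition~\ref{def:characteristic_foliation}), i.e. it is $\Sigma_\eta$. Consequently $\theta\wedge\nu\wedge(d\nu)^{n-1}$ vanishes at a point precisely when $\theta(Z)$ does, which never happens because $\theta$ is a framing form. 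Pulling back and wedging with $ds$ thus produces a nowhere-zero top form on $C\Sigma$ along $\{0\}\times\Sigma$; compactness of $\Sigma$ then furnishes an $\epsilon > 0$ for which $\alpha$ is contact on all of $(-\epsilon,\epsilon)\times\Sigma$.

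Finally, for the hypersurface claim: $\alpha|_\Sigma = \nu$ is nowhere vanishing (again because $\nu\wedge(d\nu)^{n-1}\neq 0$ forces $\nu\neq 0$ pointwise), so $T\Sigma$ is not contained in $\ker\alpha = \xi$, and since $\dim T\Sigma = \dim\xi = 2n$ inside the $(2n+1)$-dimensional $C\Sigma$ this is equivalent to $\xi$ being transverse to $\Sigma$; thus $\{0\}\times\Sigma$ is a contact Hamiltonian hypersurface with induced Hamiltonian form $\nu$. As a bonus one checks that $\partial_s$ lies in $\xi$ (since $\alpha(\partial_s) = 0$) and is transverse to $\Sigma$, so it is a framing vector field, and $\iota_{\partial_s}d\alpha = \theta$, so the embedding also recovers the framing form $\theta$. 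I do not anticipate a genuine obstacle here; the only step requiring care is identifying the kernel of $\nu\wedge(d\nu)^{n-1}$ with $\Sigma_\eta$, which is exactly where the framing hypothesis $\theta|_{\Sigma_\eta}\neq 0$ enters to guarantee nondegeneracy of the candidate volume form.
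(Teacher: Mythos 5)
Your proof is correct and takes essentially the same approach as the paper: both identify $\on{span}(Z) = \Sigma_\eta$ as the kernel of $\nu \wedge d\nu^{n-1}$ and use the framing condition $\theta(Z)\neq 0$ to conclude that $ds \wedge \theta \wedge \nu \wedge d\nu^{n-1}$ is a volume form near $s = 0$. Your explicit expansion of $\alpha \wedge (d\alpha)^n$ at $s=0$ and the closing transversality check are a bit more detailed than the paper's, but the underlying argument is identical.
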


\begin{proof} Note that $\nu \wedge d\nu^{n-1}$ is nowhere vanishing and the characteristic vector-field $Z$ of $\theta$ is a nowhere vanishing vector-field that satisfies
\[\iota_Z(\nu \wedge d\nu^{n-1}) = 0\]
A $1$-form $\mu$ on $\Sigma$ thus satisfies $\mu \wedge \nu \wedge d\nu^{n-1} \neq 0$ if and only if $\mu(Z) \neq 0$ everywhere. Therefore
\[\mu \wedge \nu \wedge d\nu^{n-1} \qquad\text{and}\qquad \nu \wedge ds \wedge \theta  \wedge d\nu^{n-1}\]
are volume forms on $\Sigma$ and $C\Sigma$ respectively. The second volume form above agrees with $\alpha \wedge d\alpha^n$ along $0 \times \Sigma$, so there is a neighborhood of $0 \times \Sigma$ where $\alpha \wedge d\alpha^n$ is a volume form.
\end{proof}

\noindent There is a natural way to deform a contact Hamiltonian manifold as a graph in its own contactization (c.f. \cite{ct2023} for a stable Hamiltonian analogue).

\begin{definition}[Deformation] \label{def:deformation} The \emph{deformation} $(\Sigma,\eta_H)$ of the contact Hamiltonian manifold $(\Sigma,\eta)$ by the Hamiltonian $H:\Sigma \to (-\epsilon,\epsilon)$ is given by
\[\eta_H = \on{ker}(\nu_H) \qquad\text{where}\qquad \nu_H = H \cdot \theta + \nu\]
This is precisely the pullback of the induced contact Hamiltonian structure on the graph
\[
\on{Gr} H \subset C\Sigma \qquad\text{given by}\qquad \on{Gr} H = \big\{(H(x),x) \; : \; x \in  \Sigma\big\} 
\]\end{definition}

Finally, we note that the contactization provides a local model for the neighborhood of any contact Hamiltonian hypersurface. Specifically, we have the following (strict) standard neighborhood lemma. 

\begin{lemma}[Collar Neighborhood] \label{lem:collar_neighborhood} Let $\Sigma$ be a contact Hamiltonian hypersurface in a contact manifold $(Y,\xi)$. Fix a contact form $\alpha$ on $Y$ and a framing vector-field $U$ of $\Sigma$ such that
\[
\iota_Ud\alpha \text{ is closed}
\]
Then the flow by $U$ yields a strict contact embedding
\[
\iota:(-\epsilon,\epsilon) \times \Sigma \to Y \qquad\text{with}\qquad \iota^*\alpha = s \cdot \theta + \nu \quad \text{where} \quad\text{$\nu = \alpha|_\Sigma$ and $\theta = \iota_Ud\alpha|_\Sigma$}
\]
\end{lemma}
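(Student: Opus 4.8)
The plan is to use the flow of the framing vector-field $U$ to build the embedding $\iota$ and then check directly that $\iota^*\alpha$ has the asserted normal form. First I would note that since $\Sigma$ is a (compact, or at least embedded) contact Hamiltonian hypersurface and $U$ is transverse to $\Sigma$, the flow $\phi^U_s$ of $U$ is defined for $s$ in some interval $(-\epsilon,\epsilon)$ on a neighborhood of $\Sigma$, and the map $\iota(s,x) = \phi^U_s(x)$ is a diffeomorphism onto a neighborhood of $\Sigma$ in $Y$ for $\epsilon$ small (standard, via the inverse/implicit function theorem applied to the transversality of $U$). Under this identification $\partial_s$ corresponds to $U$, and $0 \times \Sigma$ is $\Sigma$ itself. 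Write $\beta = \iota^*\alpha$; I want to show $\beta = s\theta + \nu$ where $\nu = \alpha|_\Sigma$ and $\theta = (\iota_U d\alpha)|_\Sigma$, both pulled back to $(-\epsilon,\epsilon)\times\Sigma$ via the projection to $\Sigma$ (so they are $s$-independent forms).

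The key computation is to control the $s$-dependence of $\beta$ via the Lie derivative along $\partial_s = U$. We have $\mathcal{L}_U \alpha = d(\iota_U\alpha) + \iota_U d\alpha$. Since $U$ is tangent to $\xi = \ker\alpha$, we have $\iota_U\alpha = 0$, so $\mathcal{L}_U\alpha = \iota_U d\alpha$. Pulling back, $\partial_s \beta = \iota^*(\iota_U d\alpha)$, i.e. the $s$-derivative of $\beta$ equals the pullback of the $1$-form $\iota_U d\alpha$. By the hypothesis that $\iota_U d\alpha$ is closed, and using Cartan's formula once more, $\mathcal{L}_U(\iota_U d\alpha) = d(\iota_U\iota_U d\alpha) + \iota_U d(\iota_U d\alpha) = 0 + 0 = 0$; hence $\iota_U d\alpha$ is invariant under the flow of $U$, so $\iota^*(\iota_U d\alpha)$ is an $s$-independent $1$-form on $(-\epsilon,\epsilon)\times\Sigma$. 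Therefore $\partial_s \beta$ is constant in $s$, and integrating gives
\[
\beta = \beta|_{s=0} + s \cdot \iota^*(\iota_U d\alpha)\big|_{s=0}.
\]
Now $\beta|_{s=0} = \alpha|_\Sigma = \nu$, and $\iota^*(\iota_U d\alpha)|_{s=0} = (\iota_U d\alpha)|_\Sigma$; since $U = \partial_s$ has no $\Sigma$-component, on $T\Sigma$ this equals $\theta$ while on $\partial_s$ it vanishes (as $\iota_U\iota_U d\alpha = 0$), so it is exactly the pullback of $\theta$. This gives $\beta = s\theta + \nu$. Finally, since both $\alpha$ and $s\theta+\nu$ are contact forms near $0\times\Sigma$ (the latter by the contactization lemma) and they agree there, the embedding is strict contact on a possibly smaller $\epsilon$.

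The only genuine subtlety — the ``main obstacle'' — is the role of the hypothesis that $\iota_U d\alpha$ is closed: without it, $\partial_s\beta$ would still equal $\iota^*(\iota_U d\alpha)$ but this form would vary with $s$, and one would only get $\beta = \nu + \int_0^s (\cdots)$ with an $s$-dependent, generally non-linear correction term — yielding a neighborhood theorem but not the \emph{strict} normal form $s\theta+\nu$. So the entire content of the lemma is that this one closedness condition is precisely what linearizes the $s$-dependence; everything else is bookkeeping with Cartan's formula and the transversality of $U$. I would also remark (or leave to the reader) that such a $U$ always exists locally — one can always choose the framing vector-field so that $\iota_U d\alpha$ is closed, e.g.\ by the standard argument producing $s\theta+\nu$ coordinates — which is what makes the hypothesis harmless in applications such as Lemma \ref{lem:intro_embedding}.
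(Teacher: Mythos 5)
Your proof is correct and follows essentially the same route as the paper: flow by $U$ to produce the tubular neighborhood, apply Cartan's formula to get $\mathcal{L}_U\alpha = \iota_U d\alpha$ and (using the closedness hypothesis) $\mathcal{L}_U(\iota_U d\alpha) = 0$, and then integrate the resulting linear ODE in $s$. The only difference is cosmetic ordering—you first write down the ODE $\partial_s\beta = \iota^*(\iota_U d\alpha)$ and then show its right-hand side is $s$-independent, whereas the paper establishes the $s$-independence of $\theta$ first.
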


\begin{proof} First note that we have the following calculation.
\[\mathcal{L}_U(\iota_Ud\alpha) = d(d\alpha(U,U)) + d(\iota_Ud\alpha) = 0\]
Now let $\iota:(-\epsilon,\epsilon)_s \times \Sigma \to Y$ be the tubular neighborhood coordinates of $\Sigma$ induced by $U$. Then the previous calculation and the fact that $\iota_U\theta = 0$ shows that the $1$-form
\[
\theta = \iota^*(\iota_Ud\alpha) = \iota_{\partial_s}d(\iota^*\alpha) \qquad\text{satisfies}\qquad \mathcal{L}_{\partial_s}\theta = 0 \text{ and }\theta(\partial_s) = 0
\]
Thus $\theta$ is the pullback of a differential form on $\Sigma$ to $(-\epsilon,\epsilon) \times \Sigma$. Moreover, we see that
\[
\mathcal{L}_U\alpha = d(\iota_U\alpha) + \iota_U d\alpha = \iota_Ud\alpha \qquad\text{and thus}\qquad \mathcal{L}_{\partial_s}\iota^*\alpha = \theta
\]
It follows that $\iota^*\alpha$ and $s \theta + \nu$ satisfy the same ODE and have the same restriction to $0 \times \Sigma$. Therefore they are equal on the given tubular neighborhood. \end{proof}

\subsection{Suspensions} The key examples of contact Hamiltonian manifolds for the purposes of this paper are suspensions of contactomorphisms (or synonymously, mapping tori). This is analogous to the mapping torus construction of stable Hamiltonian manifolds \cite[\S 2.1]{ce2015}.

\vspace{3pt}

Fix a contact manifold $(Y,\xi)$ with a contactomorphism $\Phi$ of $Y$. Recall that the suspension $\Sigma(\Phi)$ of $\Phi$ is the quotient of $\R \times Y$ by the map
\[\bar{\Phi}:\R \times Y \to \R \times Y \qquad\text{given by}\qquad \bar{\Phi}(t,y) = (t-1,\Phi(y))\]
Since $\xi$ is preserved by $\Phi$, the contact Hamiltonian structure $\on{span}(\partial_t) \oplus \xi$ on the product $\R \times Y$ (see Example \ref{ex:product}) is $\bar{\Phi}$-invariant. It descends to a contact Hamiltonian structure $\eta$ on the suspension. 

\begin{definition}[Contact Suspension] The \emph{contact suspension} of a contactomorphism $\Phi:(Y,\xi) \to (Y,\xi)$ is the contact Hamiltonian manifold given by
\[(\Sigma(\Phi),\eta)\]
The characteristic foliation and a natural framing form are given by the coordinate vector-field and covector-field in the $t$-direction.
\[\Sigma(\Phi)_\eta = \on{span}(\partial_t) \qquad\text{and}\qquad \theta = dt\]
\end{definition}

\begin{remark} In this case, the contact structure on the contactization extends to all of $\R \times \Sigma(\Phi)$, and we will refer to this latter space as the contactization.
\end{remark}

The most important result of this section is the following lemma, which relates graph-like perturbations of the suspension hypersurface to perturbations of the underlying contactomorphism.  

\begin{lemma}[Hamiltonian Perturbation] \label{lem:hamiltonian_perturbation} Let $H:\Sigma(\Phi) \to \R$ be a smooth function on the suspension of $\Phi:(Y,\xi) \to (Y,\xi)$ such that $\theta = dt$ frames $\eta_H$. Then there exists a contactomorphism
\[
\Phi^H:(Y,\xi) \to (Y,\xi) \qquad\text{with an isomorphism}\qquad \Psi^H:(\Sigma(\Phi^H),\eta) \to (\Sigma(\Phi),\eta_H)
\]
such that $\on{dist}_{C^1}(\Phi^H,\Phi) \le C \cdot \|H\|_{C^2}$ for any Riemannian metric $g$ and a constant $C = C(g)$. 
\end{lemma}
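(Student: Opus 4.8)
The plan is to describe $\Sigma(\Phi)$ and its deformation $\eta_H$ as graphs in the common contactization $\R \times \Sigma(\Phi)$, and to read off $\Phi^H$ as the return map of the characteristic foliation of the deformed structure. Recall that $\Sigma(\Phi) = \R_t \times Y / \bar\Phi$ with framing $\theta = dt$, contact Hamiltonian form $\nu$ descending from $\operatorname{span}(\partial_t) \oplus \xi$ on $\R \times Y$, and $\nu_H = H\,\theta + \nu = H\,dt + \nu$. Lifting everything to the cover $\R_t \times Y$, I would first fix a contact form $\alpha$ for $\xi$ on $Y$ so that $\nu = \alpha$ (independent of $t$) on $\R \times Y$. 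The hypothesis that $\theta = dt$ frames $\eta_H$ means exactly that the characteristic vector-field $Z_H$ of $\eta_H$ can be normalized by $dt(Z_H) = 1$, so $Z_H = \partial_t + W_H$ where $W_H$ is a $t$-dependent vector-field on $Y$ tangent to $\xi$ determined by $\iota_{Z_H} d\nu_H|_{\eta_H} = 0$. By Lemma~\ref{lem:characteristic_flow}, the flow of $Z_H$ preserves $\eta_H$, hence (working in the cover) preserves $\xi$ on each slice up to conformal factor; concretely the flow of $W_H$ on $Y$ through $t \in [0,1]$ is a path of contactomorphisms.

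The second step is to define $\Phi^H$ as the time-$1$ map of this characteristic flow followed by the identification $\bar\Phi$: that is, flow from $\{t=0\}$ to $\{t=1\}$ along $Z_H$ to get a contactomorphism $\psi_H : Y \to Y$ of the slices of the cover, and set $\Phi^H = \Phi \circ \psi_H$ (or $\psi_H \circ \Phi$, with the ordering dictated by the conventions of $\bar\Phi$). Then the map $\R \times Y \to \R \times Y$ sending a point to its $Z_H$-trajectory parametrized so that the $t$-coordinate advances linearly intertwines the standard suspension structure $\eta$ of $\Sigma(\Phi^H)$ with $\eta_H$ on $\Sigma(\Phi)$, because it sends $\partial_t$ (the characteristic direction of $\eta$) to $Z_H$ (the characteristic direction of $\eta_H$) and is built from contactomorphisms of the $Y$-slices. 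This descends to the desired isomorphism $\Psi^H : (\Sigma(\Phi^H), \eta) \to (\Sigma(\Phi), \eta_H)$. When $H \equiv 0$ we have $W_H = 0$, $\psi_H = \operatorname{id}$, and $\Phi^H = \Phi$, $\Psi^H = \operatorname{id}$, which anchors the estimate.

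The third step is the quantitative bound $\operatorname{dist}_{C^1}(\Phi^H, \Phi) \le C\|H\|_{C^2}$. Here I would argue: $W_H$ is obtained from $d\nu_H = d\nu + dH \wedge dt + H\,d(dt) = d\nu + dH\wedge dt$ by a pointwise algebraic (linear-algebra) operation — solving the linear system $(\iota_{\partial_t + W_H} d\nu_H)|_{\eta_H} = 0$ with $dt(W_H)=0$ — whose coefficients depend on $d\nu$ (fixed) and on $dH$ (first derivatives of $H$), and which is invertible for $\|H\|_{C^1}$ small by the non-degeneracy from Definition~\ref{def:contact_hamiltonian_manifold}. Hence $\|W_H\|_{C^0} \le C\|H\|_{C^1} \le C\|H\|_{C^2}$ and, differentiating the linear system once, $\|W_H\|_{C^1} \le C\|H\|_{C^2}$. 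Since $\psi_H$ is the time-$1$ flow of the (time-dependent) vector-field $W_H$, Gronwall-type estimates for flows give $\operatorname{dist}_{C^0}(\psi_H, \operatorname{id}) \le C\|W_H\|_{C^0}$ and, differentiating the flow equation, $\operatorname{dist}_{C^1}(\psi_H,\operatorname{id}) \le C\|W_H\|_{C^1} \le C\|H\|_{C^2}$ (with $C$ depending on $g$ and on a uniform bound, valid for $H$ small, on the flow and its derivative). Composing with the fixed diffeomorphism $\Phi$ only changes the constant, giving $\operatorname{dist}_{C^1}(\Phi^H,\Phi) \le C\|H\|_{C^2}$.

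The main obstacle I anticipate is not any single step in isolation but bookkeeping the two layers of this construction correctly: (i) checking that $\psi_H$ (and hence $\Phi^H$) is genuinely a contactomorphism of $(Y,\xi)$ — this is where Lemma~\ref{lem:characteristic_flow} is used, but one must confirm the conformal factors match up across the $t=1$ gluing so that $\Phi^H$ preserves $\xi$ on the nose — and (ii) verifying that $\Psi^H$ descends to the quotient, i.e. is equivariant for the two suspension identifications, which forces the precise definition $\Phi^H = \Phi \circ \psi_H$ rather than the reverse. The estimate itself is routine Gronwall once the dependence $H \mapsto W_H$ is seen to be $C^1 \to C^1$ bounded linearly, and the slightly surprising appearance of $\|H\|_{C^2}$ (rather than $\|H\|_{C^1}$) is exactly because controlling the $C^1$-distance of the flow requires controlling the $C^1$-norm of its generating field $W_H$, which involves two derivatives of $H$.
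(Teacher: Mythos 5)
Your proposal follows the paper's proof essentially verbatim: define $Z_H$ as the characteristic vector-field of $\eta_H$ for the framing $dt$, take $\Phi^H$ to be the time-$1$ map of its flow lifted to the cover $\R \times Y$, invoke Lemma~\ref{lem:characteristic_flow} to see the flow preserves the even-contact structure, and estimate via the pointwise linear dependence of $Z_H$ on $(H,dH)$. One small slip --- $W_H$ is not tangent to $\xi$, since $\nu_H(Z_H)=0$ forces $\alpha(W_H)=-H$ rather than $0$ --- is harmless because the flow still preserves the slices $\{t\}\times Y$ and acts on them by contactomorphisms, and your Gronwall step is in fact more explicit than the paper's one-line conclusion and correctly identifies why $\|H\|_{C^2}$ (not $\|H\|_{C^1}$) is the right exponent.
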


\begin{proof} Let $Z_H$ denote the characteristic vector-field of $\eta_H$ with respect to the framing form $dt$. The characteristic flow $\Psi^H$ of $Z_H$ satisfies $\Psi^H_t(0 \times Y) = t \times Y$ since $dt(Z_H) = 1$, and $\Psi^H$ preserves $\eta_H$ by Lemma \ref{lem:characteristic_flow}. Finally, note that $\nu_H = H dt + \nu$ restricts to $\alpha$ on $0 \times Y$ and thus $\eta_H \cap (0 \times Y)$ is $\xi$. By restriction to $\R \times Y$ where $Y$ is identified with $0 \times Y$ in $\Sigma(\Phi)$, we get a map
\[
\Psi^H:\R \times Y \to \Sigma(\Phi) \qquad\text{with}\qquad (\Psi^H)^*dt = dt \text{ and } (\Psi^H)^*\eta_H = \on{span}(\partial_s) \oplus \xi
\]
We now define $\Phi^H$ to be the time 1 map $\Psi^H_1$ of the flow. Then the map $\Psi^H$ satisfies
\[
\Psi^H(s,x) = \Psi^H(s-1,\Phi^H(x))
\]
In particular, $\Psi^H$ descends to a map $\Phi^H:\Sigma(\Phi^H) \to \Sigma(\Phi)$ with $(\Phi^H)^*\eta_H = \eta$. Finally, note that by Definition \ref{def:characteristic_foliation}, $Z_H$ is defined by the formulas
\[
\theta(Z_H) = 1 \qquad \iota_{Z_H}(H \cdot \theta + \nu) = 0 \qquad \iota_{Z_H}(dH \wedge \theta + d\nu)  = 0
\]
It follows that there is a smooth linear bundle map
\[
T:\Lambda^0(\Sigma(\Phi)) \oplus \Lambda^1(\Sigma(\Phi)) \oplus \Lambda^2(\Sigma(\Phi)) \to T\Sigma(\Phi) \qquad\text{such that}\qquad Z_H = T(H,dH,\nu,d\nu)
\]
In particular, for any choice of metric on $\Sigma(\Phi)$, there is a constant $C > 0$ and an estimate
\[
\|Z_H - Z\|_{C^1} \le C \cdot \|H\|_{C^2}
\]
The same estimate holds for the flow and the time-1 maps. \end{proof}

\begin{example}[Mapping Torus Of Identity] \label{ex:deforming_mapping_torus} Let $(Y,\xi)$ be a contact manifold with contact form $\alpha$ and consider the suspension of the identity
\[\Sigma(\on{Id}_Y) = (\R/\Z)_t \times Y \qquad\text{with contact Hamiltonian form }\nu = \alpha\]
Fix a Hamiltonian $H:\R/\Z \times Y \to \R$ and let $V_H:\R/\Z \times Y \to TY$ be the contact vector-field of $H$. Consider the deformation
\[
(\R/\Z \times Y,\nu_{-H}) \qquad\text{with}\qquad \nu_{-H} = -Hdt + \alpha
\]It is simple to check that the characteristic vector field for the framing form $dt$ is given by
\[
Z_H = \partial_t + V_H
\]
It follows that the contactomorphism $\Phi^H$ constructed in Lemma \ref{lem:hamiltonian_perturbation} is precisely the time 1 map of the contactomorphism generated by $-H$ and map $\Psi$ defines an isomorphism
\[
(\Sigma(\Phi^H),\eta) \simeq (\R/\Z \times Y, \eta_{-H})
\]\end{example}

We easily derive the following analogue of Lemma \ref{lem:hamiltonian_perturbation} for perturbations of hypersurfaces.

\begin{lemma}[Surface Perturbation] \label{lem:surface_perturbation} Let $\Phi$ be a contactomorphism. Then there is a $C^2$-neighborhood
\[
\mathcal{U} \subset \on{Emb}(C\Sigma(\Phi)) \qquad\text{of the sub-manifold}\qquad \iota:\Sigma(\Phi) \to C\Sigma(\Phi)
\]
in the space of embedded smooth sub-manifolds such that any element $\Sigma \in \mathcal{U}$ has an isomorphism
\[\Psi_\Sigma:(\Sigma,\eta \cap \Sigma) \to (\Sigma(\Phi_\Sigma),\eta) \qquad\text{where}\qquad \on{dist}_{C^1}(\Phi_\Sigma,\Phi) \le C \cdot \on{dist}_{C^2}(\Sigma,\Sigma(\Phi))
\]
\end{lemma}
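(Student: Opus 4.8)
The plan is to realize any hypersurface $C^2$-close to $\Sigma(\Phi)$ inside the contactization as the graph of a small Hamiltonian, and then invoke Lemma \ref{lem:hamiltonian_perturbation}. Recall from the remark after the contact suspension definition that $C\Sigma(\Phi) = \R_s \times \Sigma(\Phi)$ carries the contact form $\alpha = s\,dt + \nu$, and $\iota$ identifies $\Sigma(\Phi)$ with the slice $0 \times \Sigma(\Phi)$. Since $Y$ is closed, $\Sigma(\Phi)$ is a closed manifold, so for any embedded hypersurface $\Sigma$ that is sufficiently $C^1$-close to $\iota$, the projection $\pi : \R_s \times \Sigma(\Phi) \to \Sigma(\Phi)$ restricts to a diffeomorphism $\Sigma \to \Sigma(\Phi)$. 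Composing its inverse with the $\R_s$-coordinate produces a smooth function $H_\Sigma : \Sigma(\Phi) \to \R$ with $\Sigma = \on{Gr} H_\Sigma$. The point that actually requires care is quantitative: by the way the $C^2$-topology on the space of embedded submanifolds is set up (normalizing the parametrization of $\Sigma$ via $\pi$), there is a constant $C_0$ with $\|H_\Sigma\|_{C^2} \le C_0 \cdot \on{dist}_{C^2}(\Sigma,\Sigma(\Phi))$, so after shrinking the neighborhood $\mathcal{U}$ we may assume $\|H_\Sigma\|_{C^2}$ is as small as we wish.

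Next I would identify the induced structure. By Definition \ref{def:deformation}, the contact Hamiltonian structure that $\Sigma = \on{Gr} H_\Sigma$ inherits as a contact Hamiltonian hypersurface of $C\Sigma(\Phi)$ pulls back under the diffeomorphism $x \mapsto (H_\Sigma(x), x)$ precisely to the deformation $(\Sigma(\Phi), \eta_{H_\Sigma})$, where $\nu_{H_\Sigma} = H_\Sigma\cdot dt + \nu$. Before this makes sense one must check that $\Sigma$ is genuinely a contact Hamiltonian hypersurface, i.e. that $\ker\alpha$ is transverse to $T\Sigma$, and that $\theta = dt$ frames $\eta_{H_\Sigma}$; both are $C^1$-open conditions satisfied by the model $H = 0$ (since $dt$ frames $\eta = \Sigma(\Phi)_\eta^{\,\perp}$ by construction of the suspension), hence they hold throughout $\mathcal{U}$ after a further shrink. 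This gives the first half of the desired isomorphism: a contact Hamiltonian isomorphism $(\Sigma, \eta\cap\Sigma) \xrightarrow{\ \sim\ } (\Sigma(\Phi), \eta_{H_\Sigma})$.

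Finally I would apply Lemma \ref{lem:hamiltonian_perturbation} with $H = H_\Sigma$: since $dt$ frames $\eta_{H_\Sigma}$, we obtain a contactomorphism $\Phi^{H_\Sigma}$ of $(Y,\xi)$ together with an isomorphism $\Psi^{H_\Sigma} : (\Sigma(\Phi^{H_\Sigma}), \eta) \to (\Sigma(\Phi), \eta_{H_\Sigma})$ satisfying $\on{dist}_{C^1}(\Phi^{H_\Sigma}, \Phi) \le C_1 \|H_\Sigma\|_{C^2}$. Set $\Phi_\Sigma := \Phi^{H_\Sigma}$, and let $\Psi_\Sigma$ be the composition of the isomorphism from the previous paragraph with $(\Psi^{H_\Sigma})^{-1}$. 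Chaining the two estimates yields
\[
\on{dist}_{C^1}(\Phi_\Sigma, \Phi) \ \le\ C_1 \|H_\Sigma\|_{C^2} \ \le\ C_1 C_0 \cdot \on{dist}_{C^2}(\Sigma, \Sigma(\Phi)),
\]
so the lemma holds with $C = C_1 C_0$ (depending on the chosen metric, as in Lemma \ref{lem:hamiltonian_perturbation}).

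The main obstacle is the quantitative graphicalization in the first paragraph: turning the soft statement ``$\Sigma$ is $C^2$-close to $\Sigma(\Phi)$, hence a graph'' into the estimate $\|H_\Sigma\|_{C^2} \le C_0 \cdot \on{dist}_{C^2}(\Sigma, \Sigma(\Phi))$. This is standard tubular-neighborhood bookkeeping, but it is the step where one must be honest about what ``$C^2$-neighborhood in $\on{Emb}(C\Sigma(\Phi))$'' means — namely choosing the normal form of a nearby submanifold via the product projection $\pi$ and checking that the graphing function depends $C^2$-continuously on the submanifold. Once this is in place, everything else is a direct assembly of Definition \ref{def:deformation} and Lemma \ref{lem:hamiltonian_perturbation}.
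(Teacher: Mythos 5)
Your proposal is correct and follows the same route as the paper: graph a $C^2$-close hypersurface over $\Sigma(\Phi)$ via the product projection with quantitative $C^2$-control on the graphing function, identify the induced structure via Definition \ref{def:deformation}, and invoke Lemma \ref{lem:hamiltonian_perturbation}. Your version spells out the verification that the graph is contact Hamiltonian and that $dt$ still frames the deformed structure, which the paper leaves implicit, but the substance is identical.
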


\begin{proof} Any surface $\Sigma$ in the contactization $(-\epsilon,\epsilon) \times \Sigma(\Phi)$ that is $C^2$-close to $0 \times \Sigma(\Phi)$ is the graph of a function $H$ on $\Sigma(\Phi)$ with $C^2$-norm controlled by the $C^2$-distance of $\Sigma$ to $0 \times \Sigma$. Thus this lemma is immediate from Lemma \ref{lem:hamiltonian_perturbation} and Definition \ref{def:deformation}.
\end{proof}

We are now ready to prove the suspension result Theorem \ref{thm:main_suspension} that was used in the introduction.

\begin{prop}[Theorem \ref{thm:main_suspension}] \label{prop:main_suspension} Let $\Phi:Y \to Y$ be a $C^1$-robustly transitive contactomorphism. Then the suspension $\Sigma(\Phi)$ has a $C^2$-neighborhood
\[\mathcal{U} \subset \on{Emb}(C\Sigma(\Phi))\] consisting of hypersurfaces with topologically transitive characteristic foliation. \end{prop}

\begin{proof} By Lemma \ref{lem:surface_perturbation}, there is a neighborhood $\mathcal{U}$ of $\Sigma(\Phi)$ in the $C^2$-topology such that every $\Sigma \in \mathcal{U}$ is the suspension of a contactomorphism $\Phi_\Sigma$ that is $C^1$-close to $\Phi$. Since $\Phi$ is robustly transitive (Definition \ref{def:robustly_transitive}), we can assume after shrinking $\mathcal{U}$ that $\Phi_\Sigma$ is topologically transitive for any $\Sigma$ in $\mathcal{U}$. Finally, note that the suspension of a transitive diffeomorphism is a transitive flow. Indeed, a smooth flow (or diffeomorphism) is topologically transitive if and only if there is a dense orbit, and if a diffeomorphism $\Psi$ has a dense orbit, then the suspension of that orbit is also a dense orbit on $\Sigma(\Psi)$. Thus every $\Sigma \in \mathcal{U}$ has a topologically transitive characteristic foliation.\end{proof}

\subsection{Constructions Of Contact Hamiltonian Hypersurfaces} We conclude this section with constructions of contact Hamiltonian hypersurfaces. The following lemma is our main tool.

\begin{lemma}[Disk Neighborhood] \label{lem:disk_neighborhood} Let $\Gamma$ be a closed contact manifold and let $\Phi:\Gamma \to \Gamma$ be a positive contactomorphism. Fix a contact form $\beta$ and let
\[H:\R/\Z \times \Gamma \to (0,\infty)\]
be the corresponding generating Hamiltonian such that $\Phi = \Phi^H_1$. Then there is a contact embedding
\begin{equation} \label{eq:disk_neighborhood} (-\epsilon,\epsilon)_s \times \Sigma(\Phi) \to (D^2 \times \Gamma,-a \cdot r^2 d\theta + \beta) \qquad\text{ for any }a > \frac{1}{2\pi} \cdot \on{max} H\end{equation}
\end{lemma}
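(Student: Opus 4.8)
The plan is to realize $\Sigma(\Phi)$ as a graph-type hypersurface inside $D^2 \times \Gamma$ by using the angular coordinate on the disk as the suspension parameter $t$. Concretely, on $D^2 \times \Gamma$ with contact form $\alpha_a = -a r^2 d\theta + \beta$, the idea is to look at the solid-torus region $\{r_0 \le r \le r_1\}$ for suitable $0 < r_0 < r_1$, on which $\alpha_a$ restricts to a contact form whose Reeb-type dynamics in the $\theta$-direction is controlled by $a r^2$; comparing this with the generating Hamiltonian $H$ for $\Phi$ and using the collar neighborhood Lemma \ref{lem:collar_neighborhood}, one identifies a neighborhood of a suitable hypersurface with the contactization $(-\epsilon,\epsilon)_s \times \Sigma(\Phi)$.

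First I would set up coordinates: write $D^2$ with polar coordinates $(r,\theta)$, $\theta \in \R/2\pi\Z$, and rescale $\theta$ to $t = \theta/2\pi \in \R/\Z$ so that $-a r^2 d\theta = -2\pi a r^2\, dt$. I then consider, for a small parameter, the hypersurface $\Sigma_c = \{2\pi a r^2 = c + H(t,\cdot)\}$ inside $D^2 \times \Gamma$ — or rather its preimage after unwrapping $t$ to $\R$, which is exactly $\{(\text{graph of } r(t,x)) \}$ — for $c$ a constant chosen in the allowed range. The requirement $a > \frac{1}{2\pi}\max H$ guarantees that for $c$ slightly positive (and $\epsilon$ small) the equation $2\pi a r^2 = c + H(t,x) + s\,\theta_{\mathrm{fr}}$ has a solution with $0 < r < 1$, i.e. the hypersurface and a whole collar of it stay inside the open disk bundle; this is precisely where the max-$H$ hypothesis enters. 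Next I would check that $\Sigma_c$ is a contact Hamiltonian hypersurface: $\xi_a = \ker \alpha_a$ is transverse to it because $\partial_r \lrcorner\, d\alpha_a$ has a component along $dr$ pairing nontrivially with the radial gradient of the defining function, which also supplies a framing vector field $U$ essentially proportional to $\partial_r$ after correcting to lie in $\xi_a$.

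Then I would identify the induced contact Hamiltonian structure on $\Sigma_c$ with that of $\Sigma(\Phi)$. Pulling back $\alpha_a$ to $\Sigma_c$ and using the defining relation to eliminate $r$, the restricted form becomes $-(c + H(t,x))\,dt + \beta$ up to the pullback of an exact term, which is exactly the contact Hamiltonian form $\nu_{-H}$ of the deformation computed in Example \ref{ex:deforming_mapping_torus}; by that example the characteristic vector field for framing $dt$ is $\partial_t + V_H$, whose time-$1$ return map is the contactomorphism generated by $H$, namely $\Phi$ (the constant $c$ only shifts the Hamiltonian by a constant, which does not change the flow on $Y$, only reparametrizes the $s$-direction). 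Hence $(\Sigma_c, \xi_a \cap T\Sigma_c) \cong (\Sigma(\Phi),\eta)$. Finally, applying Lemma \ref{lem:collar_neighborhood} with the framing vector field $U$ (after arranging $\iota_U d\alpha_a$ closed, which one can do since $U$ is radial and the structure is $t$-independent in the relevant sense, or by a preliminary homotopy of $U$ within $\xi_a$) produces the strict contact embedding $(-\epsilon,\epsilon)_s \times \Sigma(\Phi) \to D^2 \times \Gamma$ landing in the region $r < 1$, which is the desired map \eqref{eq:disk_neighborhood}.

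I expect the main obstacle to be the bookkeeping in the last two steps: verifying that after eliminating $r$ the restricted form is genuinely $\nu_{-H}$ plus something exact (so that the deformation picture of Example \ref{ex:deforming_mapping_torus} applies on the nose rather than just up to conformal/isotopy ambiguity), and checking the hypotheses of Lemma \ref{lem:collar_neighborhood} — in particular producing a framing vector field $U$ with $\iota_U d\alpha_a$ closed. The inequality $a > \frac{1}{2\pi}\max H$ is exactly the quantitative condition that keeps the whole collar inside $\{r < 1\} \subset D^2$, so tracking it through the estimate on $r$ is the one place where care is genuinely required; everything else is a routine, if slightly tedious, coordinate computation.
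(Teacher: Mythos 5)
Your proposal is essentially the paper's proof. The paper also realizes $\Sigma(\Phi)$ as a graph inside $D^2 \times \Gamma$ over the angular coordinate, invokes Example~\ref{ex:deforming_mapping_torus} to match the restricted contact Hamiltonian form with $\nu_{-H} = -H\,dt + \beta$, and then applies Lemma~\ref{lem:collar_neighborhood} to thicken the graph to a collar. The one structural difference is that the paper first performs the explicit coordinate change
\[
\iota:(-2\pi a,0)_s \times \R/\Z_t \times \Gamma \to (D^2 \setminus 0) \times \Gamma, \qquad \iota(s,t,x)=\big((-s/2\pi a)^{1/2},\,2\pi t,\,x\big),
\]
which satisfies $\iota^*(-ar^2d\theta + \beta) = s\,dt + \beta$ exactly. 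After that substitution the hypersurface is literally the graph $\{s=-H\}$, the pullback form is exactly $-H\,dt+\beta$ with no exact correction term, and the framing vector field is $\partial_s$, for which $\iota_{\partial_s}d(s\,dt+\beta)=dt$ is manifestly closed. So the two worries you flag at the end (an unwanted exact term; arranging $\iota_U d\alpha_a$ closed) both evaporate once you make this change of variables first rather than working directly with $(r,\theta)$.

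One genuine error worth correcting: you write that shifting $H$ by a constant $c$ ``does not change the flow on $Y$, only reparametrizes the $s$-direction.'' That is false. Adding a constant to a contact Hamiltonian changes the contact Hamiltonian vector field by a multiple of the Reeb vector field, so the time-$1$ return map is altered by a Reeb rotation. In this lemma the constant is unnecessary anyway: since $\Phi$ is a positive contactomorphism, $H>0$ everywhere, so the graph $\{2\pi a r^2 = H(t,x)\}$ already avoids $r=0$, and the bound $a > \frac{1}{2\pi}\max H$ keeps it in $r<1$. You should simply take $c=0$, which is what the paper's formula does implicitly.
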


\begin{proof} We use the following smooth map in radial coordinates.
\[
\iota:(-2\pi a,0)_s \times \R/\Z_t \times \Gamma \to (D^2 - 0) \times \Gamma \quad\text{given by}\quad \iota(s,t,x) = ((-s/2\pi a)^{1/2},2\pi t,x)
\]
This map satisfies $\iota^*(-ar^2 d\theta + \beta) = sdt + \beta$. Fix $a$ satisfying $2\pi a > \on{max} H$. By Example \ref{ex:deforming_mapping_torus}, the graph of $-H$ defines an embedding
\[
(\Sigma(\Phi),\eta) \simeq (\R/\Z \times \Gamma,-Hdt + \beta) \to ((-2\pi a,0)_s \times \R/\Z_t \times \Gamma,sdt + \beta)
\]
This embedding extends to a contactomorphism (\ref{eq:disk_neighborhood}) by the flow of $\partial_s$ (see Lemma. \ref{lem:collar_neighborhood}). \end{proof}

The next lemma shows that small regions of the jet bundle contains arbitrarily large tubular neighborhoods of the cosphere bundle.  

\begin{lemma} \label{lem:cosphere_neighborhood} Let $X$ be a closed smooth manifold with cosphere bundle $SX$ and jet bundle $JX$. Fix a neighborhood $U$ of $X \subset JX$ and a contact form $\beta$ on $SX$. Then for any $a > 0$, there is a contact embedding
\[
(D^2 \times SX, \xi_{a,\beta}) \to U \qquad\text{where}\qquad \xi_{a,\beta} = \on{ker}(-a \cdot r^2d\theta + \beta)
\]
\end{lemma}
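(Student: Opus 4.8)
The plan is to reduce the statement to Lemma \ref{lem:disk_neighborhood} by exhibiting the model contact manifold $(D^2 \times SX, \xi_{a,\beta})$ as a subset of a Liouville-type neighborhood inside the 1-jet bundle $JX = T^*X \times \R$. The key point is that $JX$ near the zero section $X$ retracts onto a neighborhood of the cosphere bundle $SX = ST^*X$ at arbitrarily small radius, and that the contact form on such a small-radius cosphere bundle is, after rescaling, isomorphic to the standard contact form $\beta$ scaled by a tiny factor — and this scaling can be absorbed into the parameter $a$.

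First I would fix the standard contact form $\lambda_{\on{std}} = dz - p\,dq$ on $JX$ (locally), whose restriction to the radius-$\rho$ cosphere bundle $S_\rho X \subset T^*X$ (inside the slice $z = 0$) is a contact form $\beta_\rho$. The Liouville flow (radial scaling in the fiber of $T^*X$, together with the appropriate scaling of $z$) gives a contactomorphism between $(S_\rho X, \beta_\rho)$ and $(SX, \rho \cdot \beta)$ for any fixed reference form $\beta$. Consequently a full neighborhood $(-\delta,\delta)_s \times SX$ with contact form $s\theta' + \rho\beta$ (for a suitable framing $\theta'$, e.g. $\theta' = \iota_U d\lambda_{\on{std}}$ for the Liouville-type framing vector field $U$ pointing radially) embeds into $U$ once $\rho$ and $\delta$ are chosen small enough that the whole neighborhood stays inside $U$; this is exactly the collar neighborhood statement (Lemma \ref{lem:collar_neighborhood}) applied in $JX$, using that $U$ is an open neighborhood of the compact zero section $X$.

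Next I would apply Lemma \ref{lem:disk_neighborhood} with $\Gamma = SX$, $\Phi = \on{Id}_{SX}$, and the contact form $\rho\beta$. The identity is a positive contactomorphism generated by the constant Hamiltonian $H \equiv 1$ (with respect to $\rho\beta$), so $\Sigma(\on{Id}_{SX}) \simeq \R/\Z \times SX$, and Lemma \ref{lem:disk_neighborhood} produces a contact embedding $(-\epsilon,\epsilon)_s \times \Sigma(\on{Id}_{SX}) \to (D^2 \times SX, -a'\, r^2 d\theta + \rho\beta)$ for any $a' > \tfrac{1}{2\pi}$; conversely, the model $(D^2 \times SX, -a' r^2 d\theta + \rho\beta)$ sits inside the collar $(-2\pi a',0)_s \times \R/\Z_t \times SX$ with form $s\,dt + \rho\beta$, which is the contactization-type neighborhood just embedded into $U$. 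Rescaling the $D^2$-coordinate (replacing $r^2$ by $c\,r^2$ for the reparametrization that identifies $(D^2,-a' c r^2 d\theta)$ with a smaller disk) together with the factor $\rho$ converts $-a' r^2 d\theta + \rho\beta$ into $-a r^2 d\theta + \beta$ for the prescribed $a$: given $a$, choose $\rho$ small, then $a' = a/\rho$, which is $> 1/(2\pi)$ automatically once $\rho < 2\pi a$. Composing these contactomorphisms yields the desired embedding $(D^2 \times SX, \xi_{a,\beta}) \to U$.

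The main obstacle I anticipate is bookkeeping of the conformal/rescaling factors: one must check carefully that scaling the fiber coordinate of $T^*X$, scaling the $z$-coordinate of $JX$, rescaling the disk radius, and replacing the reference contact form $\beta$ by $\rho\beta$ all compose to give precisely $\xi_{a,\beta} = \on{ker}(-a r^2 d\theta + \beta)$ and not some form only conformally equivalent to it — though since we only need a \emph{contact} embedding (not a strict one) and the claim is stated with $\ker$, a conformal factor is in fact harmless here. A secondary technical point is verifying that the neighborhood produced stays inside $U$: this is where compactness of $X$ and openness of $U$ enter, guaranteeing a uniform $\rho, \delta$ work. Beyond these, the argument is a routine concatenation of Lemmas \ref{lem:collar_neighborhood} and \ref{lem:disk_neighborhood}.
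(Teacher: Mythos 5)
Your reduction to Lemma \ref{lem:disk_neighborhood} runs in the wrong direction, and the step ``the model $(D^2 \times SX, -a' r^2 d\theta + \rho\beta)$ sits inside the collar $(-2\pi a',0)_s \times \R/\Z_t \times SX$ with form $s\,dt + \rho\beta$'' is false. The coordinate change $\iota(s,t,x) = ((-s/2\pi a')^{1/2}, 2\pi t, x)$ used in the proof of Lemma \ref{lem:disk_neighborhood} is a contactomorphism from the open collar onto the \emph{punctured} disk $(D^2 \setminus 0) \times SX$, so the collar is exactly the punctured disk, not something bigger. The center $\{0\} \times SX$ is lost: as $r \to 0$ the angle $\theta$ is undefined while the circle coordinate $t$ remains free, so the would-be inverse $\iota^{-1}$ forces each circle $\{s=0\} \times \R/\Z_t \times \{x\}$ to collapse to a single point, which cannot be part of a smooth embedding. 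Rescaling by $\rho$ or conformally reparametrizing does not repair this; the defect is topological, not metric.

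The ingredient you are missing is precisely what makes the paper's proof nontrivial. Instead of working on the cylinder $\R/\Z_t$ and forcing the disk's center toward the degenerate locus $s = 0$, the paper works on the universal cover $\R_t$: it embeds a round disk $D$ of radius $(2a)^{1/2}$ entirely inside the open half-plane $\{s < 0\} \subset \R_s \times \R_t$, and then ``untwists'' the contact form by the Reeb flow of $\beta$ via the map $\Psi(s,t,x) = (s,t,\Phi^R(\tau(s,t),x))$, where $\tau$ is a primitive relating $-s\,dt$ to the rotationally symmetric Liouville form $\lambda = \tfrac12((t-t_0)ds - (s-s_0)dt)$ centered at the center of $D$. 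That twist, not a product structure, is what converts $-s\,dt + \beta$ on $D \times SX$ into $-\tfrac{a}{2}r^2 d\theta + \beta$ on $D^2 \times SX$. Your first step (pushing a compact region into $U$ by the Liouville-type contraction on $JX$, and the observation that scaling $\beta$ by $\rho$ can be traded against the parameter $a$) is correct and matches the paper's Step 2, but without the Reeb-flow twist the disk cannot be completed at its center, so the argument as stated does not go through.
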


\begin{proof} Recall that the jet bundle $JX$ is given by $\R_t \times T^*X$ with the standard contact form $\alpha_{\on{std}} = dt + \lambda_{\on{std}}$. We break the proof into two steps.

\vspace{3pt}

\noindent {\bf Step 1.} First assume that $U = JX$ (so that we may ignore the neighborhood). There is a standard embedding of the symplectization of $\R \times SX$ into $T^*X$ via the Liouville flow of $T^*X$. By using $t$-translation, we can extend this to an embedding
\[
\kappa:\R_\rho \times \R_t \times SX \to JX \qquad\text{with}\qquad \kappa^*\alpha_{\on{std}} = dt + e^\rho \beta
\]
By applying a further change coordinates by taking $s = -e^{-\rho}$, we get a map
\[
\jmath:(-\infty,0]_s \times \R_t \times SX \to JX \qquad\text{with}\qquad \jmath^*(dt + e^\rho \beta) = dt - s^{-1} \beta  = -s^{-1} \cdot (-s \cdot dt + \beta)
\]
Now we construct an embedding to $(-\infty,0]_s \times \R_t \times SX$. Take a disk $D \subset (-\infty,0]_s \times \R_t$ of radius $(2a)^{1/2}$ centered at a point $(s_0,t_0)$. We consider the Liouville form
\[\lambda = \frac{1}{2}((t - t_0)ds - (s - s_0)dt)) \qquad\text{satisfying}\qquad d\lambda = -ds \wedge dt\]
Next, let $\tau:\R^2 \to \R$ be a primitive such that $\lambda = -sdt +  d\tau$ and consider the diffeomorphism
\[
\Psi:\R_s \times \R_t \times SX \to \R_s \times \R_t \times SX \quad\text{given by}\quad \Psi(s,t,x) = (s,t,\Phi^R(\tau(s,t),x)
\]
Here $\Phi^R:\R_\tau \times SX \to SX$ denotes the Reeb flow of $\beta$. We compute that
\[
\Psi^*(-sdt + \beta) = -sdt + \beta(R) \cdot d\tau + (\Phi^R)^*\beta = -sdt + d\tau + \beta = \lambda + \beta
\]
Finally, we compose $\Psi$ with the map $\Phi = \phi \times \on{Id}_{SX}$ where
\[\phi:D^2 \to \R_s \times \R_t \qquad\text{given by}\qquad (r,\theta) \mapsto (a r\cos(\theta) + s_0, a r\sin(\theta) + t_0)\]
The composition $\Psi \circ \Phi$ now restricts to a contact embedding
\[
D^2 \times SX \to (-\infty,0]_s  \times SX \qquad\text{with}\qquad (\Psi \circ \Phi)^*(-s \cdot dt + \beta) = -\frac{a}{2} \cdot r^2 d\theta + \beta
\]
The composition $\jmath \circ \Psi \circ \Phi:D^2 \times SX \to JX$ is the desired embedding in the lemma.

\vspace{3pt}

\noindent {\bf Step 2.} Now consider the general case where $U \subset JX$ is a proper open set. There is a natural flow of contactomorphisms $\Phi':\R_r \times JX \to JX$ given by $\Phi_r^Z(t,z) = (e^rt, \Phi^Z_r(z))$ where $\Phi^Z$ is the Liouville flow on $T^*X$. This flow is generated by a vector-field $U = t\partial_t + Z$ satisfying
\[
\mathcal{L}_U\alpha_{\on{std}} = \alpha_{\on{std}}
\]
In particular, any compact set in $JX$ can be pushed into $U$ by $\Phi'_r$ for $r$ sufficiently negative. We may then compose the embedding from Step 1 with $\Phi'_r$ to acquire the desired embedding. \end{proof}

By using the Weinstein neighborhood theorem for Legendrians \cite{g2008intro} to convert a Legendrian into an embedding of the cosphere bundle of the Legendrian, we acquire the following corollary.

\begin{lemma}[Lemma \ref{lem:intro_embedding}] \label{lem:legendrian_embeddings} Let $\Lambda \subset (Y,\xi)$ be a closed Legendrian sub-manifold and let $\Phi:S\Lambda \to S\Lambda$ be a positive contactomorphism of the cosphere bundle $S\Lambda$. Then there is a contact embedding
\[
(-\epsilon,\epsilon) \times \Sigma(\Phi) \to (Y,\xi) 
\]
\end{lemma}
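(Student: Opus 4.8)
The plan is to reduce Lemma \ref{lem:legendrian_embeddings} to Lemma \ref{lem:cosphere_neighborhood} and Lemma \ref{lem:disk_neighborhood} using the Weinstein neighborhood theorem for Legendrians. First, recall that for a closed Legendrian $\Lambda \subset (Y,\xi)$, the Weinstein Legendrian neighborhood theorem \cite{g2008intro} provides a contactomorphism from a neighborhood of the zero section in the jet bundle $J\Lambda = \R_t \times T^*\Lambda$ onto a neighborhood $N$ of $\Lambda$ in $Y$, carrying the zero section to $\Lambda$. So it suffices to find a contact embedding $(-\epsilon,\epsilon) \times \Sigma(\Phi) \to U$ for some neighborhood $U$ of the zero section $\Lambda \subset J\Lambda$.

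Second, I would apply Lemma \ref{lem:cosphere_neighborhood} with $X = \Lambda$: for the contact form $\beta$ on $S\Lambda$ implicit in the description of $\Phi$ (choose any contact form and let $H:\R/\Z \times S\Lambda \to (0,\infty)$ be the positive generating Hamiltonian with $\Phi = \Phi^H_1$, which exists since $\Phi$ is a positive contactomorphism), Lemma \ref{lem:cosphere_neighborhood} gives a contact embedding
\[
(D^2 \times S\Lambda, \xi_{a,\beta}) \to U \qquad\text{for any } a > 0, \text{ where } \xi_{a,\beta} = \on{ker}(-a r^2 d\theta + \beta).
\]
Third, I would choose $a$ large enough that $a > \tfrac{1}{2\pi}\max H$, and then invoke Lemma \ref{lem:disk_neighborhood} (with $\Gamma = S\Lambda$), which produces a contact embedding
\[
(-\epsilon,\epsilon)_s \times \Sigma(\Phi) \to (D^2 \times S\Lambda, -a r^2 d\theta + \beta).
\]
Composing the embedding of Lemma \ref{lem:disk_neighborhood}, the embedding of Lemma \ref{lem:cosphere_neighborhood}, and the Weinstein neighborhood chart yields the desired contact embedding $(-\epsilon,\epsilon) \times \Sigma(\Phi) \to (Y,\xi)$.

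The only real subtlety — and the step I would flag as the main (minor) obstacle — is a bookkeeping one: matching the contact form $\beta$ used to generate $\Phi$ with the contact form appearing in the statements of Lemmas \ref{lem:disk_neighborhood} and \ref{lem:cosphere_neighborhood}, and confirming that the suspension $\Sigma(\Phi)$ (a manifold-with-structure, not a fixed submanifold) is preserved under the chain of identifications. This is handled by simply fixing $\beta$ at the outset and threading it through both lemmas; there is no genuine difficulty since each lemma is stated for an arbitrary contact form on the cosphere bundle and an arbitrary value of the parameter $a$ above the stated threshold. One should also note that $JX$ appearing in Lemma \ref{lem:cosphere_neighborhood} is precisely the $1$-jet bundle $\R_t \times T^*X$, which is exactly the local model supplied by the Weinstein Legendrian neighborhood theorem, so the two neighborhoods are compatible. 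Hence the proof is essentially a one-line composition once the three preceding lemmas are in hand.
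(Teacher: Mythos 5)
Your proposal is correct and follows exactly the same route as the paper: Weinstein neighborhood theorem to reduce to $J\Lambda$, then Lemma \ref{lem:cosphere_neighborhood} to embed $(D^2 \times S\Lambda, \xi_{a,\beta})$ into that neighborhood, then Lemma \ref{lem:disk_neighborhood} to embed $(-\epsilon,\epsilon)\times\Sigma(\Phi)$ into $(D^2 \times S\Lambda,\xi_{a,\beta})$, and compose. The extra bookkeeping you flag (fixing $\beta$ and choosing $a$ above the threshold $\tfrac{1}{2\pi}\max H$) is exactly what makes the two lemmas mesh, and you have handled it correctly.
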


\begin{proof} By the Weinstein neighborhood theorem, we have a contact embedding
\[
(U,\xi_{\on{std}}) \to (Y,\xi) \qquad\text{for a neighborhood }U \subset JX \text{ of }X
\]
By Lemma \ref{lem:disk_neighborhood} and Lemma \ref{lem:cosphere_neighborhood}, for any contact form $\beta$ on $S\Lambda$, there are constants $\epsilon,a > 0$ and a contact embedding of the form
\[
((-\epsilon,\epsilon) \times \Sigma(\Phi),\eta) \to (D^2 \times SX,\xi_{a,\beta}) \to (U,\xi_{\on{std}})
 \qedhere\]
\end{proof}

\section{Partial Hyperbolicity} \label{sec:partial_hyperbolicity} In this section, we review the theory of partially hyperbolic maps and blenders.

\begin{remark} This section contains extensive background aimed at non-experts in dynamics. We recommend that the reader look to  Crovisier-Potrie \cite{crovisier2015introduction}, Hertz-Hertz-Ures \cite{hertz2011partially}, Hirsch-Pugh-Shub \cite{hps1977} or Bonatti-Diaz-Viana \cite{bonatti2004dynamics} for a more comprehensive treatment. We also recommend the excellent book of Fisher-Hasselblatt \cite{fh2019hyperbolic} for an accessible treatment of hyperbolic dynamics.
\end{remark}

\subsection{Fundamentals} \label{subsec:fundamentals} Fix a compact smooth manifold $Y$ and a diffeomorphism
\[
\Phi:Y \to Y
\]
\begin{definition}[Expansion/Contraction] A sub-bundle $E \subset TY$ is \emph{uniformly expanding} with respect to $\Phi$ and a Riemannian metric $g$ on $Y$ if there are constants $C > 0$ and $\lambda > 1$ such that
\[
|T\Phi^n(u)| \ge C \cdot \lambda^n \cdot |u| \qquad\text{for all }n \ge 0
\]
Similarly, $E$ is \emph{uniformly contracting} for $\Phi$ and $g$ if it is uniformly expanding for $\Phi^{-1}$ and $g$. \end{definition}

\begin{definition}[Domination] A continuous splitting of $TY$ into continuous sub-bundles
\[TY = E_1 \oplus \dots \oplus E_k\] is \emph{dominated} with respect to $\Phi$ and a metric $g$ if there are constants $C > 0$ and $\lambda > 1$ such that
\[
|T\Phi^n(u)| \ge C \cdot \lambda^n \cdot |T\Phi^n(v)| \qquad\text{for all }n \ge 0 \text{ and any unit vectors }u \in E_{i+1} \text{ and }v \in E_i
\]
The constant $\lambda$ is the \emph{constant of dilation} and the splitting may also be called \emph{$\lambda$-dominated}.\end{definition}

Dominated splittings obey several fundamental properties (cf. \cite[Appendix B]{bonatti2004dynamics} or \cite{crovisier2015introduction}) that we now discuss briefly. First, dominated splittings are persistent  with respect to $C^1$-perturbation.

\begin{thm}[$C^1$-Persistence] \label{thm:persistence_of_splitting} Let $\Phi:Y \to Y$ be a diffeomorphism with a $\lambda$-dominated splitting
\[TY = E_1 \oplus \dots \oplus E_k\]
Then for $\mu < \lambda$, there is a $C^1$-neighborhood $\mathcal{U}$ of $\Phi$ such that every $\Psi \in \mathcal{U}$ has a $\mu$-dominated splitting
\[TY = E_1(\Psi) \oplus \dots \oplus E_k(\Psi) \qquad\text{such that}\qquad \on{dim} E_i(\Psi) = \on{dim} E_i(\Phi)\]
 \end{thm}

\noindent Next, dominated splittings with an expanding (or contracting) factor possess a unique, expanding (or contracting) invariant foliation (cf. \cite{hps1977} and \cite[Thm B.7]{bonatti2004dynamics}).

\begin{thm}[Foliations] \label{thm:invariant_foliations} \cite{hps1977} Let $\Phi:Y \to Y$ be a diffeomorphism with a dominated splitting
\[
TY = D \oplus E \quad\text{with $E$ uniformly expanding (or contracting)}
\]
Then there is a unique H\"{o}lder foliation $F$ with smooth leaves such that
\[\text{$F$ is tangent to $E$} \quad\qquad \Phi_*F = F \qquad\text{and}\qquad F(P) \simeq \R^{\on{rk}(E)} \]
where $F(P)$ denote the leaf through the point $P$ in $Y$.\end{thm}

\noindent Finally, the stable manifold theorem (c.f. Hirsch-Pugh-Shub \cite[Thm 4.1]{hps1977}) asserts the existence of stable and unstable manifolds for hyperbolic invariant sets.

\begin{thm}[Invariant Manifolds] Let $\Gamma$ be an invariant sub-manifold of a diffeomorphism $\Phi:Y \to Y$ with normal hyperbolic splitting $E^u \oplus T\Gamma \oplus E^s$. Then there are unique invariant sub-manifolds
\[W^s(\Gamma,\Phi) \qquad \text{and} \qquad W^u(\Gamma,\Phi) \qquad\text{containing }\Gamma\]
that are locally invariant near $\Gamma$, $C^1$-robust and that satisfy
\[TW^s(\Phi,\Gamma) = E^s \oplus T\Gamma \qquad\text{and}\qquad TW^u(\Phi,\Gamma) = E^u \oplus T\Gamma \qquad\text{along }\Gamma\]
\end{thm}

\begin{notation}[Local Invariant Manifolds] Given a normally hyperbolic invariant set $\Gamma$ and an open neighborhood $U$ of $\Gamma$ we will use the notation
\[W^s_{\on{loc}}(\Gamma,\Phi;U)  \qquad\text{and}\qquad W^u_{\on{loc}}(\Gamma,\Phi;U)\]
to denote the respective components of $W^s(\Gamma,\Phi) \cap U$ and $W^u(\Gamma,\Phi) \cap U$ that contain $\Gamma$. We adopt analogous notation for the leaves of the foliations $F^s(\Phi)$ and $F^u(\Phi)$ when defined. \end{notation}

This paper will be entirely oriented towards the following class of diffeomorphisms. 

\begin{definition}[Partially Hyperbolic] \label{def:partially_hyperbolic} A diffeomorphism $\Phi:Y \to Y$ is \emph{partially hyperbolic} if there is a splitting the tangent bundle into $\Phi$-invariant, continuous sub-bundles 
\[
TY = E^s(\Phi) \oplus E^c(\Phi) \oplus E^u(\Phi)
\]
such that $E^s(\Phi)$ is uniformly contracting, $E^u(\Phi)$ is uniformly expanding and the splitting is dominated with respect to $\Phi$ and some (or equivalently any) Riemannian metric $g$.
\end{definition}

\begin{example} Let $\Psi:\R \times Y \to Y$ be an Anosov flow generated by a vector-field $V$. Then for any $T$, the time $T$ map
\[\Psi_T:Y \to Y\]
is partially hyperbolic. The stable and unstable bundles of $\Psi_T$ are those of $\Psi$, while the central bundle is the span of $V$.  \end{example}

\noindent Partially hyperbolic contactomorphisms have some special compatibility properties with the underlying contact structure. For instance, we have the following lemma (for use later).

\begin{lemma} \label{lem:phcontactomorphism_properties} Fix a contact $(2n+1)$-manifold $(Y,\xi)$ and a partially hyperbolic contactomorphism
\[\Phi:(Y,\xi) \to (Y,\xi)\]
such that $\xi = E^s(\Phi) \oplus E^u(\Phi)$ and $E^c(\Phi)$ is transverse to $\xi$. Then the stable and unstable foliations $F^s(\Phi)$ and $F^u(\Phi)$ have Legendrian leaves.\end{lemma}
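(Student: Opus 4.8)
The plan is to show that each leaf $L$ of $F^s(\Phi)$ (and symmetrically $F^u(\Phi)$) is an integral submanifold of $\xi$ of maximal dimension, i.e.\ that $TL \subset \xi$ and $\dim L = n$. Since $TL = E^s(\Phi)$ by Theorem \ref{thm:invariant_foliations} and $E^s(\Phi) \subset \xi$ by hypothesis, the containment $TL \subset \xi$ is automatic, so the only real content is the dimension count: a Legendrian in a $(2n+1)$-manifold is an $n$-dimensional integral submanifold, so I must check $\on{rk}(E^s(\Phi)) = n$ (and likewise $\on{rk}(E^u(\Phi)) = n$).

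To get the dimension count, I would use the conformal symplectic structure on $\xi$. Fix a contact form $\alpha$ for $\xi$; then $\omega := d\alpha|_\xi$ is a symplectic form on the rank-$2n$ bundle $\xi$, and since $\Phi$ is a contactomorphism, $\Phi^*\alpha = e^g\alpha$ for some function $g$, so $\Phi$ acts on $(\xi,\omega)$ conformally symplectically: $(T\Phi)^*\omega = e^g\,\omega$ on $\xi$. The key observation is that $E^s(\Phi)$ is isotropic for $\omega$: for $u,v \in E^s_y$ we have $|\omega_{\Phi^n y}(T\Phi^n u, T\Phi^n v)| = e^{g_n(y)}|\omega_y(u,v)|$ where $g_n = \sum_{k=0}^{n-1} g\circ\Phi^k$; meanwhile $|T\Phi^n u|, |T\Phi^n v| \le C\lambda^{-n}|u||v|$ decay exponentially, so $|\omega_{\Phi^n y}(T\Phi^n u, T\Phi^n v)| \le C'\lambda^{-2n}|u||v|$. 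Comparing the two expressions forces $e^{g_n(y)}|\omega_y(u,v)| \le C'\lambda^{-2n}|u||v|$, and since $\frac1n g_n$ is bounded below along the orbit (Birkhoff averages of the continuous function $g$ over a compact manifold cannot tend to $-\infty$ faster than any geometric rate — more concretely, $g$ is bounded, so $g_n \ge -n\|g\|_{\infty}$, hence $e^{g_n(y)} \ge e^{-n\|g\|_\infty}$), the only possibility as $n\to\infty$ is $\omega_y(u,v) = 0$. Thus $E^s(\Phi)$ is an isotropic subbundle of $(\xi,\omega)$, so $\on{rk}(E^s(\Phi)) \le n$; the identical argument applied to $\Phi^{-1}$ shows $E^u(\Phi)$ is isotropic, so $\on{rk}(E^u(\Phi)) \le n$. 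Since $\xi = E^s(\Phi) \oplus E^u(\Phi)$ has rank $2n$, both ranks equal exactly $n$.

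Combining the two conclusions: each leaf $L$ of $F^s(\Phi)$ is an $n$-dimensional submanifold with $TL = E^s(\Phi) \subset \xi$, hence a Legendrian; the same holds for $F^u(\Phi)$. I expect the main (though still mild) obstacle to be the isotropy argument — specifically, making precise the claim that the conformal distortion $e^{g_n}$ cannot outpace the exponential contraction, which is where one needs the compactness of $Y$ to bound $g$ and thereby control $g_n = \sum g\circ\Phi^k$ from below linearly in $n$. Everything else (the identification $TL = E^s(\Phi)$, the rank count, the symmetry between stable and unstable) is formal given Theorem \ref{thm:invariant_foliations} and the definition of a contactomorphism.
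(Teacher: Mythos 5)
Your overall strategy — show $E^s(\Phi)$ and $E^u(\Phi)$ are isotropic subbundles of $(\xi,d\alpha)$, hence of rank $\le n$, and then use $\on{rk}\,E^s + \on{rk}\,E^u = \on{rk}\,\xi = 2n$ to force equality — is the right one, and it is the structure of the argument the paper uses implicitly. However, the specific way you prove isotropy has a gap, and the fix is actually simpler than what you attempted.

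The gap is in the last inference of the isotropy argument. From
\[
e^{g_n(y)}\,|\omega_y(u,v)| \;\le\; C'\,\lambda^{-2n}\,|u|\,|v|
\]
and the lower bound $e^{g_n(y)} \ge e^{-n\|g\|_\infty}$ you deduce
\[
|\omega_y(u,v)| \;\le\; C'\bigl(\lambda^{-2}e^{\|g\|_\infty}\bigr)^n\,|u|\,|v|,
\]
and this forces $\omega_y(u,v)=0$ only if $\lambda^{-2}e^{\|g\|_\infty}<1$, i.e.\ $\|g\|_\infty < 2\log\lambda$. Nothing in the hypotheses guarantees this: $\lambda$ is a contraction constant for $E^s$ and $\|g\|_\infty$ is the sup of the conformal distortion of $\alpha$ under $\Phi$, and these are a priori unrelated. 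Partial hyperbolicity gives only relative (domination) constraints on the center rate versus the stable/unstable rates, not the absolute bound you need; and the conformal symplectic pairing of $E^s$ with $E^u$ only relates the stable and unstable rates \emph{to each other} and to $g$, again giving relative constraints. So the bound, as stated, can be vacuous.

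The simpler route, which sidesteps the dynamics entirely, uses the fact that $E^s(\Phi)$ is integrable (Theorem \ref{thm:invariant_foliations} provides the foliation $F^s(\Phi)$ tangent to it). If $L$ is a leaf, then $TL \subset \xi$, i.e.\ $\alpha|_L = 0$, and therefore $d\alpha|_L = d(\alpha|_L) = 0$; in other words, integral submanifolds of a contact distribution are automatically isotropic. Thus $T_pL = E^s(\Phi)_p$ is an isotropic subspace of $(\xi_p, d\alpha_p)$ for every $p$, giving $\on{rk}\,E^s(\Phi) \le n$, and likewise for $E^u(\Phi)$. This is the content behind the paper's terse phrase that the leaves are tangent to $\xi$ and the dimensions add to $\on{rk}\,\xi$. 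The key extra input your argument did not use is precisely the integrability of $E^s(\Phi)$ and $E^u(\Phi)$; without it, the purely dynamical estimate genuinely needs an additional hypothesis like $\|g\|_\infty < 2\log\lambda$.
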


\begin{proof} 

The leaves of $F^s(\Phi)$ and $F^u(\Phi)$ are tangent to $\xi$, and the dimensions of the leaves of $F^s(\Phi)$ and $F^u(\Phi)$ add to the rank of $\xi$. It follows that the dimension of the leaves $F^s(\Phi)$ and $F^u(\Phi)$ must be $n$, so that the leaves are Legendrian.
\end{proof}

\subsection{Holonomy} \label{subsec:holonomy} The holonomy of the stable and unstable foliations are a key tool in the analysis of partially hyperbolic maps, which we will need in Section \ref{sec:blender_construction}. We next briefly review this concept.

\vspace{3pt}

Let $F$ be a transversely continuous foliation with smooth leaves on a manifold $Y$. Let $\Lambda$ be a leaf and let $P$ be a point in $\Lambda$. Recall that a transversal to $\Lambda$ at $P$ is a sub-manifold $S \subset Y$ of dimension $\on{codim} F$ tranverse to the leaves of $F$ and intersecting $\Lambda$ at $P$. 

\begin{definition}[Holonomy] \label{def:holonomy} \cite[Ch 2]{cc1985foliations} Let $\Lambda$ be a leaf of $F$ equipped with transversals $S$ and $T$ at points $P$ and $Q$ in $\Lambda$. Fix a continuous path
\[
\Gamma:[0,1] \to \Lambda \qquad\text{with}\qquad \Gamma(0) = P \text{ and }\Gamma(1) = Q
\]
The \emph{holonomy} is the unique correspondence assigning to $(\Gamma,S,T)$ the germ of a smooth map
\[
\on{Hol}_{F,\Gamma}:S \cap \on{Nbhd}(P) \to T \cap \on{Nbhd}(Q) \qquad\text{ with }\qquad \on{Hol}_{F,\Gamma}(P) = Q
\]
that satisfies the following properties.
\begin{itemize}
    \item The correspondence $\on{Hol}$ respects path composition.
    \vspace{3pt}
    \item If $\Gamma,S,T$ are in a foliation chart, then $\on{Hol}_\Gamma(s)$ is the point in $T$ in the same plaque as $s$. 
\end{itemize}
Recall that a plaque refers to a local leaf of the foliation in a foliation chart. The holonomy only depends on $\Gamma$ up to homotopy relative to $P$ and $Q$ in $\Lambda$ \cite[Prop 2.3.2]{cc1985foliations}. Moreover, given a foliation $G$ such that $TF$ and $TG$ are $C^0$-close, there is  a continuation point
\[Q_G \in \on{Nbhd}(Q) \cap T\]
on the leaf of $G$ through $P$, converging to $Q$ as $G$ converges to $F$. There is also a unique homotopy class of path $P$ to $Q_G$ corresponding to $\Gamma$. Thus there is a holonomy map
\[
\on{Hol}_{G,\Gamma}:S \cap \on{Nbhd}(P) \to T \cap \on{Nbhd}(Q)
\]
The holonomy $\on{Hol}_{F,\Gamma}$ varies continuously with respect to the foliation $F$. \end{definition}

By  Theorem \ref{thm:invariant_foliations}, the leaves of the stable foliation $F^s$ and unstable foliation $F^u$ of a partially hyperbolic diffeomorphism $\Phi$ are contractible and so the holonomy is independent of the path $\Gamma$. In this case, we denote the corresponding holonomies
\[\on{Hol}^s_\Phi = \on{Hol}_{F^s,\Gamma} \qquad\text{and}\qquad \on{Hol}^u_\Phi = \on{Hol}_{F^u,\Gamma}\]
The main property of holonomy that we will need is the following regularity result, which follows from (for instance) the uniform H\"{o}lder regularity results of Pugh-Shub-Wilkenson \cite[Thm A]{psw1997}.

\begin{lemma}[H\"{o}lder Holonomy] \label{lem:holder_holonomy} Let $\Phi:Y \to Y$ be a partially hyperbolic diffeomorphism. Fix points $P$ and $Q$ in an unstable leaf with transversals $S$ and $T$. Then there is
\[
\text{a $C^1$-neighborhood $\mathcal{U}$ of $\Phi$} \qquad \text{a H\"{o}lder constant $\kappa \in (0,1)$} \qquad\text{and}\qquad \text{a neighborhood $\on{Nbhd}(P)$}
\]
so that  the holonomy $\on{Hol}_\Psi^u:S \cap \on{Nbhd}(P) \to T \cap \on{Nbhd}(Q)$ of any diffeomorphism $\Psi$ in $\mathcal{U}$ satisfies
\begin{equation} \on{dist}\big(\on{Hol}^u_\Psi(x),\on{Hol}^u_\Psi(y)\big) < \on{dist}(x,y)^\kappa \end{equation}
\end{lemma}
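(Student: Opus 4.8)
The plan is to reduce the statement to the uniform Hölder continuity of stable/unstable holonomies for partially hyperbolic systems, as established by Pugh-Shub-Wilkinson \cite{psw1997}, and to upgrade it to the $C^1$-robust form by combining that with the $C^1$-persistence of the dominated/partially hyperbolic splitting (Theorem \ref{thm:persistence_of_splitting}) and the continuous dependence of the foliation and its holonomy on the diffeomorphism. First I would recall that, by Definition \ref{def:partially_hyperbolic} and Theorem \ref{thm:persistence_of_splitting}, there is a $C^1$-neighborhood $\mathcal{U}_0$ of $\Phi$ consisting of partially hyperbolic diffeomorphisms whose splitting $TY = E^s(\Psi) \oplus E^c(\Psi) \oplus E^u(\Psi)$ has the same dimensions as that of $\Phi$ and is $\mu$-dominated for a fixed $\mu > 1$ independent of $\Psi \in \mathcal{U}_0$; moreover the contraction/expansion rates on $E^s(\Psi), E^u(\Psi)$ are bounded uniformly on $\mathcal{U}_0$ (shrinking $\mathcal{U}_0$ if necessary). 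By Theorem \ref{thm:invariant_foliations} each such $\Psi$ carries a Hölder unstable foliation $F^u(\Psi)$ with smooth leaves, and the basic continuity statement for holonomy recalled before the lemma gives, for $\Psi$ close enough to $\Phi$, a continuation point $Q_\Psi$ near $Q$ on the $F^u(\Psi)$-leaf through $P$, together with a well-defined holonomy germ $\on{Hol}^u_\Psi : S \cap \on{Nbhd}(P) \to T \cap \on{Nbhd}(Q)$.

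The core step is to extract a Hölder exponent that works uniformly over $\mathcal{U}_0$. The key point is that the Hölder exponent produced in \cite[Thm A]{psw1997} for the unstable holonomy depends only on the spectral gap data of the partial hyperbolicity — the contraction rate along $E^s$, the expansion rate along $E^u$, and the domination constant $\mu$ — all of which are, by the previous paragraph, bounded away from their critical values uniformly on $\mathcal{U}_0$. Hence one may fix a single $\kappa_0 \in (0,1)$ such that $\on{Hol}^u_\Psi$ is $\kappa_0$-Hölder (with some constant $C_\Psi$) for every $\Psi \in \mathcal{U}_0$, and the Hölder constants $C_\Psi$ are locally uniformly bounded by a constant $C_0$ because the leaves, their transversal spacing, and the defining data vary continuously in the $C^1$-topology. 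Then I would choose any $\kappa \in (0, \kappa_0)$ and shrink the transversal-neighborhood $\on{Nbhd}(P)$ to a ball of radius $\rho$ small enough that $C_0 \cdot \on{dist}(x,y)^{\kappa_0} = C_0 \cdot \on{dist}(x,y)^{\kappa_0 - \kappa} \cdot \on{dist}(x,y)^{\kappa} \le C_0 (2\rho)^{\kappa_0 - \kappa} \cdot \on{dist}(x,y)^{\kappa} < \on{dist}(x,y)^{\kappa}$ for all $x,y \in \on{Nbhd}(P)$; this absorbs the constant into the exponent at the cost of localizing, which is exactly the form demanded by the statement. Finally, set $\mathcal{U} = \mathcal{U}_0$ (further shrunk so that the continuation $Q_\Psi$ and the holonomy germ are defined for all $\Psi \in \mathcal{U}$), and the displayed inequality holds for every $\Psi \in \mathcal{U}$.

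The main obstacle is the uniformity claim: one must be careful that \cite{psw1997} (or whatever regularity input one cites) genuinely yields a Hölder exponent and constant that are uniform over a $C^1$-neighborhood, rather than merely for the single map $\Phi$. I expect this to follow because the proofs of such holonomy regularity results are quantitative in the hyperbolicity constants and the $C^1$-size of the dynamics, and Theorem \ref{thm:persistence_of_splitting} controls precisely those quantities on a neighborhood; nonetheless, spelling out that the relevant constants in the cited theorem are continuous functions of the data — and that the transversals $S, T$ and the foliation charts can be chosen to vary continuously with $\Psi$ near the fixed points $P, Q$ — is the delicate bookkeeping. A minor secondary point is ensuring the continuation point $Q_\Psi$ stays inside the fixed target neighborhood $T \cap \on{Nbhd}(Q)$ and that the homotopy class of the connecting path (hence the holonomy germ) is canonical; this is immediate here because, by Theorem \ref{thm:invariant_foliations}, unstable leaves are contractible, so there is no ambiguity in the path and the germ depends only on $P$, $Q$, $S$, $T$, and $\Psi$.
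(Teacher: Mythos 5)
Your proposal is correct and follows essentially the same route the paper takes, which is simply to cite the uniform H\"{o}lder regularity of unstable holonomies from Pugh--Shub--Wilkinson \cite[Thm~A]{psw1997}; the paper gives no further argument beyond that citation. You have usefully supplied the two details that the paper leaves implicit: that the H\"{o}lder exponent and constant produced by \cite{psw1997} depend only on the hyperbolicity data, which Theorem \ref{thm:persistence_of_splitting} makes uniform over a $C^1$-neighborhood, and that the multiplicative constant can be absorbed into the exponent by shrinking $\on{Nbhd}(P)$ so as to obtain the constant-free inequality in the lemma's statement.
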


\subsection{Cone-Fields} There is an alternative formulation of dominated splittings in terms of cone-fields. Cone-fields will also be used extensively in the definition and construction of blenders. 

\begin{definition}[Cone Fields] A continuous \emph{cone-field} $K$ on a manifold $X$ is a bundle of the form
\[
K = \big\{v \; : \; Q(v) \le 0\big\} \qquad\text{for a continuous, non-degenerate quadratic form }Q:TX \to \R
\]
A diffeomorphism $\Phi:X \to Y$ induces a natural pushforward cone-field
\[
\Phi_*K \qquad\text{with fiber} \qquad \Phi_*K_x = T\Phi_x(K_{\Phi^{-1}(x)})
\]
The \emph{interior} $\on{int} K$ of a cone-field $K$ is the union of the (fiberwise) interior of $K$ and the zero section.  \end{definition}

\begin{example}[Metric Cones] \label{ex:metric_cones} The cone-field $K_\epsilon E$ of width $\epsilon$ around a sub-bundle $E \subset TX$ of the tangent bundle of a Riemannian manifold $(X,g)$ is defined by
\[
K_\epsilon E := \big\{v \in TX \; : \; |v - \pi_E(v)|_g \le \epsilon \cdot |\pi_E(v)|_g\big\} \qquad\text{where $\pi_E$ is orthogonal projection to $E$}
\]
A cone-field $K$ has \emph{width less than $\epsilon$ with respect to $g$} if $K \subset K_\epsilon E$ for some linear sub-bundle $E \subset TX$. \end{example}

\begin{definition}[Contraction/Dilation] Fix a subset $A \subset X$ and a cone-field $K$ over $X$. A diffeomorphism $\Phi$ \emph{contracts $K$ over $A$} if
\[\Phi_*(K|_A) \subset \on{int} K\]
Given a Riemannian metric $g$, we say that $\Phi$ \emph{dilates} $K$ over $A$ with constant of dilation $\lambda > 0$ if
\[
\lambda \cdot |v|_g \le |\Phi_*v|_g \qquad\text{for every }v \in K|_A
\]\end{definition}

\begin{thm}[Invariant Cone-Fields] \label{thm:invariant_cone_field} (c.f. \cite[\S 2.2]{crovisier2015introduction}) Let $\Phi:Y \to Y$ be a diffeomorphism with a dominated splitting $TY = D \oplus E$. Then for any Riemannian metric $g$ and any $\epsilon > 0$, there is an $N > 0$ such that
\[\Phi^n \text{ contracts }K_\epsilon E \text{ for all }n \ge N\]
Moreover, if $E$ is uniformly expanding, then for any $\lambda > 1$, we may choose $N$ so that
\[
\Phi^n \text{ dilates }K_\epsilon E \text{ with dilation factor $\lambda$ for all }n \ge N
\]\end{thm}

\subsection{Blenders} \label{subsec:blenders} A blender is a type of robust hyperbolic set within a dynamical system, introduced by Bonatti-Diaz \cite{bonattidiaz1995}. Blenders play a central role in the construction of robustly mixing maps in the partially hyperbolic setting.

\vspace{3pt}

In this section, we precisely define blenders (Definition \ref{def:blender}) and associated structures. We start with the notion of a blender box, which is a chart where the blender structure will reside.

\begin{definition}[Blender Box] \label{def:blender_box} A \emph{blender box} $B$ of type $(k,l)$ in an $n$-manifold $X$ is a $C^1$-embedded $n$-manifold with corners with coordinates
\[
(s,t,u):B \xrightarrow{\sim} D^k_s \times D^1_t \times D^l_u \subset \R^n
\]  
Here $D^m_x$ denotes a closed metric $m$-ball of some (unspecified) radius in $\R^m_x$, with coordinates $x_i$. The boundary of $B$ has distinguished subsets that we denote as follows.
\[
\partial^sB = \partial D^k \times D^1 \times D^l \qquad \partial^cB = D^k \times \partial D^1 \times D^l \qquad \partial^u B = D^k \times D^1 \times \partial D^l
\]
These are the \emph{stable}, \emph{central} and \emph{unstable} boundaries, respectively. The boundary of $D^1$ is a union of two points, a negative point $\partial^-D^1$ and a positive point $\partial^+D^1$. We also fix the notation
\[
\partial^l B = D^k \times \partial^- D^1 \times D^l \qquad\text{and}\qquad \partial^r B = D^k \times \partial^+ D^1 \times D^l 
\]
These subsets are the \emph{left-side} and \emph{right-side} of $B$, respectively. \end{definition} 

Next, we introduce the notion of compatible cone-fields and vertical/horizontal disks. These structures can be viewed as local (and more flexible) versions of the invariant bundles and foliations associated to a partially hyperbolic map.

\begin{definition}[Compatible Cones] \label{def:compatible_cones} A triple of smooth cone-fields $K^s, K^{cu}$ and $K^u$ on $X$ are \emph{compatible} with a blender box $B$ if there are inclusions
\[TD^k \times D^1 \times D^l \subset K^s \qquad D^k \times TD^1 \times TD^l \subset K^{cu} \qquad D^k \times D^1 \times TD^l \subset K^u\] We refer to these cone-fields as \emph{stable, central-unstable} and \emph{unstable} cones for $B$, respectively. \end{definition}

\begin{definition}[Vertical/Horizontal Disks] An embedded $k$-disk $D \subset B$ is called \emph{vertical} with respect to an unstable cone-field $K^u$ if
\[
TD \subset K^u \qquad \text{and}\qquad \partial D \subset \partial^u B
\]
Similarly, an embedded $l$-disk $D \subset B$ is called \emph{horizontal} with respect to a stable cone-field $K^s$ if
\[TD \subset K^s \qquad\text{and}\qquad \partial D \subset \partial^sB\]
Given a horizontal disk $H \subset B$ that is disjoint from $\partial^u B$, there are precisely two homotopy classes of vertical disk disjoint from $H$, corresponding to the disks
\[
D_{\on{Right}} = 0_s \times +1 \times D^l \qquad \text{and} \qquad D_{\on{Left}} = 0_s \times -1 \times D^l
\]
A vertical disk $D$ is \emph{right} of $H$ if it is homotopic to $D_{\on{Right}}$ and \emph{left} of $H$ if it is homotopic to $D_{\on{Left}}$. \end{definition}

It will be useful to record some basic properties of vertical disks in the following lemmas. These are entirely elementary and left to the reader (also see \cite[\S 1]{bonattidiaz1995} or \cite{bonatti2004dynamics}).

\begin{lemma}[Vertical Manifolds] \label{lem:vertical_manifolds_disks} Let $\Sigma \subset B$ be any smooth sub-manifold in a blender box $B$ with
\[
T\Sigma \subset K^u \qquad\text{and}\qquad \partial \Sigma \subset \partial^u B
\]
Then $\Sigma$ is diffeomorphic to an $l$-disk, and therefore is a vertical disk. \end{lemma}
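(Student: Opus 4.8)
The plan is to prove that any submanifold $\Sigma \subset B$ whose tangent spaces lie in the unstable cone $K^u$ and whose boundary lies in $\partial^u B = D^k \times D^1 \times \partial D^l$ must in fact be an $l$-disk sitting as a vertical disk. First I would observe that the compatibility condition $D^k \times D^1 \times TD^l \subset K^u$ together with the cone having \emph{width less than some $\epsilon$} around the sub-bundle $0 \oplus 0 \oplus TD^l$ forces $K^u$ to miss all of $TD^k \times 0 \times 0$ and $0 \times TD^1 \times 0$; in particular, writing the coordinate projection $\pi_u : B \to D^l$, the restriction $\pi_u|_\Sigma$ is an immersion, since $T\Sigma \subset K^u$ and $K^u$ contains no vector with vanishing $u$-component. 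Hence $\dim \Sigma \le l$. Conversely I would argue that $\Sigma$ must have dimension exactly $l$: the key point is that $\partial \Sigma \subset \partial^u B$ and $\Sigma \not\subset \partial^u B$ (a submanifold tangent to $K^u$ cannot be contained in the boundary face $D^k \times D^1 \times \partial D^l$, since vectors tangent to that face have $u$-component tangent to $\partial D^l$, which combined with the cone width forces a contradiction unless the dimension is smaller — so the genuinely $l$-dimensional pieces reach the boundary transversally), so $\Sigma$ is an honest $l$-manifold with nonempty boundary on $\partial^u B$.

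Next I would promote the immersion $\pi_u|_\Sigma$ to a diffeomorphism. Since $\pi_u|_\Sigma : \Sigma \to D^l$ is an immersion between $l$-manifolds and maps $\partial \Sigma$ into $\partial D^l$, it is a proper map (as $\Sigma$ is closed in the compact $B$, hence compact), so $\pi_u|_\Sigma$ is a covering map onto its image. I would then check that $\pi_u|_\Sigma$ sends interior to interior and boundary to boundary, so it restricts to a covering $\on{int}\Sigma \to \on{int} D^l$; since $D^l$ is a ball and in particular simply connected, this covering is trivial of some degree $d$, i.e.\ $d$ disjoint copies of the open ball. Then I would rule out $d > 1$ and rule out that $\Sigma$ consists of several components: any component $\Sigma_0$ of $\Sigma$ with $\pi_u|_{\Sigma_0}$ a degree-$d$ cover of $D^l$ must, because $\Sigma_0$ is tangent to the graphical cone $K^u$ (which has width $<\epsilon$ around $TD^l$), be a graph over $D^l$ of a map $D^l \to D^k \times D^1$ — this is the standard fact that a submanifold tangent to a narrow cone around a coordinate direction, once it covers the base, is a single-sheeted graph, proved by following horizontal paths and using the cone width to bound the $(s,t)$-variation. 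A graph is a section, so $d = 1$ and $\Sigma_0$ is connected and diffeomorphic to $D^l$; finally, distinct components would give distinct graphs, but then $\partial \Sigma_0 \subset \partial^u B$ for each, and I would note this is consistent — however, Lemma~\ref{lem:vertical_manifolds_disks} as stated asserts $\Sigma$ itself (connected, as "a smooth sub-manifold" and "an $l$-disk") is a vertical disk, so I would assume connectedness of $\Sigma$ from the outset or observe it follows because $\pi_u|_\Sigma$ is a connected (degree-one) cover.

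It remains to check $\Sigma$ is a \emph{vertical disk} in the precise sense of the definition: $T\Sigma \subset K^u$ holds by hypothesis, and $\partial \Sigma \subset \partial^u B$ holds by hypothesis, so actually \emph{vertical disk} is exactly the conjunction of these two conditions plus being an $l$-disk. Thus once we know $\Sigma$ is diffeomorphic to $D^l$, there is nothing more to verify — the lemma reduces to the diffeomorphism-type statement. So the real content is: (i) $\pi_u|_\Sigma$ is an immersion (immediate from $T\Sigma\subset K^u$ and the cone being a narrow cone around the $u$-direction), (ii) it is a covering of $D^l$ (properness plus immersion plus boundary-to-boundary), and (iii) it is single-sheeted, i.e.\ a graph (cone-width argument along horizontal paths). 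The main obstacle I anticipate is step (iii): one must set up the cone-width hypothesis carefully — the definition of \emph{compatible cones} only gives the containment $D^k \times D^1 \times TD^l \subset K^u$, and one needs the complementary fact that $K^u$ is a \emph{genuine} (narrow, or at least proper) cone so that it contains no vectors with zero $u$-component and so that submanifolds tangent to it are graphical; I would extract this from the hypothesis that all the cone-fields in a blender are smooth and from the intended geometry (or, if the paper's conventions guarantee narrowness of the unstable cone, cite that), and then the graphing argument is the routine "integrate along the base, the fibre coordinate varies by at most $\epsilon \cdot \text{(diameter)}$" estimate, which I would not grind through in detail since the lemma is explicitly flagged as elementary.
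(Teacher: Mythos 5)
The paper does not prove this lemma at all: immediately before the statement of Lemmas~\ref{lem:vertical_manifolds_disks} and~\ref{lem:vertical_disks_graphs} it says ``These are entirely elementary and left to the reader,'' with a pointer to \cite{bonattidiaz1995,bonatti2004dynamics}. So there is no paper argument to compare against, and the task reduces to assessing your proof on its own.

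Your argument is essentially the right one, and its skeleton (cone containment forces $\pi_u|_\Sigma$ to be an immersion; compactness plus boundary-to-boundary makes it a proper local diffeomorphism on interiors; $D^l$ simply connected makes the covering trivial; the cone-width graph estimate pins it down as a single-sheeted graph, hence a disk) is the standard way to prove such a statement and also dovetails with Lemma~\ref{lem:vertical_disks_graphs}. Two points deserve tightening. First, your attempt to \emph{derive} $\dim\Sigma = l$ does not quite work: a $0$-dimensional interior point, or more generally any lower-dimensional submanifold tangent to $K^u$ with empty boundary, vacuously satisfies both hypotheses, so there is nothing internal forcing the dimension. The lemma must be read with $\dim\Sigma = l$ (and $\Sigma$ connected, compact) as implicit hypotheses, consistent with how it is used in the paper --- it is cleaner to simply state them rather than try to deduce them. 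Second, the remark that connectedness ``follows because $\pi_u|_\Sigma$ is a connected (degree-one) cover'' is circular, since the degree being one is exactly what connectedness plus simple connectivity of $D^l$ would give you; just take connectedness as a hypothesis. With those two hypotheses made explicit, the proof is correct, and you are right that once $\Sigma \cong D^l$ is established, the ``vertical disk'' conclusion is just a restatement of the hypotheses against the definition, so the whole content is the diffeomorphism type.
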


\begin{lemma}[Graphs] \label{lem:vertical_disks_graphs} Let $D$ be a vertical disk in a blender box $B$ with respect to an unstable cone-field $K^u$ of width $\epsilon$. Then $D$ is the graph
    \[
    D \subset D^k_s \times D^1_t \times D^l_u \qquad\text{of a $2\epsilon$-Lipschitz map }f:D^l_u \to D^k_s \times D^1_t
    \]
\end{lemma}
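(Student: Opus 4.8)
The plan is to show that the projection $\pi = \pi_{D^l_u} : D \to D^l_u$ onto the unstable factor is a diffeomorphism, and then read off the graph description and the Lipschitz bound from the cone condition. First I would unpack the cone hypothesis: since $D$ is vertical, $TD \subset K^u$, and $K^u$ has width $\epsilon$, meaning $K^u \subset K_\epsilon E$ for the sub-bundle $E$ spanned by the $u$-coordinate directions (by Definition~\ref{def:compatible_cones} together with Example~\ref{ex:metric_cones}). Concretely this says that for every $v \in TD$, writing $v = v_{st} + v_u$ in the splitting $T(D^k_s \times D^1_t) \oplus TD^l_u$, one has $|v_{st}| \le \epsilon |v_u|$ (after possibly using that in the product chart the cone width constant can be arranged; if the width is stated with respect to the Euclidean metric of the chart this is immediate, and otherwise one absorbs the comparison constant, which is where the factor $2\epsilon$ rather than $\epsilon$ comes from). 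In particular $v_u \ne 0$ whenever $v \ne 0$, so $d\pi|_D$ is injective at every point, hence $\pi|_D$ is an immersion; since $\dim D = l = \dim D^l_u$, it is a local diffeomorphism.

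Next I would promote "local diffeomorphism" to "diffeomorphism onto $D^l_u$" using the boundary condition $\partial D \subset \partial^u B = D^k \times D^1 \times \partial D^l$. The key point is that $\pi|_D$ is a proper local diffeomorphism that maps $\partial D$ into $\partial D^l_u$ and $\mathrm{int}\, D$ into $\mathrm{int}\, D^l_u$ (the latter because a boundary point of $D$ mapping to the interior would violate, via the inverse function theorem for manifolds with boundary, the fact that $D$ is a manifold with boundary). A proper local homeomorphism between connected manifolds is a covering map; since $D^l_u$ is a disk, hence simply connected, $\pi|_D$ is a homeomorphism, and combined with the local-diffeo property it is a diffeomorphism $D \xrightarrow{\sim} D^l_u$ (one should check properness, which is automatic since $D$ is compact — it is a closed disk, being an embedded vertical disk in the compact blender box). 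Alternatively, and perhaps more cleanly, I would invoke Lemma~\ref{lem:vertical_manifolds_disks} to know $D$ is an $l$-disk, then use a degree/connectedness argument: $\pi|_D : D \to D^l_u$ restricts to a local diffeo $\mathrm{int}\,D \to \mathrm{int}\,D^l_u$ and to a local diffeo $\partial D \to \partial D^l_u = S^{l-1}$; the latter is a covering of a sphere (or a circle) by a sphere (or circle) of the same dimension, hence a diffeomorphism when $l \ge 2$ (and for $l = 1$ one argues directly), and then $\pi|_D$ on the interior is a proper local diffeo with homeomorphic boundary restriction, forcing it to be a global diffeomorphism.

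Finally, given that $\pi|_D : D \xrightarrow{\sim} D^l_u$ is a diffeomorphism, I set $f = (\pi_{D^k_s \times D^1_t}|_D) \circ (\pi|_D)^{-1} : D^l_u \to D^k_s \times D^1_t$, which is smooth, and $D = \mathrm{Gr}(f)$ by construction. The Lipschitz bound comes from differentiating: at a point, $df$ sends a tangent vector $w \in T D^l_u$ to $v_{st}$ where $v = v_{st} + w \in TD$ is the unique tangent vector of $D$ projecting to $w$; the cone condition $|v_{st}| \le \epsilon|w|$ (or $\le 2\epsilon |w|$ after the metric comparison) gives $\|df\| \le 2\epsilon$ pointwise, and integrating along paths in the convex domain $D^l_u$ gives that $f$ is $2\epsilon$-Lipschitz. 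The main obstacle I anticipate is the bookkeeping in the second step — rigorously passing from "proper local diffeomorphism respecting boundary strata" to "global diffeomorphism onto the disk" — and being careful that the cone width, which is defined metrically, translates into the claimed Lipschitz constant in the product coordinates; this is exactly the source of the harmless factor of $2$ in the statement, so I would not belabor it beyond noting the metric-comparison estimate on the compact chart.
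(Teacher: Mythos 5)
The paper does not actually prove this lemma: both this statement and Lemma~\ref{lem:vertical_manifolds_disks} are explicitly declared ``entirely elementary and left to the reader,'' with a pointer to \cite{bonattidiaz1995} and \cite{bonatti2004dynamics}. Your argument is the intended one and is correct: the cone condition $TD \subset K^u$ forces the projection $\pi : D \to D^l_u$ to be an immersion (hence a local diffeomorphism, since $\dim D = l$), the boundary condition $\partial D \subset \partial^u B$ together with compactness of $D$ and simple connectedness of $D^l_u$ upgrades this to a global diffeomorphism, and the Lipschitz bound is read off from the cone width, with the metric comparison between the axis sub-bundle $E$ defining the width and the coordinate sub-bundle $TD^l_u$ (which need only be $\epsilon$-close, not equal) accounting for the factor of $2$ in the statement — exactly as you flagged.
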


We are now prepared to introduce our operating definition of a blender (Definition \ref{def:blender}). We warn the reader that this definition is quite cumbersome, since it is entirely functional and tailored to the specific application. We will discuss more motivation in Remark \ref{rmk:hypotheses_of_blender} below.

\begin{definition}[Blender] \label{def:blender} Let $\Phi:X \to X$ be a $C^1$-diffeomorphism and let $B \subset X$ be a blender box (see Definition \ref{def:blender_box}). The pair
\[(B,\Phi)\]
is a \emph{simple stable blender} (or just a \emph{stable blender}) if it satisfies the following properties.
\begin{enumerate}[label=(\alph*)]
\item There is a connected component $A$ of $B \cap \Phi(B)$ that is disjoint from
\[\partial^s B \qquad \Phi(\partial^c B) \quad\text{and}\quad \Phi(\partial^u B)\]
\item There is an $m \in \N$ and a connected component $A'$ of $B \cap \Phi^m(B)$ that is disjoint from
\[\partial^r B \qquad \partial^s B \quad\text{and}\quad \Phi(\partial^uB)\]
\end{enumerate}
Moreover, there are constants $\mu > 1 > \epsilon$ and a compatible set of cone-fields $K^s,K^{cu}$ and $K^u$ on $Y$ of width less than $\epsilon$ (with respect to the standard metric on $B$) such that

\vspace{3pt}

\begin{enumerate}[label=(\alph*)] \setcounter{enumi}{2}

\item The cone-fields $K^s$ and $K^u$  are contracted and dilated (with constant $\mu$) by $\Phi^{-1}$ and $\Phi$.
\[\Phi^{-1}_*K^s \subset \on{int} K^s \qquad \Phi_*K^u \subset \on{int} K^u \qquad \Phi^{-1} \text{ dilates }K^s \qquad \Phi \text{ dilates }K^u 
\]

\item The cone-field $K^{cu}$ is contracted and dilated (with constant $\mu$) by $\Phi$ as follows.
\[\Phi_*K^{cu} \subset \on{int} K^{cu}  \quad\text{and}\quad \Phi \text{ dilates }K^{cu} \qquad\text{ over }\Phi^{-1}(A)\]
\[\Phi_*^mK^{cu} \subset \on{int} K^{cu}  \quad\text{and}\quad \Phi^m \text{ dilates }K^{cu} \qquad\text{ over }\Phi^{-m}(A')\]
\end{enumerate}
Finally, in any box $B$ equipped with such cone-fields $K^\bullet$, $\Phi$ has a unique hyperbolic fixed point $Q$ in the component $A$ of index $l$ (via \cite[Lemma 1.6]{bonattidiaz1995}), and the local stable manifold
\[
W := W^s_{\on{loc}}(Q,\Phi;B) \subset W^s(Q,\Phi) \cap B
\]
is a horizontal $k$-disk in the blender box $B$. We further assume that there are neighborhoods
\[
U_- \text{ of }\partial^l B \qquad U_+ \text{ of }\partial^r B \qquad U \text{ of }W
\]
satisfying the following assumptions.
\begin{enumerate}[label=(\alph*)] \setcounter{enumi}{4}
    \item Every vertical disk $D$ through $B$ to the right of $W$ is disjoint from $U_-$.
    \vspace{3pt}
    \item Every vertical disk $D$ through $B$ to the right of $W$ satisfies one of two possibilities.
    \vspace{3pt}
    \begin{enumerate}
        \item The intersection $\Phi(D) \cap A$ contains a vertical disk $D'$ through $B$ to the right of $W$ and disjoint from $U_+$.
        \vspace{3pt}
        \item The intersection $\Phi^m(D) \cap A'$ contains a vertical disk $D'$ through $B$ to the right of $W$ and disjoint from $U$.
    \end{enumerate}
\end{enumerate}
\vspace{3pt}
The pair $(B,\Phi)$ is an \emph{simple unstable blender} if the pair $(B,\Phi^{-1})$ is a simple stable blender. \end{definition}

\begin{remark}[Blender Property] \label{rmk:hypotheses_of_blender}  Definition \ref{def:blender} is best understood as being specifically tailored to enforce the following \emph{distinctive blender property} (c.f. \cite[Lem 6.6 and \S 6.2.2]{bonatti2004dynamics}): 
\begin{equation} \label{eq:distinctive_blender_property} \text{Every vertical disk to the right of $W$ must intersect the (global) stable manifold of $Q$}\end{equation}
This property has the following easy consequence (\cite[Lem 1.8]{bonattidiaz1995} or \cite[Lem 6.8]{bonatti2004dynamics}): if $P$ is a hyperbolic periodic point with unstable manifold $W^u(P,\Phi)$ intersecting the blender box along a vertical disk to the right of $W$, then
\[W^s(P,\Phi) \subset \on{close}(W^s(Q,\Phi))\]
If $W^s(P,\Phi)$ has larger dimension than $W^s(Q,\Phi)$, then this says that $W^s(Q,\Phi)$ is bigger than expected. Since the distinctive property is robust, this can lead to robust transitivity properties of the global dynamics of $\Phi$ \cite[\S 7.1.3]{bonatti2004dynamics}. 

\vspace{3pt}

More contemporary accounts of blenders (c.f. \cite{whatisblender} and \cite[\S 6-7]{bonatti2004dynamics}) take the distinctive property (or a related property) as the \emph{definition} of a blender. With this in mind, we highly encourage the reader to view Definition \ref{def:blender} as a concrete list of criteria leading to the distinctive property (\ref{eq:distinctive_blender_property}). \end{remark}

\begin{remark}[Blender Variants] \label{rmk:altdef_of_blender} Several alternative definitions have been introduced since \cite{bonattidiaz1995} (cf. \cite[\S 6.2.2]{bonatti2004dynamics}). Definition \ref{def:blender} modifies the original definition of Bonatti-Diaz \cite[p. 365]{bonattidiaz1995} by replacing the hypothesis \cite[p. 365 H3]{bonattidiaz1995} with the streamlined hypothesis (c-d). 

\end{remark}

\section{Contact Blenders And Robustly Mixing Contactomorphisms} \label{sec:robustly_mixing_contactomorphisms}

In this section, we give a precise statement of the existence theorem for contact blenders in Theorem \ref{thm:heteroclinic_contact_blender}. We then prove Theorem \ref{thm:main_anosov} on the existence of robustly mixing contactomorphisms using contact blenders. The proof of Theorem \ref{thm:heteroclinic_contact_blender} is deferred to Sections \ref{sec:blender_construction} and \ref{sec:proof_of_blender_axioms}.

\subsection{Statement Of Contact Blender Theorem} We start by stating our existence theorem for contact blenders. We require the following setup.

\begin{setup}[Blender Setup] \label{set:blender_setup} Let $(Y,\xi)$ be a contact $(2n+1)$-manifold with contact form $\alpha$ and Reeb vector-field $R$. Fix a strict contactomorphism
\[\Phi:Y \to Y\]
that satisfies the following properties.
\begin{enumerate}[label=(\alph*)]
    \item The contactomorphism $\Phi$ is partially hyperbolic with stable, central and unstable splitting
    \[E^s(\Phi) \oplus E^c(\Phi) \oplus E^u(\Phi) \qquad\text{with}\qquad \xi = E^s(\Phi) \oplus E^u(\Phi) \quad\text{and}\quad E^c(\Phi) = \on{span}(R)\]
    \item There is a closed Reeb orbit $\Gamma \subset Y$ that is a normally hyperbolic set of fixed points of $\Phi$.
    \vspace{3pt}
    \item There is a neighborhood $V$ of $\Gamma$ and a smooth, integrable sub-bundle
    \[E_{\on{sm}}^s(\Phi) \subset \xi \quad\text{with foliation}\quad F_{\on{sm}}^s(\Phi) \qquad\text{ over }V\]
    that is uniformly contracted by $\Phi$ and that agrees with $E^s(\Phi)$ on $W^s(\Gamma,\Phi)$.

    \vspace{3pt}
    
    \item There are two points $P$ and $Q$ in $\Gamma$ such that the stable and unstable manifolds
    \[W^s(Q,\Phi) \qquad\text{and}\qquad W^u(P,\Phi)\]
    intersect cleanly along a heteroclinic orbit $\chi$ of $\Phi$ orbit from $P$ to $Q$.
\end{enumerate}
\end{setup}

\begin{remark} A contactomorphism satisfying Setup \ref{set:blender_setup} can be constructed by taking the time $T$ map of a (slight perturbation of) an Anosov Reeb flow with $C^\infty$ stable and unstable foliations. This will be discussed in the next part,  Section \ref{subsec:construction_of_RMCont}.
\end{remark}

The main existence result for contact blenders that we will use can now be stated as follows.

\begin{thm}[Heteroclinic Contact Blender] \label{thm:heteroclinic_contact_blender} For any contactomorphism $\Phi:Y \to Y$ as in Setup \ref{set:blender_setup}, there is an integer $N > 0$ and a smooth family of contactomorphisms
\[
\Psi:[0,1]_r \times Y \to Y \qquad\text{with}\qquad \Psi_0 = \Phi
\]
satisfying the following properties for all sufficiently small $r > 0$.
\begin{enumerate}[label=(\alph*)]
\item The points $P$ and $Q$ are hyperbolic fixed points of $\Psi_r$ of index $n+1$ and $n$, respectively.
\vspace{3pt}
\item There is a neighborhood $B_r(Q)$ of $Q$ such that $(B_r(Q),\Psi_r^N)$ is a stable blender. 
\vspace{3pt}
\item The intersections $W^u(P,\Psi_r) \cap B_r(Q)$ contains a vertical disk $D_Q$ to the right of $W^s(Q,\Psi_r)$.

\end{enumerate}
\end{thm}

\noindent We reiterate that the proof of Theorem \ref{thm:heteroclinic_contact_blender} is presented in the last two sections (Sections \ref{sec:blender_construction}-\ref{sec:proof_of_blender_axioms}).

\subsection{Construction Of Robustly Mixing Contactomorphisms} \label{subsec:construction_of_RMCont} We now prove Theorem \ref{thm:main_anosov} on the existence of robustly mixing contactomorphisms. Given the existence of a blender in the contact setting, this is a standard argument that follows the smooth case (cf. \cite[Section 4.C] {bonattidiaz1995}). 

\begin{thm}[Theorem \ref{thm:main_anosov}] \label{thm:body_anosov} Let $(Y,\xi)$ be a closed contact manifold admitting an Anosov Reeb flow $\Phi$ with $C^\infty$ stable and unstable foliations.  Then the $C^1$-open set of robustly mixing contactomorphisms
\[
\on{Cont}_{\on{RM}}(Y,\xi) \subset \on{Cont}(Y,\xi)
\]
is non-empty. Moreover, if $T$ is the period of a closed Reeb orbit of $\Phi$, then $\Phi_T$ is in the closure of this set.
\end{thm}

\begin{proof} Since $\Phi$ is Reeb Anosov, it must be transitive by the Plante alternative \cite[Thm 8.1.3 and 8.1.4]{fh2019hyperbolic}. A transitive Anosov flow must also have a dense set of closed orbits \cite[Thm 6.2.10]{fh2019hyperbolic}. 

\vspace{3pt}

Fix a closed orbit $\Gamma$ of period $T$ and an open neighborhood $U$ of $\Gamma$. Also pick an auxilliary orbit $\Xi$ with a neighborhood $V$. Let $\alpha$ denote the contact form with Reeb vector-field generating the Reeb flow $\Phi$ and choose a $1$-parameter family of contact forms $\alpha_s$ on $Y$ such that
\[\alpha_0  = \alpha \qquad \alpha_s|_U = \alpha \quad\text{and}\quad \alpha_s|_V = (1+s) \cdot \alpha\]
Let $\Phi^s$ denote the Reeb flow of $\alpha_s$. Anosov flows are $C^1$-structurally stable (\cite{a1963} or \cite[Thm 5.4.22]{fh2019hyperbolic}), and thus $\Phi_s$ is a smooth Anosov Reeb flow for sufficiently small $s$. Moreover, $\Gamma$ and $\Xi$ are orbits of $\Psi^s$ for all $s$.

\vspace{3pt}

Thus, choose an $s$ such that $\Phi^s$ is Reeb Anosov and such that $\Xi$ has a period that is not a multiple of $T$ with respect to $\Phi^s$. We let $\Psi$ be the time $T$ map of $\Phi^s$ and note that
\begin{itemize}
    \item $\Psi$ is a strict, partially hyperbolic contactomorphism with stable and unstable bundle equal to those of $\Phi^s$ and central bundle given by the span of the Reeb vector-field of $\alpha_s$.
    \vspace{3pt}
    \item $\Gamma$ is a closed Reeb orbit that is a normally hyperbolic fixed set of $\Psi$.
    \vspace{3pt}
    \item The stable bundle $E^s(\Phi)$ of $\Phi_T$ is smooth, integrable and uniformly contracted by $\Psi$ on $U$ since $\Psi$ agrees with $\Phi_T$ on $U$. 
\end{itemize}
Note also the local stable manifolds of $\Psi$ and $\Phi$ for the set $\Gamma$ (or for any point $P \in \Gamma$) agree.
\[W^s_{\on{loc}}(\Gamma,\Psi;U) = W^s_{\on{loc}}(\Gamma,\Phi_T;U) \qquad\text{and}\qquad W^s_{\on{loc}}(P,\Psi;U) = W^s_{\on{loc}}(P,\Phi_T;U)  \]
Since the points $P$ of $\Gamma$ are fixed, the stable manifold of $P$ is equal to the stable leaf of $P$ with respect to both $\Psi$ and $\Phi_T$. It follows that
\[
F^s(\Psi,P) = F^s(\Phi,P) \qquad\text{ on }W_{\on{loc}}^s(\Gamma,\Psi;U)
\]
This verifies the criteria in Theorem \ref{thm:heteroclinic_contact_blender}(a-c). To check Theorem \ref{thm:heteroclinic_contact_blender}(d) we apply the following lemma of Bonatti-Diaz. 

\begin{lemma} \cite[Lemma 4.3]{bonattidiaz1995} Let $\Psi$ be a partially hyprbolic map and $\Gamma,\Xi$ be closed orbits of different period, as constructed above. Then there are a pair of points $P,Q \in \Gamma$ and a heteroclinic orbit from $P$ to $Q$.  
\end{lemma}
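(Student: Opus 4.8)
The plan is to read off the heteroclinic orbit from the transitivity of the Anosov Reeb flow $\Phi^s$, which was established above via the Plante alternative (and which holds equally for $\Phi^s$, since $\Phi^s$ is again a Reeb Anosov flow). The key reduction is that every point $P$ of $\Gamma$ is fixed by $\Psi = \Phi^s_T$, and since $T$ is the period of $\Gamma$, the backward $\Psi$-orbit of a point $x$ converges to $P$ exactly when $x$ lies on the strong unstable leaf of $P$ for the flow; thus $W^u(P,\Psi) = W^{uu}(P,\Phi^s)$ and, symmetrically, $W^s(Q,\Psi) = W^{ss}(Q,\Phi^s)$ for $P,Q \in \Gamma$. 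Consequently $W^u(\Gamma,\Psi)$ and $W^s(\Gamma,\Psi)$ are precisely the weak unstable and weak stable manifolds $W^{wu}(\Gamma,\Phi^s) = \bigcup_{P \in \Gamma}W^{uu}(P,\Phi^s)$ and $W^{ws}(\Gamma,\Phi^s) = \bigcup_{Q \in \Gamma}W^{ss}(Q,\Phi^s)$ of the orbit $\Gamma$, and producing a heteroclinic orbit of $\Psi$ from some $P \in \Gamma$ to some $Q \in \Gamma$ amounts to finding a point of $W^{wu}(\Gamma,\Phi^s) \cap W^{ws}(\Gamma,\Phi^s)$ that does not lie on $\Gamma$ and whose two determining endpoints on $\Gamma$ are distinct.

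First I would invoke the classical fact that for a topologically transitive Anosov flow the strong stable and strong unstable manifolds of every orbit are dense in $Y$ (see \cite{fh2019hyperbolic}). Fixing $P \in \Gamma$, the leaf $W^{uu}(P,\Phi^s)$ is therefore dense, so by the local product structure of the flow it meets the weak stable manifold $W^{ws}(\Gamma,\Phi^s)$ in a set that is dense in the $(\dim Y - n)$-dimensional manifold $W^{ws}(\Gamma,\Phi^s)$, where $n$ is the rank of the unstable bundle. This intersection is far larger than the at most countable set $W^{uu}(P,\Phi^s) \cap W^{ss}(P,\Phi^s)$ of homoclinic points of $P$, so we may choose a point $z$ in $W^{uu}(P,\Phi^s) \cap W^{ws}(\Gamma,\Phi^s)$ lying neither on $\Gamma$ nor on the leaf $W^{ss}(P,\Phi^s)$. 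Then $z \in W^{uu}(P,\Phi^s) \cap W^{ss}(Q,\Phi^s)$ for a unique $Q \in \Gamma$, necessarily with $Q \neq P$, and the full $\Psi$-orbit $\chi = \{\Psi^k(z) : k \in \Z\}$ lies in $W^u(P,\Psi) \cap W^s(Q,\Psi)$: a heteroclinic orbit of $\Psi$ from $P$ to $Q$.

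It remains to see that the intersection is clean along $\chi$. At any $w \in \chi$ we have $T_w W^u(P,\Psi) = E^u(\Phi^s)_w$ and $T_w W^s(Q,\Psi) = E^s(\Phi^s)_w$; since the Anosov splitting satisfies $E^s \oplus E^0 \oplus E^u = TY$ with the neutral line $E^0$ spanned by the Reeb field and $E^s \oplus E^u = \xi$, the subspaces $E^s_w$ and $E^u_w$ are complementary inside $\xi_w$ and meet only in $0$. Hence $W^u(P,\Psi)$ and $W^s(Q,\Psi)$ meet transversally inside $\xi$, and near each point of $\chi$ their intersection is a $0$-dimensional submanifold, i.e. that point is isolated in the intersection; this is exactly the clean intersection required in Setup \ref{set:blender_setup}(d).

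I expect the only real difficulty to be the middle step: converting density of the strong unstable leaf $W^{uu}(P,\Phi^s)$ into an honest intersection point with $W^{ws}(\Gamma,\Phi^s)$ lying off $\Gamma$ and with $Q \neq P$, since density of two immersed submanifolds does not by itself force them to meet. This rests on the local product structure of the transitive Anosov flow together with the dimension count $\dim W^{uu}(P) + \dim W^{ws}(\Gamma) = \dim Y$; the auxiliary orbit $\Xi$ and the incommensurability of its period play no role here. A secondary point to get right is that $P$ and $Q$ are not hyperbolic fixed points of $\Psi$ but only normally hyperbolic, so "$W^u(P,\Psi)$" must be read as the strong unstable manifold and "clean intersection" as transversality inside the contact hyperplane $\xi$ rather than in all of $Y$.
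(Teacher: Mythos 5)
The paper does not reprove this lemma; it simply cites \cite[Lemma 4.3]{bonattidiaz1995}, where the auxiliary orbit $\Xi$ appears explicitly. Your proposal bypasses $\Xi$ entirely by exploiting density of the \emph{strong} stable and unstable leaves, a phenomenon specific to the contact Anosov setting. That is a legitimate and genuinely cleaner route here, since (as you observe) the incommensurability of $\Xi$'s period plays no role for you; the trade-off is that your argument establishes only the instance of Bonatti-Diaz's lemma needed in this paper, not the lemma in its original generality, where strong leaves need not be dense.

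Two steps need to be repaired, though neither is fatal to the approach. First, the ``classical fact'' that a transitive Anosov flow has dense strong stable and unstable leaves is false as stated: a constant-time suspension of an Anosov diffeomorphism is transitive, yet each strong unstable leaf lies in a single fiber of the suspension and is nowhere dense. Density of strong leaves is equivalent to topological mixing, so you need to argue that $\Phi^s$ is mixing. This is where the contact structure enters: $\Phi^s$ is a Reeb Anosov flow, so $E^s(\Phi^s) \oplus E^u(\Phi^s) = \xi$ is a contact distribution and hence nowhere integrable, and by the Anosov--Plante dichotomy a transitive Anosov flow whose distribution $E^s \oplus E^u$ is not jointly integrable is topologically mixing. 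Second, the cardinality comparison used to choose $z$ off $W^{ss}(P,\Phi^s)$ does not work: $W^{uu}(P,\Phi^s)$ and $W^{ws}(\Gamma,\Phi^s)$ are transverse of complementary dimensions, so their intersection is a discrete, hence countable, set, and density in an $(n+1)$-manifold does not imply uncountability; the intersection is \emph{not} ``far larger'' than the (also countable) homoclinic set $W^{uu}(P,\Phi^s) \cap W^{ss}(P,\Phi^s)$. The correct repair is a continuity argument: the projection $\pi : W^{ws}(\Gamma,\Phi^s) \setminus \Gamma \to \Gamma$, sending $z$ to the unique $Q$ with $z \in W^{ss}(Q,\Phi^s)$, is continuous, and the local-product-structure construction you already use (choose $w \in W^{uu}(P,\Phi^s)$ close to a prescribed $R \in \Gamma$ and slide along the local strong unstable leaf of $w$ to hit the local weak stable leaf of $R$) produces points of $W^{uu}(P,\Phi^s) \cap W^{ws}(\Gamma,\Phi^s)$ whose $\pi$-image is arbitrarily close to $R$. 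In particular the image of $\pi$ on this intersection is dense in $\Gamma$ and therefore contains some $Q \neq P$. The remaining steps --- the identifications $W^u(P,\Psi) = W^{uu}(P,\Phi^s)$ and $W^s(Q,\Psi) = W^{ss}(Q,\Phi^s)$ for fixed points $P,Q \in \Gamma$, and cleanness of the intersection from $E^s_z \cap E^u_z = 0$ inside $\xi_z$ --- are correct as you describe them.
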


The argument now proceeds identically to \cite[p. 395]{bonattidiaz1995} and we recall it here. Apply Theorem \ref{thm:heteroclinic_contact_blender} to acquire a $1$-parameter family of contactomorphisms $\Psi_r$ and an $N > 0$ such that
\begin{itemize}
    \item $\Psi_r$ has hyperbolic fixed points $P$ and $Q$ on $\Gamma$ of index $n+1$ and $n$, respectively. 
    \vspace{3pt}
    \item There is a neighborhood $B_r(Q)$ of $Q$ such that $(B_r(Q),\Psi_r^N)$ is a stable blender.
    \vspace{3pt}
     \item The intersection $W^u(P,\Psi_r) \cap B_r(Q)$ contains a vertical disk $D_Q$ to the right of $W^s(Q,\Psi_r)$.
\end{itemize}
This implies that the tuple $((B_r(Q),\Psi_r^N),P)$ is a chain of blenders in the sense of Bonatti-Diaz (see \cite[p. 369]{bonattidiaz1995} or \cite[\S 7.1]{bonattidiaz1995}). Lemma 1.12 of \cite{bonattidiaz1995} now states that there is a $C^1$-neighborhood $\mathcal{U}$ of $\Psi_r$ such that, for any $\Psi' \in \mathcal{U}$, there are fixed points $P'$ and $Q'$ (the continuations of $P$ and $Q$, which are non-degenerate hyperbolic fixed points and thus persist in a $C^1$-nieghborhood) such that
\begin{equation} \label{eq:stable_closure}
W^s(P',\Psi') \subset \on{close}(W^s(Q',\Psi')) \end{equation}
Here $\on{close}(-)$ denotes the topological closure. 

\vspace{3pt}

Next, since $\Phi^s$ is Anosov and transitive, the time $T$ map $\Psi$ is partially hyperbolic with well-defined center-stable and center-unstable foliations
\[F^{cs}(\Psi) \text{ tangent to }\on{span}(R) \oplus E^s(\Psi) \qquad\text{and}\qquad F^{cu}(\Psi) \text{ tangent to } \on{span}(R) \oplus E^u(\Psi)\]
Moreover, the center-stable and center-unstable foliations have dense leaves \cite[Thm 6.2.10]{fh2019hyperbolic}. By Hirsch-Pugh-Robinson \cite{hps1977}, these properties are $C^1$-robust. Thus $\Psi'$ is partially hyperbolic with invariant foliations
\[F^{cs}(\Psi') \text{ tangent to }E^c(\Psi') \oplus E^s(\Psi') \qquad\text{and}\qquad F^{cu}(\Psi') \text{ tangent to }E^c(\Psi') \oplus E^u(\Psi')\] 
with dense leaves. Now note that we have the following identifications. 
\[F^{cs}(\Psi',P') = W^s(P',\Psi') \qquad\text{and}\qquad F^{cu}(\Psi',Q') = W^u(Q',\Psi') \]
Indeed, the uniqueness of local invariant manifolds near hyperbolic invariant sets \cite[Thm 4.1(b)]{hps1977} implies equality near $P'$ and $Q'$, and then global invariance implies global equality. In particular, $W^s(P',\Psi')$ and $W^u(Q',\Psi')$ are dense. Moreover, by (\ref{eq:stable_closure}) $W^s(Q',\Psi')$ is also dense.

\vspace{3pt}

\vspace{3pt}

Now \cite[Lem 7.3]{bonatti2004dynamics} states that $\Psi'$ is robustly transitive (and in fact mixing). We recall the argument. Let $U$ and $V$ be neighborhoods in $Y$. Then
\[U \cap W^s(Q',\Psi') \neq \emptyset \qquad\text{and}\qquad V \cap W^u(Q',\Psi') \neq \emptyset\]
since these invariant manifolds are dense. This implies that $(\Psi')^j(U) \cap (\Psi')^{-k}(V)$ is non-empty for all sufficiently large $j$ and $k$. In other words, $\Psi'$ is topologically transitive. \end{proof}

\section{Contact Blender Construction} \label{sec:blender_construction} In this section, we describe the construction of the various objects in Theorem \ref{thm:heteroclinic_contact_blender}. We will use the notation of Setup \ref{set:blender_setup} throughout this section. We also fix the following notation.

\begin{notation}[Reeb Intervals] \label{not:Reeb_intervals} Let $\gamma:[0,T] \to Y$ be an embedded segment of a Reeb trajectory with end-points $x = \gamma(0)$ and $y = \gamma(T)$. Then we use the shorthand
\[
[x,y] = \gamma[0,T] \subset Y
\]
\end{notation}

We begin the section by constructing certain contact chart near a segment of the closed loop $\Gamma$ in Setup \ref{set:blender_setup} (Section \ref{subsec:standard_chart}). We then use this chart to construct the family of perturbations $\Psi$ appearing in Theorem \ref{thm:heteroclinic_contact_blender} (Section \ref{subsec:family_of_contactomorphisms}) and prove various useful properties of the family (Section \ref{subsec:properties_of_the_family}). Finally, we describe the family of blender boxes (Definitions \ref{def:blender_box} and \ref{def:blender}) corresponding to the maps in the family $\Psi$ (Section \ref{subsec:family_of_boxes}).

\subsection{Standard Chart} \label{subsec:standard_chart} We start by constructing a particular standard rectangular neighborhood of the segment within the fixed circle $\Gamma \subset Y$ from the point $Q$ to the point $P$ (see Setup \ref{set:blender_setup}). 

\vspace{3pt}

\begin{lemma}[Standard Chart] \label{lem:standard_chart} Let $V \subset Y$ be an open neighborhood of $\Gamma$. Then there is a smoothly embedded cube $U \subset V$, constants $L,\delta,\epsilon > 0$ and local coordinates
\[
(s,t,u):U \simeq [-3,3]^n_s \times [-\delta,L+\delta]_t \times [-\epsilon,\epsilon]^n_u
\]
satisfying the following properties.

\begin{enumerate}[label=(\alph*)]
    \item The contact form is given by $\alpha|_U = dt - (s_1du_1 + \dots + s_ndu_n)$.
    \vspace{3pt}
    \item The Reeb vector-field $R$ of $\alpha$ is given by $R|_U = \partial_t$.
    \vspace{3pt}
    \item The points $P$ and $Q$ are given by $(0_s,L_t,0_u)$ and $(0_s,0_t,0_u)$ respectively. 
    \vspace{3pt}
    \item The local stable and unstable manifolds $W_{\on{loc}}^s(\Gamma,\Phi;U)$ and $W_{\on{loc}}^u(\Gamma,\Phi;U)$ of $\Gamma \cap U$ are given by
    \[[-3,3]^n_s \times [-\delta,L-\delta]_t \times 0_u \qquad \text{and}\qquad 0_s \times [-\delta,L-\delta]_t \times [-\epsilon,\epsilon]_u\]
    \item The stable and unstable bundles $E^s(\Phi)$ and $F^u(\Phi)$ satisfy
    \[TF^s(\Phi) = T\R^n_s \text{ on }W_{\on{loc}}^s(\Gamma,\Phi;U) \qquad\text{and}\qquad  TF^u(\Phi) = T\R^n_u \text{ on }W_{\on{loc}}^u(\Gamma,\Phi;U) \]
    \item The bundle $T\R^n_s = E^s_{\on{sm}}(\Phi)$ is invariant under $\Phi$ and uniformly contracted by $\Phi$ on $U$.
    \vspace{3pt}
    \item The heteroclinic orbit $\chi$ in Setup \ref{set:blender_setup} contains the point
    \[a = (1_s,0_t,0_u) \qquad\text{where }1_s = (1,0,\dots,0) \in [-3,3]^n_s\]
    \item There is an integer $k > 0$ such that
    \[ \Phi^{-k}(a) \not\in U\]
\end{enumerate}
 \end{lemma}

\begin{proof} We construct this chart in two steps: the construction of a nice transverse hypersurface and the construction of a good chart using that hypersurface. 

\vspace{3pt}

{\bf Step 1: Transverse Hypersurface.} In this step, we construct a nice embedded hypersurface. We shrink $V$ so that the smooth unstable bundle $E^s_{\on{sm}}(\Phi)$ is defined on $V$ (see Setup \ref{set:blender_setup}). Consider the foliations $F^u(\Phi)$ and $F^s_{\on{sm}}(\Phi)$ corresponding to the unstable bundle $F^u(\Phi)$ and the smooth stable bundle $E^s_{\on{sm}}(\Phi)$. These foliations are Legendrian (Lemma \ref{lem:phcontactomorphism_properties}) and $F^u(\Phi)$ is H\"{o}lder with smooth leaves (Theorem \ref{thm:invariant_foliations}).

\vspace{3pt}

Next, choose a ball $\Lambda_u$ in the leaf of $F^u(\Phi;Q)$ containing $Q$. Since $F^u_{\on{sm}}(\Phi)$ is smooth, we may choose an embedded codimension $1$ surface that is contained in the union of the leaves of $F^s_{\on{sm}}(\Phi)$ intersecting $\Lambda_u$. Denote this surface by
\[\Sigma \subset Y \qquad\text{with}\qquad \Lambda_u \subset \Sigma\]
This surface is transverse to the Reeb vector-field, and is thus symplectic with symplectic form
\[d\alpha|_\Sigma \qquad\text{with primitive $\alpha|_\Sigma $ vanishing on the leaves of $F^s(\Phi)$ and $F^u(\Phi)$}\]  
By flowing $\Sigma$ slightly by the Reeb flow, we acquire an embedding as follows (for small $\delta > 0$).
\begin{equation} \label{eq:lem:standard_chart:1}
\iota:(-\delta,\delta)_t \times \Sigma \to Y \qquad\text{with}\qquad \iota^*\alpha = dt + \alpha|_\Sigma
\end{equation}

Next, apply the Weinstein neighborhood theorem for Lagrangian foliations \cite[Thm 7.1]{w1971} to the smooth Legendrian foliation $F^s_{\on{sm}}(\Phi)$ near the transverse smooth Lagrangian $\Lambda_u \subset \Sigma$. After shrinking $\Sigma$, this yields a symplectic embedding
\begin{equation} \label{eq:lem:standard_chart:2}
\jmath:\Sigma \to T^*F^u(\Phi;Q) \qquad\text{with}\qquad F^s_{\on{sm}}(\Phi) \cap \Sigma = \jmath^*F_{\on{std}}
\end{equation}
Here $F_{\on{std}}$ is the standard Lagrangian foliation of $T^*F^u(\Phi;Q)$ by cotangent fibers. Finally, consider the pullback $\lambda_{\on{std}}$ of the standard Liouville form on $T^*\Lambda_u$ by (\ref{eq:lem:standard_chart:2}). Since $\lambda_{\on{std}}$ and $\alpha|_\Sigma$ vanish on $\Lambda_s$ and $\Lambda_u$, we can find a primitive $f$ such that
\[\lambda_{\on{std}} - \alpha|_\Sigma = df \]
The primitives $\alpha|_\Sigma$ and $\lambda_{\on{std}}$ vanish on $L_u$ and on the foliation $F^s(\Phi) \cap \Sigma$, so $f$ is constant these manifolds. It follows that $f$ is constant on $\Sigma$ and so
 \begin{equation}  \label{eq:lem:standard_chart:3}
 \lambda_{\on{std}} = \alpha|_\Sigma  
 \end{equation}

 {\bf Step 2: Cube Coordinates.} In this step, we use $\Sigma$ to construct a rectangular chart and verify the requirements. Since $\chi$ is a homoclinic orbit asymptotic to $Q$ in the forward direction, we may choose a point $a \in \chi$ such that
 \[a \in  \Sigma \cap F^s_{\on{sm}}(\Phi;Q) \subset F^s(\Phi;Q)\]
 Here we use the fact that $Q \in \Gamma$ so that $F^s_{\on{sm}}(\Phi;Q) = F^s(\Phi;Q) \subset W^s(\Gamma,\Phi)$ by Setup \ref{set:blender_setup}(c). Choose a smooth embedding of a cube of the form
 \begin{equation} \label{eq:lem:standard_chart:4}
 [-\epsilon,\epsilon]_u^n \to \Lambda_u \subset F^u(Q) \qquad\text{with}\qquad 0_u \mapsto Q 
 \end{equation}
 This extends naturally to a map of cotangent bundles, giving a Liouville embedding
 \[ \kappa:([-3,3]_s^n \times [-\epsilon,\epsilon]_u^n,\lambda_{\on{std}}) \to (\Sigma,\alpha|_\Sigma) \qquad\text{where}\qquad \lambda_{\on{std}} = -\sum_i s_i \cdot du_i\]
 By shrinking $\Sigma$ around $\Lambda_s$, we may assume that this map is a symplectomorphism. Using the Reeb flow as in (\ref{eq:lem:standard_chart:1}), we may extend $\kappa$ to a local contactomorphism
 \[
\kappa:\big([-3,3]_s^n \times [-\delta ,L+\delta]_t \times [-\epsilon ,\epsilon]_u\; , \; dt + \lambda_{\on{std}}\big) \to (Y\; , \; \alpha)
 \]
 The restriction of $\kappa$ to $0_s \times [-\delta,L+\delta]_t \times 0_u$ is a parametrization of a sub-arc of $\Gamma$ containing $Q$ and $P$ sending $0$ to $Q$ and $L$ to $P$. Thus by choosing a sufficiently small surface $\Sigma$ and choosing $\delta,\epsilon$ small, we may guarantee that $\kappa$ is an embedding and the image of $\kappa$ lies in $V$.  

 \vspace{3pt}

Now take $U$ to be the image of $\kappa$ and take $(s,t,u)$ as the coordinates induced by $\kappa$. By construction $U \subset V$. We now check that $U$ and $(s,t,u)$ satisfy (a-g). For (a), we note that
\[
\kappa^*\alpha = dt + \lambda_{\on{std}} = dt - (s_1 du_1 + \dots + s_n du_n)
\]
The requirements (b) follows immediately from (a). Requirements (c) and (f,g) follow trivially from the construction. To see property (d), note that the local stable manifold $W^s_{\on{loc}}(\Gamma,\Phi;U)$ of $\Gamma$ is precisely the $R$-orbit of the component of the local stable leaf $F^s_{\on{loc}}(\Phi,Q;U)$ containing $Q$. By construction, $\kappa$ maps $[-3,3]_s^n \times 0_t \times 0_u$ to $F^s_{\on{loc}}(\Phi,Q;U)$. It follows that $W^s(\Gamma,\Phi;U)$ is identified with the zero set
\[\{u = 0\} = [-3,3]_s^n \times [-\delta,L+\delta]_t \times 0_u\]
An analogous discussion applies to $W^u_{\on{loc}}(\Gamma,\Phi;U)$, verifying (d). Requirement (e) follows from the same discussion, and the fact that $F^s(\Phi)$ and $F^u(\Phi)$ are $R$-invariant. Finally, by shrinking the neighborhood $V$ in the construction, we may guarantee that the orbit $\chi$ contains a point that is not in $V$ or $U$. It follows that $\Phi^{-k}(a) \not\in U$ for some $k$. This verifies (h).\end{proof}

We will require an enhancement of Lemma \ref{lem:standard_chart} that incorporates a set of invariant cone-fields.

\begin{lemma}[Standard Chart With Cone-Fields] \label{lem:standard_chart_w_cones} For any $\mu > 1$ and $\epsilon \in (0,1)$, there exists
\begin{itemize}
    \item An integer $N \ge 1$.
    \vspace{3pt}
    \item A cube $U \subset Y$ with coordinates $(s,t,u)$ as in Lemma \ref{lem:standard_chart}.
    \vspace{3pt}
    \item A Riemannian metric $g$ on $Y$ that is compatible with the splitting $TY = E^s \oplus E^c \oplus E^u$
    \vspace{3pt}
    \item Continuous cone-fields $K^s$ and $K^u$ on $Y$.
\end{itemize}
that satisfy the following properties (after possibly rescaling the contact form $\alpha$).

\begin{enumerate}[label=(\alph*)]
    \item The cone-fields $K^s$ and $K^u$ are compatible with the blender box $U$ (see Definition \ref{def:compatible_cones}).
    \[T\R^n_s \subset K^s \quad\text{and}\quad  T\R^n_u \subset K^u\]
    \item The cone-fields $K^s$ is contracted by $\Phi^{-N}$, and $K^{cu}$ and $K^u$ are contracted by $\Phi^N$.
    \[
    \Phi_*^{-N}(K^s) \subset \on{int} K^s  \quad\text{and}\quad \Phi_*^N(K^u) \subset \on{int} K^u
    \]
    \item The cone-fields $K^s$ and $K^u$ are dilated by $\Phi^{-N}$ and $\Phi^N$ with contant of dilation $\mu$, respectively.
    \[\mu \cdot |T\Phi^N(v)| < |v| \text{ if }v \in K^u \qquad\text{and}\qquad \mu \cdot |T\Phi^{-N}(v)| < |v| \text{ if }v \in K^s\]
    \item The cone-fields $K^s$ and $K^u$ are width less than $\epsilon$ for both $g$ and the standard metric on $U$.
\end{enumerate}\end{lemma}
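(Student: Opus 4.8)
The plan is to define $K^s$ and $K^u$ as metric cones around the stable and unstable bundles, to extract the dynamical conditions (b) and (c) directly from the invariant cone-field theorem, and then to do the real work of making these cones compatible with the chart of Lemma \ref{lem:standard_chart}.

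First, fix a Riemannian metric $g$ on $Y$ making the splitting $E^s \oplus E^c \oplus E^u$ orthogonal and $|R|_g \equiv 1$; this will be the metric in the statement. The coarsenings $(E^s \oplus E^c) \oplus E^u$ and (for $\Phi^{-1}$) $(E^c \oplus E^u) \oplus E^s$ are dominated with uniformly expanding last factor, so Theorem \ref{thm:invariant_cone_field} applies: for a parameter $\epsilon_0 \in (0,\epsilon)$ to be chosen later, it produces an $N \ge 1$ (the larger of the two thresholds) with $\Phi^N_*(K_{\epsilon_0}E^u) \subset \on{int} K_{\epsilon_0}E^u$ and with $\Phi^N$ dilating $K_{\epsilon_0}E^u$ by $\mu$, and symmetrically $\Phi^{-N}_*(K_{\epsilon_0}E^s) \subset \on{int} K_{\epsilon_0}E^s$ with $\Phi^{-N}$ dilating $K_{\epsilon_0}E^s$ by $\mu$. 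Setting $K^s := K_{\epsilon_0}E^s$ and $K^u := K_{\epsilon_0}E^u$ then gives (b), (c), and the half of (d) measured in $g$.

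Next, produce the chart. Apply Lemma \ref{lem:standard_chart}, using the freedom to rescale $\alpha$ and to shrink the transverse surface (hence the $u$-radius and $\delta$) as small as needed, to obtain the cube $U$ with coordinates $(s,t,u)$. By Lemma \ref{lem:standard_chart}, on $U$ one has $E^c = \on{span}(R) = T\R_t$ everywhere, $E^s = E^s_{\on{sm}} = T\R^n_s$ on all of $W^s_{\on{loc}}(\Gamma,\Phi;U)$, and $E^u = T\R^n_u$ along $W^u_{\on{loc}}(\Gamma,\Phi;U)$. Since the cube is thin transverse to $\{u=0\}$, uniform continuity of the continuous bundle $E^s$ over the compact slice $\{u=0\}\cap U$ forces $E^s$ to lie within any prescribed angle of $T\R^n_s$ throughout $U$, so $T\R^n_s \subset K_{\epsilon_0}E^s = K^s$; and since $g$ and the flat $(s,t,u)$-metric are bi-Lipschitz equivalent on the compact cube $U$, shrinking $\epsilon_0$ below the corresponding constant makes $K^s$ have width $<\epsilon$ in the flat metric as well. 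The analogous statements for $K^u$ are where one must exploit the rescaling of $\alpha$ together with the shrinking of the chart, arranging that $E^u$ stays within a small angle of $T\R^n_u$ over all of $U$ in both metrics; this then yields $T\R^n_u \subset K^u$ and completes (a) and (d).

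The step I expect to be the main obstacle is precisely this last one. The chart of Lemma \ref{lem:standard_chart} straightens the \emph{smooth} bundle $E^s_{\on{sm}}$ over all of $U$, but the genuinely dynamical, only H\"{o}lder, bundle $E^u$ is straightened to $T\R^n_u$ only along the codimension-$n$ slice $W^u_{\on{loc}}$, whereas the box $U$ is macroscopic in the $s$- and $t$-directions, where $E^u \subset \xi$ is a priori tilted relative to $T\R^n_u$. Simultaneously arranging that $T\R^n_u$ lies inside a cone $K^u$ that is globally $\Phi^N$-invariant and $\mu$-dilating, narrow in the adapted metric $g$, and narrow in the flat chart metric requires carefully tuning the scales of the chart — this is the role of the ``after possibly rescaling $\alpha$'' clause and of the smoothness hypothesis on $E^s_{\on{sm}}$ — and is where essentially all of the content of the lemma lies. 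Once the chart scales are tuned so that all three bundles remain within a small angle of the coordinate directions throughout $U$, (a) and (d) follow, and (b)--(c) are already in hand.
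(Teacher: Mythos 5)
Your overall strategy — metric cones around $E^s$ and $E^u$, Theorem \ref{thm:invariant_cone_field} for (b)--(c), a chart from Lemma \ref{lem:standard_chart}, and then shrinking so that the coordinate directions fall inside the cones — is the same as the paper's, and you correctly isolate the $E^u$-side of (a) and (d) as the crux. But the argument stops exactly there: \emph{``requires carefully tuning the scales of the chart''} is the claim that needs a proof, not a proof. Your mechanism — shrink the $u$-radius so the cube is a thin collar of $\{u=0\}$ — works for $E^s$ precisely because $E^s=T\R^n_s$ on all of $\{u=0\}$. It does nothing for $E^u$, which equals $T\R^n_u$ only on the codimension-$n$ slice $\{s=0\}$, while the box $[-2,2]^n_s\times\cdots$ remains macroscopic in $s$; at a point with $|s|$ of order one, $E^u$ can deviate from $T\R^n_u$ by an angle you have no way to control by shrinking in $u$ alone.

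The missing observation is that all three bundles $E^s, E^c, E^u$ agree with $T\R^n_s, T\R^1_t, T\R^n_u$ \emph{simultaneously} along the one-dimensional arc $\Gamma\cap U = \{s=0,u=0\}$, since that arc lies in both $W^s_{\on{loc}}(\Gamma,\Phi;U) = \{u=0\}$ and $W^u_{\on{loc}}(\Gamma,\Phi;U)=\{s=0\}$, and $E^c=\on{span}(\partial_t)$ everywhere in the chart. Continuity of the splitting then gives a single tubular neighborhood $V$ of $\Gamma\cap U$ — thin in \emph{both} $s'$ and $u'$ — on which $T\R^n_s\subset K_\delta E^s$ and $T\R^n_u\subset K_\delta E^u$. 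The paper then rescales the chart coordinates, $(s,t,u)=(c\,s',c\,t',c\,u')$ with $c$ large (and rescales $\alpha$ accordingly, which is what the ``after possibly rescaling $\alpha$'' clause is for), so that the standard-size cube corresponds to a tube of radius $O(1/c)$ around $\Gamma$ in the original coordinates and hence sits inside $V$; this also renders the flat $(s,t,u)$-metric commensurate with $g$ on the cube, giving the remaining half of (d). So the shrinking must be transverse to $\Gamma$ in all $2n$ directions at once, not merely transverse to $\{u=0\}$, and it is the pointwise agreement on the arc $\Gamma$ — not on the two slices separately — that makes a single rescaling handle $K^s$ and $K^u$ simultaneously.
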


\begin{proof} Choose an auxilliary chart $U'$ and coordinates $(s',t',u')$ as in Lemma \ref{lem:standard_chart}. Also choose a continuous metric $g$ on $Y$ that is compatible with the splitting $E^s \oplus E^c \oplus E^u$. Finally, choose a $\delta < \epsilon$ sufficiently small such that the cone-fields
\[K^s = K_\delta(E^s(\Phi)) \qquad K^{cu} = K_\delta(E^c(\Phi) \oplus E^u(\Phi)) \qquad K^u = K_\delta(E^u(\Phi))\]
are width less than $\epsilon$ with respect to $g$. By construction, these cone-fields satisfy (d). By Theorem \ref{thm:invariant_cone_field} and Setup \ref{set:blender_setup}, we may choose an $N \ge 1$ such that $\Phi$ satisfies (b) and (c).

\vspace{3pt}

To achieve (a), note that $E^s(\Phi), E^c(\Phi) \oplus E^u(\Phi)$ and $E^u(\Phi)$ agree with the tangent spaces $T\R^n_s, T(\R^1_t \times \R^n_u)$ and $T\R^n_u$ along $\Gamma$, respectively. Thus in a small neighborhood $V$ of $\Gamma$, the chosen cone-fields satisfy (a). We may then rescale the coordinates $(s',t',u')$ by taking
\[
(s,t,u) = (c \cdot s', c \cdot t', c \cdot u') \qquad\text{for $c$ large}
\]
For $c$ sufficiently large, the cube with $(s,t,u)$-coordinates $[2,-2]_s \times [-\delta,L+\delta]_t \times [-\epsilon,\epsilon]_u^n$ will lie within $V$. Properties (a-b) are preserved after we scale the contact form by $c^{-1}$. Properties (c-f,h) in Lemma \ref{lem:standard_chart} are preserved by this coordinate change. Property (g) in in Lemma \ref{lem:standard_chart} can be preserved by replacing $a$ with $\Phi^j(a)$ for large $j$.\end{proof}

\subsection{Family of Contactomorphisms} \label{subsec:family_of_contactomorphisms} Our next task is to construct the family of contactomorphisms $\Psi$ in Theorem \ref{thm:heteroclinic_contact_blender}. For the rest of Sections \ref{sec:blender_construction} and \ref{sec:proof_of_blender_axioms}, fix
\[\text{constants $L$, $N$, $k$} \qquad U \subset Y\text{ with coordinates }(s,t,u) \qquad \text{metric $g$} \quad\text{and}\quad \text{cone-fields }K^\bullet\]
as in Lemmas \ref{lem:standard_chart} and \ref{lem:standard_chart_w_cones}. We can now begin the main construction of the family of maps. We will require two auxilliary contact Hamiltonians for the construction.

\begin{construction}[Hamiltonian $H$] \label{con:Hamiltonian_H} Let $H:U \to \R$ be a contact Hamiltonian such that
\[H(s,t,u) = h(t) \]
Here $h:\R \to \R$ is a smooth function of the $t$-variable satisfying the following constraints. 
\[h(t) = t \quad \text{if}\quad t \le L/3 \qquad\text{and}\qquad h(t) = L - t \quad \text{if}\quad t \ge 2L/3\]
\[h(t) > 0 \quad\text{and}\quad |h'(t)| < 1/L \qquad \text{if} \quad 0 < t < L 
\]
The contact Hamiltonian vector-field $V_H$ generating the flow of contactomorphisms $\Phi^H$ is
\begin{equation} \label{eqn:form_of_V_H}
V_H = h(t) \cdot \partial_t - h'(t) \cdot \sum_i s_i \cdot \partial_{s_i} \qquad\text{ in the chart }U
\end{equation}
In particular, we have the following formulas for special ranges of $t$.
\begin{equation} \label{eq:form_of_VH_in_ranges}
V_H = t\partial_t - \sum_i s_i \partial_{s_i}\;\;\text{if $t \le L/3$}\qquad\qquad V_H = (L-t)\partial_t + \sum_i s_i \partial_{s_i}\;\;\text{if $t \ge 2L/3$}\end{equation}
\noindent Using (\ref{eqn:form_of_V_H}), it is a simple calculation to show that $\Phi^H$ takes the following general form on $U$.
\begin{equation} \label{eq:form_of_Phi}
\Phi^H_r(s,t,u) = (f_r(t) \cdot s,\psi_r(t),u) \quad\text{where}\quad \partial_r\psi  = h \circ \psi \quad\text{and}\quad \partial_r f = -h' \cdot f
\end{equation}
In particular, we have the following formulas for $\Phi^H$ on the ranges of $t$ appearing in (\ref{eq:form_of_VH_in_ranges}). 
\begin{equation} \label{eq:form_of_Phi_in_range_13} \Phi^H_r(s,t,u) = (e^{-r}s,e^rt,u)\;\;\text{if $t \le 1/3$}\end{equation}
\begin{equation} \label{eq:form_of_Phi_in_range_23} \Phi^H_r(s,t,u) = (e^{r}s,L + e^{-r}(t-L),u) \;\;\text{if $t \ge 2L/3$}\end{equation}
\end{construction}

\begin{construction}[Hamiltonian $G$] \label{con:Hamiltonian_G} We define the (time-dependent) contact Hamiltonian
\[G:[0,1]_r \times U \to \R\]
as follows. Recall that the heteroclinic point $a$ in Lemma \ref{lem:standard_chart} is in the unstable manifold of $P$. Moreover, the local unstable manifold of $P$ in $U$ is the set $0_s \times L_t \times [-\delta,\delta]^n_u$. Thus we choose 
\begin{equation} \label{eqn:choice_of_m}
m \ge 2 \qquad\text{such that}\qquad \Phi^{-Nm}(a) = (0_s,L_t,x_u) \text{ for some }x_u \in [-\delta,\delta]^n_u 
\end{equation}
 Choose a neighborhood $W$ of $\Phi^{-Nm}(a)$ that is small enough so that the sets
\[
(\Phi^H_r \circ \Phi)^j(W) \qquad\text{for }j = 0 \dots m
\]
are all disjoint for sufficiently small $r$. Since $\Phi^{-Nm}(a)$ and $a$ are in $U$, we may also assume that
\[W \subset U \qquad\text{and}\qquad (\Phi^H_r \circ \Phi)^{Nm}(W) \subset U\]
for sufficiently small $r$. Moreover, since $\Phi^{-k}(a)$ is not in $\Phi^{-1}(U)$ for some $k$, we have that
\[
(\Phi^H_r \circ \Phi)^{Nm-k}(W) \cap \Phi^{-1}(U) = \emptyset
\]
for sufficiently small $r$ and neighborhood $W$. We may thus fix a pair of open sets $W' \subset W''$ that have the following properties for sufficiently small $r$.
\[
(\Phi^H_r \circ \Phi)^{Nm-k}(W) \subset W' \qquad W'' \cap \Phi^{-1}(U) = \emptyset\]
\[W'' \cap (\Phi^H_r \circ \Phi)^j(W) = \emptyset \quad \text{if}\quad 0 \le j \le Nm \quad \text{and}\quad j \neq Nm-k
\]
Now we define a smooth family of contactomorphism $\Phi^K_r:Y \to Y$ (generated by a parameter-dependent contact Hamiltonian $K$) implicitly by the following equation. 
\begin{equation} \label{eq:lem:construction_of_Psi_1}
\Phi^R_r \circ \Phi^{Nm} \circ \Phi^H_{Nmr} = (\Phi^H_r \circ \Phi)^k \circ G_r \circ (\Phi^H_r \circ \Phi)^{Nm-k} 
\end{equation}
Here $\Phi^R$ is the Reeb flow of $Y$. Note that $\Phi^K_0 = \on{Id}$. We now define $G$ so that it satisfies
\[
G_r = K_r \;\text{ on }\; W' \qquad\text{and}\qquad G_r = 0 \;\text{ on }\; Y \setminus W'' 
\] \end{construction}

\begin{construction}[Family Of Maps] \label{con:family_Psi} We define the family of contactomorphisms
\[\Psi:[0,1]_r \times Y \to Y \qquad\text{as the composition}\qquad \Psi_r = \Phi^G_r \circ \Phi^H_r \circ \Phi\]
Note that $\Psi_r$ satisfies the following elementary properties for $W$ and $m$ as in Construction \ref{con:Hamiltonian_G}. \begin{equation} \label{eq:basic_properties} \Psi_r = \Phi^H_r \circ \Phi \;\;\text{on} \;\;\Phi^{-1}(U) \qquad\text{and}\qquad \Psi_r^{Nm}(x) = \Phi^R_r \circ \Phi^{Nm} \circ \Phi^H_{Nmr} \;\;\text{on} \;\; W\end{equation}\end{construction}

\begin{remark}[Role Of Perturbations] The two perturbations $\Phi^H$ and $\Phi^G$ play two different roles in the above construction. The perturbation $\Phi^H$ breaks the family of fixed points $\Gamma$ into two fixed points $P$ and $Q$ of coindex one. The perturbation $\Phi^G$ is essentially designed to guarantee (\ref{eq:lem:construction_of_Psi_1}), which imposes the existence of the heteroclinics between $P$ and $Q$, among other properties.
\end{remark}

\begin{remark}[Error Terms] \label{rmk:error_terms} There is a subtle difficulty arising in the construction of $\Psi$ here that is not present in the smooth setting. Namely, the vector field $V_H$ in (\ref{eqn:form_of_V_H}) generating the perturbation $\Phi^H$has a both a term tangent to $\Gamma$, which is needed to break $\Gamma$ into fixed points $P$ and $Q$, and an extra error term normal to $\Gamma$. Since we must work with contact vector fields, this error term cannot be removed. Moreover, such an error term is not harmless in general, since it could cause the blender axiom in Definition \ref{def:blender}(d) to fail. Indeed, after large iterations of $\Psi_r$, even a small error term could grow large enough to ruin the cone invariance property in Definition \ref{def:blender}(d).

\vspace{3pt}

This difficulty is resolved by the existence of the smoothly integrable, invariant and uniformly contracted Legendrian sub-bundle $E^s_{\on{sm}}(\Phi)$, assumed in Setup \ref{set:blender_setup}. Since the foliation tangent to $E^s_{\on{sm}}(\Phi)$ is smooth, we are able to choose the chart in Section \ref{subsec:standard_chart} and the Hamiltonian $H$ in Construction \ref{con:Hamiltonian_H} carefully so that the unavoidable error term in $V_H$ lies entirely within the uniformly contracted sub-bundle $T\R^n_s$. This allows us to control the term when demonstrating the axioms Definition \ref{def:blender}(c-d) in Section \ref{subsec:axioms_C_and_D} (and specifically in Lemma \ref{lem:axiom_d_part_2}).
\end{remark}

\subsection{Properties Of The Family} \label{subsec:properties_of_the_family} In this section, we prove several properties of the family $\Psi$ in Construction \ref{con:family_Psi}. We will use these properties extensively in the proof of the blender properties.

\vspace{3pt}

We start by recording the effect of the maps $\Psi$ on the coordinates in the standard chart $U$.

\begin{lemma}[Coordinate Projections] \label{lem:coordinate_projections} Let $\pi_W$ and $\pi_\Gamma$ be the coordinate projections
\[\pi_W:U \to W^u_{\on{loc}}(\Gamma,\Phi;U) = U \cap \{s = 0\} \qquad\text{given 
by}\qquad \pi_W(s,t,u) = (0_s,t,u)\]
\[\pi_\Gamma:U \to \Gamma \cap U = U \cap \{s,u = 0\} \qquad\text{given 
by}\qquad \pi_\Gamma(s,t,u) = (0_s,t,0_u)\]
Then $\pi_W$ and $\pi_\Gamma$ commute with $\Psi_r$ over $U \cap \Phi^{-1}(U)$. In particular, if $\psi$ is the flow of $h \cdot \partial_t$ on  $\R_t$ then
\[
t(\Psi_r(x)) = t(\pi_\Gamma \circ \Psi_r(x)) = t(\Psi_r \circ \pi_\Gamma(x)) = \psi_r(t(x))
\]
\end{lemma}

\begin{proof} On the set $\Phi^{-1}(U)$, we have $\Psi_r = \Phi^H_r \circ \Phi$ by the first identity in (\ref{eq:basic_properties}). By Lemma \ref{lem:standard_chart}(d-f) and by examination of (\ref{eq:form_of_Phi}), both $\Phi$ and $\Phi^H_r$ preserves the foliation
\begin{equation} \label{eq:lem:coordinate_contraction:1}
E^s_{\on{sm}}(\Phi) = T\R^n_s \text{ on }U \qquad\text{and}\qquad T\R^n_u \text{ on }W^u_{\on{loc}}(\Gamma,\Phi;U)
\end{equation}
Moreover, both $\Phi$ and $\Phi^H_r$ (and therefore $\Psi_r$) preserve the sets
\[
W^u_{\on{loc}}(\Gamma,\Phi;U) = U \cap \{s = 0\} \qquad\text{and}\qquad \Gamma \cap U = U \cap \{s,t = 0\}
\]
This $\Psi_r$ sends the leaf of $T\R^n_s$ through $x \in U \cap \{s=0\}$ to the leaf through $\Psi_r(x)$. In other words, $\pi_W$ commutes with $\Psi_r$. Similarly, $\Psi_r$ preserves the leaves of $T\R^n_u$ on $U \cap \{s = 0\}$ and so
\[\pi_\Gamma \circ \Psi_r(x) = \Psi_r \circ \pi_\Gamma(x) \qquad\text{if }x \in U \cap \{s = 0\}\]
Since $\pi_\Gamma \circ \pi_W = \pi_\Gamma$, this implies that $\pi_\Gamma$ commutes with $\Psi_r$ in general. The final claim follows since $t \circ \Psi_r = \psi_r \circ t$ on $\Gamma \cap U$, since $\Phi$ fixes $\Gamma$ pointwise and $t \circ \Phi^H_r = \psi_r \circ t$ on $\Gamma \cap U$.  \end{proof}

\begin{lemma}[Coordinate Contraction] \label{lem:coordinate_contraction} Let $U' = U \cap \dots \cap \Psi^{-N}(U)$. Then for small $r$, we have
\[\mu \cdot |s(\Psi_r^N(x))| < |s(x)| \qquad\text{and}\qquad  \mu \cdot |u(x)| < |u(\Psi_r^N(x))|\]
\end{lemma}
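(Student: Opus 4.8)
\emph{Proof proposal.} The plan is to localize to the chart $U$ and combine the explicit normal form for $\Phi^H_r$ from (\ref{eq:form_of_Phi}) with the partial hyperbolicity of $\Phi$ packaged in Lemmas \ref{lem:standard_chart} and \ref{lem:standard_chart_w_cones}. Recall from (\ref{eq:basic_properties}) that $\Psi_r=\Phi^H_r\circ\Phi$ on $\Phi^{-1}(U)$, and from (\ref{eq:form_of_Phi}) that in $U$ one has $\Phi^H_r(s,t,u)=(f_r(t)\,s,\psi_r(t),u)$, so $\Phi^H_r$ fixes the $u$-coordinate exactly and $|f_r(t)|$ lies within $e^{\pm r/L}$ of $1$ (as $|h'|<1/L$). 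Recall also that $\Phi$ preserves the foliation of $U$ by the $s$-planes $\{t=\mathrm{const},\,u=\mathrm{const}\}$ — the leaves of $E^s_{\on{sm}}(\Phi)=T\R^n_s$, by Setup \ref{set:blender_setup}(c) and Lemma \ref{lem:standard_chart}(f) — preserves $\{s=0\}=W^u_{\on{loc}}(\Gamma,\Phi;U)$ and $\{u=0\}=W^s_{\on{loc}}(\Gamma,\Phi;U)$, uniformly contracts $T\R^n_s$, and satisfies $T\R^n_u=TF^u(\Phi)$ along $\{s=0\}$ by Lemma \ref{lem:standard_chart}(e).

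The key preliminary observation is structural: $U'$ is a union of full $s$-planes. Indeed, by Lemma \ref{lem:coordinate_projections} we have $t\circ\Psi_r=\psi_r\circ t$, which depends only on $t(x)$; and since $\Phi$ (hence $\Phi^H_r\circ\Phi$) carries each $s$-plane into an $s$-plane, $u(\Psi_r(x))$ depends only on the $s$-plane through $x$, i.e.\ only on $(t(x),u(x))$ — inductively the same holds for $\Psi_r^j(x)$. Because $\Phi$ also fixes $\{s=0\}$ and contracts $T\R^n_s$, the $s$-coordinate stays in range along the orbit. Hence ``$\Psi_r^j(x)\in U$ for $j\le N$'' is a condition on $(t(x),u(x))$ alone, so $U'=\{|s|\le 3\}\times B'$ for a set $B'$ in the $(t,u)$-space. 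Together with the $C^1$-smallness of $\Phi^H_r$ and $\Phi^G_r$ for small $r$ — which forces $\Psi_r^j(x)\in\Phi^{-1}(U)$ for $j<N$, after harmlessly enlarging $U$ slightly in the $t$-direction, a change that preserves all properties of Lemmas \ref{lem:standard_chart}--\ref{lem:standard_chart_w_cones} — this lets us assume $\Psi_r^N=(\Phi^H_r\circ\Phi)^N$ along all the orbits and coordinate segments we integrate over.

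For the $s$-estimate: $\Psi_r^N$ carries the $s$-plane $F_x$ through $x$ to another $s$-plane, fixing its origin $\{s=0\}\cap F_x$, and along $T\R^n_s$ its differential is $T\Psi_r^N|_{T\R^n_s}=\big(\prod_j f_r(t_j)\big)\cdot\big(T\Phi_{x_{N-1}}\cdots T\Phi_{x_0}\big)\big|_{T\R^n_s}$ with $x_j=\Psi_r^j(x)$, since the scalar factors $f_r(t)$ commute with everything. The $f_r$-product is $e^{O(r)}$, and $T\Phi_{x_{N-1}}\cdots T\Phi_{x_0}$ is $C^0$-close, for $r$ small and uniformly over $U'$, to $T\Phi^N$ along the $\Phi$-orbit of $x$, which contracts $T\R^n_s\subset K^s$ by a factor strictly larger than $\mu$ (Lemma \ref{lem:standard_chart_w_cones}(c), taking the dilation constant there to be some $\mu'>\mu$). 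Hence $\|T\Psi_r^N|_{T\R^n_s}\|<\mu^{-1}$ for $r$ small, and integrating along the segment from $0_s$ to $s(x)$ inside $F_x\subset U'$ gives $\mu\,|s(\Psi_r^N(x))|<|s(x)|$. For the $u$-estimate, note first that $u\circ\Psi_r=u\circ\Phi$ is constant on $s$-planes, so with $\pi_W\circ\Psi_r=\Psi_r\circ\pi_W$ (Lemma \ref{lem:coordinate_projections}) one gets $u\circ\Psi_r^N=u\circ\Psi_r^N\circ\pi_W$ on $U'$; thus we may assume $s(x)=0$. On $\{s=0\}$ the map $\Psi_r$ preserves $\{s=0\}$, carries the $u$-fibers $\{s=0,\,t=\mathrm{const}\}$ to $u$-fibers (again by $t\circ\Psi_r=\psi_r\circ t$), fixes the points of $\Gamma$, and preserves $T\R^n_u$ — for $\Phi$ by Lemma \ref{lem:standard_chart}(e), for $\Phi^H_r$ by the normal form, on which $T\Phi^H_r$ moreover acts isometrically on $T\R^n_u$. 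Since $\Phi^N$ dilates $T\R^n_u\subset K^u$ by more than $\mu$ (Lemma \ref{lem:standard_chart_w_cones}(c)), the same $O(r)$-perturbation argument gives that $T\Psi_r^N|_{T\R^n_u}$ has smallest singular value $>\mu$ for $r$ small; integrating along the segment from $0_u$ to $u(x)$ in the $u$-fiber through $x$ — which lies in $U'$ because, for $\epsilon$ small, $\Psi_r^j$ restricted to a $u$-fiber is $C^1$-close to its linearization at the fixed point on $\Gamma$, making $U'$ star-shaped in each $u$-fiber about $0_u$ — yields $\mu\,|u(x)|<|u(\Psi_r^N(x))|$.

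I expect the main obstacle to be exactly the bookkeeping of the second paragraph: ensuring that the iterates $\Psi_r^j(x)$ and the coordinate segments used for integration all remain in $U\cap\Phi^{-1}(U)$, where the normal forms are valid. The clean way around this is the observation that the $t$- and $u$-coordinates of the $\Psi_r$-orbit of $x$ do not depend on $s(x)$, which forces $U'$ to be a union of $s$-planes and reduces the $u$-statement to the slice $\{s=0\}$, where the dynamics is honestly partially hyperbolic in the coordinate directions; the remaining perturbative slack — the $f_r$-factors and the $C^1$-distance between the $\Psi_r$- and $\Phi$-orbits, both $O(r)$ — is absorbed by choosing the dilation constant in Lemma \ref{lem:standard_chart_w_cones} strictly above $\mu$. (As stated, the strict inequalities of course require $s(x)\neq 0$, resp.\ $u(x)\neq 0$; on the complementary coordinate planes both sides vanish.)
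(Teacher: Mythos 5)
Your proposal is correct and follows essentially the same strategy as the paper's proof: reduce the $s$-estimate to the fiberwise distance from $x$ to $\pi_W(x)$ along $T\R^n_s$, reduce the $u$-estimate to the fiberwise distance from $\pi_W(x)$ to $\pi_\Gamma(x)$ along $T\R^n_u$ on $\{s=0\}$, use the commuting projections of Lemma \ref{lem:coordinate_projections}, and then invoke the $C^1$-robust contraction/dilation of vectors in $K^s \supset T\R^n_s$ and $K^u \supset T\R^n_u$ under $\Phi^N$ from Lemma \ref{lem:standard_chart_w_cones}. What you add beyond the paper's two-line argument is bookkeeping that the paper glosses over — namely that the iterates and the coordinate segments one integrates over must stay inside $U \cap \Phi^{-1}(U)$ where the normal form holds — and the observation that the $(t,u)$-evolution is independent of $s$ is the right way to control this; you also correctly flag that the strict inequality comes from taking the dilation constant in Lemma \ref{lem:standard_chart_w_cones} strictly above $\mu$. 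One small caveat: the claim ``$U'$ is a union of full $s$-planes'' needs a touch more care, since $|s(\Psi_r^j(x))|\le 3$ for intermediate $j$ genuinely depends on $s(x)$ unless the $s$-contraction is monotone at every single step, which Lemma \ref{lem:standard_chart_w_cones} only guarantees after $N$ iterates; this is fixable (e.g.\ by adapting the metric or shrinking the $s$-radius slightly), and in any case the same gap is implicit in the paper's proof.
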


\begin{proof} Note that $T\R^n_s$ is uniformly contracted by $\Phi^N$ by a factor of $\mu$ on $U$ and $T\R^n_u$ is uniformly expanded by $\Phi^N$ by a factor of $\mu$ on $W^u_{\on{loc}}(\Gamma,\Phi;U)$ due to Lemma \ref{lem:standard_chart_w_cones}.  

\vspace{3pt}

These properties are robust in the $C^1$-topology, so the map $\Psi_r$ also satisfies these properties for small $r$. Since $x$ and $\pi_\Gamma(x)$ lie in the same fiber of $T\R^n_s$, we can apply Lemma \ref{lem:coordinate_projections} to see that
\[|s(\Psi^N_r(x))| = |s(\Psi^N_r(x)) - s(\Psi^N_r(\pi_W(x))| \le \mu^{-1} \cdot |s(x) - s(\pi_W(x))| = \mu^{-1} \cdot |s(x)|\]
Also, the maps $u,\pi_\Gamma$ and $\pi_W$ satisfy $u \circ \pi_W = u$ and $u \circ \pi_\Gamma = 0$. Thus by Lemma \ref{lem:coordinate_projections}, we have
\[
|u(\Psi^N_r(x))| = |u \circ \pi_W(\Psi^N_r(x)) - u \circ \pi_\Gamma(\Psi^N_r(x))| \ge \mu \cdot |u(\pi_W(x)) - u(\pi_\Gamma(x))| \ge \mu \cdot |u(x)| \qedhere
\]
 \end{proof}

 \vspace{3pt}

Next, note that the maps in the family $\Psi$ are partially hyperbolic for sufficiently small parameter. This follows from Theorems \ref{thm:persistence_of_splitting}-\ref{thm:invariant_foliations} and the partial hyperbolicity of $\Phi$.

\begin{lemma}[Partial Hyperbolicity] \label{lem:partial_hyperbolicity} The maps $\Psi_r$ (for sufficiently small $r$) has a dominated splitting
    \[TY = E^s(\Psi_r) \oplus E^c(\Psi_r) \oplus E^u(\Psi_r) \quad\text{with stable/unstable foliations}\quad F^s(\Psi_r) \text{ and }F^u(\Psi_r)\]
\end{lemma}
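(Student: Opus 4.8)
The plan is to recognize that Lemma \ref{lem:partial_hyperbolicity} is simply the assertion that partial hyperbolicity, together with its attendant invariant foliations, is a $C^1$-open condition, and that $\Psi_r$ lies in the relevant $C^1$-neighborhood of $\Phi$ once $r$ is small. By Constructions \ref{con:Hamiltonian_H}, \ref{con:Hamiltonian_G} and \ref{con:family_Psi}, the map $\Psi_r = \Phi^G_r \circ \Phi^H_r \circ \Phi$ depends smoothly on $r$, and $\Phi^H_0 = \Phi^G_0 = \on{Id}$, so $\Psi_0 = \Phi$. Hence $\Psi_r \to \Phi$ in the $C^1$-topology as $r \to 0$, and for any prescribed $C^1$-neighborhood $\mathcal{U}$ of $\Phi$ there is $r_0 > 0$ with $\Psi_r \in \mathcal{U}$ for $0 \le r < r_0$.

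First I would apply Theorem \ref{thm:persistence_of_splitting} to the $\lambda$-dominated splitting $TY = E^s(\Phi) \oplus E^c(\Phi) \oplus E^u(\Phi)$ furnished by Setup \ref{set:blender_setup}(a). Fixing some $\mu < \lambda$, this produces a $C^1$-neighborhood $\mathcal{U}$ of $\Phi$ such that every $\Psi \in \mathcal{U}$ carries a $\mu$-dominated splitting $TY = E_1(\Psi) \oplus E_2(\Psi) \oplus E_3(\Psi)$ with $\on{dim} E_i(\Psi) = \on{dim} E_i(\Phi)$. By the continuity of this splitting in $\Psi$ we may shrink $\mathcal{U}$ so that $E_1(\Psi) \subset \on{int} K^s$ and $E_3(\Psi) \subset \on{int} K^u$, where $K^s = K_\delta(E^s(\Phi))$ and $K^u = K_\delta(E^u(\Phi))$ are the metric cone-fields from Lemma \ref{lem:standard_chart_w_cones}.

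The one point that needs care — and the main obstacle — is that Theorem \ref{thm:persistence_of_splitting} propagates only \emph{domination}, whereas Definition \ref{def:partially_hyperbolic} also requires the outermost factors $E_1(\Psi_r)$ and $E_3(\Psi_r)$ to be uniformly contracting and uniformly expanding, respectively. Here I would invoke the cone-field criterion: by Lemma \ref{lem:standard_chart_w_cones}(b,c), $\Phi^N$ maps $K^u$ into $\on{int} K^u$ and uniformly dilates vectors in $K^u$ (with constant $\mu$), while $\Phi^{-N}$ does the same for $K^s$. Both the inclusion $\Phi^{\pm N}_*(\cdot) \subset \on{int}(\cdot)$ and the uniform dilation estimate are open conditions in the $C^1$-topology, so after a final shrinking of $\mathcal{U}$ every $\Psi \in \mathcal{U}$ enjoys these same properties. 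Since $E_3(\Psi_r) \subset K^u$ is $\Psi_r$-invariant, the dilation of $K^u$ by $\Psi_r^N$ forces $E_3(\Psi_r)$ to be uniformly expanding, and symmetrically $E_1(\Psi_r)$ is uniformly contracting. Writing $E^s(\Psi_r) := E_1(\Psi_r)$, $E^c(\Psi_r) := E_2(\Psi_r)$, $E^u(\Psi_r) := E_3(\Psi_r)$ for $r < r_0$ then exhibits $\Psi_r$ as partially hyperbolic.

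Finally, I would extract the foliations from Theorem \ref{thm:invariant_foliations}. Applied to $\Psi_r$ with the dominated splitting $\big(E^s(\Psi_r) \oplus E^c(\Psi_r)\big) \oplus E^u(\Psi_r)$, whose second summand is uniformly expanding, it yields a unique Hölder foliation $F^u(\Psi_r)$ with smooth, contractible leaves tangent to $E^u(\Psi_r)$ and invariant under $\Psi_r$. Applied instead to $\Psi_r^{-1}$ with the splitting $\big(E^c(\Psi_r) \oplus E^u(\Psi_r)\big) \oplus E^s(\Psi_r)$ — noting that $E^s(\Psi_r)$ is uniformly expanding for $\Psi_r^{-1}$ — it produces the stable foliation $F^s(\Psi_r)$. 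This gives all the structure claimed in the lemma, with the passage from domination to genuine hyperbolicity of the extreme factors being precisely the step for which the global cone-fields of Lemma \ref{lem:standard_chart_w_cones} were prepared.
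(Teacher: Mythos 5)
Your proof is correct and follows the same route the paper sketches; the paper simply asserts the result in one sentence (``This follows from Theorems \ref{thm:persistence_of_splitting}--\ref{thm:invariant_foliations} and the partial hyperbolicity of $\Phi$'') without elaboration, whereas you have spelled out the one step that actually requires care, namely that Theorem \ref{thm:persistence_of_splitting} only transports domination and that the uniform expansion/contraction of the extreme bundles must be recovered from the $C^1$-openness of the cone-field dilation conditions of Lemma \ref{lem:standard_chart_w_cones}. This is a faithful and complete filling-in of the paper's argument, not a different proof.
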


\noindent Also note that the points $P$ and $Q$ become non-degenerate and hyperbolic fixed points of $\Psi_r$.

\begin{lemma}[Fixed Points] \label{lem:fixed_points} The points $Q$ and $P$ are hyperbolic fixed points of $\Psi_r$ of index
\[
\on{ind}(\Psi_r;Q) = n \qquad\text{and}\qquad \on{ind}(\Psi_r;P) = n+1 
\]
The local stable and unstable manifolds of $P$ and $Q$ in $U$ with respect to  $\Psi_r$ are given by
\begin{equation*}
W^s_{\on{loc}}(P,\Psi_r;U) = [-3,3]^n_s \times (0,L+\epsilon]_t \times 0_u \qquad\quad W^u_{\on{loc}}(P,\Psi_r;U) = 0_s \times L_t \times  [-\delta,\delta]_u \end{equation*}
\begin{equation*} \hspace{28pt} W^s_{\on{loc}}(Q,\Psi_r;U) = [-3,3]^n_s \times 0_t \times 0_u \hspace{71pt}
W^u_{\on{loc}}(Q,\Psi_r;U) = 0_s \times [-\epsilon,L)_t \times [-\delta,\delta]_u
\end{equation*}\end{lemma}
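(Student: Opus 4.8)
The plan is to work entirely inside the standard chart $U$ with coordinates $(s,t,u)$ from Lemma \ref{lem:standard_chart} and exploit the explicit formulas for $\Phi^H$ in Construction \ref{con:Hamiltonian_H}. First I would observe that both $P = (0_s,L_t,0_u)$ and $Q = (0_s,0_t,0_u)$ lie in $\Gamma$, which is fixed pointwise by $\Phi$, and that near $\Gamma$ we have $\Psi_r = \Phi^H_r \circ \Phi$ (the $G$-factor is supported away from $\Gamma$ by Construction \ref{con:Hamiltonian_G}, since $W$ is a neighborhood of the heteroclinic point $\Phi^{-Nm}(a)$ sitting in the local unstable manifold of $P$, not at $P$ itself — one should check the support of $G$ misses small balls around $P$ and $Q$, which follows from the disjointness of the $(\Phi^H_r\circ\Phi)^j(W)$). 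Since $\psi_r(0) = 0$ and $\psi_r(L) = L$ (because $h(0) = 0$, $h(L) = 0$), formula (\ref{eq:form_of_Phi}) shows $\Phi^H_r$ fixes both $P$ and $Q$, hence so does $\Psi_r$.

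Next I would linearize. At $Q$, $t \le L/3$, so (\ref{eq:form_of_Phi_in_range_13}) gives $\Phi^H_r(s,t,u) = (e^{-r}s, e^r t, u)$, whose differential at $Q$ is $\mathrm{diag}(e^{-r}I_n, e^r, I_n)$ on the $(s,t,u)$ splitting. Composing with $T\Phi_Q$, which by Setup \ref{set:blender_setup} and Lemma \ref{lem:standard_chart}(e,f) is block-triangular with a contraction on $T\R^n_s = E^s$, the identity on $E^c = \mathrm{span}(R) = T\R^1_t$ (as $\Phi$ is strict and fixes $\Gamma$), and an expansion on $T\R^n_u = E^u$, one sees $T\Psi_r|_Q$ has $n$ contracting eigenvalue-directions ($e^{-r}\times$ the old stable contraction on the $s$-block) and $n{+}1$ non-contracting directions — the $t$-direction now becomes \emph{expanding} with factor $e^r > 1$, and the $u$-block stays expanding. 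So $Q$ is hyperbolic of index (= dimension of stable manifold) $n$. At $P$, $t \ge 2L/3$, so (\ref{eq:form_of_Phi_in_range_23}) gives differential $\mathrm{diag}(e^r I_n, e^{-r}, I_n)$; composing with $T\Phi_P$ gives $n$ contracting $s$-directions \emph{still} contracting? — no: here the $s$-block gets multiplied by $e^r$, but the old $\Phi$-contraction on $E^s$ is stronger in the relevant regime; more carefully, the $s$-block remains contracting only if $e^r$ times the old factor is $< 1$, which holds for small $r$, giving $n$ stable $s$-directions, the $t$-direction is now \emph{contracting} with factor $e^{-r} < 1$, and the $u$-block is expanding. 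Hence $P$ has stable manifold of dimension $n+1$, i.e. index $n+1$. I would then read off the local invariant manifolds directly: by Lemma \ref{lem:coordinate_projections} and the product form (\ref{eq:form_of_Phi}) of $\Phi^H$, the sets $\{u = 0\}$, $\{s = 0\}$ are $\Psi_r$-invariant in $U \cap \Phi^{-1}(U)$, and inside these the $t$-dynamics is $\psi_r$, which attracts toward $0$ from $t > 0$ small near... — one matches the sign of $h'$ near $0$ and $L$ to determine which side of $t=0$ (resp. $t=L$) flows in, yielding $W^s_{\mathrm{loc}}(Q,\Psi_r;U) = [-3,3]^n_s \times 0_t \times 0_u$ and $W^u_{\mathrm{loc}}(Q,\Psi_r;U) = 0_s \times [-\epsilon,L)_t \times [-\delta,\delta]_u$, and symmetrically for $P$ with the $t$-interval $(0,L+\epsilon]_t$ and $0_s \times L_t \times [-\delta,\delta]_u$.

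The main obstacle, I expect, is \textbf{bookkeeping the eigenvalue inequalities carefully} — specifically verifying that for small $r$ the $s$-block at $P$ is genuinely contracting despite the $e^r$ factor (this is where smallness of $r$ and the strict domination from Setup \ref{set:blender_setup}(a) and Lemma \ref{lem:standard_chart_w_cones} get used), and confirming that the support of the corrector $G$ is disjoint from neighborhoods of $P$ and $Q$ so that the identity $\Psi_r = \Phi^H_r\circ\Phi$ holds on a full neighborhood of each fixed point — not merely on $\Phi^{-1}(U)$. Both are routine given the setup but require attention to the precise regime of $t$ and the disjointness clauses in Construction \ref{con:Hamiltonian_G}; everything else is the explicit linear algebra of block-triangular matrices and the product structure of $\Phi^H$ recorded in (\ref{eq:form_of_Phi})--(\ref{eq:form_of_Phi_in_range_23}).
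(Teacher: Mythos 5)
Your proposal follows the same route as the paper's proof: reduce to $\Psi_r=\Phi^H_r\circ\Phi$ near $P,Q$ via (\ref{eq:basic_properties}) (with $P,Q\in\Phi^{-1}(U)$ automatic since they lie on the $\Phi$-fixed set $\Gamma$), compute $T_P\Psi_r$ and $T_Q\Psi_r$ blockwise from (\ref{eq:form_of_Phi_in_range_13})--(\ref{eq:form_of_Phi_in_range_23}) and the normal hyperbolicity of $\Gamma$, and read off the local invariant manifolds from the sign of $h$ on $[-\epsilon,0)$, $(0,L)$, $(L,L+\epsilon]$ together with the $s$-/$u$-contraction and expansion from Lemma \ref{lem:coordinate_contraction}. (One small slip: $\psi_r$ \emph{repels} from $t=0$ toward $t=L$ since $h>0$ on $(0,L)$, which is what makes the $t$-axis part of $W^u_{\on{loc}}(Q)$ rather than $W^s_{\on{loc}}(Q)$; your stated conclusion is nonetheless the correct one.)
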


\begin{proof} The points $P$ and $Q$ are fixed by $\Phi$ due to Setup \ref{set:blender_setup}(d) and fixed by $\Phi^H_r$ due to (\ref{eq:form_of_Phi_in_range_13}-\ref{eq:form_of_Phi_in_range_23}). Thus (\ref{eq:basic_properties}) implies that $P$ and $Q$ are fixed by $\Psi_r$. To compute the index of $P$, note that $T\Phi$ and $T\Phi^H_r$ at $P$ decomposes via the splitting $TY = E^s(\Phi) \oplus E^c(\Phi) \oplus E^u(\Phi)$ as follows.
\[
T_P\Phi = T_P\Phi^s \oplus \on{Id}_c \oplus T_P\Phi^u \qquad\text{and}\qquad T_P\Phi^H_r = e^{r}\on{Id}_s \oplus \; e^{-r}\on{Id}_c \oplus \on{Id}_u
\]
The linear maps $T_P\Phi^s$ and $T_P\Phi^u$ are the restrictions to $E^s(\Phi)$ and $E^u(\Phi)$ respectively. Since $P$ is in $U$, the first formula in (\ref{eq:basic_properties}) implies that the differential of $\Psi_r$ at $P$ is given by
\[
T_P\Psi_r = e^{r}T_P\Phi^s_r \;\oplus \; e^{-r}\on{Id}_c \; \oplus \; T_P\Phi^u_r
\]
Since $\Gamma$ is a normally hyperbolic fixed set of $\Phi$ by Setup \ref{set:blender_setup}, the maps $e^rT_P\Phi^s_r$ (for small $r$) and $T_P\Phi^u_r$ have real eigenvalues of norm bounded above by $1$ and below by $1$, respectively. It follows that $T_P\Psi_r$ will be hyperbolic of index $n+1$. The same analysis applies to the index of $Q$.

\vspace{3pt}

To find the stable and unstable manifolds, fix $x \in U$. By Lemma \ref{lem:coordinate_projections}, we know that $t(\Psi_r^k(x)) = \psi^k_r(t(x))$ where $\psi_r$ of the vector-field $h \cdot \partial_t$ and $H$ is as in Construction \ref{con:Hamiltonian_H}. Moreover $h > 0$ on $(0,L)$ and $h < 0$ on $[-\epsilon,0)$ and $(L,L+\epsilon]$. This implies that $t(\Psi_r^k(x))$ converges to
\[
L \text{ if }t(x) \in (0,L+\epsilon] \qquad 0 \text{ if }t(x) = 0
\]
and that $\Psi_r^k(x)$ leaves $U$ if $t(x) \in [-\epsilon,0)$. Now Lemma \ref{lem:coordinate_contraction} implies that $|u(\Psi^k_r(x))|$ diverges if $u(x) \neq 0$, and so $\Psi^k_r(x)$ leaves $U$. On the other hand, if $u(x) = 0$ then $|s(\Psi^k_r(x))| \to 0$ as $k \to \infty$ by Lemma \ref{lem:coordinate_contraction}. This shows that
\[W^s_{\on{loc}}(P,\Psi_r;U) = [-3,3]^n_s \times (0,L+\epsilon]_t \times 0_u \qquad W^s_{\on{loc}}(Q,\Psi_r;U) = [-3,3]^n_s \times 0_t \times 0_u\]
An identical analysis works for the unstable manifolds. \end{proof}

Next, we have the following description of the local stable and unstable foliations, and the action of $\Psi_r$ on the leaves in the chart $U$.

\begin{lemma}[Stable/Unstable Foliations] \label{lem:stab_unstab_foliations} The local stable and unstable foliations of $\Psi_r$ satisfy
\[F^s(\Psi_r) = T\R^n_s \;\; \text{on}\;\;W_{\on{loc}}^s(P,\Psi_r;U) \qquad\text{and}\qquad F^u(\Psi_r) = T\R^n_u \;\; \text{on}\;W_{\on{loc}}^u(Q,\Psi_r;U)\]
Moreover, if $\Lambda^s$ and $\Lambda^u$ are leaves of $F^s(\Psi_r)$ and $F^u(\Psi_r)$ intersecting $\Gamma \cap U$, respectively, then
\[
\Psi_r(\Lambda^s) \cap U = \Phi^H_r(\Lambda^s) \cap U \qquad\text{and}\qquad \Psi_r(\Lambda^u) \cap U = \Phi^H_r(\Lambda^u) \cap U
\]
\end{lemma}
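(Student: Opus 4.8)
The plan is to leverage two facts established earlier: that on $\Phi^{-1}(U)$ the map $\Psi_r$ coincides with $\Phi^H_r \circ \Phi$ (the first identity in \eqref{eq:basic_properties}), and that in the chart $U$ both $\Phi$ and $\Phi^H_r$ preserve the coordinate foliations $T\R^n_s$ and (over $W^u_{\mathrm{loc}}$) $T\R^n_u$. The strategy is: first identify the local invariant \emph{manifolds} with explicit coordinate slabs (already done in Lemma \ref{lem:fixed_points}), then show the \emph{foliations} restricted to those slabs are the coordinate foliations, and finally read off the action of $\Psi_r$ on individual leaves.

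First I would prove the statement about $F^u(\Psi_r)$ on $W^u_{\mathrm{loc}}(Q,\Psi_r;U)$. By Lemma \ref{lem:fixed_points}, this set is $0_s \times [-\epsilon,L)_t \times [-\delta,\delta]_u$, which sits inside the coordinate slab $\{s=0\}$. On $\Phi^{-1}(U) \cap U$ we have $\Psi_r = \Phi^H_r \circ \Phi$, and by \eqref{eq:lem:coordinate_contraction:1} (equivalently Lemma \ref{lem:standard_chart}(d--f) together with the explicit form \eqref{eq:form_of_Phi}) both $\Phi$ and $\Phi^H_r$ preserve the foliation $T\R^n_u$ over this set. The foliation $F^u(\Psi_r)$ is, by Theorem \ref{thm:invariant_foliations}, the \emph{unique} $\Psi_r$-invariant foliation tangent to $E^u(\Psi_r)$; along $W^u_{\mathrm{loc}}(Q,\Psi_r;U)$ the bundle $E^u(\Psi_r)$ agrees with $T\R^n_u$ (this is the content of Lemma \ref{lem:standard_chart}(e) being robust under the $C^1$-small perturbation $\Psi_r - \Phi$, since $\Psi_r = \Phi^H_r \circ \Phi$ preserves $T\R^n_u$ over the slab and this slab is precisely the unstable set). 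Since $T\R^n_u$ is a $\Psi_r$-invariant foliation tangent to that bundle, uniqueness forces $F^u(\Psi_r) = T\R^n_u$ on $W^u_{\mathrm{loc}}(Q,\Psi_r;U)$. The argument for $F^s(\Psi_r) = T\R^n_s$ on $W^s_{\mathrm{loc}}(P,\Psi_r;U)$ is identical, applied to $\Psi_r^{-1}$ and using that $T\R^n_s = E^s_{\mathrm{sm}}(\Phi)$ is globally $\Psi_r$-invariant and uniformly contracted (Lemma \ref{lem:coordinate_contraction}).

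For the second claim, let $\Lambda^s$ be a leaf of $F^s(\Psi_r)$ meeting $\Gamma \cap U$, say at a point $x$ with $t(x) \in (0,L)$. Since $\Psi_r$ fixes $\Gamma$ pointwise on $(0,L)$ in the $t$-coordinate (Lemma \ref{lem:fixed_points}), the leaf through $x$ is part of $W^s_{\mathrm{loc}}(P,\Psi_r;U)$, so by the first part $\Lambda^s \subset$ the plaque $T\R^n_s$ through $x$, i.e. a slice $\{t = t(x), u = 0\}$. On this slice, which lies in $\Phi^{-1}(U) \cap U$ for $r$ small (shrinking $U$ slightly inward near the $t$-boundary as needed), we have $\Psi_r = \Phi^H_r \circ \Phi$; but $\Phi$ fixes $\Gamma$ and preserves $T\R^n_s$ with $E^s(\Phi) = T\R^n_s$ there, and moreover $\Phi$ acts as the identity on the $u,t$-coordinates of such a slice (by Lemma \ref{lem:standard_chart}(e) and $R$-invariance), so after applying $\Phi$ we land in the same slice and $\Psi_r(\Lambda^s) \cap U = \Phi^H_r(\Phi(\Lambda^s)) \cap U = \Phi^H_r(\Lambda^s) \cap U$. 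The unstable case is symmetric.

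The main obstacle I expect is the bookkeeping around domains: the identity $\Psi_r = \Phi^H_r\circ\Phi$ only holds on $\Phi^{-1}(U)$, whereas the leaves $\Lambda^s, \Lambda^u$ and their images live in $U$, so one must check that the relevant plaques and their forward (resp. backward) iterates stay inside $\Phi^{-1}(U) \cap U$, or else patch the argument near the $t$-boundary using that $\Phi$ itself already fixes $\Gamma$ and preserves the coordinate foliations throughout $U$ (Lemma \ref{lem:standard_chart}(d--f)), so the composition $\Phi^H_r \circ \Phi$ is what matters structurally rather than the literal equality with $\Psi_r$. A secondary subtlety is justifying that $E^{s/u}(\Psi_r)$ restricted to the invariant slabs really is the coordinate bundle and not merely $C^0$-close to it; this is handled by the uniqueness clause of Theorem \ref{thm:invariant_foliations} combined with the fact that the \emph{exact} coordinate foliation is $\Psi_r$-invariant over those slabs, so no approximation argument is needed once invariance is in hand.
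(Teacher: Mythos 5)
Your proof is correct and follows essentially the same route as the paper: identify the coordinate distributions $T\R^n_s$, $T\R^n_u$ as $\Psi_r$-invariant and uniformly contracted/expanded over the local stable/unstable manifolds of $P$ and $Q$ (using Lemma \ref{lem:fixed_points} and the identity $\Psi_r = \Phi^H_r \circ \Phi$ from (\ref{eq:basic_properties})), then invoke a uniqueness theorem to conclude these are the strong stable/unstable foliations; and for the second claim, use (\ref{eq:basic_properties}) plus the fact that $\Phi$ fixes $\Gamma$ pointwise and preserves the coordinate foliations in $U$.

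The one place where you slightly misattribute the key ingredient is in the uniqueness step: the paper cites the \emph{local} uniqueness of strong stable/unstable foliations on the stable/unstable manifold of a hyperbolic set (\cite[\S 4]{hps1977}), whereas you invoke the global uniqueness statement of Theorem \ref{thm:invariant_foliations}. Your detour through showing $E^u(\Psi_r) = T\R^n_u$ on the slab does make the global uniqueness applicable (since agreement of tangent bundles forces agreement of foliations), but you should be explicit that the $C^1$-robustness of Lemma \ref{lem:standard_chart}(e) by itself only gives $C^0$-closeness of $E^u(\Psi_r)$ to $T\R^n_u$, and the exact equality comes from uniqueness of the dominated splitting once one observes that $T\R^1_t \oplus T\R^n_u$ is a $\Psi_r$-invariant dominated splitting of the tangent space of the slab (the $u$-factor expands by $\mu$, the $t$-factor only by $e^{O(r)}$, so domination holds for $r$ small). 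You flag this subtlety yourself in your last paragraph, which shows you are aware the approximation argument alone does not close the gap; both routes (local strong-foliation uniqueness vs. dominated-splitting uniqueness plus integrability) are standard and equivalent for this purpose.
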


\begin{proof} For the first claim, note that $T\R^n_s$ and $T\R^n_u$ are tangent to $W^s_{\on{loc}}(P,\Psi_r;U)$ and $W^u_{\on{loc}}(Q,\Psi_r;U)$ by Lemma \ref{lem:fixed_points}. Moreover, $T\R^n_s$ and $T\R^n_u$ are uniformly contracted and expanded, respectively, on those sets via (\ref{eq:basic_properties}). The lemma then follows from the uniqueness of the strong stable and unstable foliations on the stable and unstable manifolds of a hyperbolic invariant set \cite[\S 4]{hps1977}.  The second claim follows from the first claim, the formula (\ref{eq:basic_properties}) and the fact that $\Phi$ preserves the stable and unstable foliations in $U$.
\end{proof}

\noindent Finally, we have the following computation of a certain important family of heteroclinics.

\begin{lemma}[Heteroclinics] \label{lem:homoclinic_points} Consider the Reeb segment $[b_{-r},b_r]$ containing the point $a$, where
\[
b_\tau = (1_s,\tau,0_u) \qquad\text{and we recall that}\qquad a = (1_s,0,0_u) 
\]
Then for $r > 0$ sufficiently small, we have
\begin{enumerate}[label=(\alph*)]
    \item The leaf of $F^u(\Psi_r)$ through $(0_s,L-\delta,0_u)$ meets $[b_{-r},b_r]$ at the point $(1_s,r-\delta,0_u)$ for $\delta \in [0,2r]$.\vspace{3pt}
    \item The interval $(a,b_r)$ is a connected component of the intersection of $W^s(P,\Psi_r)$ and $W^u(Q,\Psi_r)$.
    \vspace{3pt}
    \item The points $a$ and $b_r$ are transverse homoclinic points of $Q$ and $P$, respectively.
    
\end{enumerate}
\end{lemma}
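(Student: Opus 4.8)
The plan is to work entirely in the standard chart $U$ with coordinates $(s,t,u)$, using the explicit formulas for $\Phi^H$ from Construction \ref{con:Hamiltonian_H}, together with the descriptions of the local invariant manifolds and foliations obtained in Lemmas \ref{lem:fixed_points} and \ref{lem:stab_unstab_foliations}. The key point is that near the two ends $t\le L/3$ and $t\ge 2L/3$ the family $\Psi_r$ agrees with $\Phi^H_r\circ\Phi$ (by \eqref{eq:basic_properties}) and $\Phi$ is the identity on $\Gamma$ and preserves the coordinate foliations $T\R^n_s$ and $T\R^n_u$ in $U$ (Lemma \ref{lem:standard_chart}(d-f)), so the dynamics on the relevant leaves is governed by the flow of $h\cdot\partial_t$ and the linear $s,u$-scalings in \eqref{eq:form_of_Phi_in_range_13}--\eqref{eq:form_of_Phi_in_range_23}.

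First I would prove (a). The point $(0_s,L-\delta,0_u)$ lies on $\Gamma\cap U$, hence on $W^u_{\on{loc}}(Q,\Psi_r;U)$, and by Lemma \ref{lem:stab_unstab_foliations} the leaf of $F^u(\Psi_r)$ through it equals the $\R^n_u$-slice $0_s\times (L-\delta)_t\times[-\delta,\delta]_u$ — but that leaf does not meet $[b_{-r},b_r]$, which sits at $s=1_s$. So the statement must be read via the \emph{global} unstable leaf: one follows the leaf of $F^u(\Psi_r)$ backward under iteration of $\Psi_r$ until it reaches the heteroclinic region near $a$. Concretely, by Lemma \ref{lem:coordinate_projections}, $t(\Psi_r^{-k}(0_s,L-\delta,0_u))$ evolves under the backward flow of $h\cdot\partial_t$; since $h(t)=L-t$ for $t\ge 2L/3$, the $t$-coordinate relaxes toward $t=0$ at the other end, and the time-$r$ piece contributes exactly the shift recorded in the claim. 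Matching the flow of $h\cdot\partial_t$ near $t=0$ (where $h(t)=t$, giving exponential behavior) against the scaling $\psi_r$ and tracking where the unstable leaf through $(0_s,L-\delta,0_u)$ crosses the Reeb segment $[b_{-r},b_r]$ at $s=1_s$ yields the crossing point $(1_s,r-\delta,0_u)$ for $\delta\in[0,2r]$; this is a direct computation with the one-variable ODE $\partial_r\psi = h\circ\psi$.

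Next I would prove (b) and (c) together. The key geometric input is that $a\in\chi$ lies on $W^s(Q,\Phi)=W^s(Q,\Psi_r)$ (it does not move under the $H$- and $G$-modifications near $a$, by the choice of supports in Constructions \ref{con:Hamiltonian_H}--\ref{con:Hamiltonian_G}, and $a$ is a stable point of $Q$ by Lemma \ref{lem:standard_chart}(g) combined with Lemma \ref{lem:fixed_points}), while $a$ also lies on the old unstable manifold $W^u(P,\Phi)$. After the perturbation the unstable manifold $W^u(P,\Psi_r)$ gets pushed off $a$ by a distance $O(r)$ in the $t$-direction — this is precisely the Reeb translation by $\Phi^R_r$ appearing in \eqref{eq:basic_properties}/\eqref{eq:lem:construction_of_Psi_1} — and lands on the point $b_r=(1_s,r,0_u)$. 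So $b_r\in W^u(P,\Psi_r)$, $a\in W^s(Q,\Psi_r)$, and the short Reeb arc $(a,b_r)$ interpolates: along it, $s=1_s$, $u=0_u$, $t\in(0,r)$, and this whole arc lies in $W^s(P,\Psi_r)\cap W^u(Q,\Psi_r)$ because $W^s_{\on{loc}}(P,\Psi_r;U)$ contains $\{t\in(0,L+\epsilon]\}\times 0_u$ (Lemma \ref{lem:fixed_points}) and $W^u_{\on{loc}}(Q,\Psi_r;U)$ contains $\{t\in[-\epsilon,L)\}\times[-\delta,\delta]_u$. That $(a,b_r)$ is a \emph{connected component} of the intersection follows by checking, again via the $t$-dynamics of Lemma \ref{lem:coordinate_projections} and the contraction/expansion of Lemma \ref{lem:coordinate_contraction}, that orbits entering $U$ near this arc either converge to $P$ (if $t>0$) or to $Q$ (if $t\le 0$ and $u=0$) but cannot simultaneously stay bounded in both time directions outside $[a,b_r]$. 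Finally, transversality in (c): at $a$, the stable direction of $Q$ is $T\R^n_s$ and the unstable direction of $W^u(P,\Psi_r)$ through $a$ — computed by pushing the $\R^n_u$-slice at $P$ forward along the heteroclinic — is the slice $\R^n_u$ together with the Reeb direction $\partial_t$, and these span $TY$; an identical count at $b_r$ gives transversality of the homoclinic point of $P$. Here the clean-intersection hypothesis of Setup \ref{set:blender_setup}(d) and the normal hyperbolicity of $\Gamma$ guarantee the relevant subspaces are in general position after the $O(r)$ perturbation.

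The main obstacle I expect is \textbf{part (a)} and, within (b), the claim that $(a,b_r)$ is exactly a connected \emph{component} of the heteroclinic intersection rather than merely a subset of it. Both require care in passing between the local picture in $U$ and the global invariant manifolds: one must follow the unstable leaf of $\Psi_r$ out of $U$, around via the heteroclinic orbit $\chi$ (using $\Phi^{-k}(a)\notin U$, Lemma \ref{lem:standard_chart}(h), and the support conditions on $G$), and back into $U$ near $b_r$, keeping quantitative control of the $O(r)$-sized $t$-displacement throughout. This bookkeeping — essentially a shadowing argument along $\chi$ — is the delicate step; the linear-algebra transversality computations in (c) and the one-variable ODE estimates are routine by comparison.
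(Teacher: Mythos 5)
There is a genuine gap in your proof of part (a). You correctly observe that the local leaf of $F^u(\Psi_r)$ through $(0_s, L-\delta, 0_u)$ is an $\R^n_u$-slice and does not reach $s = 1_s$, so one must trace the global unstable leaf out of $U$ along $\chi$ and back in. But your proposed mechanism — a ``direct computation with the one-variable ODE $\partial_r\psi = h\circ\psi$'' together with ``a shadowing argument along $\chi$'' — cannot produce the \emph{exact} crossing point $(1_s, r-\delta, 0_u)$. The one-variable ODE for $\psi_r$ only governs the $t$-coordinate while the orbit stays inside $U$ (where $\Psi_r = \Phi^H_r\circ\Phi$ and Lemma~\ref{lem:coordinate_projections} applies), and once the orbit exits $U$ along $\chi$ that description breaks down; a generic shadowing estimate could at best bound the displacement by $O(r)$, which is what the Lipschitz control already gives. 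The missing ingredient is the second identity in \eqref{eq:basic_properties}, namely $\Psi_r^{Nm} = \Phi^R_r\circ\Phi^{Nm}\circ\Phi^H_{Nmr}$ on the fixed neighborhood $W$ of $\Phi^{-Nm}(a)$ — this is precisely what the time-dependent Hamiltonian $G$ of Construction~\ref{con:Hamiltonian_G} is engineered to produce. You gesture at ``the support conditions on $G$'' and cite \eqref{eq:lem:construction_of_Psi_1} only in parts (b)--(c), but you never invoke the resulting identity in the computation where it is actually needed. The paper's proof evaluates $\Psi_r^{Nm}\circ\Phi^H_{-Nmr}$ on the point $(0_s,L-\delta,1_u)\in W$, commutes the Reeb translation through $\Phi^{Nm}$, and uses $\Phi^{-Nm}(a)=(0_s,L_t,1_u)$ from \eqref{eqn:choice_of_m} to land exactly on $(1_s,r-\delta,0_u)$; part (a) then follows by applying Lemma~\ref{lem:stab_unstab_foliations} to identify the image leaf with the one through $(0_s,L-\delta,0_u)$. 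No approximation or shadowing enters at any point — the construction of $\Psi_r$ makes the computation exact, and without invoking that structure your outline does not close.

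Your treatment of (b) and (c) is closer in spirit to the paper's: showing the arc sits in the intersection by Lemma~\ref{lem:fixed_points}, characterizing it as a component by the separate memberships $a\in W^s(Q,\Psi_r)$ and $b_r\in W^u(P,\Psi_r)$ at the endpoints, and a dimension count for transversality are all the right moves. Two small corrections there: for (c), a homoclinic point of $Q$ requires transversality of $T_aW^s(Q,\Psi_r)$ with $T_aW^u(Q,\Psi_r)$, not $T_aW^u(P,\Psi_r)$ as you write (the relevant unstable tangent is spanned by the unstable leaf direction $\approx T\R^n_u$ together with the Reeb direction $\partial_t$ along the arc $(a,b_r)\subset W^u(Q,\Psi_r)$, coming from (a) and (b)); and the claim $W^s(Q,\Phi)=W^s(Q,\Psi_r)$ is only valid locally in $U$ (Lemma~\ref{lem:fixed_points}), which fortunately is all that is needed. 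Both (b) and (c) also rest on the exact formula from (a), so the gap there propagates.
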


\begin{proof} Let $m$ and $W$ be the integer and fixed ($r$-independent) neighborhood in Construction \ref{con:family_Psi}. Note that $W$ is a fixed neighborhood of the point
\[
\Phi^{-Nm}(a) = (0,L_t,1_u)
\]We may thus choose $r$ small enough so that we have the following inclusion for all $\delta \in [0,2r]$.
\[\Phi_{-Nmr}^H(0_s,L-\delta,1_u) \in W\]
We first compute the image of $(0_s,L-\delta,1_u)$ under $\Psi_r^{Nm} \circ \Phi^H_{-Nmr}$. By (\ref{eq:basic_properties}), we see that
\[
\Psi_r^{Nm} \circ \Phi_{-Nmr}^H(0_s,L-\delta,1_u) = \Phi^R_r \circ \Phi^{Nm} \circ \Phi^H_{Nmr} \circ \Phi^H_{-Nmr}(0_s,L-\delta,1_u) = \Phi^R_r \circ \Phi^{Nm}(0_s,L-\delta,1_u)
\]
Since the Reeb vector-field commutes with $\Phi$, the latter expression becomes
\[
\Phi^R_r \circ \Phi^{Nm}(0_s,L-\delta,1_u) = \Phi^R_r \circ \Phi^{Nm} \circ \Phi^R_{-\delta}(0_s,L,1_u) = \Phi^R_{r - \delta} \circ \Phi^m(0_s,L,1_u) 
\]
Since $(1_s,0,0_u) = \Phi^{Nm}(0_s,L,1_u)$ by (\ref{eqn:choice_of_m}) in Construction \ref{con:family_Psi}, we then acquire the equality
\[
\Phi^R_{r - \delta} \circ \Phi^{Nm}(0_s,L,1_u) = \Phi^R_{r - \delta}(1_s,0,0_u) = (1_s,r-\delta,0_u) 
\]
We thus acquire the following formula that we will shortly use to prove (a-c).
\begin{equation} \label{eq:lem:homoclinic_points:1}
\Psi^{Nm}_r \circ \Phi^H_{-Nmr}(0_s,L-\delta,1_u) = (1_s,r-\delta,0_u)
\end{equation}

Now we prove the claims above. For (a), note that by Lemma \ref{lem:stab_unstab_foliations}, $\Psi^{Nm}_r$ maps the leaf $\Lambda$ of $F^u(\Psi_r)$ containing $\Phi^H_{-Nmr}(0_s,L-\delta,1_u)$ to the leaf $\Psi^{Nm}_r(\Lambda) = \Phi^H_{Nmr}(\Lambda)$ containing the points
\[
\Psi^{Nm}_r \circ \Phi^H_{-Nmr}(0_s,L-\delta,1_u) = (1_s,r-\delta,0_u)\]
\[\Phi_{Nmr}^H \circ \Phi^H_{-Nmr}(0_s,L-\delta,1_u) = (0_s,L-\delta,1_u)
\]
By Lemma \ref{lem:stab_unstab_foliations}, the leaf that contains $(0_s,L-\delta,1_u)$ also contains $(0_s,L-\delta,0_u)$, proving (a). For (b), we note that by (a), the point $(0_s,r-\delta,0_u)$ is contained in the stable manifold $W^s(P,\Psi_r)$ by Lemma \ref{lem:fixed_points}. On the other hand, it is also contained in the leaf of in $F^u(\Psi_r)$ passing through $(1_s,r-\delta,0_u)$ by (a), and this leaf is contained in the unstable manifold $W^u(Q,\Psi_r)$ by Lemma \ref{lem:fixed_points}. This proves (b). For (c), we argue similarly to (b). By (a), the point $a = (1_s,0_t,0_u)$ is contained in the leaf of $F^u(\Psi_r)$ containing $(0_s,L-r,0_u)$, which is contained in the unstable manifold $W^u(Q,\Psi_r)$ by Lemmas \ref{lem:fixed_points}. On the other hand, $a$ is contained in $W^s(Q,\Psi_r)$ (also by Lemma \ref{lem:fixed_points}). Thus it is a homoclinic point for $Q$. A similar discussion holds for $b_r$ and $P$.\end{proof}

\subsection{Family Of Boxes} \label{subsec:family_of_boxes} We conclude this section by describing the family of blender boxes
\[B_r(Q)\]
appearing in the blenders in Theorem \ref{thm:heteroclinic_contact_blender}. We start by introducing notation for several important points and quantities appearing in the description of the boxes.

\begin{notation} \label{not:m_r} For $r \in [0,1]$, let $P_r$ and $Q_r$ denote the points in $U$ given by
\[
P_r = (0_s,L-r,0_u) \qquad \text{and}\qquad Q_r = (0_s,r \cdot (1 - e^{-8Nr}), 0_u)\]
Then we define the constant $m_r$ to be the unique integer satisfying  
\begin{equation} \label{eq:def_of_mr} \Psi_r^{Nm_r}(Q_r) \in [\Psi^{5N}_r(P_r),\Psi^{6N}_r(P_r)]\end{equation}
\end{notation}

\begin{lemma} \label{lem:growth_of_mr} The integers $m_r$ diverge to $\infty$ as $r \to 0$. Precisely, there are constants $C,r_0 > 0$ such that
\[m_r > -Cr^{-1} \cdot \log(r) \qquad\text{for all }r \in (0,r_0)\]
\end{lemma}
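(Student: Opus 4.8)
The plan is to track the $t$-coordinate dynamics of the points $Q_r$ and $P_r$ under $\Psi_r$ and count how many iterates of $\Psi_r^N$ it takes for the orbit of $Q_r$ to reach the window $[\Psi^{5N}_r(P_r),\Psi^{6N}_r(P_r)]$. By Lemma \ref{lem:coordinate_projections}, as long as points stay in $U \cap \Phi^{-1}(U)$, the $t$-coordinate evolves under the flow $\psi$ of $h \cdot \partial_t$, and by Construction \ref{con:Hamiltonian_H} we have $h(t) = t$ for $t \le L/3$, so on this range the $t$-coordinate of $\Psi_r^{Nj}$ applied to a point with $t$-coordinate $t_0$ is $e^{Nrj} t_0$ (via \eqref{eq:form_of_Phi_in_range_13}). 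Thus I would first establish that $t(Q_r) = r(1 - e^{-8Nr}) \approx 8Nr^2$ for small $r$ (Taylor expansion), and that $t(P_r) = L - r$, so that the $t$-coordinates of $\Psi^{5N}_r(P_r)$ and $\Psi^{6N}_r(P_r)$ are bounded below by a fixed constant independent of $r$ — here I need the formula \eqref{eq:form_of_Phi_in_range_23} near $t = L$ showing these points have $t$-coordinate $L - e^{-5Nr}r$ and $L - e^{-6Nr}r$, which for small $r$ lie in, say, $[L - r, L - r/2] \subset (2L/3, L)$, hence bounded away from $0$ and $L/3$.

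Next I would set up the iteration count. Starting from $t(Q_r) \approx 8Nr^2$, each application of $\Psi_r^N$ multiplies the $t$-coordinate by $e^{Nr}$ (while it remains below $L/3$), so after $j$ steps the $t$-coordinate is $\approx 8Nr^2 e^{Nrj}$. To escape the region $t \le L/3$ and reach the target window (whose $t$-coordinates are a fixed constant $c_0 > 0$), we need roughly $8Nr^2 e^{Nrj} \gtrsim c_0$, i.e. $e^{Nrj} \gtrsim c_0/(8Nr^2)$, i.e. $Nrj \gtrsim \log(c_0/(8Nr^2)) = -2\log r + O(1)$, i.e. $j \gtrsim \frac{-2\log r + O(1)}{Nr}$. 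Since $m_r$ is (by definition \eqref{eq:def_of_mr}) essentially this escape count — more precisely $Nm_r$ is the number of single iterates, and \eqref{eq:def_of_mr} pins $\Psi_r^{Nm_r}(Q_r)$ to lie between $\Psi^{5N}_r(P_r)$ and $\Psi^{6N}_r(P_r)$ — this gives $m_r \gtrsim \frac{-\log r}{r}$, which is the claimed bound with an appropriate constant $C$. I would also need to check the orbit of $Q_r$ does stay inside $U \cap \Phi^{-1}(U)$ for all these iterates until it reaches the window; this follows because along $\Gamma \cap U$ (which contains $Q_r$ and $P_r$ since $s = u = 0$ there) the points $\Psi_r$ acts as $\Phi^H_r$ composed with $\Phi$ fixing $\Gamma$, and the $t$-coordinate increases monotonically from near $0$ toward $L$, never leaving $[-\delta, L+\delta]$.

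To make the argument clean I would argue monotonicity and the pigeonhole-style existence/uniqueness of $m_r$: since $\psi_r$ strictly increases $t$ on $(0, L)$ and the target window $[\Psi^{5N}_r(P_r), \Psi^{6N}_r(P_r)]$ is a nondegenerate Reeb interval on $\Gamma$ with length bounded below (in fact $e^{-5Nr}r - e^{-6Nr}r \sim Nr^2$, comparable to the per-step increment near the window so that exactly one $m_r$ works — or at worst finitely many, and \eqref{eq:def_of_mr} selects one), $m_r$ is well-defined for small $r$, and the escape-time estimate above gives both that $m_r \to \infty$ and the quantitative lower bound. The main obstacle I anticipate is bookkeeping the constants carefully: matching the exponential-growth estimate for the $t$-coordinate of $Q_r$ (which depends delicately on the precise choice $t(Q_r) = r(1-e^{-8Nr}) = \Theta(r^2)$) against the fixed-size window, and confirming that the relevant orbit segment indeed stays in the range $t \le L/3$ where $h(t) = t$ and the clean exponential formula \eqref{eq:form_of_Phi_in_range_13} applies, so that the doubling-exponent count $Nr m_r \approx -2\log r$ is valid; the rest is elementary calculus.
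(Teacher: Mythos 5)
Your proposal is correct and follows essentially the same route as the paper: reduce via Lemma \ref{lem:coordinate_projections} to tracking the $t$-coordinate under the explicit flow $\psi_r$ of $h\,\partial_t$, use the exponential formula $\psi_r(t)=e^r t$ valid for $t\le L/3$, and count the number of applications of $\Psi_r^N$ needed to carry $t(Q_r)=r(1-e^{-8Nr})=\Theta(r^2)$ out of the starting region, yielding $Nr\,m_r \gtrsim -2\log r$. The paper's proof is slightly more refined in that it decomposes the full travel time into three pieces $T_{\on{st}}(r), T_{\on{mid}}, T_{\on{end}}(r)$ (using the analogous exponential formula near $t=L$ for the ending leg) and computes each, obtaining the sharper asymptotic $rNm_r \sim -3\log r$; but since the lemma only asserts a one-sided bound with an unspecified constant $C$, your coarser estimate using only the escape time from the $t\le L/3$ regime — which is in any case the dominant contribution — already suffices.
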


\begin{proof} Since $\Psi_r$ restricts to the flow of $\psi_r$ on $\Gamma$ (see Construction \ref{con:Hamiltonian_H} or Lemma \ref{lem:coordinate_projections}), we may equivalently characterize $m_r$ by
\[\psi_{rNm_r}(r(1 - e^{-8Nr})) \in [L-re^{-5Nr},L-re^{-6Nr}]\]
Here $\psi_r:\R_t \to \R_t$ is the flow of the vector-field $h \cdot \partial_t$ and $h$ is as in Construction \ref{con:Hamiltonian_H}. Briefly switch notation by letting $\psi_r(t) = \psi(r,t)$, and let the quantities $T_{\on{st}}(r), T_{\on{mid}}$ and $T_{\on{end}}(r)$ be the unique values such that
\[\psi(T_{\on{st}}(r),r(1 - e^{-8Nr})) = L/3 \qquad \psi(T_{\on{mid}},L/3) = 2L/3 \qquad \psi(T_{\on{end}}(r),2L/3) = L-re^{-5Nr} \]
Note that $T_{\on{mid}}$ is independent of $r$. Moreover, using the formulas (\ref{eq:form_of_Phi_in_range_13}) and (\ref{eq:form_of_Phi_in_range_23}) on the intervals $[0,L/3]_t$ and $[2L/3,L]_t$ (see Construction \ref{con:Hamiltonian_H}), we can compute that
\[
T_{\on{st}}(r) = \log(L/3) - \log(r) - \log(1 - e^{-8Nr})
\]
\[
T_{\on{end}}(r) = -\log(3/2) - \log(r) + 5Nr
\]
Now we simply note that in the $r \to 0$ limit, we have
\[
rNm_r \sim (T_{\on{st}}(r) + T_{\on{mid}} + T_{\on{end}}(r)) \ge -3\log(r)
\]
Here $\sim$ denotes that the limit of the ratio is $1$. This proves the result.\end{proof}

We can now introduce the definition of the blender box $B_r(Q)$. 

\begin{construction}[Blender Box For $\Psi$] \label{con:smooth_boxes} The blender box $B_r(Q) \subset U$ is defined as the set
\[B_r(Q) = D^n_s(2) \times D^1_t(t(Q_r)) \times D^n_u(l \cdot \mu^{-m_r})\]
The constants in the formula are defined as follows.
\begin{itemize}
\item The quantity $t(Q_r)$ is the $t$-coordinate $r \cdot (1 - e^{-8Nr})$ of the point $Q_r$.
\vspace{3pt}
\item The constant $l$ is a positive number such that $l/3$ is larger than the minimum radius of a ball in the unstable leaf $F^u(\Psi_r;P)$ containing $a$.
    \vspace{3pt}
    \item The constant $\mu$ is an $r$-independent constant of dilation for $\Psi_r$ via Lemma \ref{lem:partial_hyperbolicity}.
    \vspace{3pt}
    \item The constant $m_r$ is the integer defined by the formula (\ref{eq:def_of_mr}).
\end{itemize}
The stable, central, unstable, left and right boundaries are defined as in Definition \ref{def:blender_box}. We will also need an auxilliary H\"{o}lder continuous box constructed using unstable leaves. Let
\begin{equation} \label{eq:def_of_SrQ} S_r(Q) = D^n_s(2 + \mu^{-m_r/2}) \times D^1_t(t(\Psi^N_r(Q_r))) \times 0_u\end{equation}
The H\"{o}lder box $\tilde{B}_r(Q)$ is defined as the union of disks in the leaves of the unstable foliation $F^u(\Psi_r)$ that have boundary on the set $\{|u| = l \cdot \mu^{-m_r}\}$ and that intersect $S_r(Q)$ . That is
\begin{equation} \label{eq:def_of_BrQ}
\tilde{B}_r(Q) = \big\{x \; : \; F^u(\Psi_r,x) \cap S_r(Q) \neq \emptyset \quad \text{and}
\quad |u(x)| \le l \cdot \mu^{-m_r}\big\}
\end{equation}
There is a map $\pi:\tilde{B}_r(Q) \to S_r(Q)$ mapping a point $x$ to the intersection point $\pi(x) \in F^u(\Psi_r,x) \cap S_r(Q)$. The following lemma allows us to replace $\tilde{B}_r(Q)$ with $B_r(Q)$ in some arguments.
\end{construction} 

\begin{lemma} The H\"{o}lder box $\tilde{B}_r(Q)$ contains the blender box $B_r(Q)$ for sufficiently small $r$. 
\end{lemma}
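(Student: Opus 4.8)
The plan is to verify, for every $x \in B_r(Q)$ and all sufficiently small $r$, the two conditions in the definition (\ref{eq:def_of_BrQ}) of $\tilde{B}_r(Q)$: that $|u(x)| \le l\cdot\mu^{-m_r}$, and that the unstable leaf $F^u(\Psi_r,x)$ meets the slab $S_r(Q)$ of (\ref{eq:def_of_SrQ}). The first condition is immediate, since by Construction \ref{con:smooth_boxes} the $u$-factor of $B_r(Q)$ is exactly $D^n_u(l\cdot\mu^{-m_r})$. So the entire content is to produce a point of $F^u(\Psi_r,x)$ lying in $S_r(Q)$.

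The first step is a thin-cone graph estimate inside the standard chart $U$. For small $r$ the unstable bundle $E^u(\Psi_r)$ is contained in $K^u$ (by the $C^1$-robustness of the cone conditions in Lemma \ref{lem:standard_chart_w_cones}, as used already in the proof of Lemma \ref{lem:coordinate_contraction}), and $K^u$ has width less than $\epsilon$ about $T\R^n_u$. Hence, for a point $x = (s_0,t_0,u_0) \in B_r(Q)$, the leaf $F^u(\Psi_r,x)$ is, near $x$, the graph of a $2\epsilon$-Lipschitz map over the $u$-coordinates; since $|u_0| \le l\cdot\mu^{-m_r}$ is far smaller than $\epsilon$ and $B_r(Q)$ is well interior to $U$, this graph extends along the segment from $u_0$ to $0$ in $u$-space without leaving $U$, yielding a point $x' = (s',t',0) \in F^u(\Psi_r,x)$ with
\[
|s' - s_0| \le 2\epsilon l\cdot\mu^{-m_r} \qquad\text{and}\qquad |t' - t_0| \le 2\epsilon l\cdot\mu^{-m_r}.
\]

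The second step is to confirm $x' \in S_r(Q)$. Clearly $u(x') = 0$. For the $s$-coordinate, $|s_0| \le 2$ gives $|s'| \le 2 + 2\epsilon l\cdot\mu^{-m_r}$, and since $m_r \to \infty$ as $r \to 0$ (Lemma \ref{lem:growth_of_mr}) this is $\le 2 + \mu^{-m_r/2}$ for small $r$, as required by (\ref{eq:def_of_SrQ}). For the $t$-coordinate, Lemma \ref{lem:coordinate_projections} says that the $t$-coordinate evolves under $\Psi_r$ by the flow of $h\partial_t$; since $t(Q_r) = r(1-e^{-8Nr})$ is tiny and $h(t) = t > 0$ on $(0,L/3]$, this gives $t(\Psi^N_r(Q_r)) = e^{Nr}\,t(Q_r)$, hence a strictly positive gap
\[
t(\Psi^N_r(Q_r)) - t(Q_r) = (e^{Nr}-1)\,t(Q_r) \ge Nr\cdot t(Q_r),
\]
of order $r^3$. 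By Lemma \ref{lem:growth_of_mr} the lower bound $m_r > -Cr^{-1}\log r$ makes $\mu^{-m_r}$ decay faster than any power of $r$, so for small $r$ the displacement $2\epsilon l\cdot\mu^{-m_r}$ is smaller than this gap; together with $|t_0| \le t(Q_r)$ this gives $|t'| \le t(\Psi^N_r(Q_r))$, i.e. $t' \in D^1_t(t(\Psi^N_r(Q_r)))$. Therefore $x' \in S_r(Q) \cap F^u(\Psi_r,x)$, so $x \in \tilde{B}_r(Q)$.

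I expect the only genuinely delicate point to be the size bookkeeping in the second step: one must know simultaneously that $\mu^{-m_r}$ is super-polynomially small in $r$ — so that it is absorbed both by the $s$-enlargement $\mu^{-m_r/2}$ of $S_r(Q)$ over $B_r(Q)$ and by the $O(r^3)$ $t$-gap $(e^{Nr}-1)t(Q_r)$ — and that the $t$-extent of $S_r(Q)$ genuinely exceeds that of $B_r(Q)$. Both facts follow directly from Lemmas \ref{lem:growth_of_mr} and \ref{lem:coordinate_projections}; the remainder is the routine thin-cone graph estimate, so I anticipate no further obstacles.
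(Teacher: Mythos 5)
Your proposal is correct and follows essentially the same route as the paper: extract the vertical disk of the unstable leaf through $x$, use the thin-cone Lipschitz graph estimate (Lemma \ref{lem:vertical_disks_graphs}) to bound the $(s,t)$-displacement of the $u=0$ intersection by a multiple of $\mu^{-m_r}$, and absorb this using $\mu^{-m_r} = o(r^k)$ (Lemma \ref{lem:growth_of_mr}) against the enlargements built into $S_r(Q)$. One minor point in your favor: your $t$-gap bookkeeping, $t(\Psi^N_r(Q_r)) - t(Q_r) = (e^{Nr}-1)\,t(Q_r) \sim 8N^2r^3$, is more careful than the paper's, which writes $r(e^{9Nr}-e^{8Nr}) \ge \tfrac12 r^2$ (an inconsistency with $t(Q_r) = r(1-e^{-8Nr})$ and $t(\Psi^N_r(Q_r)) = e^{Nr}t(Q_r)$); since $\mu^{-m_r}$ is super-polynomially small either estimate suffices, so the conclusion is unaffected.
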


\begin{proof} Let $x \in B_r(Q)$. By Construction \ref{con:smooth_boxes} we know that $|u(x)| \le l \cdot \mu^{-m_r}$. Let $D \subset F^u(\Psi_r,x)$ be the disk given by $F^u(\Psi_r,x) \cap \{|u(x)| \le l \cdot \mu^{-m_r}\}$. This disk is tangent to the vertical cone-field $K^u$ in $U$ (see Lemma \ref{lem:standard_chart_w_cones}), and thus is the graph of a Lipschitz map
\[
f:D_u^n(l \cdot \mu^{-m_r}) \to \R^n_s \times \R^1_t
\]
with a uniform Lipschitz constant $C$ independent of $r$ (see Lemma \ref{lem:vertical_disks_graphs}) and so the image of $f$ in $\R_s \times \R_t$ has diameter bounded by $C \cdot l \cdot \mu^{-m_r}$. Thus if $y = f(0_u)$ then
\[|s(y) - s(x)| < 5C \cdot l \cdot \mu^{-m_r} \qquad \text{and}\qquad |t(y) - t(x)| < 5C \cdot l \cdot \mu^{-m_r}\]
In particular, this implies that for sufficiently small $r$, we have
\[|s(y)| \le |s(x)| + 5C \cdot l \cdot \mu^{-m_r} \le 2 + \mu^{-m_r/2}\]
\[|t(y)| \le |t(x)| + 5C \cdot l \cdot \mu^{-m_r} \le t(Q_r) + 5C \cdot l \cdot \mu^{-m_r} \le t(\Psi_r(Q_r))\]
The last inequality follows from the fact that, for $r$ small, we have
\[t(\Psi_r(Q_r)) - t(Q_r) = r(e^{9Nr} - e^{8Nr}) \ge \frac{1}{2}r^2\] 
Thus $D$ intersects $S_r(Q)$ at $y \times 0_u$ and $F^u(\Psi_r,x) \cap S_r(Q)$ is non-empty. In particular, $x$ is in $S_r(Q)$ and $B_r(Q) \subset \tilde{B}_r(Q)$. \end{proof}

\section{Proof Of Blender Axioms And Theorem \ref{thm:heteroclinic_contact_blender}} \label{sec:proof_of_blender_axioms} In the previous section, we constructed the family of maps $\Psi$ and the accompanying family of blender boxes appearing in Theorem \ref{thm:heteroclinic_contact_blender}. Our objective in this section is to prove that the pair
\[(\Psi_r^N,B_r(Q)) \qquad\text{for sufficiently small }r\]
satisfies the axioms of a stable blender given in Definition \ref{def:blender}. We will also prove Theorem \ref{thm:heteroclinic_contact_blender}. 

\vspace{3pt}

We begin by proving the axioms of a blender stated in Definition \ref{def:blender}(a-f) in order (Sections \ref{subsec:axiom_A}-\ref{subsec:axiom_E_F}) and conclude with the proof of Theorem \ref{thm:heteroclinic_contact_blender} (Section \ref{subsec:conclusion_of_thm}).

\begin{remark} We warn the reader that the proofs in the coming section are the lengthy and notation heavy.  
\end{remark}

\begin{construction}[Useful Holonomy] \label{con:useful_holonomy} The following holonomy map will be useful in the proofs below. Recall that $a$ and $P$ lie on a leaf of the unstable foliation $F^u(\Phi)$ of $\Phi$, with transversals
\begin{equation} \label{eq:holonomy_transversals_for_blender}
S = T = [-2,2]^n_s \times [-\epsilon,L+\epsilon]_t \times 0_u
\end{equation}
These transversals will play the role of the source and target transversals of the holonomy map (Definition \ref{def:holonomy}). By Lemma \ref{lem:holder_holonomy}, we can choose a neighborhood $\on{Nbhd}(P)$ and a H\"{o}lder constant $\kappa$ so that the corresponding holonomy maps $\on{Hol}_{\Psi_r}$ from $\on{Nbhd}(P) \cap S$ to $T$ are H\"{o}lder with H\"{o}lder constant $\kappa$ for sufficiently small $r$. For small $r$, this restricts to a holonomy map
\begin{equation} \label{eq:useful_holonomy}
\on{Hol}_{\Psi_r}:D^n_s(2 \cdot \mu^{-m_r}) \times [L-2r,L]_t \times 0_u \to [-2,2]^n_s \times [-\delta,L+\delta]_t \times 0_u
\end{equation}
Lemma \ref{lem:homoclinic_points}(a) can be restated as the following formula.
\begin{equation} \label{eq:holonomy_of_Gamma} \on{Hol}_{\Psi_r}(0_s,L-\delta,0_u) = (1_s,r-\delta,0_u) \qquad\text{for any }\delta \in [0,2r]\end{equation}
    
\end{construction} 

\subsection{Axiom A} \label{subsec:axiom_A} We start by giving the definition of the subset $A$ in Definition \ref{def:blender} and explaining the proof of the axiom in  Definition \ref{def:blender}(a). Both are fairly short and straightward.

\begin{definition}[Blender Set $A$] We let $A_r(Q) \subset B_r(Q)$ be the connected component of the intersection $B_r(Q) \cap \Psi^N(B_r(Q))$ that contains the point $Q = (0_s,0_t,0_u)$. 
\end{definition}

\begin{lemma}[Blender Axiom A] \label{lem:blender_axiom_A} The intersection $B_r(Q) \cap \Psi_r^N(B_r(Q))$ is disjoint from \[\partial^s B_r(Q) \qquad \Psi^N_r(\partial^c B_r(Q))\quad \text{and}\quad \Psi_r^N(\partial^u B_r(Q)) \qquad\text{for sufficiently small }r\] In particular, the connected component $A_r(Q)$ of $B_r(Q) \cap \Psi_r^N(B_r(Q))$ satisfies these properties. \end{lemma}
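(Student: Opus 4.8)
The strategy is to control the geometry of $B_r(Q)$ and $\Psi_r^N(B_r(Q))$ explicitly in the chart $U$, using the coordinate formulas established in Lemmas \ref{lem:coordinate_projections}, \ref{lem:coordinate_contraction}, and \ref{lem:fixed_points}. The point is that, near $Q$, the map $\Psi_r$ agrees with $\Phi_r^H \circ \Phi$ (by \eqref{eq:basic_properties}), and in the range $t \le L/3$ the contactomorphism $\Phi_r^H$ acts by the simple formula \eqref{eq:form_of_Phi_in_range_13}: it contracts the $s$-coordinate by $e^{-r}$ and expands the $t$-coordinate by $e^{r}$, while $\Phi$ uniformly contracts $T\R^n_s$ and expands $T\R^n_u$ by the factor $\mu$ (Lemma \ref{lem:standard_chart_w_cones}). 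So $\Psi_r^N$ strictly contracts the $s$-directions, strictly expands the $u$-directions, and moves the $t$-coordinate of $Q_r$ strictly forward (since $h>0$ on $(0,L)$ and $Q_r$ has small positive $t$-coordinate $r(1-e^{-8Nr})$, which one checks stays in the range $t \le L/3$ for small $r$). Each of the three disjointness claims is then read off from one of these three monotonicities.

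First I would treat $\partial^s B_r(Q) = \partial D^n_s(2) \times D^1_t \times D^n_u$. Any point of $B_r(Q) \cap \Psi_r^N(B_r(Q))$ is the image $\Psi_r^N(x)$ of some $x \in B_r(Q)$, so $|s(x)| \le 2$; by Lemma \ref{lem:coordinate_contraction}, $|s(\Psi_r^N(x))| < \mu^{-1} |s(x)| \le 2\mu^{-1} < 2$, hence $\Psi_r^N(x) \notin \partial^s B_r(Q)$. Second, for $\Psi_r^N(\partial^u B_r(Q))$: a point in $\partial^u B_r(Q)$ has $|u| = l\mu^{-m_r}$, and Lemma \ref{lem:coordinate_contraction} gives $|u(\Psi_r^N(x))| > \mu |u(x)| = \mu \cdot l\mu^{-m_r} > l\mu^{-m_r}$, so its image lies outside $D^n_u(l\mu^{-m_r})$ and hence outside $B_r(Q)$ entirely; in particular it is not in the intersection. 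Third, for $\Psi_r^N(\partial^c B_r(Q))$: a point $x \in \partial^c B_r(Q)$ has $t(x) \in \{\pm t(Q_r)\}$ where $t(Q_r) = r(1-e^{-8Nr}) > 0$. By Lemma \ref{lem:coordinate_projections}, $t(\Psi_r^N(x)) = \psi_{rN}(t(x))$. Using \eqref{eq:form_of_Phi_in_range_13} one computes $\psi_{rN}(t(Q_r)) = e^{rN} t(Q_r)$, which is strictly larger than $t(Q_r)$ (so this image is not in $D^1_t(t(Q_r))$, hence not in $B_r(Q)$), while $\psi_{rN}(-t(Q_r))$ is even more negative — in fact $h<0$ on $[-\epsilon,0)$ pushes it further out — so that image also fails to lie in $D^1_t(t(Q_r))$. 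Either way $\Psi_r^N(\partial^c B_r(Q)) \cap B_r(Q) = \emptyset$, so it is disjoint from the intersection. One small technical point: all of these coordinate identities are valid only where the simple formula \eqref{eq:form_of_Phi_in_range_13} applies, i.e. where the relevant orbit segment stays in $\{t \le L/3\}$; since $t(Q_r) \to 0$ and $m_r$-many iterations are not involved here (we only apply $\Psi_r^N$ once), for $r$ small the orbit of $B_r(Q)$ under $\Psi_r^N$ stays well within $\{t \le L/3\}$, so this is fine — I would spell this out.

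Having shown the three disjointness statements for the full intersection $B_r(Q) \cap \Psi_r^N(B_r(Q))$, the corresponding statements for the connected component $A_r(Q)$ are immediate, since $A_r(Q) \subset B_r(Q) \cap \Psi_r^N(B_r(Q))$; this is the last sentence of the lemma. The main obstacle I anticipate is purely bookkeeping: making sure that for sufficiently small $r$ the point $Q_r$ and its short forward orbit genuinely lie in the regime $t \le L/3$ where \eqref{eq:form_of_Phi_in_range_13} is exact, and confirming that $A_r(Q)$ is nonempty (it contains $Q$, which is fixed by $\Psi_r$ by Lemma \ref{lem:fixed_points} and clearly interior to $B_r(Q)$, so this is automatic). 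Beyond that there is no real difficulty — this is the "easy" blender axiom, and everything reduces to the explicit contraction/expansion/translation estimates already in hand.
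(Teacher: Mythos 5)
Your proof is correct and follows essentially the same route as the paper's: both use Lemma \ref{lem:coordinate_contraction} for the $s$- and $u$-coordinate estimates, Lemma \ref{lem:coordinate_projections} together with the explicit form of $\psi_r$ on $\{t \le L/3\}$ for the $t$-coordinate, and then read off the three disjointness statements from the definitions in Construction \ref{con:smooth_boxes}. The only cosmetic difference is that you split the two sides of $\partial^c B_r(Q)$ into separate cases where the paper handles them at once with $|t(\Psi_r^N(w))|$; your remark that $Q \in A_r(Q)$ guarantees nonemptiness is a nice touch but already implicit in the definition.
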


\begin{proof} We start by noting that for sufficiently small $r$, we have
\begin{equation} \label{eq:blender_axiom_A:1} B_r(Q) \subset U \cap \Psi_r^{-1}(U) \cap \dots \cap \Psi_r^{-N}(U)\end{equation}
Now fix points $x \in \partial^s B_r(Q)$ and $y \in \Psi^N_r(\partial^u B_r(Q))$. By Lemma \ref{lem:coordinate_contraction} and Construction \ref{con:smooth_boxes}, we know that the coordinates of $x$ and $y$ satisfy
\[|s(x)| = 2 \qquad\text{and}\qquad |u(y)| > \mu \cdot l \cdot \mu^{-m_r}\]
Also by Lemma \ref{lem:coordinate_contraction} and Construction \ref{con:smooth_boxes}) we know that any $z \in B_r(Q) \cap \Psi^N_r(B_r(Q))$ satisfies 
\[
|s(z)| < 2 \cdot \mu^{-1} \qquad\text{and}\qquad |u(z)| = l \cdot \mu^{-m_r}
\]
It follows that $B_r(Q) \cap \Psi^N_r(B_r(Q))$ is disjoint from $\partial^s B_r(Q)$ and $\Psi^N_r(\partial^u B_r(Q))$. For the central boundary, note that $B_r(Q)$ consists of points where $t \le L/3$ for small $r$. Thus by Lemma \ref{lem:coordinate_contraction}
\[
|t(\Psi^N_r(w))| = |\psi_{Nr}(t(w))| = e^{Nr} \cdot r \cdot (1 - e^{-8Nr})
 \qquad\text{ for any }w \in \partial^c B_r(Q) \]
On the other hand, any $z \in B_r(Q)$ has $|t(z)| \le r \cdot (1 - e^{-8Nr})$ by Construction \ref{con:smooth_boxes}. Thus $B_r(Q)$ and $\Psi^N_r(\partial^c B_r(Q))$ are disjoint, and the proof is finished.
\end{proof}

\subsection{Axiom B} \label{subsec:axiom_B} Next, we define the subset $A'$ in Definition \ref{def:blender} and prove the axiom in Definition \ref{def:blender}(b). This is more involved than the previous axiom. We require the following lemma.

\begin{lemma} \label{lem:heteroclinic_is_in_set} The heteroclinic point $a \in \chi$ in the heteroclinic orbit $\chi$ in Setup \ref{set:blender_setup} is contained in
\[B_r(Q) \cap \Psi^{N m_r}(B_r(Q))\]
\end{lemma}

\begin{proof} By the construction of the integer $m_r$ (see Notation \ref{not:m_r}), we know that
\[\Psi^{Nm_r}_r(Q_r) \in \big[\Psi_r^5(P_r),\Psi_r^6(P_r)\big] \qquad\text{(using Notation \ref{not:Reeb_intervals})}\]
Since $\Psi_r$ and $\Phi^H_r$ agree on the Reeb segment $\Gamma \cap U$, and $(\Phi^H_r)^{N m_r}$ maps the interval $\Gamma \cap B_r(Q)$ to an interval in $\Gamma$ containing $[0,\Psi^5_r(P_r)]$, we thus deduce that  
\[
P_r \in \Psi^{Nm_r}([-Q_r,Q_r]) \qquad\text{where}\qquad \pm Q_r = 0_s \times \pm r (1 - e^{-8Nr}) \times 0_u
\]
Let $x \in [-Q_r,Q_r]$ be the point with $\Psi^{N m_r}(x) = P_r$ and let $D \subset B_r(Q)$ be the disk
\[D = 0_s \times x \times D^n_u(l \cdot \mu^{-m_r}) \qquad\text{contained in}\qquad  F^u(\Psi_r,x) \cap B_r(Q)\]
This disk is a disk of radius $l \cdot \mu^{-m_r}$ in $F^u(\Psi_r,x)$. Since $\Psi^N_r$ uniformly expands distances in leaves of $F^u$ (see Lemma \ref{lem:standard_chart_w_cones}), the disk $\Psi^{Nm_r}(D)$ has radius larger than $l$ in $F^u(\Psi_r,P_r)$. It follows from the definition of $l$ that
\[a \in \Psi^{Nm_r}(D) \subset \Psi^{N m_r}(B_r(Q))\qedhere\]\end{proof}

\begin{definition}[Blender Set $A'$] \label{def:blender_set_Aprime} The sets $A'_r(Q)$ and $\tilde{A}_r(Q)$ are the components of the intersections
\[B_r(Q) \cap \Psi^{Nm_r}(B_r(Q)) \qquad\text{and}\qquad \tilde{B}_r(Q) \cap \Psi^{Nm_r}(\tilde{B}_r(Q))\]that contain the point $a = (1_s,0_t,0_u)$, respectively. Note that $A_r(Q) \subset \tilde{A}_r(Q)$. 
\end{definition}

We next begin working towards the proof of the corresponding axiom, Definition \ref{def:blender}(b). We need some technical lemmas about the holonomy map (see Construction \ref{con:useful_holonomy}). Consider the set
\[C_r = D^n_s(5 \cdot \mu^{-m_r}) \times [L-2r,L]_t \times 0_u\]
The holonomy from Construction \ref{con:useful_holonomy} is well-defined on $C_r$ for small $r$, yielding a smooth map
\[\on{Hol}_{\Psi_r}:C_r \to [-3,3]^n_s \times [-\epsilon,L + \epsilon]_t \times 0_u \qquad\text{for small }r\]
Our first goal is to analyze the image of $C_r$ under the holonomy map.

\begin{lemma}[Holonomy Estimates] \label{lem:holonomy_estimates} There is a $\kappa > 0$ independent of $r$ with the following property. Fix
\[x \in C_r \qquad\text{and}\qquad y = \on{Hol}_{\Psi_r}(x)\]
Then for sufficiently small $r$, the coordinates of $y$ satisfy
\[|s(y)| \le \mu^{-\kappa m_r} \qquad \text{and}\qquad |t(y) - (r + t(x) - L)| \le \mu^{-\kappa m_r}\]\end{lemma}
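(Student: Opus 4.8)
The plan is to track a point $x \in C_r$ under the holonomy $\on{Hol}_{\Psi_r}$ of the unstable foliation $F^u(\Psi_r)$ by comparing it to the base point $(0_s, L - \delta, 0_u)$ on $\Gamma$, for which the holonomy is computed exactly in formula \eqref{eq:holonomy_of_Gamma}. The two inputs we have are: (i) the explicit value $\on{Hol}_{\Psi_r}(0_s, L-\delta, 0_u) = (1_s, r - \delta, 0_u)$ from Lemma \ref{lem:homoclinic_points}(a), and (ii) the uniform H\"older continuity of the holonomy with some exponent $\kappa \in (0,1)$ independent of $r$ (Lemma \ref{lem:holder_holonomy}, as arranged in Construction \ref{con:useful_holonomy}). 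The strategy is purely to feed a nearby point into the H\"older estimate and then shrink $\kappa$ to absorb constants.

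First I would fix $x = (s_0, t_0, 0_u) \in C_r$, so that $|s_0| \le 5\mu^{-m_r}$ and $t_0 \in [L - 2r, L]$; write $\delta = L - t_0 \in [0, 2r]$, and let $x_0 = (0_s, L - \delta, 0_u) = (0_s, t_0, 0_u)$ be the corresponding point on $\Gamma$. Then $\on{Hol}_{\Psi_r}(x_0) = (1_s, r - \delta, 0_u) = (1_s, r + t_0 - L, 0_u)$ by \eqref{eq:holonomy_of_Gamma}. The Euclidean distance between $x$ and $x_0$ in the chart $U$ is exactly $|s_0| \le 5\mu^{-m_r}$. Applying the H\"older bound of Construction \ref{con:useful_holonomy}, $\on{dist}(\on{Hol}_{\Psi_r}(x), \on{Hol}_{\Psi_r}(x_0)) \le \on{dist}(x, x_0)^{\kappa_0} \le (5\mu^{-m_r})^{\kappa_0} = 5^{\kappa_0} \mu^{-\kappa_0 m_r}$, where $\kappa_0$ is the H\"older exponent from Lemma \ref{lem:holder_holonomy}. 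Writing $y = \on{Hol}_{\Psi_r}(x)$ and $y_0 = \on{Hol}_{\Psi_r}(x_0)$, this gives $|s(y) - s(y_0)| \le 5^{\kappa_0}\mu^{-\kappa_0 m_r}$ and $|t(y) - t(y_0)| \le 5^{\kappa_0}\mu^{-\kappa_0 m_r}$. However $s(y_0) = 1_s \ne 0$, so to get the claimed bound $|s(y)| \le \mu^{-\kappa m_r}$ one must instead measure $x$ against a base point whose holonomy image has vanishing $s$-coordinate — and indeed the relevant transversal here is the one used in \eqref{eq:useful_holonomy} landing in the block near $Q_r$, not near $a$; so I would recheck which holonomy (hence which base point on $\Gamma$, and which component of the leaf) is being used, and use the base point $(0_s, t_0, 0_u)$ whose $\Psi_r$-dynamics keeps it near $W^u_{\on{loc}}(Q, \Psi_r)$, for which \eqref{eq:holonomy_of_Gamma} should read with the $s$-image in $D^n_s(2\mu^{-m_r})$. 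With the correct base point, $|s(y_0)| \le 2\mu^{-m_r}$, and then $|s(y)| \le |s(y_0)| + 5^{\kappa_0}\mu^{-\kappa_0 m_r} \le (2 + 5^{\kappa_0})\mu^{-\kappa_0 m_r}$. Similarly $t(y_0) = r + t_0 - L$ exactly (from \eqref{eq:holonomy_of_Gamma}, with $\delta = L - t_0$), so $|t(y) - (r + t(x) - L)| \le 5^{\kappa_0}\mu^{-\kappa_0 m_r}$.

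Finally I would choose $\kappa \in (0, \kappa_0)$ and invoke Lemma \ref{lem:growth_of_mr}: since $m_r \to \infty$ as $r \to 0$, for sufficiently small $r$ we have $(2 + 5^{\kappa_0})\mu^{-\kappa_0 m_r} \le \mu^{-\kappa m_r}$ and $5^{\kappa_0}\mu^{-\kappa_0 m_r} \le \mu^{-\kappa m_r}$, because $\mu > 1$ and $(\kappa_0 - \kappa)m_r \to \infty$ dominates any fixed multiplicative constant. This yields both inequalities in the statement and fixes the universal $\kappa$. The main obstacle I anticipate is bookkeeping rather than analysis: making sure the base point on $\Gamma$ and the branch of the unstable leaf are chosen so that formula \eqref{eq:holonomy_of_Gamma} applies with the $s$-image small (i.e. that we are in the transversal of \eqref{eq:useful_holonomy}, the one adapted to the blender box near $Q$), and confirming that $x \in C_r$ indeed lies in the domain where the H\"older holonomy estimate of Construction \ref{con:useful_holonomy} is valid for all small $r$ — which follows since $C_r \subset D^n_s(5\mu^{-m_r}) \times [L-2r,L]_t \times 0_u$ shrinks into any fixed neighborhood $\on{Nbhd}(P)$ as $r \to 0$.
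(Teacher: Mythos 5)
Your approach is the same as the paper's: project $x$ to $\Gamma$ via $x' = \pi_\Gamma(x)$, compute the holonomy of $x'$ exactly via \eqref{eq:holonomy_of_Gamma}, apply the uniform H\"older bound from Construction \ref{con:useful_holonomy} to $\on{dist}(x,x') = |s(x)| \le 5l\mu^{-m_r}$, and then shrink $\kappa$ slightly (using Lemma \ref{lem:growth_of_mr} so that $m_r \to \infty$ absorbs multiplicative constants). That is precisely what the paper does, and your reduction is correct.

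You also caught a genuine inconsistency in the paper. As you observe, \eqref{eq:holonomy_of_Gamma} gives $\on{Hol}_{\Psi_r}(0_s, L-\delta, 0_u) = (1_s, r-\delta, 0_u)$, so $s(y') = 1_s$, and the H\"older estimate then yields $|s(y) - 1_s| \le \mu^{-\kappa m_r}$, \emph{not} $|s(y)| \le \mu^{-\kappa m_r}$ as the lemma claims. This is a typo in the paper: the proof of the lemma erroneously asserts $s(y') = 0$, and the intended conclusion is $|s(y) - 1_s| \le \mu^{-\kappa m_r}$ — this is in fact what is used in Lemma \ref{lem:bounds_on_Atilde}, whose conclusion is $|s(z) - 1_s| \le \mu^{-\kappa m_r}$. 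However, your proposed resolution is not correct: you speculate that there is a ``different base point'' or a ``different transversal'' for which $|s(y_0)| \le 2\mu^{-m_r}$. There is no such alternative — the holonomy under consideration is pinned down by \eqref{eq:useful_holonomy}, the base point on $\Gamma$ is $\pi_\Gamma(x)$, and its image has $s$-coordinate exactly $1_s$. The right move is simply to record the corrected inequality $|s(y) - 1_s| \le \mu^{-\kappa m_r}$ and carry that forward. If you insist on the inequality $|s(y)| \le \mu^{-\kappa m_r}$ as literally stated, it is false: $|s(y)| \to |1_s| = 1$ as $r \to 0$.
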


\begin{proof} If $x$ has $s(x) = 0$, then (\ref{eq:holonomy_of_Gamma}) says that $s(y) = 0$ and $t(y) = r + t(x) - L$. In general, let $x' = \pi_\Gamma(x)$ be the projection of $x$ to the $t$-axis and let $y' = \on{Hol}_{\Psi_r}(x')$.  Then by Construction \ref{con:useful_holonomy}
\[\on{dist}(y',y) \le \on{dist}(x',x)^\kappa = |s(x)|^\kappa \le (5l\mu^{-m_r})^\kappa\]
Here $\kappa$ is the H\"{o}lder constant in Construction \ref{con:useful_holonomy}. The implies that
\[
|s(y)| = |s(y) - s(y')| \le (5 l\mu^{-m_r})^\kappa \qquad \text{and}\qquad |s(y) - (r + t(x) - L)| \le (5 l  \mu^{-m_r})^\kappa
\]
Since $m_r$ grows faster than $1/r$, we can eliminate the factor of $5l$ by shrinking $\kappa$. \end{proof}

\noindent Next let $\Sigma_r$ be the union of the disks $D$ in the unstable foliation satisfying the following properties.
\[\partial D \subset \{|u| \le l \cdot \mu^{-m_r}\} \qquad\text{and}\qquad D \cap \{u = 0\} \in \on{Hol}_{\Psi_r}(\partial C_r)\]

\begin{lemma} \label{lem:holonomy_wall} The set $\Sigma_r$ is disjoint from $\tilde{B}_r(Q) \cap \Psi^{Nm_r}(\tilde{B}_r(Q))$. \end{lemma}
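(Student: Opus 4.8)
The plan is to show that any point $z$ lying in both $\tilde{B}_r(Q)$ and $\Psi^{Nm_r}_r(\tilde{B}_r(Q))$ cannot lie on $\Sigma_r$, by tracking the $s$- and $t$-coordinates of the relevant points along the unstable foliation. First I would unwind the definitions: a point of $\Sigma_r$ lies on an unstable leaf $D$ whose central slice $D\cap\{u=0\}$ is a point of $\on{Hol}_{\Psi_r}(\partial C_r)$, so by Lemma \ref{lem:holonomy_estimates} applied to the boundary points $x\in\partial C_r$ (where $|s(x)| = 5\mu^{-m_r}$ or $t(x)\in\{L-2r, L\}$), the slice point $y=\on{Hol}_{\Psi_r}(x)$ satisfies $|s(y)|\le \mu^{-\kappa m_r}$ and $t(y)\in [r-2r, r]+[-\mu^{-\kappa m_r},\mu^{-\kappa m_r}]$, i.e. $t(y)$ is within $\mu^{-\kappa m_r}$ of the interval $[-r, r]$. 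Since the unstable cone-field $K^u$ has width less than $\epsilon$, the whole disk $D$ (radius $\le l\mu^{-m_r}$ in the $u$-directions) stays within a uniform constant times $\mu^{-m_r}$ of its slice in the $s$- and $t$-coordinates, so every point of $\Sigma_r$ has $|s| \le \mu^{-\kappa m_r} + C\mu^{-m_r}$ and $t$ within $\mu^{-\kappa m_r}+C\mu^{-m_r}$ of $[-r,r]$ — for small $r$, say $|s|\le 2\mu^{-\kappa m_r}$ and $|t|\le 2r$.

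Next I would derive a contradictory estimate for $z\in \tilde B_r(Q)\cap \Psi^{Nm_r}_r(\tilde B_r(Q))$. Writing $w$ for the preimage $\Psi^{-Nm_r}_r(z)\in\tilde B_r(Q)$, I use Lemma \ref{lem:coordinate_contraction} (iterated $m_r$ times, valid since $B_r(Q)\subset U\cap\cdots\cap\Psi_r^{-Nm_r}(U)$ for small $r$, as in Lemma \ref{lem:heteroclinic_is_in_set}) to control $|s(z)|$ from below: contraction of $T\R^n_s$ runs backwards, so $|s(z)| = |s(\Psi^{Nm_r}_r(w))|$ is \emph{small}, but the $u$-coordinate expansion forces $w$ to have $|u(w)|\lesssim \mu^{-m_r}\cdot\mu^{-m_r}$, hence $w$ is extremely close to $\Gamma\cap U$; then the image point $z$ has $t$-coordinate essentially $\psi_{Nm_r r}(t(w))$, and by the definition of $m_r$ (Notation \ref{not:m_r}) this lies near $[\Psi^{5N}_r(P_r),\Psi^{6N}_r(P_r)]$, i.e. near $t\approx L - re^{-5Nr}$, which is bounded away from $[-r,r]$ and from $0$ for all small $r$. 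Alternatively — and this is probably the cleaner route — one observes that $z = \Psi^{Nm_r}_r(w)$ with $w\in\tilde B_r(Q)$ means $z$ lies on the unstable leaf through $\Psi^{Nm_r}_r(\pi(w))$ where $\pi(w)\in S_r(Q)$, and $\Psi^{Nm_r}_r$ maps $S_r(Q)\subset\{|t|\le t(\Psi^N_r(Q_r))\}$ into a set where $t$ is close to $L$; comparing with the slice constraint $t(y)\in[-2r,2r]$ for points of $\Sigma_r$ gives the disjointness directly, since $L - O(r)$ and $[-2r,2r]$ are disjoint for small $r$.

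The main obstacle I anticipate is bookkeeping the two competing distortions cleanly: the H\"older exponent $\kappa$ in the holonomy estimate degrades the $\mu^{-m_r}$ bound to $\mu^{-\kappa m_r}$, while the cone-width contribution to the disk diameter is only $\mu^{-m_r}$, so one must be careful that all error terms are $o(r)$ — which is exactly why Lemma \ref{lem:growth_of_mr} ($m_r \gtrsim -r^{-1}\log r$, so $\mu^{-m_r} = o(r^k)$ for every $k$) is invoked: it guarantees $\mu^{-\kappa m_r}\to 0$ faster than any power of $r$, making the separation between the near-$[-r,r]$ region (where $\Sigma_r$ sits) and the near-$L$ region (where $\Psi^{Nm_r}_r(\tilde B_r(Q))$ sits) robust. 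So the proof structure is: (i) locate $\Sigma_r$ in a $\mu^{-\kappa m_r}$-neighborhood of $\{|s|=0\}\cap\{t\in[-r,r]\}\cap\{|u|\le l\mu^{-m_r}\}$ via Lemma \ref{lem:holonomy_estimates} and the cone-width bound; (ii) locate $\Psi^{Nm_r}_r(\tilde B_r(Q))$ near $\{t\approx L\}$ via the coordinate dynamics of $\Psi_r$ on $\Gamma$ and Lemma \ref{lem:stab_unstab_foliations}; (iii) conclude disjointness for small $r$ using Lemma \ref{lem:growth_of_mr}.
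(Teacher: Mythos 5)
Your proposal has a genuine gap, and it is not a minor one: both of your proposed routes rely on the false premise that $\Psi^{Nm_r}_r(\tilde{B}_r(Q))$ is confined to a region where $t$ is close to $L$, and hence is $t$-separated from $\Sigma_r$. This is not true, and in fact it cannot be true, because that would contradict the entire mechanism of the blender. The set $S_r(Q)$ does get pushed by $\Psi^{Nm_r}_r$ to $t \approx L - O(r)$, but $\tilde{B}_r(Q)$ is a union of unstable disks over $S_r(Q)$, and after $Nm_r$ iterates these disks have been stretched to leaf-radius roughly $l$; they reach along the unstable leaf of $P$, which passes through the heteroclinic point $a=(1_s,0_t,0_u)$, back into the small-$t$ region near $Q$. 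Concretely, $\tilde{A}_r(Q)\subset \tilde{B}_r(Q)\cap\Psi^{Nm_r}_r(\tilde{B}_r(Q))$ contains $a$, which has $t=0$, so the intersection is nowhere near $\{t\approx L\}$; both it and $\Sigma_r$ live in the $|t|\lesssim r$ region near $Q$, and a crude $t$-separation argument cannot distinguish them.

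Your first route fails for a related reason: you iterate Lemma \ref{lem:coordinate_contraction} $m_r$ times and write $t(z)\approx\psi_{Nm_r r}(t(w))$, but this uses Lemma \ref{lem:coordinate_projections}, which only holds while the orbit remains in $U$. The backward orbit from $\tilde{A}_r(Q)$ shadows the heteroclinic orbit $\chi$ from $Q$ back toward $P$, and by Lemma \ref{lem:standard_chart}(h) that orbit leaves $U$ (this is exactly what the contactomorphism $\Phi^G$ of Construction \ref{con:Hamiltonian_G} is designed to handle, via the intermediate window $W$). So the naive coordinate iteration does not apply on the whole trajectory. Moreover even where it does apply, $\psi_{Nm_r r}(t(w))$ is only near $L-O(r)$ when $t(w)\approx t(Q_r)$; for $t(w)$ small it is small, so the conclusion ``$t(z)$ bounded away from $[-r,r]$'' does not follow.

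The paper's actual argument is more delicate and does not separate the two sets by a global $t$-estimate. Instead it works on the transversal $C_r$: it first shows that a disk $D\subset\Sigma_r$ through $x\in\partial C_r$ can intersect $\Psi^{Nm_r}_r(\tilde{B}_r(Q))$ only if $x$ itself lies in $\Psi^{Nm_r}_r(S_r(Q))\cap\partial C_r$ (using the fact that $z$ and $x$ lie on the same unstable leaf within distance $\approx l$, together with the definition of $\tilde{B}_r(Q)$ via unstable disks of radius $l$ around $\Psi^{Nm_r}_r(S_r(Q))$), then shows via Lemmas \ref{lem:coordinate_projections}--\ref{lem:coordinate_contraction} that $\Psi^{Nm_r}_r(S_r(Q))\cap\partial C_r$ is confined to the single bottom face $\{t=L-2r\}$ of $\partial C_r$, and finally applies Lemma \ref{lem:holonomy_estimates} to that face to get $t(\on{Hol}_{\Psi_r}(x))\approx -r < -t(\Psi^N_r(Q_r))$, which places the slice point $y$, and hence the whole disk $D$, strictly below the $t$-range of $S_r(Q)$, so outside $\tilde{B}_r(Q)$. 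The step you are missing is this restriction to the single face of $\partial C_r$ and the resulting one-sided (not two-sided) $t$-estimate, which is what actually separates $\Sigma_r$ from $\tilde{B}_r(Q)\cap\Psi^{Nm_r}_r(\tilde{B}_r(Q))$.
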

\begin{proof} Fix $z \in \Sigma_r$ and let $D$ be the corresponding unstable disk of radius $l \cdot \mu^{-m_r}$, centered at $y = \on{Hol}_{\Psi_r}(x)$ where $x \in \partial C$. Recall that $S_r(Q)$ is the intersection of $\tilde{B}_r(Q)$ with $\{u = 0\}$, given by
\[
S_r(Q) = D^n_s(2 + \mu^{-m_r/2}) \times D^1_t(t(\Psi_r^N(Q_r))) \times 0_u
\]
Note that $P$ is connected to $\on{Hol}_\Phi(a)$ by a path of length less than $l/3$ in the unstable leaf $F^u(\Phi,P)$. Moreover, $\on{Hol}_{\Psi_r}$ and $C_r$ converge to $\on{Hol}_\Phi$ and $P$ as $r \to 0$. Therefore $\on{Hol}_{\Psi_r}(x)$ and $x$ are connected by a path of length less than $l/2$ in $F^u(\Psi_r,x)$ for small $r$, and $z$ is connected to $x$ by a path in $\Lambda$ of length less than $l$. Since $\Psi^{Nm_r}(\tilde{B}_r(Q))$ contains the unstable disks of radius $l$ around $\Psi^{Nm_r}(S_r(Q))$, it follows that
\[z \in \Psi^{Nm_r}(\tilde{B}_r(Q)) \qquad\text{if and only if}\qquad x \in \Psi^{Nm_r}(S_r(Q))\]

We now claim that the intersection $\Psi^{Nm_r}(S_r(Q)) \cap \partial C_r$ is contained in the set
\begin{equation} \label{eq:holonomy_wall:1}
D_s^n(5 \cdot l \cdot \mu^{-m_r}) \times \{L - 2r\}_t \times 0_u  
\end{equation}
Indeed, any point in $S_r(Q)$ satisfies $|s(x)| \le 2 + \mu^{-m-r/2}$ and $t(x) \le t(\Psi_r^N(Q_r))$. Therefore it follows from Lemmas \ref{lem:coordinate_projections}-\ref{lem:coordinate_contraction} and the construction of $m_r$ that
\[
|s(\Psi^{Nm_r}(x))| \le (2+\mu^{-m_r/2}) \cdot \mu^{-m_r} < 5\mu^{-m_r}\]
\[t(\Psi^{Nm_r}(x)) \le t(\Psi^{N(m_r+1)}(Q_r)) \le t(\Psi^{7N}(P_r)) = L - re^{7Nr} < L
\]
By the definition of $C_r$, this implies that $x$ is in $\Psi^{Nm_r}(S_r(Q)) \cap \partial C_r$ only if $x$ is in the set (\ref{eq:holonomy_wall:1}).

\vspace{3pt}

Finally, we claim that if $x$ is in (\ref{eq:holonomy_wall:1}), then $z$ is disjoint from $\tilde{B}_r(Q)$ for small $r$. Indeed, by Lemma \ref{lem:holonomy_estimates}, we know that
\[
t(y) \le r + t(x) - L + \mu^{-\kappa m_r} = L - r + \mu^{-\kappa m_r} < -r(1 - r^{-9Nr}) = -t(\Psi_r^N(Q_r))
\]
On the other hand, if $z$ is in $\tilde{B}_r(Q)$, then $y$ must be contained in $S_r(Q)$ which only consists of points $y$ with $t(y) \ge -t(\Psi^N_r(Q_r))$. This shows that $z$ must be disjoint from $\tilde{B}_r(Q) \cap \Psi^{Nm_r}(\tilde{B}_r(Q))$, concluding the proof. \end{proof}

We now apply Lemmas \ref{lem:holonomy_estimates} and \ref{lem:holonomy_wall} to acquire some estimates on $A'_r(Q)$ and $\tilde{A}_r(Q)$. 

\begin{lemma}[Bounds On $\tilde{A}$] \label{lem:bounds_on_Atilde} There is a $\kappa > 0$ so that for any point $z$ in $\tilde{A}_r(Q)$ and small $r$, we have
\begin{equation} \label{eq:lem:bounds_on_Atilde:1}
|s(z) - 1_s| \le \mu^{-\kappa m_r} \qquad -t(Q_r) \le t(z) \le r(1 - e^{-6Nr}) + \mu^{-\kappa m_r} \qquad |u(z)| \le l \cdot \mu^{-m_r}
\end{equation}
\end{lemma}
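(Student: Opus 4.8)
The plan is to pin down $\tilde{A}_r(Q)$, which by Definition \ref{def:blender_set_Aprime} is the connected component of $\tilde{B}_r(Q)\cap\Psi_r^{Nm_r}(\tilde{B}_r(Q))$ through $a = (1_s,0_t,0_u)$, by sandwiching it between the ``wall'' $\Sigma_r$ of Lemma \ref{lem:holonomy_wall} and the defining constraints of $\tilde B_r(Q)$ and its image. First I would observe that any $z\in\tilde A_r(Q)$ automatically satisfies $|u(z)|\le l\cdot\mu^{-m_r}$, directly from the definition (\ref{eq:def_of_BrQ}) of $\tilde B_r(Q)$, so the third estimate is free. For the $s$- and $t$-estimates I would pass to the projection $\pi(z)\in F^u(\Psi_r,z)\cap S_r(Q)$ of Construction \ref{con:smooth_boxes}, which lies in $\{u=0\}$; since $F^u$ is tangent to the narrow cone-field $K^u$, $z$ and $\pi(z)$ differ in the $(s,t)$-coordinates by at most a uniform constant times $\mu^{-m_r}$ (Lemma \ref{lem:vertical_disks_graphs}), so it suffices to bound $s(\pi(z))$ and $t(\pi(z))$ and then absorb the error into $\mu^{-\kappa m_r}$ by shrinking $\kappa$ (using, as in Lemma \ref{lem:holonomy_estimates}, that $m_r\gtrsim -r^{-1}\log r$ grows faster than $1/r$).

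Next I would use connectedness. The component $\tilde A_r(Q)$ contains $a$, and $a=\on{Hol}_{\Psi_r}$ of a point on $\Gamma$ by (\ref{eq:holonomy_of_Gamma}), so $a$ lies in the interior region cut out by the holonomy image of $C_r$. By Lemma \ref{lem:holonomy_wall} the set $\Sigma_r$ — the union of unstable disks centered on $\on{Hol}_{\Psi_r}(\partial C_r)$ — is disjoint from $\tilde B_r(Q)\cap\Psi_r^{Nm_r}(\tilde B_r(Q))$, hence disjoint from $\tilde A_r(Q)$. Since $\Sigma_r$ separates $U$ into an ``inside'' (containing $a$) and an ``outside'', the connected set $\tilde A_r(Q)$ must lie entirely on the inside, i.e. $\pi(z)$ lies in the region bounded by $\on{Hol}_{\Psi_r}(\partial C_r)$. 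Combined with $\pi(z)\in\Psi_r^{Nm_r}(\tilde B_r(Q))$, which via the argument of Lemma \ref{lem:holonomy_wall} (the computation leading to (\ref{eq:holonomy_wall:1}) and the ensuing inequalities on $t(\Psi_r^{Nm_r}(x))$) forces $\pi(z)\in\Psi_r^{Nm_r}(S_r(Q))$ with $t(\pi(z))\le t(\Psi_r^{7N}(P_r)) = L-re^{7Nr}$ and $|s(\Psi_r^{-Nm_r}\pi(z))|$ small, I would apply the holonomy estimates of Lemma \ref{lem:holonomy_estimates} to a preimage $x\in C_r$ with $\on{Hol}_{\Psi_r}(x)=\pi(z)$: these give $|s(\pi(z))-1_s|\le\mu^{-\kappa m_r}$ (since $\on{Hol}_{\Psi_r}$ moves the $s=0$ axis to $s=1_s$) and $t(\pi(z)) = r + t(x) - L + O(\mu^{-\kappa m_r})$. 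Finally I would convert the constraint $t(x)\le t(\Psi_r^{6N}(P_r)) = L - re^{-6Nr}$ coming from the $\Psi_r^{Nm_r}(S_r(Q))$ membership (together with $t(x)\ge L-2r$ from $C_r$) into $-t(Q_r)\le t(\pi(z))\le r(1-e^{-6Nr}) + O(\mu^{-\kappa m_r})$, exactly as claimed after absorbing the cone-field error.

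The main obstacle I anticipate is the bookkeeping of which ``side'' of the holonomy wall $\tilde A_r(Q)$ lies on, and matching up the various $t$-thresholds: one must track carefully that $\Psi_r^{Nm_r}$ sends $Q_r$ into $[\Psi_r^{5N}(P_r),\Psi_r^{6N}(P_r)]$ by the very definition (\ref{eq:def_of_mr}) of $m_r$, deduce that $\Psi_r^{Nm_r}(S_r(Q))$ reaches at most $t=L-re^{7Nr}$ (one extra application of $\Psi_r^N$ beyond $Q_r$, landing in $[\Psi_r^{6N}(P_r),\Psi_r^{7N}(P_r)]$), and then feed that bound through the holonomy, whose $t$-coordinate behaviour $t(y)\approx r+t(x)-L$ flips the interval near $t=L$ back down to a neighborhood of $t=r$. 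Everything else — the cone-field Lipschitz bound, the Hölder holonomy estimate, the growth of $m_r$ — is already available from Lemmas \ref{lem:vertical_disks_graphs}, \ref{lem:holder_holonomy}, \ref{lem:growth_of_mr}, \ref{lem:holonomy_estimates} and \ref{lem:holonomy_wall}, so the proof is essentially an assembly of these pieces plus the separation argument.
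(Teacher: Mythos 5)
Your proposal follows essentially the same route as the paper's proof: the $u$-bound is read off from $\tilde A_r(Q)\subset\tilde B_r(Q)$; the wall $\Sigma_r$ of Lemma \ref{lem:holonomy_wall}, together with connectivity of $\tilde A_r(Q)$ and the observation that $a$ lies on the inside of the wall, pins $\tilde A_r(Q)$ to the region $V^{\on{in}}_r$ foliated by unstable disks meeting $\on{Hol}_{\Psi_r}(\on{int}C_r)$; and the H\"older holonomy estimate of Lemma \ref{lem:holonomy_estimates} plus the $2\epsilon$-Lipschitz graph bound (Lemma \ref{lem:vertical_disks_graphs}) then yield the $s$- and $t$-estimates after shrinking $\kappa$. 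This matches the paper step for step, so the comparison needs no further elaboration.

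One small point worth flagging, which is a shared imprecision rather than a defect of your argument: your derivation of the lower bound $-t(Q_r)\le t(z)$ from $t(x)\ge L-2r$ gives only $t(\pi(z))\gtrsim -r$, and the paper's appeal to $\tilde A_r(Q)\subset\tilde B_r(Q)$ only gives $t(z)\ge -t(\Psi_r^N(Q_r))-O(\mu^{-m_r})$; both are slightly weaker than the stated $-t(Q_r)$. Since only the $s$-bound and the upper $t$-bound are used downstream (in Lemma \ref{lem:blender_axiom_B}), this is harmless, but your writeup should either weaken the lower bound accordingly or tighten the constraint on $t(x)$ so it actually produces $-t(Q_r)$.
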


\begin{proof} The bound on $u(z)$ and the lower bound on $t$ follow since $\tilde{A}_r(Q) \subset \tilde{B}_r(Q)$.

\vspace{3pt}

For the remaining bounds, let $V_r \subset U$ denote the tube $[-2,2]^n_s \times [-\delta,L+\delta] \times D^n_u(l \cdot \mu^{-m_r})$. Note that $\Sigma_r \subset V_r$ and $V_r \setminus \Sigma_r$ consists of two components $V^{\on{in}}_r$ and $V^{\on{out}}_r$ where
\[
V^{\on{in}}_r = \big\{z \in D \; : \;D \text{ is unstable disk with $D \cap S_r(Q) \in \on{Hol}_{\Psi_r}(\on{int}(C_r))$ and $\partial D \subset \{|u| = l \cdot \mu^{-m_r}\}$}\big\} 
\]
We will not need a description of $V^{\on{out}}_r$. Since $\tilde{B}_r(Q) \subset V_r$ by construction, $\tilde{A}_r(Q) \subset V_r$ as well. By Lemma \ref{lem:holonomy_wall}, $\tilde{A}_r(Q)$ must be in one of the components $V^{\on{in}}_r$ and $V^{\on{out}}_r$. A direct computation gives $a \in V_r^{\on{in}}$, so it follows from Definition \ref{def:blender_set_Aprime} that $\tilde{A}_r(Q) \subset V^{\on{in}}_r$. 

\vspace{3pt}

It therefore suffices to prove the remaining bounds for a point $z$ in $V^{\on{in}}_r$. Let $D$ be the unstable disk through $z$ and let $y = \on{Hol}_{\Psi_r}(x)$ be the intersection of $D$ with $D_r(Q)$. By Lemma \ref{lem:holonomy_estimates}
\[|s(y)| \le \mu^{-\kappa m_r} \qquad\text{and}\qquad t(y) \le L - r + t(x) \le L - r + t(\Psi^7_r(P_r)) = r(1 - e^{7Nr})\]
The point $z$ lies on the vertical disk $D$, which is a graph of a $2\epsilon$-Lipschitz graph from $D^n_u(l \cdot \mu^{-m_r})$. It follows that $\on{dist}(z,y) \le C \mu^{-m_r}$ for some $C$ independent of $r$, and so
\[
|s(x)| \le \mu^{-\kappa m_r} + C\mu^{-m_r} \qquad t(y) \le r(1 = e^{7Nr}) +  C\mu^{-m_r}
\]
The remaining estimates follow by taking $r$ small and possibly shrinking $\kappa$. \end{proof}

The axiom in Definition \ref{def:blender}(b) is now an easy consequence of Lemma \ref{lem:bounds_on_Atilde}.

\begin{lemma}[Blender Axiom B] \label{lem:blender_axiom_B} The regions $\tilde{A}_r(Q)$ and $A'_r(Q)$ is disjoint from
\[\partial^r B_r(Q) \qquad \partial^sB_r(Q) \quad\text{and}\quad \Psi_r^N(\partial^u B_r(Q))\]
\end{lemma}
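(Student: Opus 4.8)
The plan is to deduce all three disjointness statements from the coordinate bounds on $\tilde A_r(Q)$ established in Lemma \ref{lem:bounds_on_Atilde}, comparing them against the coordinate descriptions of the three boundary pieces coming from Construction \ref{con:smooth_boxes} and the coordinate-contraction estimates of Lemma \ref{lem:coordinate_contraction}. Since $A'_r(Q) \subset \tilde A_r(Q)$ by Definition \ref{def:blender_set_Aprime}, it suffices to prove the disjointness for $\tilde A_r(Q)$; the statement for $A'_r(Q)$ is then immediate. Throughout we assume $r$ is small enough that Lemma \ref{lem:bounds_on_Atilde} applies, so every $z \in \tilde A_r(Q)$ satisfies
\[
|s(z) - 1_s| \le \mu^{-\kappa m_r} \qquad -t(Q_r) \le t(z) \le r(1 - e^{-6Nr}) + \mu^{-\kappa m_r} \qquad |u(z)| \le l \cdot \mu^{-m_r}.
\]

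First I would handle $\partial^s B_r(Q)$: a point $w$ on the stable boundary has $|s(w)| = 2$ by Construction \ref{con:smooth_boxes}, whereas any $z \in \tilde A_r(Q)$ has $|s(z)| \le |1_s| + \mu^{-\kappa m_r} = 1 + \mu^{-\kappa m_r} < 2$ for small $r$ (using $m_r \to \infty$ from Lemma \ref{lem:growth_of_mr}). Next, for $\partial^r B_r(Q)$: the right-side boundary $\partial^r B_r(Q) = D^n_s(2) \times \partial^+ D^1_t(t(Q_r)) \times D^n_u$ consists of points with $t = t(Q_r) = r(1 - e^{-8Nr})$; but any $z \in \tilde A_r(Q)$ has $t(z) \le r(1 - e^{-6Nr}) + \mu^{-\kappa m_r}$, and since $r(1-e^{-8Nr}) - r(1-e^{-6Nr}) = r(e^{-6Nr} - e^{-8Nr}) \ge \tfrac12 r^2 N$ for small $r$ while $\mu^{-\kappa m_r}$ decays faster than any power of $r$ (Lemma \ref{lem:growth_of_mr}), we get $t(z) < t(Q_r)$ strictly. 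Finally, for $\Psi_r^N(\partial^u B_r(Q))$: a point on $\partial^u B_r(Q)$ has $|u| = l \cdot \mu^{-m_r}$, so by the unstable-expansion estimate in Lemma \ref{lem:coordinate_contraction} (applicable since $B_r(Q) \subset U'$ for small $r$, as in (\ref{eq:blender_axiom_A:1})) its image under $\Psi_r^N$ has $|u| > \mu \cdot l \cdot \mu^{-m_r}$, whereas any $z \in \tilde A_r(Q)$ has $|u(z)| \le l \cdot \mu^{-m_r} < \mu \cdot l \cdot \mu^{-m_r}$.

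Combining the three displayed inequalities shows $\tilde A_r(Q)$ — and hence $A'_r(Q)$ — is disjoint from $\partial^r B_r(Q)$, $\partial^s B_r(Q)$ and $\Psi_r^N(\partial^u B_r(Q))$, as claimed. The only subtlety, and the step I expect to need the most care, is the $\partial^r$ bound: it relies on the comparison between the upper bound $r(1-e^{-6Nr}) + \mu^{-\kappa m_r}$ on $t(z)$ and the value $t(Q_r) = r(1-e^{-8Nr})$, which is genuinely a "win by a polynomial-in-$r$ margin over an exponentially small error" estimate and uses Lemma \ref{lem:growth_of_mr} ($m_r > -Cr^{-1}\log r$) in an essential way; the other two bounds are comparisons of order-one quantities against exponentially small ones and are routine. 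One should also note that for the $\Psi_r^N(\partial^u B_r(Q))$ part one must check the image point still lies in the chart $U$ so that the $u$-coordinate is defined — this follows exactly as in the proof of Lemma \ref{lem:blender_axiom_A} via (\ref{eq:blender_axiom_A:1}).
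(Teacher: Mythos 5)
Your proposal is correct and follows essentially the same route as the paper: the $\partial^s$ and $\partial^r$ disjointness from the coordinate bounds of Lemma~\ref{lem:bounds_on_Atilde} (with the "exponentially small versus polynomial-in-$r$" comparison on the $t$-coordinate, enabled by Lemma~\ref{lem:growth_of_mr}), and the $\Psi_r^N(\partial^u)$ disjointness by the unstable-expansion argument already used in Lemma~\ref{lem:blender_axiom_A}. Your write-up is slightly more explicit than the paper's, but the decomposition and key lemmas are identical.
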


\begin{proof} Disjointness from $\Psi^N(\partial^u B_r(Q))$ follows from the same argument as in Lemma \ref{lem:blender_axiom_A}. For the other two boundary regions, note that by Lemma \ref{lem:bounds_on_Atilde} we have
\[
|s(x) - 1_s| \le \mu^{-\kappa m_r} \quad \text{and}\quad t(x) \le r(1 - e^{-7Nr}) + \mu^{-\kappa m-r} \quad\text{for all }x \in \tilde{A}_r(Q)
\]
This implies that for sufficiently small $r$, the set $\tilde{A}_r(Q)$ is disjoint from the sets
\[
\partial^sB_r(Q) = B_r(Q) \cap \{|s| = 2\} \qquad\text{and}\qquad \partial^r B_r(Q) = B_r(Q) \cap \{t = r \cdot (1 - e^{-8Nr})\} \qedhere
\]
\end{proof}

\subsection{Axioms C And D} \label{subsec:axioms_C_and_D} Next, we prove the blender axioms related to cone-fields (see Definition \ref{def:blender}(c-d)). The first of these axioms is relatively straightforward.

\begin{lemma}[Blender Axiom C] \label{lem:blender_axiom_C} There are compatible cone-fields $K^s$ and $K^u$ for $B_r(Q)$ of width less than $\epsilon$ (with respect to the standard metric) that are contracted and dilated (with constant $\mu$) as follows.
\[(\Psi_r^{-N})_*K^s \subset \on{int} K^s \qquad (\Psi_r^N)_*K^u \subset \on{int} K^u  \qquad \Psi_r^{-N} \text{ dilates }K^s \qquad \Psi_r^N \text{ dilates }K^u
\]
\end{lemma}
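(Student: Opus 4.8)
The plan is to produce $K^s$ and $K^u$ as the metric cones $K_\delta(E^s(\Psi_r))$ and $K_\delta(E^u(\Psi_r))$ around the stable and unstable bundles of $\Psi_r$, for a width $\delta$ small enough to ensure the "width less than $\epsilon$" clause and compatibility with the box, and then to invoke Theorem \ref{thm:invariant_cone_field} together with the partial hyperbolicity of $\Psi_r$ (Lemma \ref{lem:partial_hyperbolicity}) to get the contraction and dilation after replacing $N$ by a suitable multiple. The subtlety compared to Lemma \ref{lem:standard_chart_w_cones} is that here $N$ is already fixed (it is the integer appearing in the blender box construction, Construction \ref{con:smooth_boxes}, and in the definition of $m_r$), so I cannot simply enlarge it; I must instead argue that the \emph{same} $N$ chosen in Lemma \ref{lem:standard_chart_w_cones} works for $\Psi_r$ when $r$ is small, using $C^1$-openness of the cone-contraction and cone-dilation conditions.

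Concretely, first I would recall from Lemma \ref{lem:standard_chart_w_cones} that we already have continuous cone-fields $K^s = K_\delta(E^s(\Phi))$ and $K^u = K_\delta(E^u(\Phi))$ and an integer $N$ such that $\Phi^{-N}$ contracts and $\mu$-dilates $K^s$ while $\Phi^N$ contracts and $\mu$-dilates $K^u$, and such that $T\R^n_s \subset K^s$, $T\R^n_u \subset K^u$ in the standard chart $U$ (giving compatibility with $B_r(Q)$ via Definition \ref{def:compatible_cones}, since $B_r(Q)\subset U$). These cone-fields are fixed, independent of $r$; I keep exactly the same $K^s$, $K^u$ for the blender. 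The width-less-than-$\epsilon$ condition and compatibility are then inherited verbatim. The point is that the four conditions
\[
(\Phi^{-N})_*K^s \subset \operatorname{int} K^s,\quad (\Phi^N)_*K^u \subset \operatorname{int} K^u,\quad \Phi^{-N}\text{ dilates }K^s\text{ by }\mu,\quad \Phi^N\text{ dilates }K^u\text{ by }\mu
\]
are all $C^1$-open in the diffeomorphism: strict inclusion of a compact cone-field into the interior of another, and a strict pointwise lower bound on the norm of the differential, both persist under small $C^1$-perturbation (by compactness of $Y$ and of the unit cone bundles, and continuity of $T\Phi$ in the $C^1$-topology). Since $\Psi_r \to \Phi$ in $C^1$ as $r \to 0$ (Construction \ref{con:family_Psi}; the Hamiltonians $H$ and $G$ depend smoothly on $r$ and vanish at $r=0$), for all sufficiently small $r$ the maps $\Psi_r^{-N}$ and $\Psi_r^N$ satisfy the same four conditions, possibly after shrinking the dilation constant slightly — but since $\mu$ in Construction \ref{con:smooth_boxes} was only required to be \emph{some} $r$-independent dilation constant, I may simply have fixed it from the start to be, say, the $\Phi$-dilation constant divided by $2$, so that the strict inequalities give room.

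The main obstacle — really the only thing requiring care — is bookkeeping around the order of quantifiers: the cone-fields and $N$ must be chosen \emph{before} $B_r(Q)$ and $m_r$ are defined (indeed they are, in Lemma \ref{lem:standard_chart_w_cones}), and the "sufficiently small $r$" in the present lemma must be compatible with all the earlier smallness requirements on $r$. I would note explicitly that all the smallness constraints on $r$ encountered so far (in Lemmas \ref{lem:coordinate_contraction}, \ref{lem:partial_hyperbolicity}, \ref{lem:blender_axiom_A}, \ref{lem:bounds_on_Atilde}, etc.) are finitely many, so their common refinement is still a nonempty interval $(0,r_0)$, and on it the cone-field axiom holds. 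One genuinely should double-check that the cone-fields $K^s,K^u$ are compatible with $B_r(Q)$ for \emph{every} small $r$ simultaneously, not just one $r$: this is immediate because $B_r(Q) \subset U$ for all small $r$ (shown in \eqref{eq:blender_axiom_A:1}) and on $U$ the inclusions $TD^n_s \subset K^s$, $TD^n_u \subset K^u$ hold independently of $r$ by Lemma \ref{lem:standard_chart_w_cones}(a). With these observations the lemma follows, and no new estimate is needed beyond $C^1$-persistence of dominated splittings and invariant cone-fields.
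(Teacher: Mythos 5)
Your proof is correct and takes essentially the same route as the paper: reuse the fixed ($r$-independent) cone-fields $K^s$, $K^u$ from Lemma \ref{lem:standard_chart_w_cones}, note $\Psi_r \to \Phi$ in $C^1$, and invoke $C^1$-openness of cone contraction and dilation. The one place where you add genuine value is your remark about building slack into $\mu$ (e.g.\ first achieving dilation for $\Phi^N$ with a constant strictly larger than the $\mu$ that will appear in the blender): the dilation bound $\mu\cdot |v| \le |\Phi^N_* v|$ as literally stated in the paper is $C^1$-closed rather than open, so this extra margin is what actually makes the "robustness" argument airtight, and the paper's one-line proof glosses over it.
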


\begin{proof} Let $K^s$ and $K^u$ be the cone-fields in Lemma \ref{lem:standard_chart_w_cones}. Note that $\Psi_r \to \Phi$ in the $C^\infty$-topology. Moreover, contraction and dilation (with constant $\mu$) of a given cone-field are $C^1$-robust properties. Thus, Lemma \ref{lem:standard_chart_w_cones} implies this axiom for our choice of constants $N,\mu,\epsilon$ and small $r$. \end{proof}

\noindent The second cone-field related axiom (Definition \ref{def:blender}(d)) is much more complicated and delicate. This is essentially due to the presence of the additional error term discussed in Remark \ref{rmk:error_terms}, which emerge inevitably in the contact setting. We will prove Definition \ref{def:blender}(d) in Lemma \ref{lem:blender_axiom_D} after several preliminary lemmas. To start, choose a Riemannian metric
\[
g \text{ on }TY \qquad\text{such that the splitting }E^u(\Psi_r) \oplus T\R^1_t \oplus T\R^n_s\text{ is orthogonal on $U$}
\]
We consider the following cone-fields of width $\epsilon$ and $\delta$ with respect to $g$.
\[K^{us}_\epsilon = K_\epsilon E^u(\Psi_r) \cap (E^u(\Psi_r) \oplus T\R^n_s) = \{u + v \; : \; u \in E^u(\Psi_r) \text{ and }v \in T\R^n_s \text{ with }|u| \ge \epsilon \cdot v\}\]
\[K^{cs}_\delta = K_\delta(T\R^1_t) \cap (T\R^1_t \oplus T\R^n_s) = \{u + v \; : \; u \in T\R^1_t \text{ and }v \in T\R^n_s \text{ with }|u| \ge \delta \cdot v\}\]
These cone-fields are well-defined over $U$ (i.e. wherever $(s,t,u)$-coordinates are well-defined). Finally, we define the fiberwise sum of cones
\[
K^{cu}_{\delta,\epsilon} = K^{us}_\epsilon + K^{cs}_\delta
\]
The following key lemma describes choices of the parameters of the cone $K^{cu}$ that guarantee contraction and dilation.

\begin{lemma}[Stretching $K^{cu}$] \label{lem:stretching_Kcu} Fix a positive integer $k$, positive constants $\mu,\epsilon,\nu,\eta$ , and a subset $V \subset U$ with the following properties.
\begin{itemize} 
    \item The constants satisfy $\mu^2 > 1 + \epsilon^2$ and $\nu > 1 > \eta$.
    \vspace{3pt}
    \item $T\Psi^{k}_r$ dilates $E^u(\Psi_r)$ with constant $\mu$ and $T\Psi^{k}_r$ dilates $T\R^n_s$ with constant $\mu^{-1}$ over $V$. 
    \vspace{3pt}
    \item $T\Psi^{k}_r(R) = \nu \cdot R + w$ where $w \in E^s(\Phi) = T\R^n_s$ and $|w| \le \eta \cdot |R|$ over the subset $V$, where $R = \partial_t$.
\end{itemize}
Then $K^{cu}_{\epsilon,\delta}$ is contracted and uniformly dilated by $\Psi_r^k$ over the subset $V$ for
\[\frac{\eta}{1 - \mu^{-1}} < \delta < \sqrt{\nu^2 - 1}\]\end{lemma}

\begin{proof} To prove that $K^{cu}_{\epsilon,\delta}$ is uniformly dilated with some positive constant of dilation, we write an arbitrary vector $v$ in $K^{cu}_{\epsilon,\delta}$ as follows.
\[
v = (v^u + v^s) + (a  R + w^s) \qquad\text{where}\qquad |v^u| \ge \epsilon |v^s| \text{ and }|aR| = a \ge \delta  |w^s|
\]
We then compute the norm of the image of $v$ under $\Psi^{Nk}$.
\[|\Psi^k_r(v)|^2 = |\Psi^k_r(v^u)|^2 + |\Psi^k_r(a R)|^2 + |\Psi^k_r(v^s + w^s)|^2\]
\[\ge \mu^2 \cdot |v^u|^2 + \nu^2 \cdot |aR|^2 + |aw|^2 + \mu^{-2} \cdot |v^s + w^s|^2 \ge  \mu^2 \cdot |v^u|^2 + \nu^2 \cdot |aR|^2\]
Now since $|v^u| \ge \epsilon |v^s|$ and $|aR| \ge \delta |w^s|$, we see that
\[
(1 + \epsilon^2) \cdot |v^u|^2 \ge |v^u + v^s|^2\qquad\text{and}\qquad (1 + \delta^2)\cdot |aR|^2 \ge |aR + w^s|^2 
\]
Finally, we calculate that
\[
|\Psi^{Nk}_r(v)|^2 \ge \frac{\mu}{1 + \epsilon^2} \cdot |v^u + v^s|^2 + \frac{\nu^2}{1 + \delta^2} \cdot |aR|^2 \ge \on{min}(\frac{\mu}{1 + \epsilon^2},\frac{\nu^2}{1 + \delta^2}) \cdot |v|^2
\]
Since $\mu^2 > 1 + \epsilon^2$ and $\nu^2 > 1 + \delta^2$ by assumption, we thus find that $v$ is uniformly expanded.

To prove that $K^{cu}_{\delta,\epsilon}$ is contracted, we argue as follows. First, note that for any $v = v^u + v^s$ in $K^{us}_\epsilon$ with $|v^u| \ge \epsilon |v^s|$, we have
\[
\Psi^k_r(v) = \Psi^k_r(v^u) + \Psi^k_r(v^s) \qquad\text{where}\qquad \Psi^k_r(v^u) \in E^u(\Psi_r) \text{ and }\Psi^k_r(v^s) \in T\R^n_s
\]
By our hypothesis on the dilation of $T\Psi_r^k$, we know that
\[|T\Psi^k_r(v^u)| \ge \mu^2 \cdot \epsilon \cdot |T\Psi^k_r(v^s)| > \epsilon \cdot |T\Psi^k_r(v^s)|\]
Therefore $T\Psi_r^k(v)$ is strictly contracted by $T\Psi_r^k$. Likewise, take any vector $v = aR + v^s$ in $K^{us}_\delta$ with $|aR| \ge \delta \cdot |v^s|$. Then
\[
T\Psi^k_r(v) = a\nu R + aw + \Psi^k_r(w^s)
\]
Now we note that we have the following estimate.
\[
|aw + \Psi^k_r(w^s)| \le |aw| + |\Psi^k(w^s)| \le \eta |aR| + \epsilon \cdot \mu^{-1} \cdot |aR| \le (\eta + \delta \cdot \mu^{-1}) \cdot |aR|
\]
By assumption we have $\eta + \delta \cdot \mu^{-1} < \delta$ and thus $T\Psi^k_r$ contracts the cone $K^{us}_\delta$. We have thus proven that
\[
T\Psi^k_r(K^{cu}_{\delta,\epsilon}) \subset \on{int} K^{cu}_{\delta,\epsilon} \qedhere
\]\end{proof}

We next verify the third criterion in Lemma \ref{lem:stretching_Kcu} in the cases relevant to our axiom. Recall that we have fixed constants $N,m$ (see the beginning of Section \ref{subsec:family_of_contactomorphisms}).

\begin{lemma}[Axiom D, Part 1] \label{lem:axiom_d_part_1} For sufficiently small $r$, we have
\[T\Psi^N_r(R)  = e^{Nr} \cdot R \qquad\text{over the subset }A_r(Q)\]
\end{lemma}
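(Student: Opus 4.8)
The plan is to reduce the claim to the pointwise identity $T\Psi_r(R) = e^r R$ along the finite $\Psi_r$-orbit of $A_r(Q)$, and then to iterate it $N$ times. Two facts drive this. First, $\Phi$ is a strict contactomorphism (Setup \ref{set:blender_setup}), so its differential preserves the Reeb field: $T\Phi(R) = R$. Second, on the region $\{t \le L/3\}$ the flow $\Phi^H_r$ has the explicit form $\Phi^H_r(s,t,u) = (e^{-r}s, e^r t, u)$ of (\ref{eq:form_of_Phi_in_range_13}), so $T\Phi^H_r(\partial_t) = e^r \partial_t$, i.e. $T\Phi^H_r(R) = e^r R$. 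Since $\Psi_r = \Phi^H_r \circ \Phi$ on $\Phi^{-1}(U)$ by (\ref{eq:basic_properties}), composing these gives $T_y\Psi_r(R_y) = e^r R_{\Psi_r(y)}$ whenever $y \in \Phi^{-1}(U)$ and $\Phi(y)$ (together with its short $\Phi^H_r$-trajectory) lies in $\{t \le L/3\}$.

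So the real work is to verify these hypotheses along the orbit of $A_r(Q)$. By (\ref{eq:blender_axiom_A:1}) in the proof of Lemma \ref{lem:blender_axiom_A}, for small $r$ one has $A_r(Q) \subset B_r(Q) \subset U \cap \Psi_r^{-1}(U) \cap \dots \cap \Psi_r^{-N}(U)$, so $\Psi_r^j(x) \in U$ for $x \in A_r(Q)$ and $0 \le j \le N$. By Lemma \ref{lem:coordinate_projections}, $t(\Psi_r^j(x)) = \psi_r^j(t(x))$, and since $|t(x)| \le t(Q_r) = r(1-e^{-8Nr})$ this is $O(r e^{Nr}) \to 0$; in particular it stays well below $L/3$. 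The box $B_r(Q)$ also has $u$-radius $l \cdot \mu^{-m_r} \to 0$, and since $\Phi^H_r$ fixes the $u$-coordinate on $\{t \le L/3\}$ while $\Phi$ is a fixed Lipschitz map preserving the local stable manifold $\{u = 0\}$ near $\Gamma$, the $u$-coordinate of $\Psi_r^j(x)$ is bounded by $C^N l \mu^{-m_r} \to 0$ for a fixed constant $C$. Combining this with the fact that $\Phi$ strictly contracts the $s$-direction $T\R^n_s = E^s_{\on{sm}}(\Phi)$ over $U$ (Lemma \ref{lem:standard_chart}(f)), a short induction on $j$ shows that for small $r$ each $\Psi_r^j(x)$ with $0 \le j \le N-1$ lies in $\Phi^{-1}(U)$, that $\Phi(\Psi_r^j(x))$ again has tiny $t$- and $u$-coordinates, and hence that its $\Phi^H_r$-trajectory stays in $\{t \le L/3\}$. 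Thus the identity of the first paragraph holds at every $y = \Psi_r^j(x)$, $0 \le j \le N-1$.

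I would then conclude with an induction on $j$ giving $T\Psi_r^j(R_x) = e^{jr} R_{\Psi_r^j(x)}$ for $0 \le j \le N$: the base case $j = 0$ is trivial, and the step is $T\Psi_r^{j+1}(R_x) = T\Psi_r(e^{jr} R_{\Psi_r^j(x)}) = e^{jr} \cdot e^r R_{\Psi_r^{j+1}(x)}$. Taking $j = N$ yields $T\Psi_r^N(R) = e^{Nr} R$ over $A_r(Q)$. The main obstacle is precisely the bookkeeping of the middle paragraph: confirming that the whole orbit segment $\{\Psi_r^j(x)\}_{0 \le j \le N}$ and the intermediate points $\{\Phi(\Psi_r^j(x))\}_{0 \le j \le N-1}$ remain inside $\Phi^{-1}(U)$ and inside the linearity region $\{t \le L/3\}$ for every $x \in A_r(Q)$ — this uses that the $t$- and $u$-extents of $B_r(Q)$ vanish as $r \to 0$, while its $s$-extent, though bounded away from $0$, is mapped into $U$ by the $s$-contracting map $\Phi$. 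Everything else is substitution into the explicit formulas of Construction \ref{con:Hamiltonian_H}.
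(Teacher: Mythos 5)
Your proposal is correct and follows essentially the same route as the paper: reduce to the pointwise identity $T\Psi_r(R) = e^r R$ on $\Phi^{-1}(U)\cap\{t\le L/3\}$ via $T\Phi(R)=R$ (strictness) and $T\Phi^H_r(R)=e^r R$ (formula (\ref{eq:form_of_Phi_in_range_13})), verify that the orbit segment $\Psi_r^j(A_r(Q))$ for $0\le j\le N$ stays in that region, and iterate. The paper dispatches the containment $\Psi^j_r(B_r(Q)) \subset [-2,2]_s \times [-\delta,L/3]_t \times [-\epsilon,\epsilon]_u$ in one line; your extra bookkeeping of the $u$- and $s$-coordinates is harmless but superfluous, since (\ref{eq:form_of_Phi_in_range_13}) only requires the $t$-bound plus membership in $U$, which already follows from (\ref{eq:blender_axiom_A:1}) and Lemma \ref{lem:coordinate_projections}.
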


\begin{proof} For sufficiently small $r$, we have
\[\Psi^j_r(B_r(Q)) \subset [-2,2]_s \times [-\delta,L/3]_t \times [-\epsilon,\epsilon] \text{ for all }j = 0,\dots,N\]
Here $\delta,\epsilon,L$ are the parameters of the chart in Lemma \ref{lem:standard_chart}. In this region, we know that $\Psi_r = \Phi^H_r \circ \Psi$. Therefore by Construction \ref{con:Hamiltonian_H} (and more specifically (\ref{eq:form_of_Phi_in_range_13})) we find that
\[T\Phi^H_r(R) = e^r R \quad\text{and}\quad T\Psi(R) = R \qquad\text{and therefore}\quad T\Psi_r^N(R) = e^{Nr} \cdot R \qedhere\]\end{proof}

For the other part of Axiom D, we require the following lemma tracking the behavior of the set $\tilde{A}_r(Q)$ under $\Psi^{-1}_r$.

\begin{lemma}[$\tilde{A}$ Stays In $U$] \label{lem:Atilde_stays_in_U} Let $m$ and $W$ be as in Construction \ref{con:Hamiltonian_G} and (\ref{eqn:choice_of_m}). Then
\[\tilde{A}_r(Q) \subset \Psi^{Nm}_r(W) \qquad\text{and}\qquad \Psi^{-j}(\tilde{A}_r(Q)) \subset U \text{ for all }j = Nm,\dots,Nm_r \qquad\text{for small $r$}\]
\end{lemma}

\begin{proof} For the first claim, note that $N$ and $m$ are independent of $r$ and $\Psi_r \to \Phi$ in $C^\infty$ as $r \to 0$. Thus we may choose an $r$-independent open neighborhood $V \subset \Psi^{Nm}_r(W)$ with $a \in V$. By Lemma \ref{lem:bounds_on_Atilde}, we know that $\tilde{A}_r(Q)$ is contained in the region
\[
B_r(Q) \cap \{|s - 1_s| \le \beta \cdot l \cdot \mu^{-\kappa m_r} \} \qquad\text{for $\beta,\kappa > 0$}
\]
It follows from Construction \ref{con:smooth_boxes} that this region is contained in a ball of radius bounded by $r$ around $a$. Thus for small $r$, this region is contained in $V$ and the first claim is proven. 

\vspace{3pt}

For the second claim, we require some preliminary observations. Consider the subsets $\Xi \subset \Gamma$ and  $\Sigma \subset W_{\on{loc}}^s(\Phi;U)$ given by
\[\Xi = 0_s \times [0,t(Q_r)]_t \times 0_u = [Q,Q_r] \qquad\text{and}\qquad \Sigma = D^n_s(5/2) \times [0,t(Q_r)]_t \times 0_u\]
Note that $\tilde{B}_r(Q) \cap \{u = 0,t \ge 0\}$ is contained in $\Sigma$ by Construction \ref{con:smooth_boxes} and (\ref{eq:def_of_SrQ}). Let $\psi_r$ be the flow of $h \cdot \partial_t$ (see Lemma \ref{lem:coordinate_contraction}). Then $\psi_r^j(0) = 0$ for all $j$ and by construction of $m_r$, we know that
\[
\psi_r^j(t(Q_r)) \in [-\delta,L+\delta]_t \quad\text{and}\quad \Psi_r(Q_r) \in \Gamma \cap U\qquad\text{for all }j = 0,\dots,Nm_r
\]
Moreover, since $\Psi_r(\Xi) = 0_s \times [0,\psi_r(t(Q_r))]_t \times 0_u$ and $\Psi_r$ contracts the $s$-coordinate (see Lemma \ref{lem:coordinate_contraction}), this implies that
\[\Psi_r^j(\Xi) \subset \Gamma \cap U\quad\text{and}\quad\Psi_r^j(\Sigma) \subset \{u = 0\} \cap U \qquad \text{for all }j = 0,\dots,Nm_r\]
Now we prove the second claim. By the first claim and the definition of $\tilde{A}_r(Q)$, we know that
\begin{equation} \label{eq:lem:Atilde_stays_in_U:1} \Psi^{-Nm}(\tilde{A}_r(Q)) \subset W \subset U \qquad\text{and}\qquad \Psi^{-Nm}_r(\tilde{A}_r(Q)) \subset \Psi^{Nm_r - Nm}_r(\tilde{B}_r(Q))\end{equation}
From these two inclusions, it follows that $\Psi^{-Nm}_r(\tilde{A}_r(Q))$ is included in the set
\[
V' = \{x \in F^u(\Psi_r;y) \; : \; y \in \Psi^{Nm_r - Nm}_r(\Sigma) \quad\text{and}\quad |u(x)| \le 2\}
\]
Here we choose $W$ in Construction \ref{con:Hamiltonian_H} so that $W$ is contained in $\{|u| \le 2\}$. Finally, we note that
\[
\Psi_r^{-j}(V') \subset U \qquad\text{for all }j = 0,\dots,Nm_r - Nm
\]
Indeed, the intersection of $\Psi_r^{-j}(V')$ with $\{u = 0\}$ is $\Psi^{Nm_r - Nm - j}(\Sigma)$ and the union $V''$ of unstable disks intersecting $\Psi^{Nm_r - Nm - j}(\Sigma)$ with boundary on $\{u = 3/2\}$ is contained in $U$. On the other hand, since $\Psi^{-1}_r$ contracts distances in $U$, $\Psi_r^{-j}(V') \subset V''$. This proves the second claim.  \end{proof}

The following proof is the most difficult step in this section. Here we make crucial use of the specific construction of our contact Hamiltonians to control certain error terms (see Remark \ref{rmk:error_terms}).

\begin{lemma}[Axiom D, Part 2] \label{lem:axiom_d_part_2} For sufficiently small $r$, we have 
\[T\Psi^{Nm_r}(R) = \lambda_r \cdot R + v\qquad\text{over}\quad \Psi^{-Nm_r}(A'_r(Q))\]
Here $\lambda_r \ge r^{-1/2}$ and $|v| \le r^2 \cdot \mu^{-\log(r)/r}$ for small $r$.
\end{lemma}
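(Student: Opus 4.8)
The plan is to follow the forward orbit of a point $p \in \Psi_r^{-Nm_r}(A'_r(Q))$, with image $z = \Psi_r^{Nm_r}(p) \in A'_r(Q)$, and to split this orbit at the point $q := \Psi_r^{Nm_r - Nm}(p) = \Psi_r^{-Nm}(z)$. By Lemma \ref{lem:Atilde_stays_in_U} we have $q \in W$ and the whole initial segment $p,\Psi_r(p),\dots,q$ lies in $U$, so along it $\Psi_r = \Phi^H_r\circ\Phi$ by (\ref{eq:basic_properties}); the remaining $Nm$ steps from $q$ to $z$ are the heteroclinic jump, governed by the exact identity $\Psi_r^{Nm} = \Phi^R_r\circ\Phi^{Nm}\circ\Phi^H_{Nmr}$ on $W$. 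Accordingly I write $T_p\Psi_r^{Nm_r}(R) = T_q\Psi_r^{Nm}\bigl(T_p\Psi_r^{Nm_r-Nm}(R)\bigr)$ and analyze the two factors in turn; all estimates below will be uniform over $\Psi_r^{-Nm_r}(A'_r(Q))$, since the inputs (Lemmas \ref{lem:bounds_on_Atilde}, \ref{lem:growth_of_mr}, \ref{lem:homoclinic_points} and the chart data) are.

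For the ``in $U$'' factor I use that $\Phi$ is a strict contactomorphism, so $T\Phi(R) = R$, and that $\Phi$ and $\Phi^H_r$ both preserve $T\R^n_s = E^s_{\on{sm}}(\Phi)$ on $U$ (Setup \ref{set:blender_setup}(c), Lemma \ref{lem:standard_chart}(f)); from the explicit form $\Phi^H_r(s,t,u) = (f_r(t)s,\psi_r(t),u)$ in (\ref{eq:form_of_Phi}) one reads off $T\Phi^H_r(R) = \psi_r'(t)\,R + f_r'(t)\sum_i s_i\partial_{s_i}$, the error lying in $T\R^n_s$. An immediate induction then yields $T_p\Psi_r^{Nm_r-Nm}(R) = c\,R_q + w$ with $w \in T\R^n_s|_q$ and $c = \prod_j \psi_r'(t_j) = \psi'_{r(Nm_r-Nm)}(t(p))$, where $t_j = t(\Psi_r^j(p)) = \psi_{rj}(t(p))$ by Lemma \ref{lem:coordinate_projections} and $\psi_s$ is the time-$s$ flow of $h\partial_t$. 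The one-dimensional identity $\psi_s'(t) = h(\psi_s(t))/h(t)$ gives $c = h(t(q))/h(t(p))$, and the quantitative core is to locate $t(p),t(q)$: Lemma \ref{lem:bounds_on_Atilde} gives $t(z) = O(r^2)$; tracing the jump backward exactly as in (\ref{eq:lem:homoclinic_points:1}) gives $L - t(q) = (r - t(z))e^{Nmr} = \Theta(r)$; and $t(p) = \psi_{-r(Nm_r-Nm)}(t(q))$, where since $r(Nm_r-Nm) \sim -3\log r$ by the proof of Lemma \ref{lem:growth_of_mr} while $t(Q_r) = \Theta(r^2)$, the backward flow from $t(q)\approx L$ lands at $t(p) = \Theta(r^2)$. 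Hence $c = h(t(q))/h(t(p)) = \Theta(r^{-1})$.

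For the jump factor, on $W$ we have $t(q)\ge 2L/3$, so $T\Phi^H_{Nmr}(R) = e^{-Nmr}R$ by (\ref{eq:form_of_Phi_in_range_23}), while $T\Phi^{Nm}$ and $T\Phi^R_r$ fix $R$; thus $T_q\Psi_r^{Nm}(R_q) = e^{-Nmr}R_z$ and we set $\lambda_r := c\,e^{-Nmr} = \Theta(r^{-1})$, which exceeds $r^{-1/2}$ for small $r$. Putting $v := T_q\Psi_r^{Nm}(w)$ gives the asserted decomposition, and since $Nm$ is fixed and $\Psi_r\to\Phi$ in $C^\infty$ we have $|v|\le C|w|$ with $C$ independent of $r$. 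It remains to bound $|w|$: unrolling the induction, $w = \sum_j c_j\,T\Psi_r^{Nm_r-Nm-1-j}(e_j)$ with $e_j = f_r'(t_j)\sum_i s_i(\Psi_r^j p)\partial_{s_i}$, which vanishes unless $t_j$ lies in the (fixed, $O(r)$-thickened) zone where $h$ is nonlinear — at most $O(1/r)$ values of $j$ — where $|e_j|\le 2|f_r'(t_j)| = O(r)$ (as $f_0\equiv 1$) and $|c_j| = h(t_j)/h(t(p)) = O(r^{-2})$; meanwhile the remaining $\ge \tfrac12(-\log r)/r$ steps form the long passage through the near-$L$ régime, which contracts $T\R^n_s$-vectors by $\mu^{-1}$ per $N$ steps (Lemma \ref{lem:coordinate_contraction}/\ref{lem:standard_chart_w_cones}), i.e.\ by a factor $\le r^{(\log\mu)/(3Nr)}$. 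Hence $|w|\le O(r^{-2})\cdot r^{(\log\mu)/(3Nr)}\to 0$ super-exponentially, so in particular $|v|\le r^2\mu^{-\log r/r}$ for $r$ small.

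I expect the main obstacle to be precisely this error control. The accumulated factors $\psi_r'(t_j)$ in the center régime blow the stable error up to order $r^{-2}$, and one must see that the super-exponential contraction $\mu^{-(\text{number of }N\text{-blocks})}$ coming from the long near-$L$ passage more than compensates; this is where the smooth stable foliation hypothesis enters essentially, via $E^s_{\on{sm}}(\Phi) = T\R^n_s$ being genuinely $\Phi$-invariant and uniformly contracted on all of $U$ (not merely along $W^s(\Gamma,\Phi)$), together with the sharp growth rate $r(Nm_r-Nm)\sim -3\log r$ from Lemma \ref{lem:growth_of_mr} and the precise location of $t(p),t(q)$ from Lemmas \ref{lem:bounds_on_Atilde} and \ref{lem:homoclinic_points}. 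A secondary bookkeeping point is to justify $\Psi_r = \Phi^H_r\circ\Phi$ along the entire initial segment using the support conditions on $G$ from Construction \ref{con:Hamiltonian_G}.
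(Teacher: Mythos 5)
Your argument is correct, and it takes a genuinely cleaner route than the paper's for the coefficient $\lambda_r$. The paper splits the orbit through $U$ into a starting regime ($t\le L/3$), a middle regime, and an ending regime ($t\ge 2L/3$), and tracks the $R$-coefficient $e^{rNn^{\on{st}}_r}$, $A_{\on{mid}}$, $e^{-rNn^{\on{end}}_r}$ through each before multiplying by $e^{-Nmr}$ from the jump; you instead collapse all of this into the one-dimensional flow identity $\psi_s'(t)=h(\psi_s(t))/h(t)$, getting $\lambda_r = \frac{h(t(q))}{h(t(p))}\,e^{-Nmr}$ in closed form. This is a genuine simplification: it eliminates the regime decomposition for the $R$-component entirely, leaving only the task of locating $t(p)$ and $t(q)$, and the error term is then handled in essentially the same way as the paper (only the middle regime contributes $T\R^n_s$-errors, and the long near-$L$ passage contracts them super-exponentially). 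Two small points of imprecision worth flagging: (i) the step ``tracing the jump backward exactly as in \eqref{eq:lem:homoclinic_points:1}'' is a bit loose, since that formula is derived for the one-parameter family $(0_s,L-\delta,1_u)$ rather than a general $q\in W$; the cleaner justification for $L-t(q)=\Theta(r)$, hence $t(p)=\Theta(r^2)$ via the fixed total flow time $r(Nm_r-Nm)\sim -3\log r$, is that Lemma \ref{lem:Atilde_stays_in_U} places $z$ in an $O(r)$-ball around $a$, so $q=\Psi_r^{-Nm}(z)$ lies within $O(r)$ of $\Phi^{-Nm}(a)=(0_s,L,1_u)$ because $Nm$ is fixed and $\Psi_r^{-Nm}$ has $r$-uniformly bounded derivatives. (ii) The exponent you derive, $r^{-2}\mu^{\log r/(3Nr)}$, is a weaker decay rate than the stated $r^2\mu^{-\log r/r}$; note, however, that the stated bound appears to carry a sign typo (as written $\mu^{-\log r/r}\to\infty$, and the corresponding inequality in the proof of Lemma \ref{lem:blender_axiom_D} only makes sense with $\mu^{\log r/r}$), and in any case your bound is super-polynomially small and therefore sufficient for the constraint $\delta\le\sqrt{e^{Nr}-1}$ in that application — so the discrepancy is cosmetic rather than structural.
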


\begin{proof} Fix an arbitrary point $x$ in $\Psi^{-Nm_r}(\tilde{A}_r(Q))$. We let $n^{\on{st}}_r$ and $n^{\on{mid}}_r$ denote the quantities
\[n_r^{\on{st}} = \on{min}\big\{j \: : \; t(\Psi_r^{Nj}(x)) \ge L/3\big\} \qquad\text{and}\qquad n_r^{\on{mid}} = \on{min}\big\{j - n^{\on{st}}_r \: : \; t(\Psi_r^{Nj}(x)) \ge 2L/3\big\}\]
Finally, let $m$ and $W$ be as in Construction \ref{con:Hamiltonian_G} and (\ref{eqn:choice_of_m}), and let
\[n^{\on{end}}_r = m_r - Nm - n^{\on{st}}_r - n^{\on{mid}}_r\]
By essentially identical analysis to Lemma \ref{lem:growth_of_mr}, we have the following asymptotic behavior for these quantities.
\begin{equation} \label{eq:n_r_formula} n^{\on{st}}_r \sim -2r^{-1} \cdot \log(r) \qquad n^{\on{mid}}_r \sim r^{-1}  \quad\text{and}\quad n^{\on{end}}_r \sim -r^{-1} \cdot \log(r)\end{equation}
Here $\sim$ means the limit of the ratio is $1$ as $r \to 0$. We will analyze the sequence of terms $T\Psi^{Nj}_{r,x}(R)$ starting at the point $x$ in four regimes, corresponding to the following intervals for the index $j$.
\[[0,n^{\on{st}}_r] \qquad [n^{\on{st}}_r,n^{\on{st}}_r + n^{\on{mid}}_r] \qquad [n^{\on{st}}_r + n^{\on{mid}}_r, m_r-Nm] \qquad [m_r - Nm,m-r]\]
We will call these the starting regime, middle regime, ending regime and extra regime. Crucially, by Lemma \ref{lem:Atilde_stays_in_U}, we know that $\Psi^{Nj}_r(x)$ stays in $U$ for the first three periods.

\vspace{3pt}

\noindent {\bf Step 1: Starting Regime.} Start by assuming that $j$ is in the interval $[0,n^{\on{st}}_r]$. In this regime, $t(\Psi_{Nj}(r)) \le L/3$. It follows from (\ref{eq:basic_properties}) and Construction \ref{con:Hamiltonian_H} (see (\ref{eq:form_of_Phi_in_range_13})) that
\begin{equation} \label{eq:lem:axiom_d_part_2:starting} T\Psi^{Nj}_r(R) = (T\Phi^H_r \circ T\Phi)^{Nj}(R) = (T\Phi^H_r)^{Nj}(R) = e^{rNn^{\on{st}}_r} \cdot R\end{equation}

\noindent {\bf Step 2: Middle Regime.} Next, assume that $j$ is in the middle interval $[n^{\on{st}}_r, n^{\on{st}}_r + n^{\on{mid}}_r]$. In this case, it follows from the general form for $\Phi^H_r$ in (\ref{eq:form_of_Phi}) that for any $y$ in $U$ with $t(y) \in [L/3,2L/3]$, we have
\begin{equation} \label{eq:TPhiH_of_R} T\Phi^H_{r,y}(R) = \partial_t\psi_r (t(y)) \cdot R + \partial_tf_r(t(y)) (s(y) \cdot \partial_s)\end{equation}
under the splitting $TU = T\R^n_s \oplus T\R^1_t \oplus T\R^n_u$. Here $\psi$ is given in (\ref{eq:form_of_Phi}) and satisfies
\[
\psi_0(t) = t \qquad \partial_r\psi = h \circ \psi
\]
By the assumption that $|h'| \le 1/L$ on the interval $[0,L]$, we know that $|h| \le 1$ and so
\begin{equation} \label{eq:lem:axiom_d_part_2:1}
|\partial_t \psi_r(t(y))| \ge e^{-r} \qquad \text{for small }r
\end{equation}
Next note that $|\partial_t f_r(t(y))|$ is bounded by some constant $C > 0$ for small $r$. Finally, $\Psi_r^N$ and $\Phi^N$ both preserve $T\R^n_s$ and contract $T\R^n_s$ by a factor of $\mu^{-1}$ for small $r$ (see Lemmas \ref{lem:standard_chart_w_cones} and \ref{lem:partial_hyperbolicity}).

\vspace{3pt}

Now let $x_{\on{st}}$ and $R_{\on{st}}$  denote the point and vector that is left after we exit the early regime.
\[x_{\on{st}} 
= \Psi^{Nn^{\on{st}}_r}_r(x)\qquad\text{and}\qquad R_{\on{st}}= T\Psi_{r,x}^{Nn^{\on{st}}_r}(R) \in \on{span}(R)\]
It follows from the discussion above and (\ref{eq:TPhiH_of_R}) that we can write the following expansion.
\begin{equation} \label{eq:ugly_formula}
T\Psi^{N n^{\on{mid}}_r}_{r,x_{\on{st}}}(R_{\on{st}}) = T(\Phi^H)^{N n^{\on{mid}}_r}_{r,x_{\on{st}}}(R_{\on{st}}) + \sum_{k = 0}^{Nn^{\on{mid}}_r} w_k
\end{equation}
Here the vectors $w_k$ can be written as follows.
\[
w_k = T\Psi_{r}^{Nn^{\on{mid}}_r - k}( c_k \cdot (s(\Psi^k_r(x_{\on{st}})) \cdot \partial_s)) \in T\R^n_s \qquad\text{where}\qquad c_k = \partial_r f_r(t(\Psi^k_r(x_{\on{st}})))
\]
Now we estimate the terms appearing in (\ref{eq:ugly_formula}). First, by (\ref{eq:lem:axiom_d_part_2:1}) we know that
\[
|T(\Phi^H)^{N n^{\on{mid}}_r}_{r,x_{\on{st}}}(R_{\on{st}})| \ge e^{-Nn^{\on{mid}}_r} \cdot |R_{\on{st}}| \qquad\text{and}\qquad 
T(\Phi^H)^{N n^{\on{mid}}_r}_{r,x_{\on{st}}}(R_{\on{st}}) \in \on{span}(R)
\]
Next we estimate the norm of $w_k$. As noted previously, $|c_k| \le C$ for some $C$ independent of $k$ and $r$ small. Since $\Psi^k_r(x_{\on{st}})$ is in the image of $U$ under the $(Nn_r^{\on{st}} + k)$-th power of $\Psi_r$ and $\Psi_r$ uniformly contracts $T\R^n_s$ by a factor of $\mu^{-1}$, we know that the $s$-vector $s(\Psi^k(x_{\on{st}})) \cdot \partial_s$ is bounded as follows.
\[
|s(\Psi^k(x_{\on{st}})) \cdot \partial_s| \le 2 \cdot \mu^{-n^{\on{st}}_r-\lfloor k/N \rfloor}
\]
Finally, $T\Psi^N_r$ uniformly contracts vectors in $T\R^n_s$ by a factor of $\mu^{-1}$. Combining these estimates, we find that for some constant $C' > 0$ independent of $x$ and small $r$, we have
\[
|w_k| \le C' \cdot \mu^{-(n^{\on{st}}_r + n^{\on{mid}}_r)} \qquad\text{and}\qquad \Big|\sum_{k = 0}^{Nn^{\on{mid}}_r} w_k \Big| \le C' \cdot Nn^{\on{mid}}_r \mu^{-(n^{\on{st}}_r + n^{\on{mid}}_r)} \le C'' \cdot \frac{1}{r} \cdot \mu^{-2\log(r)/r}
\]
The outcome of this analysis of the middle regime is the following formula.
\begin{equation} \label{eq:lem:axiom_d_part_2:middle} R_{\on{mid}} = T\Psi_{r,x}^{N(n^{\on{st}}_r + n^{\on{mid}}_r)}(R) = e^{rNn^{\on{st}}_r} \cdot T\Psi_{r,x}^{Nn^{\on{mid}}_r}(R) = A_{\on{mid}} \cdot e^{rNn^{\on{st}}_r} \cdot R + w\end{equation}
Here $A_{\on{mid}}$ is some constant bounded by $e^{-Nn^{\on{mid}}_r}$ and $w$ is a vector in $T\R^n_s$ with norm bounded by the quantity  $C'' \cdot r^{-1} \cdot \mu^{-2\log(r)/r}$. 

\vspace{3pt}

\noindent {\bf Step 3: Ending Regime.} We next examine the ending regime, where $j$ is in the interval $[n^{\on{st}}_r + n^{\on{mid}}_r, m_r-Nm]$. In this regime, $t(\Psi_{Nj}(r)) \ge 2L/3$. By using (\ref{eq:basic_properties}) and Construction \ref{con:Hamiltonian_H} (and specifically (\ref{eq:form_of_Phi_in_range_23})), we see that we have
\[
T\Psi^{Nn^{\on{end}}_r}_r(R_{\on{mid}}) =  A_{\on{mid}} \cdot e^{rNn^{\on{st}}_r} \cdot T\Psi^{Nn^{\on{end}}_r}_r(R) + T\Psi^{Nn^{\on{end}}_r}_r(w) = A_{\on{mid}} \cdot e^{rN(n^{\on{st}}_r - n^{\on{end}}_r)}  R + T\Psi^{Nn^{\on{end}}_r}_r(w)
\]
Focusing on the first term, the lower bound $A_{\on{mid}} \ge e^{-rNn^{\on{mid}}_r}$ and the asymptotic formula (\ref{eq:n_r_formula}) imply that
\[
A_{\on{mid}} \cdot e^{rN(n^{\on{st}}_r - n^{\on{end}}_r)} \ge e^{-r \cdot \log(r)/r} \ge r^{-1/2} \qquad\text{for small }r
\]
Moreover, $\Psi^N$ uniformly contracts $T\R^n_s$ by a factor of $\mu^{-1}$, and we thus see that
\[
w' = T\Psi^{Nn^{\on{end}}_r}_r(w) \qquad\text{satisfies}\qquad |w'| \le N \cdot n^{\on{mid}}_r \cdot \mu^{-(n^{\on{st}}_r + n^{\on{mid}}_r + n^{\on{end}}_r)} \le N \cdot m_r \cdot \mu^{-m_r+Nm}
\]
Combining all of the analysis up to now, we have proven that
\begin{equation} \label{eq:axiom_D_step_2:final}
T\Psi^{N(m_r-m)}_r(R) = \lambda_r \cdot R + v \qquad\text{over }\Psi^{-Nm_r}(A'_r(Q))
\end{equation}
where $\lambda_r \ge r^{-1/2}$ and $v \in T\R^n_s$ satisfies $|v| \le r^2 \cdot \mu^{-\log(r)/r}$ for small $r$.

\vspace{3pt}

\noindent {\bf Step 4: Extra Regime.} Finally, we consider the last regime. By Lemma \ref{lem:Atilde_stays_in_U}, we know that
\[
\Psi^{N(m_r - m)}_r(x) \subset W
\]
By the construction of $\Psi_r$, or more precisely (\ref{eq:basic_properties}), we know that
\[
T\Psi^{Nm}_r = T\Phi^R_r \circ T\Phi^{Nm} \circ T \Phi^H_{Nmr}
 \qquad\text{on }W\]
Note that: $T\Phi^R_r$ preserves $R$ and $T\R^n_s$; $T\Phi^{Nm}$ preserves $R$ and shrinks $T\R^n_s$ by a factor of $\mu^{-1}$; and $T\Phi^H_{Nmr}$ preserves $R$ and expands $T\R^n_s$ by at most a factor of $e^{Nmr}$. It follows that
\[T\Psi^{Nm_r}_r(R) = \lambda_r \cdot R + v\]
where $\lambda_r$ and $v$ satisfy the same estimates as in (\ref{eq:axiom_D_step_2:final}). This concludes the proof.\end{proof}

We are (finally) ready to prove the second cone axiom. Thanks to the onerous work of Lemmas \ref{lem:axiom_d_part_1} and \ref{lem:axiom_d_part_2}, this will be a simple application of Lemma \ref{lem:stretching_Kcu}.

\begin{lemma}[Blender Axiom D] \label{lem:blender_axiom_D} There is a compatible cone-field $K^{cu}$ for $B_r(Q)$ of width less than $\epsilon$ (with respect to the standard metric) that is contracted and dilated uniformly as follows.
\[(\Psi_r^N)_*K^{cu} \subset \on{int} K^{cu}  \quad\text{and}\quad \Psi^N_r \text{ dilates }K^{cu} \qquad\text{ over }\Psi^{-N}(A_r(Q))\]
\[(\Psi^{Nm_r}_r)_*K^{cu} \subset \on{int} K^{cu}  \quad\text{and}\quad \Psi^{Nm_r}_r \text{ dilates }K^{cu} \qquad\text{ over }\Psi_r^{-Nm_r}(A'_r(Q))\]
\end{lemma}

\begin{proof} Fix constants $N,\mu,\epsilon$ as in Lemma \ref{lem:standard_chart_w_cones} such that $\mu > 1 + \epsilon^2$. By Lemma \ref{lem:Atilde_stays_in_U}, we may take $r$ small enough so that $T\Psi^N_r$ dilates $E^u(\Psi_r)$ by the constant $\mu$ and dilates $T\R^n_s$ by $\mu^{-1}$ over $U$.  We take
\[K^{cu} = K^{cu}_{\epsilon,\delta}\]
for a judiciously chosen $\delta$. To choose $\delta$ appropriately, note that by Lemma \ref{lem:axiom_d_part_1}, we have
\[
T\Psi^N_r(R) = e^{Nr} \cdot R \qquad\text{ over the subset }\Psi^{-N}_r(A_r(Q)) \subset U
\]
It follows by Lemma \ref{lem:stretching_Kcu} that $K^{cu}$ is contracted and uniformly dilated by $\Psi^N_r$ as long as
\begin{equation} \label{eq:delta_constraint_1}0 \le \delta \le \sqrt{e^{Nr} - 1} \end{equation}
Likewise, by Lemma \ref{lem:axiom_d_part_2}, we know that for large $r$
\[T\Psi^{Nm_r}(R) = \lambda_r \cdot R + v  \qquad\text{ over the subset }\Psi^{-Nm_r}_r(A_r(Q)) \subset U
\]
where $|\lambda_r| \ge r^{-1/2}$ and $|v| \le r^2 \cdot \mu^{-\log(r)/r}$. It follows by Lemma \ref{lem:stretching_Kcu} that $K^{cu}$ is contracted and uniformly dilated by $ \Psi^{Nm_r}_r$ as long as
\begin{equation} \label{eq:delta_constraint_2}\frac{\mu^{-\log(r)/r}}{1 - \mu^{-1}} \le \delta \le \sqrt{r^{-1} - 1} \le \sqrt{\lambda_r^2 - 1} \end{equation}
On the other hand, for every small $r$ we know that
\[
\frac{\mu^{-\log(r)/r}}{1 - \mu^{-Nm_r}} \le r^2 \le \sqrt{e^{Nr} - 1}
\]
Thus we may choose $\delta$ to satisfy both (\ref{eq:delta_constraint_1}) and (\ref{eq:delta_constraint_2}). The result now follows by Lemma \ref{lem:stretching_Kcu}.\end{proof}

\subsection{Axioms E And F} \label{subsec:axiom_E_F} Finally, we prove the blender axioms regarding vertical disks, Definition \ref{def:blender}(e-f). The first such axiom is relatively straightforward.

\begin{lemma}[Blender Axiom E] \label{lem:blender_axiom_E}  Let $D$ be a vertical disk through $B_r(Q)$ to the right of the local unstable manifold of $Q$ with respect to $\Psi_r$. Then
\[\on{dist}(D,\partial^l B_r(Q)) > r^3 \qquad\text{for small }r\]
In particular, there is a neighborhood $U_-$ of $\partial^l B_r(Q)$ that is disjoint from all such disks $D$. \end{lemma}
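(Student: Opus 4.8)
The plan is to play off two scales inside the box $B_r(Q)$: the $u$-width $l\cdot\mu^{-m_r}$, which bounds how much a vertical disk can oscillate in the $t$-direction, against the $t$-width $t(Q_r)=r(1-e^{-8Nr})$, which is the distance from the central slice $\{t=0\}$ to the left face $\partial^lB_r(Q)$ and is of order $r^2$. Since $m_r$ grows faster than $1/r$ (Lemma~\ref{lem:growth_of_mr}), the first scale is super-polynomially smaller than the second, and this gap is exactly what produces the bound.

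First I would set up the graph picture. By Lemma~\ref{lem:fixed_points}, within $B_r(Q)$ the stable manifold of $Q$ (about which ``left'' and ``right'' of vertical disks are defined) is the flat horizontal $n$-disk
\[
W:=W^s_{\on{loc}}(Q,\Psi_r;U)\cap B_r(Q)=D^n_s(2)\times 0_t\times 0_u\subset\{t=0,\ u=0\},
\]
and it is disjoint from $\partial^uB_r(Q)$. A vertical disk $D$ through $B_r(Q)$ is, by Lemma~\ref{lem:vertical_disks_graphs} and the width bound of Lemma~\ref{lem:standard_chart_w_cones}(d), the graph of a $2\epsilon$-Lipschitz map $f=(f_s,f_t):D^n_u(l\cdot\mu^{-m_r})\to D^n_s(2)\times D^1_t(t(Q_r))$. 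Since $D$ meets $\{u=0\}$ only at the single point $(f_s(0),f_t(0),0)$, the disk $D$ is disjoint from $W$ exactly when $f_t(0)\neq0$; moreover $D\mapsto\on{sign}f_t(0)$ is continuous and $\{\pm1\}$-valued on the space of vertical disks disjoint from $W$, hence locally constant, hence constant on each of its two connected components. As it takes the value $+1$ on $D_{\on{Right}}=0_s\times\{+t(Q_r)\}\times D^n_u$, every vertical disk $D$ to the right of $W$ must have $f_t(0)>0$.

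The estimate is then immediate. From the Lipschitz bound, $f_t(u)\ge f_t(0)-2\epsilon|u|>-2\epsilon\,l\,\mu^{-m_r}$ for every $u\in D^n_u(l\cdot\mu^{-m_r})$, so every point of $D$ has $t$-coordinate strictly greater than $-2\epsilon\,l\,\mu^{-m_r}$, whereas $\partial^lB_r(Q)$ lies in $\{t=-t(Q_r)\}$; comparing $t$-coordinates gives $\on{dist}(D,\partial^lB_r(Q))\ge t(Q_r)-2\epsilon\,l\,\mu^{-m_r}$. By Lemma~\ref{lem:growth_of_mr} we have $m_r>-Cr^{-1}\log r$, so $\mu^{-m_r}=o(r^k)$ for every $k$, and in particular $2\epsilon\,l\,\mu^{-m_r}<r^3$ for small $r$; combined with $t(Q_r)=r(1-e^{-8Nr})\ge 4Nr^2$ for small $r$, this gives $\on{dist}(D,\partial^lB_r(Q))\ge 4Nr^2-r^3>r^3$ for all sufficiently small $r$, and then $U_-:=\{x:\on{dist}(x,\partial^lB_r(Q))\le r^3\}$ is the claimed neighborhood. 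The one point deserving care is identifying ``right of $W$'' with $f_t(0)>0$ rather than merely $f_t(0)\neq0$: this uses crucially that $W$ is the \emph{flat} disk $\{t=0,\ u=0\}$, so that the two components of vertical disks disjoint from $W$ are precisely $\{f_t(0)>0\}$ and $\{f_t(0)<0\}$. Everything after that is the routine size comparison driven by $m_r\to\infty$ faster than $1/r$.
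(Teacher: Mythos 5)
Your proof is correct and follows essentially the same route as the paper's: represent $D$ as a $2\epsilon$-Lipschitz graph over the $u$-disk, observe that ``right of $W$'' forces the $t$-coordinate at $u=0$ to be positive, bound the $t$-oscillation by $2\epsilon l\mu^{-m_r}$, and compare against the left face at $t=-t(Q_r)$, using Lemma~\ref{lem:growth_of_mr} to make $\mu^{-m_r}$ negligible against $r^2$. The only differences are cosmetic: you correctly identify $W=D^n_s(2)\times 0_t\times 0_u$ as the local \emph{stable} manifold (the paper's proof text misnames it ``unstable,'' though the set used is the same), and you spell out the homotopy-invariance argument that ``right'' is equivalent to $f_t(0)>0$, which the paper asserts without elaboration.
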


\begin{proof} By Lemma \ref{lem:fixed_points}, the local unstable manifold $W$ of $Q$ with respect to $\Psi_r$ in $B_r(Q)$ is given by $D^n_s(2) \times 0_t \times 0_u$. It follows that a vertical disk $D$ to the right of $W$ must intersect the manifold
\[W' = D^n_s(2) \times D^1_t(t(Q_r)) \times 0_u\]
at a point $x$ with $t(x) > 0$ and $u(x) = 0$. By Lemma \ref{lem:vertical_disks_graphs}, $D$ is the graph of a $2\epsilon$-Lipschitz map
\[f:D^n_u(l \cdot \mu^{-\mu_r}) \to D^n_s(2) \times D^1_t(t(Q_r))\]
The image of $f$, or equivalently the projection of $D$ to $D^n_s(2) \times D^1_t(t(Q_r))$, is contained in a ball of radius $2 \epsilon  \cdot l \cdot \mu^{-\mu_r}$ and the $t$-coordinate of $D$ is lower bounded by
\[
t_0-2\epsilon \cdot l \cdot \mu^{-m_r} > -2\epsilon \cdot l \cdot \mu^{-m_r}
\]
By Construction \ref{con:smooth_boxes}, the left boundary $\partial^l B_r(Q)$ consists of points in $B_r(Q)$ with $t$-coordinate $-t(Q_r)$. Finally, note that $m_r$ diverges faster than $1/r$ as $r \to 0$ by Lemma \ref{lem:growth_of_mr}. Therefore
\[\on{dist}(D,\partial^l B_r(Q)) > r \cdot (1 - e^{-8Nr}) -2\epsilon \cdot l \cdot \mu^{-m_r} > r^3 \qquad\text{for small $r$ }\qedhere\]\end{proof}

The final blender axiom is more difficult. We prove the following version.

\begin{lemma}[Blender Axiom F]  \label{lem:blender_axiom_F} Let $D$ be a vertical disk through $B_r(Q)$ to the right of the local unstable manifold $W$ of $Q$ with respect to $\Psi_r$. Consider the intersection point
\[x = (s,t,0_u) = D \cap W' \qquad\text{where}\qquad W' := D^n_s \times D^1_t \times 0_u \]
Then the following two alternatives hold for $D$.
\begin{enumerate}[label=(\alph*)]
    \item If $0 < t(x) \le t(\Psi^{-3N}(Q_r))$ then the intersection $\Psi_r^N(D) \cap A_r(Q)$ contains a disk $\tilde{D}$ through $B_r(Q)$ to the right of $W$, such that
    \[\on{dist}(\tilde{D},\partial^r B_r(Q)) > r^3\]
    \item Otherwise, if $t(\Psi^{-3N}(Q_r)) \le t(x)$ then the intersection $\Psi_r^{Nm_r}(D) \cap A'_r(Q)$ contains a disk $\tilde{D}$ through $B_r(Q)$ to the right of $W$ such that
    \[
    \on{dist}(\tilde{D},W) > r^3
    \]
\end{enumerate}
Thus there are neighborhoods $U_+$ of $\partial^r B_r(Q)$ and $U$ of $W$ such that $\tilde{D}$ is disjoint from either $U$ or $U_+$.\end{lemma}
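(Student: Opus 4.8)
The plan is to prove the two alternatives by tracking, along the forward orbit of $D$, two quantities: the size of $D$ in the unstable direction, which is multiplied by the dilation constant $\mu$ under each application of $\Psi_r^N$ (Lemma~\ref{lem:coordinate_contraction} and Lemma~\ref{lem:blender_axiom_C}), and the $t$-coordinate $t_0 = t(D\cap\{u=0\})$ of the unique point where the (graphical, $K^u$-tangent) disk $D$ meets $\{u=0\}$, which evolves under $\Psi_r$ by the time-$r$ flow $\psi_r$ of $h\partial_t$ in the chart $U$ (Lemma~\ref{lem:coordinate_projections}). Alternative~(a) is the \emph{short return}, realized by a single application of $\Psi_r^N$; alternative~(b) is the \emph{long return through the heteroclinic}, realized by $Nm_r$ applications. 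In both cases the output disk $\tilde D$ is obtained by restricting the enlarged image of $D$ to $B_r(Q)$ and recognizing it as a vertical disk via Lemma~\ref{lem:vertical_manifolds_disks}; errors of size $\mu^{-m_r}$, which by Lemma~\ref{lem:growth_of_mr} are smaller than any power of $r$, are absorbed throughout.

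For alternative~(a), suppose $0<t_0\le t(\Psi_r^{-3N}(Q_r))=e^{-3Nr}t(Q_r)$. Since $\Psi_r^N$ sends $K^u$ into its interior and dilates it by $\mu$, the image $\Psi_r^N(D)$ is $K^u$-tangent with unstable extent exceeding $l\mu^{-m_r}$, so, using that its $s$-coordinate contracts and its $t$-coordinate moves to $e^{Nr}t_0\le e^{-2Nr}t(Q_r)$ (both well inside $B_r(Q)$), its restriction $\tilde D$ to $\{|u|\le l\mu^{-m_r}\}$ is, by Lemma~\ref{lem:vertical_disks_graphs}, a vertical disk through $B_r(Q)$ meeting $\{u=0\}$ at $t=e^{Nr}t_0>0$, hence to the right of $W$. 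The explicit $t$-dynamics give
\[
\on{dist}\bigl(\tilde D,\partial^r B_r(Q)\bigr)\ \ge\ (1-e^{-2Nr})\,t(Q_r)-2\epsilon l\mu^{-m_r}\ >\ r^3
\]
for small $r$ (using $N\ge 1$ and Lemma~\ref{lem:growth_of_mr}). That $\tilde D\subset A_r(Q)$ follows because $D\subset B_r(Q)$ forces $\tilde D\subset B_r(Q)\cap\Psi_r^N(B_r(Q))$, and the concatenation of the image under $\Psi_r^N$ of the straight segment in $B_r(Q)$ from $x$ to $\pi_\Gamma(x)$, followed by the $\Gamma$-segment from $\Psi_r^N(\pi_\Gamma(x))=(0_s,e^{Nr}t_0,0_u)$ to $Q$, is a path inside $B_r(Q)\cap\Psi_r^N(B_r(Q))$ joining the crossing point of $\tilde D$ to $Q$; this uses only that $\Psi_r$ preserves $\Gamma\cap U$ and $\{u=0\}\cap U$, contracts the $s$-coordinate, and acts on $t$ by $\psi_r$ (Lemmas~\ref{lem:coordinate_projections} and~\ref{lem:coordinate_contraction}).

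For alternative~(b), $t_0\in[e^{-3Nr}t(Q_r),\,t(Q_r)]$. Because $\Psi_r^N$ contracts $K^u$ into itself, the tangent planes of $\Psi_r^{jN}(D)$ converge to $E^u$, so an inclination-lemma argument along the heteroclinic orbit shows $\Psi_r^{Nm_r}(D)$ is $C^1$-close, over a piece of intrinsic radius $\ge\mu^{m_r}(l\mu^{-m_r})=l$, to the unstable leaf of $\Psi_r$ through $\Psi_r^{Nm_r}(x)$; here the relevant part of the orbit stays in $U$ by Lemma~\ref{lem:Atilde_stays_in_U}, and the (global) cone dilation is Lemma~\ref{lem:blender_axiom_C}. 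By the definition of $m_r$ (Notation~\ref{not:m_r}) together with Lemma~\ref{lem:coordinate_projections}, $\Psi_r^{Nm_r}(x)$ lies within $O(\mu^{-m_r})$ of $(0_s,L-\delta,0_u)$ with $\delta\in[re^{-6Nr},re^{-2Nr}]\subset(0,r)$; then the holonomy identity $(\ref{eq:holonomy_of_Gamma})$ (equivalently Lemma~\ref{lem:homoclinic_points}(a)) shows this unstable leaf crosses $\{u=0\}$ near $a$, at $(1_s,r-\delta,0_u)$. Since the intrinsic radius $l$ exceeds the radius needed to contain $a$ in an unstable leaf of $\Psi_r$ near $P$ (the defining property of $l$ in Construction~\ref{con:smooth_boxes}), $\Psi_r^{Nm_r}(D)$ contains, near $a$, a disk whose restriction $\tilde D$ to $\{|u|\le l\mu^{-m_r}\}$ is a vertical disk through $B_r(Q)$ meeting $\{u=0\}$ at $t\approx r-\delta>0$, hence to the right of $W$ with $\on{dist}(\tilde D,W)\ge(r-\delta)-2\epsilon l\mu^{-m_r}\ge r(1-e^{-2Nr})-2\epsilon l\mu^{-m_r}>r^3$ for small $r$. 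That $\tilde D$ lies in the component $A'_r(Q)$ of $a$ follows as in Lemma~\ref{lem:heteroclinic_is_in_set}, using the location and invariance estimates of Lemmas~\ref{lem:bounds_on_Atilde} and~\ref{lem:Atilde_stays_in_U}.

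The main obstacle is the inclination-lemma step in alternative~(b): making precise, uniformly over all vertical disks $D$ and with an error term beating $\mu^{-m_r}$, that $\Psi_r^{Nm_r}(D)$ is $C^1$-close over a piece of size $\ge l$ to an unstable leaf, which requires controlling exactly which iterates of the relevant part of $D$ leave $U$ (only the bounded stretch around the heteroclinic excursion, by Lemma~\ref{lem:Atilde_stays_in_U}). Granting the two alternatives, the final sentence is immediate: since the distance bounds above are uniform over $D$ at any fixed sufficiently small $r$, take $U_+$ to be the $\tfrac12 r^3$-neighborhood of $\partial^r B_r(Q)$ and $U$ the $\tfrac12 r^3$-neighborhood of $W$; then $\tilde D$ is disjoint from $U_+$ in case~(a) and from $U$ in case~(b).
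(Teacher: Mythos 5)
Your alternative~(a) is essentially the paper's Step~1, with the added (and worthwhile) explicit verification that $\tilde D$ lands in the connected component $A_r(Q)$ rather than merely in $B_r(Q)\cap\Psi_r^N(B_r(Q))$.

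Your alternative~(b) takes a genuinely different route and, as you yourself flag, it has a real gap: the quantitative inclination lemma. The paper's proof (Steps~2--3) is designed specifically to sidestep this. For $w$ in the slab $C'_r\subset\{u=0\}$, it uses the $C^1$-robustly H\"older holonomy of $F^u$ (Lemma~\ref{lem:holder_holonomy} via Construction~\ref{con:useful_holonomy} and Lemma~\ref{lem:holonomy_estimates}) to produce a point $x_w$ in the unstable disk $D_w$ with $\Psi_r^{Nm_r}(x_w)\in\tilde A_r(Q)\cap\{u=0\}$ and $t(\Psi_r^{Nm_r}(x_w))>\tfrac12 r^2$; the decisive observation is that $w\mapsto x_w$ is a \emph{continuous section} $\sigma$ of the projection $\pi'\colon V\to C'_r$. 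Since a vertical disk $D$ is a Lipschitz graph over $D^n_u(l\mu^{-m_r})$ (Lemma~\ref{lem:vertical_disks_graphs}), it must cross the image of $\sigma$, and the resulting intersection point, pushed forward $Nm_r$ times, is the center of $\tilde D$. No claim that $\Psi_r^{Nm_r}(D)$ is $C^1$-close to an unstable leaf over a macroscopic piece is ever made or needed; verticality of $D$ alone forces the intersection. Your $\lambda$-lemma path is plausible in principle (the cone contraction of Lemma~\ref{lem:blender_axiom_C} has a rate uniform in $r$, and the heteroclinic excursion is only a bounded stretch of $Nm$ iterations), but you would need a graph-transform estimate that holds uniformly in $r$, uniformly over all vertical disks $D$, over an orbit segment of length $Nm_r\to\infty$ as $r\to 0$, with a final error beating $\mu^{-\kappa m_r}$ so that the holonomy identity~(\ref{eq:holonomy_of_Gamma}) can be applied — and that estimate is exactly the non-trivial technical work the section-and-intersection argument avoids. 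As written, your proof of~(b) is not complete.
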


\begin{proof} We proceed in several steps. First, we address (a) which is easy. Second, we discuss a special case of (b). Third, we discuss the general case of (b).

\vspace{3pt}

\noindent {\bf Step 1.} We prove (a) first. Note that the $t$-coordinate of $\Psi_r^N(x)$ satisfies
\[
0 < t(\Psi^N_r(x)) = e^{Nr} \cdot t(x) \le e^{-2Nr} \cdot t(Q_r)\]
Take the disk $\tilde{D}$ to be the connected component of $\Psi_r^N(D)$ containing $\Psi_r^N(x)$. Then $\tilde{D}$ is a vertical disk to the right of $W$ since it intersects $W'$ at a point with positive $t$-coordinate. By Lemma \ref{lem:vertical_disks_graphs}, it is the graph of a $2\epsilon$-Lipschitz map
\[f:D^n_u(l \cdot \mu^{-m_r}) \to D^n_s(2) \times D^1_t(t(Q_r))\]
As in Lemma \ref{lem:blender_axiom_E}, this implies that the $t$-coordinate is bounded by
\[e^{-2Nr} \cdot t(Q_r) + 2\epsilon \cdot l \cdot \mu^{-m_r} < e^{-Nr} \cdot t(Q_r) \qquad\text{for small }r\]
The right side $\partial^r B_r(Q)$ is the set of points with $t$-coordinate $t(Q_r)$, by Definition \ref{con:smooth_boxes}. Therefore
\[\on{dist}(\partial^r B_r(Q),\tilde{D}) > t(Q_r) - e^{-Nr} \cdot t(Q_r) =  e^{-Nr} \cdot (1 - e^{-Nr}) \cdot (1 - e^{-8Nr}) > r^3\qquad\text{for small }r\]

\vspace{3pt}

\noindent {\bf Step 2.} In this step, we consider a useful special case of (b). Let $C'_r \subset W'$ denote the set
\[
C'_r = D^n_s(2+\mu^{-m_r/2}) \times [t(\Psi^{-4N}_r(Q_r)),t(\Psi^{N}_r(Q_r))]_t \times 0_u
\]
Given a point $w \in C'_r$, we let $D_w$ denote the following vertical disk
\[
D_w = \big\{ (s,t,u) \in F^u(\Psi_r,w) \; : \; |u| \le l \cdot \mu^{-m_r}\big\} 
\]
Note that by Lemmas \ref{lem:coordinate_contraction} and \ref{lem:coordinate_projections}, we have
\[
\Psi^{Nm_r}_r(C'_r) \subset D^n_s(2 \cdot \mu^{-m_r}) \times [t(\Psi^{N(m_r-4)}_r(Q_r)),t(\Psi^{N(m_r+1)}_r(Q_r))]\]
By the construction of $m_r$ (see Notation \ref{not:m_r}) we know that
\[t(\Psi_r^{N(m_r-4)}(Q_r)) \ge t(\Psi_r^{N}(P_r)) \qquad\text{and}\qquad t(\Psi_r^{N(m_r+1)}(Q_r)) \ge t(\Psi_r^{7N}(P_r))\]
Therefore we have the following inclusion
\[
\Psi^{Nm_r}_r(C'_r) \subset D^n_s(2 \cdot \mu^{-m_r}) \times [t(\Psi^{N}_r(P_r)),t(\Psi^{7}_r(P_r))]\]
For sufficiently small $r$, Construction \ref{con:useful_holonomy} yields a well-defined holonomy map 
\[
\on{Hol}_{\Psi_r}:D^n_s(2 \cdot \mu^{-m_r}) \times [t(\Psi^{N}_r(P_r)),t(\Psi^{7N}_r(P_r))] \to D^n_s(3) \times [-\delta,L+\delta]_t \times 0_u 
\]
Thus the point $z = \Psi^{Nm_r}_r(w)$ has well-defined holononomy. By Lemma \ref{lem:holonomy_estimates}, we have
\[|\on{Hol}_{\Psi_r}(z) - (r + t(z) - L)| \le \mu^{-\kappa m_r}\]
In particular, this implies that for small $r$, we have the following inequality.
\begin{equation} \label{eq:axiom_F_proof:1} t(\on{Hol}_{\Psi_r}(z)) \ge r + t(\Psi^N_r(P_r)) - L - \mu^{-\kappa m_r} = r(1 - e^{-Nr}) - \mu^{-\kappa m_r} > \frac{1}{2}r^2\end{equation}
Moreover, $\on{Hol}_{\Psi_r}(z)$ lies on the unstable disk $\Psi^{Nm_r}(D_w)$ through $\Psi_r^{Nm_r}(w)$, since it contains all points in the unstable leaf of distance less than $l$ from $\Psi_r^{Nm_r}(w)$. We thus acquire a point
\[
x_w = \Psi_r^{-Nm_r}(\on{Hol}_{\Psi_r}(z)) \quad\text{with}\quad \Psi^{Nm_r}_r(x_w) \in \tilde{A}_r(Q) \cap \{u = 0\} \text{ and }t(\Psi^{Nm_r}_r(x_w)) > \frac{1}{2}r^2
\]
Note that $x_w$ varies continuously with $w$.

\vspace{3pt}

\noindent {\bf Step 3.} In this step we discuss the general case of (b). Fix a vertical disk $D \subset B_r(Q)$ as in (b), with $t(\Psi^{-3N}(Q_r)) < t(x)$ where $x = D \cap W'$. Let
\[\pi:U \to [-3,3]^n_s \times [-\epsilon,L+\epsilon]_t \times 0_u\]
be projection to the $(s,t)$-plane and let $\Sigma \subset W'$ be the union of all projections $\pi(D_w)$ where $D_w$ intersects $\pi(D)$. The disks $D$ and $D_w$ are $2\epsilon$-Lipschitz graphs over $D^n_u(l \cdot \mu^{-m_r})$ (Lemma \ref{lem:vertical_disks_graphs}). Therefore these projections are all contained in balls of radius $2 \epsilon \cdot l \cdot \mu^{-m_r}$ in $W'$ (see also Step 2). It follows that there is a $C > 0$ independent of $r$ such that
\[
\on{dist}(x,y) \le C \cdot \mu^{-m_r} \qquad\text{for all }y \in \Sigma
\]
This implies that $\Sigma \subset C'_r$ for sufficiently small $r$, and therefore that
\[
D \subset V \qquad\text{where}\qquad V = \{z \in D_w \;: \; w \in C'_r\}
\]
since $\Sigma'$ contains the $(s,t)$-ball of radius $C \cdot \mu^{-m_r}$ around $x$ for small $r$. We let $\pi':V \to C'_r$ be the obvious projection mapping $z \in D_w$ to $w$. 

\vspace{3pt}

By Step 2, the projection $\pi':V \to C'_r$ has a natural continuous section
\[
\sigma:C'_r \to V \qquad\text{given by}\qquad \sigma(w) = x_w
\]
Since $D$ is vertical, it must necessarily intersect one point $x \in D$ in the image of $\sigma$. By Step 2
\[
t(\Psi^{Nm_r}(x)) > \frac{1}{2}r^2 \qquad\text{and}\qquad \Psi^{Nm_r}(x) \in \tilde{A}_r(Q) \cap W'
\]
Note that $\tilde{A}_r(Q) \subset B_r(Q)$ for small $r$. We let $\tilde{D}$ be the component of $\Psi_r^{Nm_r}(D) \cap B_r(Q)$ containing $x$. This is a vertical disk containing $x$, so by the usual considerations we have
\[
\on{dist}(x,z) < C \cdot \mu^{-m_r} \qquad\text{for all }z \in \tilde{D}
\]
It follows that for sufficiently small $r$, we have the lower bound
\[
\on{dist}(\tilde{D},W) \ge \on{dist}(x,W) - C\mu^{-m_r} = t(x) - C\mu^{-m_r} > \frac{1}{2}r^2  - C\mu^{-m_r} > r^3
\]
This constructs the required disk and concludes the proof.\end{proof}

\subsection{Proof Of Theorem \ref{thm:heteroclinic_contact_blender}} \label{subsec:conclusion_of_thm} In this final part, we provide the proof of Theorem \ref{thm:heteroclinic_contact_blender}. We need a final lemma, demonstrating Theorem \ref{thm:heteroclinic_contact_blender}(c).

\begin{lemma} \label{lem:heteroclinic_blender_c} The intersection $W^u(P,\Psi_r) \cap B_r(Q)$ contains a vertical disk $D_Q$ to the right of $W^s(Q,\Psi_r)$.
\end{lemma}

\begin{proof} We demonstrate this for $B_r(Q)$. By Lemma \ref{lem:homoclinic_points}, $W^u(P,\Psi_r)$ contains a disk $D_r \subset U$ in $F^u(\Psi_r)$ centered at the homoclinic point
\[b_r = (1_s,r,0_u) \quad\text{of}\quad P\]
Let $c_r = (0_s,r,0_u)$ be the intersection point $F^s(\Psi_r,b_r) \cap \Gamma$ and let $D'_r$ be a disk in $F^u(\Psi_r) \cap U$ centered at $c_r$. Note that we may take the radius of $D'_r$ and $D_r$ to be bounded below by $A > 0$ independent of $r$. Let $l_r$ denote the unique integer such that
\[
\Psi_r^{l_r}(c_r) \in (\Psi_r(P_r),\Psi_r^2(P_r)] \quad\text{or equivalently}\quad t(\Psi_r^{l_r}(c_r)) \in (L - re^{-r}, L - re^{-2r}]  
\]
As in Lemma \ref{lem:growth_of_mr}, we know that $l_r > 1/r$ if $r$ is small. Now note that $\Psi_r^N$ uniformly expands $F^u(\Psi_r)$ with constant of dilation (greater than) $\mu$, for small $r$. Therefore $\Psi_r^{l_r}(D'_r)$ contains the disk in $F^u(\Psi_r)$ of radius greater than $2 \cdot l$ around $\Psi^{l_r}_r(c_r)$ for small $r$. In particular, by Lemma \ref{lem:homoclinic_points} and the definition of $l$ (Construction \ref{con:smooth_boxes}), there is a sub-disk
\[D''_r \subset \Psi^{l_r}(D'_r) \qquad\text{with a point $x = (1_s,t(x),0_u)$ with $t(x) \in [r(1 - e^{-r}), r(1 - e^{-2r})]$}\]
Thus the disk is to the right of $W^s_{\on{loc}}(Q,\Psi_r) \cap B_r(Q)$. The disk must also be contained in a unstable disk fiber of $\tilde{B}_r(Q)$ (see Construction \ref{con:smooth_boxes}). Therefore every point in $D''_r$ is within distance $2\epsilon \cdot l \cdot \mu^{-m_r}$ of $x$ and so
\[
t(y) \le t(x) + 2\epsilon \cdot l \cdot \mu^{-m_r} \le r(1 - e^{-3r}) \le 4r^2 \qquad\text{for every $y \in D''_r$ and small $r$}
\]
Similarly, $t(y) \ge r^2$. Finally, since $b_r$ and $c_r$ are on the same stable disk in $U$, we have
\[
\on{dist}(\Psi^{m_r}_r(c_r),\Psi^{m_r}_r(b_r)) \le \mu^{-m_r} 
\le \mu^{-1/r}
\]
By taking a path $\gamma$ from $\Psi^{m_r}_r(c_r)$ to $x$ in $F^u(\Psi_r)$ and using the holonomy map of $F^u$ (see Construction \ref{con:useful_holonomy}) we get a small unstable disk $D_Q$ in $F^u(\Psi_r)$ centered at a point $x_Q = \on{Hol}_{\Psi_r}(x)$ with
\[\on{dist}(x_Q,x) \le \mu^{-\kappa/r}\]
Here $\beta$ is the uniform H\"{o}lder constants in Construction \ref{con:useful_holonomy}. It follows, as with $D''$, that $D_Q$ is a vertical disk to the right of $W^s_{\on{loc}}(Q,\Psi_r) \cap B_r(Q)$.\end{proof}

\begin{proof} (Theorem \ref{thm:heteroclinic_contact_blender}) The existence theorem for the contact blender is now an immediate consequence of Lemma \ref{lem:fixed_points} for Theorem \ref{thm:heteroclinic_contact_blender}(a), the Lemmas \ref{lem:blender_axiom_A}, \ref{lem:blender_axiom_B}, \ref{lem:blender_axiom_C}, \ref{lem:blender_axiom_D}, \ref{lem:blender_axiom_E} and \ref{lem:blender_axiom_F} for Theorem \ref{thm:heteroclinic_contact_blender}(b) and the Lemma \ref{lem:heteroclinic_blender_c} for Theorem \ref{thm:heteroclinic_contact_blender}(c). \end{proof}

\bibliographystyle{hplain}
\bibliography{standard_bib}

\begin{thebibliography}{10}

\bibitem{a1963}
Dmitry~Victorovich Anosov.
\newblock Ergodic properties of geodesic flows on closed riemannian manifolds of negative curvature.
\newblock In {\em Doklady Akademii Nauk}, volume 151, pages 1250--1252. Russian Academy of Sciences, 1963.

\bibitem{a2023}
Russell Avdek.
\newblock An algebraic generalization of giroux's criterion.
\newblock {\em arXiv preprint arXiv:2307.09068}, 2023.

\bibitem{az2024}
Russell Avdek and Zhengyi Zhou.
\newblock Bourgeois' contact manifolds are tight.
\newblock {\em arXiv preprint arXiv:2404.16311}, 2024.

\bibitem{bs2016}
John~A Baldwin and Steven Sivek.
\newblock A contact invariant in sutured monopole homology.
\newblock In {\em Forum of Mathematics, Sigma}, volume~4, page e12. Cambridge University Press, 2016.

\bibitem{bm2021}
M{\'e}lanie Bertelson and Gael Meigniez.
\newblock Conformal symplectic structures, foliations and contact structures.
\newblock {\em arXiv preprint arXiv:2107.08839}, 2021.

\bibitem{whatisblender}
Christian Bonatti, Sylvain Crovisier, Lorenzo Diaz, and Amie Wilkinson.
\newblock What is... a blender?
\newblock {\em arXiv preprint arXiv:1608.02848}, 2016.

\bibitem{bonattidiaz1995}
Christian Bonatti and Lorenzo~J. Diaz.
\newblock Persistent nonhyperbolic transitive diffeomorphisms.
\newblock {\em Annals of Mathematics}, 143(2):357--396, 1996.

\bibitem{bd2008}
Christian Bonatti and Lorenzo~J D{\'\i}az.
\newblock Robust heterodimensional cycles and-generic dynamics.
\newblock {\em Journal of the Institute of Mathematics of Jussieu}, 7(3):469--525, 2008.

\bibitem{bonatti2008robust}
Christian Bonatti and Lorenzo~J D{\'\i}az.
\newblock Robust heterodimensional cycles and-generic dynamics.
\newblock {\em Journal of the Institute of Mathematics of Jussieu}, 7(3):469--525, 2008.

\bibitem{bdp2003}
Christian Bonatti, Lorenzo~J D{\'\i}az, and Enrique~R Pujals.
\newblock A c1-generic dichotomy for diffeomorphisms: weak forms of hyperbolicity or infinitely many sinks or sources.
\newblock {\em Annals of Mathematics}, pages 355--418, 2003.

\bibitem{bonatti2004dynamics}
Christian Bonatti, Lorenzo~J D{\'\i}az, and Marcelo Viana.
\newblock {\em Dynamics beyond uniform hyperbolicity: A global geometric and probabilistic perspective}, volume~3.
\newblock Springer Science \& Business Media, 2004.

\bibitem{b1963}
Armand Borel.
\newblock Compact clifford-klein forms of symmetric spaces.
\newblock {\em Topology}, 2(1-2):111--122, 1963.

\bibitem{b2021}
Joseph Breen.
\newblock Morse-smale characteristic foliations and convexity in contact manifolds.
\newblock {\em Proceedings of the American Mathematical Society}, 149(9):3977--3989, 2021.

\bibitem{b2024}
Joseph Breen.
\newblock Folded symplectic forms in contact topology.
\newblock {\em Journal of Geometry and Physics}, 201:105213, 2024.

\bibitem{bhh2023}
Joseph Breen, Ko~Honda, and Yang Huang.
\newblock The giroux correspondence in arbitrary dimensions.
\newblock {\em arXiv preprint arXiv:2307.02317}, 2023.

\bibitem{cc1985foliations}
A.~Candel and L.~Conlon.
\newblock {\em Foliations}.
\newblock Number v. 2 in Foliations. Mathematical Society of Japan, 1985.

\bibitem{c2023}
Robert Cardona.
\newblock Stability is not open or generic in symplectic four-manifolds.
\newblock {\em arXiv preprint arXiv:2305.13158}, 2023.

\bibitem{ch2025}
Julian Chaidez and Michael Huang.
\newblock Convex hypersurfaces and heteroclinic cycles.
\newblock {\em work in progress}.

\bibitem{ct2023}
Julian Chaidez and Shira Tanny.
\newblock Elementary sft spectral gaps and the strong closing property.
\newblock {\em arXiv preprint arXiv:2312.17211}, 2023.

\bibitem{cemm2023}
Rima Chatterjee, John~B Etnyre, Hyunki Min, and Anubhav Mukherjee.
\newblock Existence and construction of non-loose knots.
\newblock {\em arXiv preprint arXiv:2310.04908}, 2023.

\bibitem{ce2015}
Kai Cieliebak and Evgeny Volkov.
\newblock First steps in stable hamiltonian topology.
\newblock {\em Journal of the European Mathematical Society}, 17(2):321--404, 2015.

\bibitem{cghh2011}
Vincent Colin, Paolo Ghiggini, Ko~Honda, and Michael Hutchings.
\newblock Sutures and contact homology i.
\newblock {\em Geometry \& Topology}, 15(3):1749--1842, 2011.

\bibitem{cgh2003}
Vincent Colin, Emmanuel Giroux, and Ko~Honda.
\newblock On the coarse classification of tight contact structures.
\newblock {\em arXiv preprint math/0305186}, 2003.

\bibitem{cgh2009}
Vincent Colin, Emmanuel Giroux, and Ko~Honda.
\newblock Finitude homotopique et isotopique des structures de contact tendues.
\newblock {\em Publications math{\'e}matiques}, 109:245--293, 2009.

\bibitem{cm2020}
James Conway and Hyunki Min.
\newblock Classification of tight contact structures on surgeries on the figure-eight knot.
\newblock {\em Geometry \& Topology}, 24(3):1457--1517, 2020.

\bibitem{crovisier2015introduction}
Sylvain Crovisier and Rafael Potrie.
\newblock Introduction to partially hyperbolic dynamics.
\newblock {\em School on Dynamical Systems, ICTP, Trieste}, 3(1), 2015.

\bibitem{ep2022}
Yakov Eliashberg and Dishant Pancholi.
\newblock Honda-huang's work on contact convexity revisited.
\newblock {\em arXiv preprint arXiv:2207.07185}, 2022.

\bibitem{e2001}
John~B Etnyre and Ko~Honda.
\newblock On the nonexistence of tight contact structures.
\newblock {\em Annals of Mathematics}, pages 749--766, 2001.

\bibitem{eh2002}
John~B Etnyre and Ko~Honda.
\newblock Tight contact structures with no symplectic fillings.
\newblock {\em Inventiones Mathematicae}, 148(3):609, 2002.

\bibitem{emm2022}
John~B Etnyre, Hyunki Min, and Anubhav Mukherjee.
\newblock Non-loose torus knots.
\newblock {\em arXiv preprint arXiv:2206.14848}, 2022.

\bibitem{fh2021}
Fran{\c{c}}ois-Simon Fauteux-Chapleau and Joseph Helfer.
\newblock Exotic tight contact structures on $\mathbb{R}^n$.
\newblock {\em arXiv preprint arXiv:2111.07590}, 2021.

\bibitem{fm2023}
Eduardo Fern{\'a}ndez and Hyunki Min.
\newblock Spaces of legendrian cables and seifert fibered links.
\newblock {\em arXiv preprint arXiv:2310.12385}, 2023.

\bibitem{fh2019hyperbolic}
Todd Fisher and Boris Hasselblatt.
\newblock {\em Hyperbolic flows}.
\newblock European Mathematical Society, 2019.

\bibitem{g2008intro}
Hansj{\"o}rg Geiges.
\newblock {\em An introduction to contact topology}, volume 109.
\newblock Cambridge University Press, 2008.

\bibitem{gls2006}
Paolo Ghiggini, Paolo Lisca, Andr{\'a}s~I Stipsicz, et~al.
\newblock Classification of tight contact structures on small seifert 3-manifolds with $e_0 \ge 0$.
\newblock In {\em Proc. Amer. Math. Soc}, volume 134, pages 909--916, 2006.

\bibitem{g1991}
Emmanuel Giroux.
\newblock Convexity in contact topology.
\newblock {\em Comment. Math. Helv}, 66(4):637--677, 1991.

\bibitem{g1987}
Michael Gromov and Ilya Piatetski-Shapiro.
\newblock Non-arithmetic groups in lobachevsky spaces.
\newblock {\em Publications Math{\'e}matiques de l'IH{\'E}S}, 66:93--103, 1987.

\bibitem{gsw2000}
Victor Guillemin, Ana Cannas~da Silva, and Christopher Woodward.
\newblock On the unfolding of folded symplectic structures.
\newblock {\em Mathematical Research Letters}, 7(1):35--53, 2000.

\bibitem{hertz2011partially}
F~Rodriguez Hertz, J~Rodriguez Hertz, and Ra{\'u}l Ures.
\newblock Partially hyperbolic dynamics.
\newblock {\em Publica{\c{c}}oes Matem{\'a}ticas do IMPA}, 2011.

\bibitem{hps1977}
M.W. Hirsch, C.C. Pugh, and M.~Shub.
\newblock {\em Invariant Manifolds}.
\newblock Lecture Notes in Mathematics. Springer Berlin Heidelberg, 2006.

\bibitem{h1999}
Ko~Honda.
\newblock On the classification of tight contact structures i.
\newblock {\em arXiv preprint math/9910127}, 1999.

\bibitem{h2000}
Ko~Honda.
\newblock On the classification of tight contact structures ii.
\newblock {\em Journal of Differential Geometry}, 55(1):83--143, 2000.

\bibitem{hh2018}
Ko~Honda and Yang Huang.
\newblock Bypass attachments in higher-dimensional contact topology.
\newblock {\em arXiv preprint arXiv:1803.09142}, 2018.

\bibitem{hh2019}
Ko~Honda and Yang Huang.
\newblock Convex hypersurface theory in contact topology.
\newblock {\em arXiv preprint arXiv:1907.06025}, 2019.

\bibitem{hkm2002}
Ko~Honda, William~H Kazez, and Gordana Mati{\'c}.
\newblock Convex decomposition theory.
\newblock {\em International Mathematics Research Notices}, 2002(2):55--88, 2002.

\bibitem{hkm2003}
Ko~Honda, William~H Kazez, and Gordana Mati{\'c}.
\newblock Tight contact structures on fibred.
\newblock {\em Journal of Differential Geometry}, 64(2):305--358, 2003.

\bibitem{hkm2008}
Ko~Honda, William~H Kazez, and Gordana Matic.
\newblock Contact structures, sutured floer homology and tqft.
\newblock {\em arXiv preprint arXiv:0807.2431}, 2008.

\bibitem{l2024}
Dongchen Li.
\newblock Blender-producing mechanisms and a dichotomy for local dynamics for heterodimensional cycles.
\newblock {\em arXiv preprint arXiv:2403.15818}, 2024.

\bibitem{li2024persistence}
Dongchen Li and Dmitry Turaev.
\newblock Persistence of heterodimensional cycles.
\newblock {\em Inventiones mathematicae}, 236(3):1413--1504, 2024.

\bibitem{m1976}
John~J Millson.
\newblock On the first betti number of a constant negatively curved manifold.
\newblock {\em Annals of Mathematics}, 104(2):235--247, 1976.

\bibitem{mn2023}
Hyunki Min and Isacco Nonino.
\newblock Tight contact structures on a family of hyperbolic l-spaces.
\newblock {\em arXiv preprint arXiv:2311.14103}, 2023.

\bibitem{mori2009reeb}
Atsuhide Mori.
\newblock Reeb foliations on s\^{} 5 and contact 5-manifolds violating the thurston-bennequin inequality.
\newblock {\em arXiv preprint arXiv:0906.3237}, 2009.

\bibitem{mori2011}
Atsuhide Mori.
\newblock On the violation of thurston-bennequin inequality for a certain non-convex hypersurface.
\newblock {\em arXiv preprint arXiv:1111.0383}, 2011.

\bibitem{m2012}
Emmy Murphy.
\newblock {\em Loose Legendrian embeddings in high dimensional contact manifolds}.
\newblock Stanford University, 2012.

\bibitem{np2012}
Meysam Nassiri and Enrique~R Pujals.
\newblock Robust transitivity in hamiltonian dynamics.
\newblock In {\em Annales scientifiques de l'{\'E}cole normale sup{\'e}rieure}, volume~45, pages 191--239, 2012.

\bibitem{AIMproblems}
Workshop Participants.
\newblock Aim problem list in higher dimensional contact topology, 2024.

\bibitem{psw1997}
Charles Pugh, Michael Shub, and Amie Wilkinson.
\newblock H{\"o}lder foliations.
\newblock {\em Duke Math. J.}, 90(1):517--546, 1997.

\bibitem{shubhomology}
Michael Shub and Dennis Sullivan.
\newblock Homology theory and dynamical systems.
\newblock {\em Topology}, 14:109--132, 1975.

\bibitem{s1979}
Dennis Sullivan.
\newblock Hyperbolic geometry and homeomorphisms.
\newblock In {\em Geometric topology}, pages 543--555. Elsevier, 1979.

\bibitem{t2000}
Ichiro Torisu.
\newblock Convex contact structures and fibered links in 3-manifolds.
\newblock {\em International Mathematics Research Notices}, 2000(9):441--454, 2000.

\bibitem{w1971}
Alan Weinstein.
\newblock Symplectic manifolds and their lagrangian submanifolds.
\newblock {\em Advances in Mathematics}, 6(3):329--346, 1971.

\bibitem{w2016}
Chris Wendl.
\newblock Lectures on symplectic field theory.
\newblock {\em arXiv preprint arXiv:1612.01009}, 2016.

\bibitem{y1997}
Kanda Yutaka.
\newblock The classification of tight contact structures on the 3-torus.
\newblock {\em Communications in Analysis and Geometry}, 5(3):413--438, 1997.

\bibitem{zeeman1972c0}
EC~Zeeman.
\newblock C0 density of stable diffeomorphisms and flows.
\newblock In {\em Proc. Colloquium on Smooth Dynamical Systems, Southampton}, 1972.

\end{thebibliography}

\end{document}